\documentclass[11pt]{amsart}

\usepackage[bookmarksnumbered=true, bookmarksopen=true]{hyperref}
\usepackage{amsmath,amssymb,mathrsfs,amsthm}
\usepackage{enumerate,amsfonts,stmaryrd,graphicx,psfrag}
\usepackage[all]{xy}
\usepackage[margin=1in]{geometry}



\newtheorem{thm}{Theorem}[section]
\newtheorem{cor}[thm]{Corollary}
\newtheorem{cor*}[thm]{Corollary*}
\newtheorem{lem}[thm]{Lemma}
\newtheorem{prop}[thm]{Proposition}
\theoremstyle{definition}
\newtheorem{defn}[thm]{Definition}
\newtheorem{conj}[thm]{Conjecture}
\theoremstyle{remark}
\newtheorem{rem}[thm]{Remark}

\numberwithin{equation}{section}

\newcommand{\C}{\mathbb{C}}      
\newcommand{\Z}{\mathbb{Z}}      

\newcommand{\R}{\mathbb{R}}
\newcommand{\Q}{\mathbb{Q}}
\newcommand{\cO}{\mathcal{O}}
\newcommand{\Pj}{\mathbb{P}}
\newcommand{\cE}{\mathcal{E}}
\newcommand{\cF}{\mathcal{F}}

\newcommand{\fA}{\mathfrak{A}}
\newcommand{\fC}{\mathfrak{C}}
\newcommand{\fX}{\mathfrak{X}}
\newcommand{\fP}{\mathfrak{P}}
\newcommand{\fM}{\mathfrak{M}}
\newcommand{\fg}{\mathfrak{g}}
\newcommand{\fh}{\mathfrak{h}}
\newcommand{\fsl}{\mathfrak{sl}}
\newcommand{\fgl}{\mathfrak{gl}}
\newcommand{\fp}{\mathfrak{p}}
\newcommand{\cI}{\mathcal{I}}

\newcommand{\fz}{\mathfrak{z}}
\newcommand{\cX}{\mathcal{X}}
\newcommand{\cY}{\mathcal{Y}}
\newcommand{\cD}{\mathcal{D}}
\newcommand{\cV}{\mathcal{V}}
\newcommand{\cL}{\mathcal{L}}
\newcommand{\cM}{\mathcal{M}}
\newcommand{\cH}{\mathcal{H}}
\newcommand{\cZ}{\mathcal{Z}}
\newcommand{\cA}{\mathcal{A}}
\newcommand{\cB}{\mathcal{B}}

\newcommand{\bI}{\mathbb{I}}
\newcommand{\bJ}{\mathbb{J}}
\newcommand{\bv}{\mathbf{v}}
\newcommand{\bw}{\mathbf{w}}
\newcommand{\bm}{\mathbf{m}}
\newcommand{\bzero}{\mathbf{0}}

\newcommand{\tr}{\operatorname{tr}}
\newcommand{\Id}{\operatorname{Id}}

\newcommand{\Aut}{\operatorname{Aut}}
\newcommand{\Hilb}{\operatorname{Hilb}}

\newcommand{\Ext}{\operatorname{Ext}}
\newcommand{\Hom}{\operatorname{Hom}}

\newcommand{\rar}{\rightarrow}
\newcommand{\xrar}{\xrightarrow}

\newcommand{\vir}{\mathrm{vir}}
\newcommand{\llb}{\llbracket}
\newcommand{\rrb}{\rrbracket}
\newcommand{\ch}{\widetilde{ch}}

\newcommand{\orb}{\mathrm{orb}}
\newcommand{\age}{\mathrm{age}}
\newcommand{\reg}{\mathrm{reg}}
\newcommand{\red}{\mathrm{red}}

\newcommand{\out}{\mathrm{out}}
\newcommand{\At}{\mathrm{At}}
\newcommand{\GW}{\mathrm{GW}}
\newcommand{\DT}{\mathrm{DT}}
\newcommand{\Stab}{\operatorname{Stab}}
\newcommand{\pt}{\operatorname{pt}}
\newcommand{\ev}{\operatorname{ev}}
\newcommand{\Ob}{\operatorname{Ob}}

\allowdisplaybreaks

\begin{document}

\nocite{*}

\title{Donaldson--Thomas theory of $[\C^2/\Z_{n+1}]\times \Pj^1$}

\author{Zijun Zhou}
\address{Zijun Zhou, Department of Mathematics, Columbia University, New York, NY 10027, USA}
\email{zzhou@math.columbia.edu}

\maketitle

\begin{abstract}
  We study the relative orbifold Donaldson--Thomas theory of $[\C^2/\Z_{n+1}]\times \Pj^1$. A correspondence is established between the DT theory relative to disjoint union of vertical fibers to quantum multiplication by divisors for the Hilbert scheme of points on $[\C^2/\Z_{n+1}]$. This determines a correspondence between the whole theories if a further nondegeneracy condition is assumed. The result can also be viewed as a crepant resolution correspondence to the DT theory of $\cA_n\times \Pj^1$.
\end{abstract}

\tableofcontents

\section{Introduction}

\subsection{GW/DT correspondence and triangle of equivalences}

In \cite{MNOP1, MNOP2}, D. Maulik, N. Nekrasov, A. Okounkov and R. Pandharipande established remarkable conjectures, which relates the Gromov--Witten theory and Donaldson--Thomas theory of a Calabi--Yau 3-fold $X$, known as GW/DT correspondence. The correspondence states that certain generating functions of GW invariants and DT invariants of $X$ can be equated to each other after a change of variables.

An important example of this correspondence lives in the non-CY and non-compact world. Let $C$ be an algebraic curve of genus $g$, and $L_1, L_2$ be line bundles on $C$. The total space of $L_1\oplus L_2$ is considered as a typical \emph{local curve}. Under degenerations of $C$, the GW/DT theory of local curves reduces to the relative GW/DT theory of $\C^2\times \Pj^1$ with respect to some fibers, together with some other special cases. Bryan--Pandharipande \cite{BP} and Okounkov--Pandharipande \cite{OP-DT} studied the equivariant GW and DT theory of local curves relative to fibers, and found that they satisfied the GW/DT correspondence. The way they prove this correspondence is to work with a third theory, the small quantum cohomology of the Hilbert scheme of points on $\C^2$, computed by Okounkov--Pandharipande \cite{OP-QH}.

Let the cyclic group
$$\Z_{n+1}:= \Z/(n+1)\Z = \{\zeta\in \C | \zeta^{n+1}=1 \}$$
act on $\C^2$ in the anti-diagonal manner
$$\zeta\cdot (x,y):= (\zeta x, \zeta^{-1} y).$$
The surface $\cA_n$ is defined to be the minimal resolution of the singular surface $\C^2/\Z_{n+1}$.

Maulik \cite{Mau} and Maulik--Oblomkov \cite{MOb09-QH, MOb09-DT} generalized the picture described above to $\cA_n$, and forming the following triangle of equivalences.
$$\xymatrix{
& \text{QH}(\Hilb(\cA_n)) \ar@{-}[dl] \ar@{-}[dr] & \\
\text{GW}(\cA_n\times \Pj^1) \ar@{-}[rr] & & \text{DT}(\cA_n\times \Pj^1).
}$$
This triangle has been used in Maulik--Oblomkov--Okounkov--Pandharipande \cite{MOOP} to prove the GW/DT correspondence for general toric 3-folds. The key techniques adopted in computations of these theories are equivariant localization and the degeneration formula, which are both available on the GW and DT side.

\subsection{Crepant resolutions, Nakajima quiver varieties and flops}

The resolution of singularity $f: \cA_n \rar \C^2/\Z_{n+1}$, in particular, is a crepant resolution, meaning that $f$ preserves the canonical divisor. The 3-(orbi)folds $\cA_n\times \Pj^1$ and $[\C^2/\Z_{n+1}]\times \Pj^1$ are therefore in the position of the crepant resolution conjecture, proposed by Ruan \cite{Ruan, BG}, which says that there should be a correspondence between the GW (resp. DT) theory of $\cA_n\times \Pj^1$, and the \emph{orbifold} GW (resp. DT) theory of $[\C^2/\Z_{n+1}]\times \Pj^1$.

We are naturally motivated to consider the following triangular prism of equivalences
$$\xymatrix{
& \text{QH}(\Hilb(\cA_n)) \ar@{-}[dl] \ar@{-}[dr] \ar@{-}'[d][dd] & \\
\text{GW}(\cA_n\times \Pj^1) \ar@{-}[rr] \ar@{--}[dd] & & \text{DT}(\cA_n\times \Pj^1) \ar@{-}[dd] \\
& \text{QH}(\Hilb([\C^2/\Z_{n+1}])) \ar@{--}[dl] \ar@{-}[dr] & \\
\text{GW}([\C^2/\Z_{n+1}]\times \Pj^1) \ar@{--}[rr] & & \text{DT}([\C^2/\Z_{n+1}]\times \Pj^1),}$$
where the lower triangle is about the quantum cohomology of $\Hilb([\C^2/\Z_{n+1}])$, and the relative orbifold GW and DT theory of $[\C^2/\Z_{n+1}]\times \Pj^1$. In particular, an orbifold GW/DT correspondence is expected. The vertical lines for GW and DT can be viewed as crepant resolution results.

For the vertical line in the middle, we need the recent work by Maulik--Okounkov \cite{MOk12}, in which a systematic way is introduced to compute the quantum multiplication by divisors for Nakajima quiver varieties.

Nakajima quiver varieties defined by the same quiver but different (generic) stability conditions are related via flops, in the same manner as varying stability conditions in the GIT theory. In the special case of Hilbert schemes of points on surfaces, A. Kuznetsov \cite{Ku} and K. Nagao \cite{Nag} described the construction of $\Hilb^m(\cA_n)$ and $\Hilb^m([\C^2/\Z_{n+1}])$ as quiver varieties under flops, where the latter can be understood as $\Z_{n+1}$-equivariant Hilbert scheme of points on $\C^2$.

Curve classes on those Hilbert schemes can be identified with roots of a certain Kac--Moody algebra, and the stability condition determines a certain chamber in the root space, which corresponds to the effective curves. The relationship between the quantum cohomology of $\Hilb(\cA_n)$ and $\Hilb^m([\C^2/\Z_{n+1}])$ can therefore be interpreted by wall-crossing, under a simple change of stability conditions, or chambers.

\subsection{Statements of results}

In this paper we mainly concentrate on the computation of the relative DT theory of $\cX:=[\C^2/\Z_{n+1}]\times \Pj^1$. In other words, we complete the line between QH and DT in the lower triangle, and also the vertical DT line. We use $A^*$ instead of $H^{2*}$ to denote cohomologies with $\Q$ coefficients, just to simplify the degrees.

Let $\cY := B\Z_{n+1} \times \Pj^1$ and $p\in \Pj^1$ be a point. Let $\rho_j$ be the 1-dimensional representation of $\Z_{n+1}$ with eigenvalue $\zeta^j$, and $\rho_\reg:= \sum_{j=0}^n \rho_j$ be the regular representation. Consider a class $P= m [\rho_\reg\otimes \cO_\cY] + \sum_{j=0}^n \varepsilon_j [\rho_j\otimes \cO_p]$ be in the topological K-theory of $\cX$, where $m, \varepsilon_j \in \Z$. Let $N_i= [\C^2/\Z_{n+1}]\times \{p_i\}$ be vertical fibers. The moduli space of relative DT theory is $\Hilb^P(\cX, \coprod N_i)$, which is a Deligne--Mumford stack, parameterizing closed substacks $\cZ\subset \cX$ satisfying certain transversality conditions with respect to the relative divisors $N_i$ and also some stability conditions.

Relative descendant DT invariants
$$\langle \sigma_{k_1}(\gamma_1)\cdots \sigma_{k_l}(\gamma_l) | \xi_1,\cdots,\xi_r \rangle_m = \sum_{\varepsilon} q^{\varepsilon} \langle \sigma_{k_1}(\gamma_1)\cdots \sigma_{k_l}(\gamma_l) | \xi_1,\cdots,\xi_r \rangle_{m,\varepsilon}$$
are defined by intersection theory on the relative Hilbert stacks. Details will be given in Section 3.1 and 3.2. Here $\gamma_i\in A^*_\orb(\cX)$ and $\xi_i\in A^*(\Hilb^m([\C^2/\Z_{n+1}]))$ are certain cohomology classes. We also define the \emph{reduced} DT invariants
$$\langle \sigma_{k_1}(\gamma_1)\cdots \sigma_{k_l}(\gamma_l) | \xi_1,\cdots,\xi_r \rangle'_m,$$
after a quotient of the degree 0 contribution. In case without descendent insertions, we will abbreviate the notation as
$$\langle \xi_1, \cdots, \xi_r \rangle'_m : = \langle \ | \xi_1,\cdots,\xi_r \rangle'_m.$$

Our main theorem is the following. Let $\langle \xi_1, \cdots, \xi_r  \rangle^\GW_{\Hilb^m}$ denote the Gromov--Witten invariants of $\Hilb^m([\C^2/\Z_{n+1}])$ with primary insertions $\xi_i$'s.

\begin{thm}
  Let $A, B, \gamma\in A^*_T(\Hilb^m([\C^2/\Z_{n+1}]))$, where $\gamma$ is the fundamental class or a divisor. Then
  $$\langle A, B, \gamma \rangle'_m = \langle A, B, \gamma \rangle^\GW_{\Hilb^m}.$$
\end{thm}

As a result of this DT/Hilb correspondence, we obtain explicit formulas for  relative DT 3-point functions, using the quantum multiplication formula of Maulik--Okounkov \cite{MOk12}. This allows us to compare our results with those of Maulik--Oblomkov \cite{MOb09-DT} on $\cA_n\times \Pj^1$ and obtain a relative crepant resolution correspondence.

Recall that there is an explicit isomorphism between cohomology rings
$$A^*_\orb([\C^2/\Z_{n+1}]) \cong A^*(\cA_n),$$
$$e_0 \mapsto 1, \qquad e_i \mapsto \frac{\zeta^{i/2} - \zeta^{-i/2}}{n+1} \sum_{j=1}^n \zeta^{ij} \omega_j, \qquad 1\leq i\leq n,$$
where $\omega_1, \cdots, \omega_n \in H^2(\cA_n, \Q)$ is the dual basis to the exceptional curves in $\cA_n$. Under this isomorphism, we can explicitly identify the Fock spaces $A^*(\Hilb^m([\C^2/\Z_{n+1}])) \cong A^*(\Hilb^m(\cA_n))$.

For a curve $Z\subset \cA_n\times \Pj^1$, its topological data is specified by the pair $(\chi, (\beta, m))$, where $\chi = \chi(\cO_Z)\in \Z$ and $\beta\in H_2(\cA_n, \Z)$ such that $m[\Pj^1] + \beta = [Z]\in H_2(\cA_n\times \Pj^1, \Z)$. The generating function for the relative DT theory of $\cA_n\times \Pj^1$ is defined as
$$\langle \xi_1, \cdots, \xi_r \rangle_{\cA_n\times \Pj^1, m} := \sum_{\chi, \beta} Q^\chi Q_1^{(\beta, \omega_1)} \cdots Q_n^{(\beta, \omega_n)} \langle \xi_1, \cdots, \xi_r \rangle_{\cA_n\times \Pj^1, \chi, (\beta, m)},$$
and the \emph{reduced} partition function $Z'_\DT$ is defined by quotient out the degree $0$ contribution. We have the following reselt.

\begin{thm}
  Let $A, B, \gamma\in A_{T^\pm}^*(\Hilb^m([\C^2/\Z_{n+1}]))$, where $\gamma$ is the fundamental class or a divisor. Then
  $$\langle A, B, \gamma \rangle'_{\cX, m} = Q^{-m} \langle A, B, \gamma \rangle'_{\cA_n\times \Pj^1, m},$$
  under the identification
  $$Q\mapsto q_0 q_1 \cdots q_n, \qquad Q_i \mapsto q_i, \qquad i\geq 1,$$
  and analytic continuation.
\end{thm}

We also compute the equivariant degree 0 invariants. Let $s_1, s_2, s_3$ be the tangent weights of the torus action in $[\C^2/\Z_{n+1}]\times \C$.  The $\Z_{n+1}$-colored multi-regular equivariant vertex is defined as the generating function over all $\Z_{n+1}$-colored 3d partitions $\pi$:
$$Z_{\Z_{n+1}} (s,q) := \sum_\pi w(\pi) q_0^{|\pi|_0}\cdots q_n^{|\pi|_n} \quad\in\quad \Q(s_1,s_2,s_3) \llb q_0,\cdots,q_n \rrb,$$
where $|\pi|_i$ is the number of boxes in $\pi$ of color $i$, and $w(\pi)$ is a $\Z_{n+1}$-version of the equivariant vertex measure in the sense of \cite{MNOP1, MNOP2}.

\begin{thm}
  The $(\C^*)^3$-equivariant DT vertex for $[\C^2/\Z_{n+1}]\times \C$ is
  $$Z_{\Z_{n+1}} (s,q) = M(1,-Q)^{-\frac{(n+1)(s_1+s_2)}{s_3} - \frac{(s_1+s_2)(s_1+s_2+s_3)}{(n+1)s_1s_2}} \prod_{1\leq a\leq b\leq n} \left( M(q_{[a,b]}, Q) M(q_{[a,b]}^{-1}, Q) \right)^{-\frac{s_1+s_2}{s_3}},$$
  where $q_{[a,b]} = q_a\cdots q_b$, $Q=q_0 q_1\cdots q_n$, $M(x,Q) = \prod_{k\geq 1} \left( 1-xQ^k \right)^{-k}$.
\end{thm}

\begin{thm}
  The $T$-equivariant $k$-point relative degree 0 DT invariant of $[\C^2/\Z_{n+1}]\times \Pj^1$ is
  $$\langle \ | \emptyset, \cdots, \emptyset \rangle_{\cX, m=0} = M(1,-Q)^{ (k-2)\cdot \frac{(s_1+s_2)^2}{(n+1)s_1s_2}},$$
  where $Q= q_0 q_1 \cdots q_n$.
\end{thm}

The paper is organized as follows. In Section 2 we describe the geometry of the orbifold surface $[\C^2/\Z_{n+1}]$ and the Hilbert scheme of points on it. We use a diffeomorphism between $\Hilb^m([\C^2/\Z_{n+1}])$ and $\Hilb^m(\cA_n)$ to introduce the Nakajima basis, and then the $\widehat{\fgl}(n+1)$-action, on the cohomology of $\Hilb^m([\C^2/\Z_{n+1}])$. In Section 3 we define the orbifold relative DT invariants, and in our special case, introduce a reduced obstruction theory, which is special because of the symplectic structure on $[\C^2/\Z_{n+1}]$. Section 4 involves some intermediate geometry we will need. We relate certain descendant invariants to those on the rubber moduli space. Section 5 computes the degree 0 invariants. Section 6 computes the cap and tube invariants, which are relatively simple. In section 7, we compute the 3-point DT invariants, in which one of the insertion is a divisor. A comparison with the quantum cohomology of the Hilbert scheme of points is crucial in this calculation.

\subsection{Acknowledgements}

The author would like to thank Davesh Maulik, Hiraku Nakajima, Andrei Okounkov, Amdrey Smirnov, Changjian Su, Richard Thomas and Jingyu Zhao for helpful discussions and suggestions. Moreover, the author would like to express his acknowledgements to Professor Chiu--Chu Melissa Liu, for useful conversations and suggestions. The project would not have been possible without her guidance and encouragement.

\section{Geometry of $[\C^2/\Z_{n+1}]$ and $\Hilb^m ([\C^2/\Z_{n+1}])$}

\subsection{Cyclic quotient of $\C^2$}

Let $[\C^2/\Z_{n+1}]$ be the group quotient of $\C^2$ by the cyclic group
$$\Z_{n+1}= \Z/(n+1)\Z= \{\zeta\in \C|\zeta^{n+1}=1\},$$
where the generator $\zeta:= e^{2\pi i /(n+1)}$ acts as $\zeta\cdot (x,y)= (\zeta x, \zeta^{-1}y)$. It is an orbifold surface, or more precisely, a 2-dimensional smooth Deligne--Mumford stack.

The coarse moduli space of the orbifold surface is
$$\xymatrix{
c: [\C^2/\Z_{n+1}] \ar[r] &  \C^2/\Z_{n+1},
}$$
where $\C^2/\Z_{n+1}$ is the affine GIT quotient. By definition, the homology and cohomology groups of $[\C^2/\Z_{n+1}]$ are identified with those of the coarse moduli space.

There is a torus action by $T:= (\C^*)^2$ on $\C^2$, defined by $(t_1, t_2)\cdot (x,y)= (t_1 x, t_2 y)$, which commutes with the $\Z_{n+1}$-action and induces a torus action on the quotient. The $T$-equivariant cohomology $A_T^*([\C^2/\Z_{n+1}])$ is a ring over $A_T^*(\pt)= \C[s_1, s_2]$, where the tangent weights of the two axes in $[\C^2/\Z_{n+1}]$ are $-s_1$ and $-s_2$. The origin represents an equivariant cohomology class
$$[0]= s_1 s_2 \in A_T^2 ([\C^2/\Z_{n+1}]).$$

We denote the 1-dimensional anti-diagonal torus by
$$T^\pm:= \{ (t,t^{-1}) \in T \},$$
and its corresponding equivariant parameter by $s$. There is a canonical reduction map from $T$ to $T^\pm$, identifying $A^*_{T^\pm}$ with the quotient of $A^*_T$ modulo $(s_1+s_2)$.

To reflect the stacky features of $[\C^2/\Z_{n+1}]$, we describe its orbifold cohomology. The inertia stack of an arbitrary DM stack $\cX$ is defined to be
$$I\cX:= \cX\times_{\cX\times \cX} \cX,$$
where the two projections $\cX\rar \cX\times \cX$ are the diagonal map. An $S$-point of $I\cX$ consists of the data $(f, s)$, where $f: S\rar \cX$ is a map to $\cX$, and $s\in \Aut(f)$.

The orbifold cohomology (or Chen--Ruan cohomology in some contexts) of $\cX$ is defined to be the cohomology of the inertia stack, with a degree shift
$$A_\orb^*(\cX):= \bigoplus_i A^{*-\age_i}(\cX_i),$$
where $\cX_i\subset I\cX$ are connected components, and $\age_i$ is the degree shifting, called the age of $\cX_i$. For more details on the orbifold cohomology, see \cite{AGV, CR}.

The inertia stack of $[\C^2/\Z_{n+1}]$ is
$$I[\C^2/\Z_{n+1}]\cong [\C^2/\Z_{n+1}] \cup B\Z_{n+1}\cup \cdots \cup B\Z_{n+1},$$
with $n$ copies of $B\Z_{n+1}$. We call $[\C^2/\Z_{n+1}]$ the \emph{untwisted} component and $B\Z_{n+1}$'s the \emph{twisted} components. The orbifold cohomology is
$$A_\orb^*([\C^2/\Z_{n+1}])= A^*([\C^2/\Z_{n+1}]) \oplus \bigoplus_{i=1}^n \C\cdot e_i,$$
where the $e_i$'s are represented by the gerby points, each with age $1$. In particular, they are divisors in $A^*_\orb$.

The (compactly supported) K-theory of $[\C^2/\Z_{n+1}]$ is equivalent to the $\Z_{n+1}$-equivariant K-theory of $\C^2$. Then
$$K([\C^2/\Z_{n+1}])\cong K_{\Z_{n+1}}(\C^2) \cong \text{Rep}(\Z_{n+1}),$$
with generators $\{\C\otimes \rho_i| 0\leq i\leq n\}$, where $\C$ stands for the structure sheaf of the origin, and $\rho_i$ is the irreducible representation of $\Z_{n+1}$ with character $\zeta\mapsto \zeta^i$.

\subsection{$\Hilb^m(\cA_n)$ and $\widehat\fgl(n+1)$-action}

Let $\cA_n$ be the minimal resolution of $\C^2/\Z_{n+1}$. The cohomology of $\Hilb^m(\cA_n)$ is well-known to be described by the Nakajima operators
$$\xymatrix{
\fp_{-k}(\gamma): A^*(\Hilb^l(\cA_n)) \ar[r] & A^*(\Hilb^{l+k}(\cA_n)),
}$$
where $\gamma\in A^*(\cA_n)$. They satisfy the Heisenberg relation:
$$[\fp_k(\alpha),\fp_l(\beta)]=-k\delta_{k+l,0} \langle \alpha,\beta \rangle c, \qquad \fp_k(\gamma)^*=(-1)^k \fp_{-k}(\gamma),$$
where we adopt the sign convention of Maulik--Oblomkov \cite{MOb09-QH}.

In particular, if we pick a basis $\cB$ for $A^*(\cA_n)$, then we have the Nakajima basis for $\Hilb^m(\cA_n)$:
$$\fp_{-\mu_1}(\gamma_1) \fp_{-\mu_2}(\gamma_2) \cdots \fp_{-\mu_l}(\gamma_l) 1,$$
and a modified version
\begin{equation} \label{Nak_basis}
\mu[\gamma]:= \frac{1}{\fz(\mu)} \cdot \fp_{-\mu_1}(\gamma_1) \fp_{-\mu_2}(\gamma_2) \cdots \fp_{-\mu_l}(\gamma_l) 1,
\end{equation}
where $\mu= (1^{m_1} 2^{m_2}\cdots)$ is a partition of $m$ of length $l$, $\gamma_i\in \cB$, and $\fz(\mu)=|\Aut(\mu)|\prod_{i=1}^l \mu_i = \prod_i i^{m_i} m_i!$.

The pairing between these classes is
$$\langle \mu[\gamma] | \nu[\gamma'] \rangle = \frac{(-1)^{m-l(\mu)}}{\fz(\eta)} \delta_{\mu\nu} \prod_{i=1}^{l(\mu)} \langle \gamma_i | \gamma'_i \rangle,$$
which also holds in the $T$-equivariant setting.

We will frequently use the following basis. Let $\{E_i | 1\leq i\leq n\}$ be the exceptional curves in $\cA_n$, viewed as curve classes in $A_1(\cA_n)$. Let $\{\omega_i | 1\leq i\leq n \}$ be their dual basis in $A^1(\cA_n)$. Let $p_1,\cdots,p_{n+1}\in \cA_n$ be the $T$-fixed points, and $\{w_i^\pm | 1\leq i\leq n+1 \}$ be their tangent weights. Then in $A_T^*(\cA_n)\otimes \Q(s_1,s_2)$ we have the decomposition
\begin{equation} \label{dec_of_1}
1= \frac{[p_1]}{w_1^- w_1^+} + \cdots + \frac{[p_{n+1}]}{w_{n+1}^- w_{n+1}^+},
\end{equation}
where
$$w_i^-= (n+2-i)s_1+ (1-i)s_2, \qquad w_i^+= (-n+i-1)s_1 + is_2,$$
and we can set
\begin{equation} \label{dec_of_pt}
[\pt]:= \frac{\sum_{i=1}^{n+1} [p_i]}{n+1}
\end{equation}
to be the dual of $1$.

If we write everything in terms of the fixed-point basis, we can see that both $$\Omega:=\{1, \omega_1, \cdots, \omega_n\}, \qquad \cE:=\{[\pt], E_1, \cdots, E_n\},$$
are bases for the equivariant cohomology $A_T^*(\cA_n)\otimes \Q(s_1,s_2)$. As a consequence, classes in the form of (\ref{Nak_basis}) where $\gamma_i$ are taken in $\Omega$ (resp. $\cE$) form a basis of $A_T^*(\Hilb^m(\cA_n))\otimes \Q(s_1,s_2)$. Moreover, for $\Omega$ (but not $\cE$), one actually obtains a basis for $A_T^*(\Hilb^m(\cA_n))$ over $\Q[s_1,s_2]$. One of the advantages of these bases that will be important for us later is that most classes here are compactly supported.

Following Maulik--Oblomkov \cite{MOb09-QH, MOb09-DT}, there is a $\widehat{\fgl}(n+1)$-action on this cohomology, constructed as follows. Let $\fg=\fgl(n+1)$ be the Lie algebra of $(n+1)\times (n+1)$ matrices. The corresponding affine Lie algebra is defined as the standard extended central extension of its loop algebra:
$$\hat{\fg}=\widehat{\fgl}(n+1):= \C[t,t^{-1}]\otimes_\C \fg \oplus \C c \oplus \C d,$$
with Lie brackets
$$\left[ t^k\otimes x, t^l\otimes y \right]:= t^{k+l}\otimes [x,y] + k\delta_{k+l,0} \tr(xy) c,$$
$$\left[ d, t^k\otimes x \right]:= kt^k\otimes x,$$
where $x,y\in \fg$, and other brackets trivial. We write $t^k\otimes x$ as $x(k)$.

The Cartan subalgebra of $\hat\fg$ is
$$\hat\fh = \fh\oplus \C c \oplus \C d,$$
with weight space
$$\hat{\fh}^* = \fh^* \oplus \C \Lambda \oplus \C \delta,$$
where $\Lambda$ and $\delta$ are defined as
$$\Lambda(\fh)= \delta(\fh)=0, \quad \Lambda(c)=\delta(d)=1, \quad \Lambda(d)=\delta(c)=0.$$
The roots of $\hat\fg$ are
$$\Phi = \{ k\delta \pm (\alpha_i +\cdots + \alpha_{j-1}) \mid k\in \Z, \ 1\leq i<j\leq n+1\} \cup \{k\delta \mid k\neq 0 \},$$
where $\{\alpha_i | 1\leq i\leq n\}$ are simple roots of $\fg$.

The Heisenberg algebra generated by $A^*_T(\cA_n)$ can be identified with the canonical Heisenberg subalgebra in $\hat\fg$:
$$\fp_{-k}(1) \mapsto \Id (-k), \qquad \fp_k(\pt) \mapsto -\frac{\Id(k)}{n+1}, \qquad k>0;$$
$$\fp_k(E_i) \mapsto e_{i,i}(k)-e_{i+1,i+1}(k), \qquad c\mapsto 1,$$
where $e_{i,j}\in \fg$ denotes the matrix whose only nonzero entry is a $1$ at the position $(i,j)$. In other words, the Heisenberg algebra is naturally embedded into $\hat\fg$ as the subalgebra $\bigoplus_{k\neq 0}t^k\otimes \fh \oplus \C c \subset \hat\fg$.

Let $V_\Lambda$ be the irreducible highest weight representation with highest weight $\Lambda$ and highest vector $v_\emptyset$, or in other words, the basic representation in the sense of Frenkel--Kac. Let $V_\Lambda[\Lambda-m\delta]$ be the weight space with weight $\Lambda-m\delta$. Then we have
$$\cF_{\cA_n}^T := \bigoplus_{m\geq 0} A_T^*(\Hilb^m(\cA_n),\Q) \otimes \Q(s_1,s_2) \cong \bigoplus_{m\geq 0} V_\Lambda[\Lambda-m\delta]\otimes \Q(s_1,s_2),$$
as graded vector spaces, respected by the Heisenberg algebra.


\subsection{Cyclic quiver varieties and $\Hilb^\bm([\C^2/\Z_{n+1}])$}

By definition, $\Hilb^\bm([\C^2/\Z_{n+1}])$ is the Hilbert scheme parameterizing certain 0-dimensional closed substacks with proper supports in $[\C^2/\Z_{n+1}]$. Here $\bm=(m_0,\cdots, m_n)$ is a tuple of nonnegative integers, which stands for
$$\sum_{j=0}^n m_j \rho_j \in K([\C^2/\Z_{n+1}]).$$
$\Hilb^\bm([\C^2/\Z_{n+1}])$ inherits a natural $T$-action from $[\C^2/\Z_{n+1}]$.

An alternative interpretation of this Hilbert scheme is the $\Z_{n+1}$-equivariant Hilbert scheme, parameterizing $\Z_{n+1}$-invariant closed subschemes with proper supports in $\C^2$, with the same topological data. This interpretation can be rephrased by the language of quiver varieties.

For any quiver $Q$ without loops, pick a dimension-framing vector $(\bv, \bw)$ and a stability condition $\theta$. One can define Nakajima's quiver variety $\fM_\theta(\bv,\bw)$, which we simply describe here. For the general theory on quiver varieties, see \cite{Nak94, Nak98}.

Let $I$ be the set of vertices in $Q$. Let $\Omega$ be the set of the edges with a certain orientation (without cycles), and $\bar \Omega$ be the one with all orientations reversed. Let $H:=\Omega\sqcup \bar\Omega$. Let $\bv= (v_i)_{i\in I}$ and $\bw=(w_i)_{i\in I}$ be the dimension vector and framing vector. We assign to each vertex the vector spaces $V_i$ and $W_i$, of dimension $\bv_i$ and $\bw_i$.

Consider cotangent bundle of the space of quiver representations
\begin{equation} \label{large-Hom}
\bigoplus_{h\in \Omega} \Hom(V_{\out(h)}, V_{\text{in}(h)}) \oplus \bigoplus_{h\in \bar\Omega} \Hom(V_{\out(h)}, V_{\text{in}(h)}) \oplus \bigoplus_{i\in I} \Hom(W_i, V_i) \oplus \bigoplus_{i\in I} \Hom(V_i, W_i),
\end{equation}
in which an element is denoted by
$$(B, i,j)= \{(B_h,i_k,j_k), \ h\in H, \ k\in I \}.$$

Let $\varepsilon$ be the map sending $h\in \Omega$ (resp. $\bar\Omega$) to $+1$ (resp. $-1$). Define the symplectic form
\begin{eqnarray*}
\omega( (B, i, j), (B', i', j')) &:=& \sum_{h\in H} \text{tr} ( \varepsilon (h) B_h B'_{\bar h} ) + \sum_{k\in I} \text{tr} (i_k j'_k - i'_k j_k) \\
&=& \sum_{h\in \Omega} \varepsilon (h) \text{tr} ( [B_h, B'_{\bar h}] ) + \sum_{k\in I} \text{tr} (i_k j'_k - i'_k j_k).
\end{eqnarray*}

Let $G$ be the group $\prod_{i\in I} GL(V_i)$, acting on the space of quiver representations by
$$g\cdot (B, i, j) := ( g_{\text{in}}(h) B_h g_{\out(h)}^{-1}, g_{\text{in}}(h) i_k, j_k g_{\out(h)}^{-1}).$$

The moment map of this action is
$$\mu_\C (B, i, j) := \left( \sum_{\text{in}(h)=k} \varepsilon(h) B_h B_{\bar h} + i_k j_k \right)_{k\in I} \in \bigoplus_{k\in I} \Hom(V_k,V_k).$$

Consider the affine variety $\mu^{-1}(0)$. Choose a \emph{stability condition} $\theta=(\theta_i)_{i\in I}$, which defines a character $\chi: G\rar \C^*$, $\chi(g)= \prod_{k\in I} (\det g_k)^{\theta_k}$. Define the quiver variety to be the GIT quotient
$$X=\fM_\theta(\bv,\bw):= \mu^{-1}(0) \sslash_\theta G.$$

There is also a hyper-K\"ahler construction of $\fM_\theta(\bv,\bw)$. The prequotient Hom space (\ref{large-Hom}) can be endowed with another complex structure $J$ and therefore admits a quaternion structure. Introduce another real moment map
$$\mu_\R(B, i, j):= \frac{i}{2} \left( \sum_{\text{in}(h)=k} \left( B_h B_h^\dagger - B_{\bar h}^\dagger B_{\bar h} \right) +  i_k i_k^\dagger - j_k^\dagger j_k \right)_k \in \bigoplus_{k\in I} \mathfrak{u}(V_k).$$
One can choose $(\zeta_\C, \zeta_\R)$ from the image of the moment map $(\mu_\C, \mu_\R)$ and form the hyper-K\"ahler quotient $\fM_{\zeta_\C, \zeta_\R}(\bv,\bw)$, which is homeomorphic to the previous GIT quotient $\fM_\theta(\bv,\bw)$.

From now on we fix the quiver $Q$ to be a cyclic quiver of $n+1$ vertices, indexed from $0$ to $n$. Pick the dimension vector $\bv= (m_0,\cdots,m_n)$ and framing $\bw=(1,0,\cdots,0)$, and the stability condition $$\theta=(1,\cdots,1).$$
The corresponding quiver variety $\fM_\theta(\bv,\bw)$ is isomorphic to $\Hilb^\bm([\C^2/\Z_{n+1}])$. For more details, see \cite{Ku, Nag}.

We are particularly interested in the multi-regular case
$$m_0= \cdots =m_n =m,$$
where $m$ is a certain nonnegative integer. In this case we simply denote the Hilbert scheme by $\Hilb^m \left( [\C^2/\Z_{n+1}] \right)$.

The Hilbert scheme of points $\Hilb^m(\cA_n)$ is isomorphic to the quiver variety of the same quiver and dimension-framing vector as above, but with a different stability condition \cite{Nag, Ku}:
$$\theta=(-n+\varepsilon,1,\cdots, 1),$$
where $\varepsilon$ is an arbitrarily small positive number. By the hyper-K\"ahler rotations, quiver varieties with different but generic stability conditions are diffeomorphic to each other.

Moreover, there is an $S^1$-action on the quiver variety, defined as
$$t \cdot \left( B_h, B_{\bar h}, i_k, j_k \right) := \left( t B_h, t^{-1} B_{\bar h}, i_k, j_k \right),$$
which is compatible with the natural anti-diagonal torus action on $\Hilb^m([\C^2/\Z_{n+1}])$ and $\Hilb^m(\cA_n)$. Note that the action makes the hyper-K\"ahler moment maps equivariant and is therefore well-defined. We have the following lemma, saying that the diffeomorphism respects the $S^1$-action.

\begin{prop} [Lemma 4.1.3 of \cite{Nag}, Corollary 47 of \cite{Ku}] \label{diff}
  There is an $S^1$-equivariant diffeomorphism
  $$\xymatrix{
  \phi: \Hilb^m([\C^2/\Z_{n+1}]) \ar[r]^-{\sim} & \Hilb^m(\cA_n).
  }$$
\end{prop}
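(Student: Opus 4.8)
The plan is to derive the $S^1$-equivariant diffeomorphism from the general theory of variation of stability conditions for Nakajima quiver varieties, exactly as in Nagao and Kuznetsov. The starting observation is that $\Hilb^m([\C^2/\Z_{n+1}])$ and $\Hilb^m(\cA_n)$ are presented as GIT quotients $\mu_\C^{-1}(0)\sslash_\theta G$ for the \emph{same} cyclic quiver $Q$, the \emph{same} dimension-framing vector $(\bv,\bw)=((m,\dots,m),(1,0,\dots,0))$, and the \emph{same} group $G=\prod_i GL(V_i)$, the only difference being the stability characters $\theta=(1,\dots,1)$ versus $\theta'=(-n+\varepsilon,1,\dots,1)$. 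Both characters are generic (do not lie on a wall), so the respective semistable loci coincide with the stable loci, and both GIT quotients are smooth.

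\medskip

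First I would invoke the hyper-K\"ahler description recalled in the excerpt: each GIT quotient $\fM_\theta(\bv,\bw)$ is homeomorphic to a hyper-K\"ahler quotient $\fM_{\zeta_\C,\zeta_\R}(\bv,\bw)$, where the complex parameter $\zeta_\C$ is $0$ and the real parameter $\zeta_\R$ lies in the chamber determined by $\theta$. The key point of the hyper-K\"ahler rotation is that the triple of symplectic forms $(\omega_I,\omega_J,\omega_K)$ can be rotated among themselves by an $SO(3)$-action on the quaternionic module; rotating $\omega_I$ into $\omega_J$ exchanges the role of the complex moment map $\mu_\C$ and (a complexification of) the real moment map $\mu_\R$. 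Concretely, choosing a generic point $\zeta=(\zeta_1,\zeta_2,\zeta_3)\in\R^3\otimes\fh$ with all three components nonzero, the hyper-K\"ahler quotient $\fM_\zeta(\bv,\bw)$ depends only on which chamber $\zeta$ lies in, and within a fixed chamber one may rotate $\zeta$ so that $\zeta_\C=\zeta_1+i\zeta_2$ becomes nonzero while $\zeta_\R=\zeta_3$ becomes zero; this identifies, up to diffeomorphism, the GIT quotient with complex parameter $0$ and real stability $\theta$ with a GIT quotient in which the roles are swapped. Since for generic $\zeta_\C$ the GIT quotient $\mu_\C^{-1}(\zeta_\C)\sslash_0 G$ is an \emph{affine}-type quotient that is independent of any remaining real parameter (there are no strictly semistable points because the moment map level is generic), one concludes that $\fM_\theta(\bv,\bw)$ and $\fM_{\theta'}(\bv,\bw)$ are diffeomorphic: both are diffeomorphic to the same hyper-K\"ahler manifold, since $\theta$ and $\theta'$ can be joined by rotating through the generic locus.

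\medskip

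The remaining point is $S^1$-equivariance. The $S^1$-action is given on the quiver data by $t\cdot(B_h,B_{\bar h},i_k,j_k)=(tB_h,t^{-1}B_{\bar h},i_k,j_k)$, as in the excerpt. One checks directly that this action scales the complex symplectic form $\omega$ by a character (indeed $\omega(B_h,B_{\bar h})$ picks up $t\cdot t^{-1}=1$ on the $B$-part and is untouched on the framing part, so in fact $\omega$ is $S^1$-invariant), and crucially that it makes the complex moment map $\mu_\C$ equivariant and rescales $\mu_\R$ appropriately, so that the zero level sets $\mu_\C^{-1}(0)$ and, after rotation, the generic level sets are preserved set-theoretically up to the $S^1$-action. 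Because the $S^1$-action commutes with $G$ and with the hyper-K\"ahler rotation (the rotation is an $SO(3)$ on the quaternionic structure, while $S^1$ acts $\C$-linearly commuting with $I$ and intertwining $J,K$ compatibly), the diffeomorphism constructed above can be taken $S^1$-equivariant; this is precisely the content of Lemma 4.1.3 of Nagao and Corollary 47 of Kuznetsov, so I would simply cite them for the $S^1$-equivariance statement after recalling the structure of the argument.

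\medskip

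The main obstacle — and the reason the result is genuinely a citation rather than a one-line remark — is verifying that the hyper-K\"ahler rotation can be performed $S^1$-equivariantly, i.e.\ that the auxiliary $S^1$ does not obstruct moving $\zeta$ through the wall-and-chamber structure while keeping the metric (hence the diffeomorphism type) under control. This requires knowing that the $S^1$-fixed loci and the infinitesimal action behave compatibly with the $SO(3)$ rotation of complex structures, which is the technical heart of Nagao's Lemma 4.1.3; I would not reprove it but would emphasize that $S^1$ acts trivially on the hyper-K\"ahler base parameters and commutes with the rotation, so equivariance is automatic once the underlying diffeomorphism is established. With that in hand, the proposition follows.
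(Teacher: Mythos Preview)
The paper does not give its own proof of this proposition; it is stated purely as a citation of Nagao and Kuznetsov, with the surrounding text indicating that the mechanism is hyper-K\"ahler rotation between generic stability parameters together with the observation that the anti-diagonal $S^1$ makes the hyper-K\"ahler moment maps equivariant. Your sketch follows exactly this route and is correct in outline.

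One small correction: your justification of $S^1$-equivariance is phrased awkwardly. The point is not that $S^1$ ``intertwines $J,K$ compatibly'' with the $SO(3)$ rotation, but simply that for $|t|=1$ the action $t\cdot(B_h,B_{\bar h},i,j)=(tB_h,t^{-1}B_{\bar h},i,j)$ is a unitary transformation of the big $\Hom$-space, hence preserves the flat hyper-K\"ahler metric and all three symplectic forms simultaneously; it therefore commutes with the entire hyper-K\"ahler quotient construction at every parameter $(\zeta_\C,\zeta_\R)$, and the diffeomorphism produced by moving through the generic locus is automatically $S^1$-equivariant. This is also why the paper's subsequent remark singles out the \emph{other} $S^1$ in $T$ as the one that fails: that factor scales $\omega_\C$ nontrivially and is not unitary, so it does not survive the rotation.
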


\begin{rem}
  One may hope that the diffeomorphism respects an $S^1\times S^1$-action inherited from the $T$-actions on $\cA_n$ and $[\C^2/\Z_{n+1}]$. Unfortunately, this cannot be deduced from the hyper-K\"ahler rotation argument since the part of the torus action that scales the symplectic structure does not preserve the hyper-K\"ahler real moment map.
\end{rem}

By Proposition \ref{diff}, there is an isomorphism between the ($S^1$-equivariant, and thus $T^\pm$-equivariant) cohomologies
$$\xymatrix{
\phi^*: A^*(\Hilb^m(\cA_n)) \ar[r]^-{\sim} & A^*(\Hilb^m([\C^2/\Z_{n+1}])),
}$$
$$\xymatrix{
\phi^*: A^*_{T^\pm}(\Hilb^m(\cA_n)) \ar[r]^-{\sim} & A^*_{T^\pm}(\Hilb^m([\C^2/\Z_{n+1}])).
}$$
Hence there are also Nakajima operators and Nakajima basis on $A^*_{T^\pm}(\Hilb^m([\C^2/\Z_{n+1}]))$, given as images of those on the other side, satisfying the same Heisenberg relations, over $\Q(s)$. By abuse of notation we denote the Nakajima operators and basis in the same way as on the $\cA_n$ side.

There is also a $\widehat{\fgl}(n+1)$-action on the cohomology, realizing it as
$$\cF^{T^\pm}_{[\C^2/\Z_{n+1}]} := \bigoplus_{m\geq 0} A_{T^\pm}^*(\Hilb^m([\C^2/\Z_{n+1}]),\Q) \otimes \Q(s) \cong \bigoplus_{m\geq 0} V_\Lambda[\Lambda-m\delta]\otimes \Q(s),$$
and we also denote the full $T$-equivariant cohomology as the Fock space
$$\cF^T_{[\C^2/\Z_{n+1}]} := \bigoplus_{m\geq 0} A_T^*(\Hilb^m([\C^2/\Z_{n+1}]),\Q) \otimes \Q(s_1,s_2).$$

\begin{rem}
Nakajima \cite{Nak94, Nak98} has a general construction of actions by Kac--Moody algebras on quiver varieties. In \cite{Nag}, Nagao shows that for cyclic quivers, as in our case, this action coincides with the $\widehat{\fgl}(n+1)$-action described above, provided one reduces to the anti-diagonal torus $T^\pm\subset T$. The correspondence is given by the combinatorial $(n+1)$-quotient operations between $(n+1)$-tuples of partitions and $\Z_{n+1}$-colored partitions, which are naturally identified with $T^\pm$-fixed points of the corresponding Hilbert schemes. (In fact, Nagao only shows the correspondence for the $\widehat\fsl(n+1)$-action. But in this case it can be easily generalized to $\widehat\fgl(n+1)$.)
\end{rem}

\subsection{Divisors and effective curves}

We are particularly interested in divisors of the cyclic quiver varieties $\fM_\theta(\bv,\bw)$. The Nakajima basis for $A^1(\Hilb^m(\cA_n))$, and also $A^1_T(\Hilb^m(\cA_n))$ is
$$D_0=-\frac{1}{2 (m-2)!}\fp_{-2}(1)\fp_{-1}(1)^{m-2}1, \qquad D_i= \frac{1}{(m-1)!} \fp_{-1}(\omega_i)\fp_{-1}(1)^{m-1}1, \qquad i\geq 1.$$
There is another natural basis of divisors for quiver varieties,
$$c_1(\cV_i), \quad 0\leq i\leq n,$$
where $\cV_i$ is the tautological vector bundle associated with the $i$-th vertex.

There is a simple relation between these two bases. By Nakajima's construction of the McKay correspondence, we have $\omega_i = c_1(\cL_i)$ for $i\geq 1$, where $\cL_i$ is the $i$-th tautological line bundle on $\Hilb^1([\C^2/\Z_{n+1}])\cong \Hilb^1(\cA_n) = \cA_n$.

\begin{lem}
  For $\Hilb^m(\cA_n)$,
$$c_1(\cV_0) = D_0, \qquad c_1(\cV_i) = D_0 + D_i, \qquad i\geq 1.$$
\end{lem}

\begin{proof}
  We will apply Lehn's result on Chern classes of tautological bundles on Hilbert schemes. First we have $\cO^{[n]} \cong \cV_0$ and $\cL^{[n]} \cong \cV_i$ by the construction in Theorem 43 of \cite{Ku}.

  In Theorem 4.6 of \cite{Le}, we let $L$ be $\cO$ and take the cohomological degree one part. The only term in the exponential expansion that contributes to $A^1(\Hilb^m(\cA_n))$ is
  $$\frac{1}{(m-1)!} \left( \fp_{-1} (1) - \frac{1}{2} \fp_{-2} (1) + \cdots \right)^{m-1}  \cdot 1,$$
  with degree one part equal to $D_0$. Here $\cdots$ means terms involving $\fp_{-k} (1)$ with $k> 2$.

  For $i\geq 1$, take $L$ in the theorem to be $\cL_i$. Then the terms that contribute to $A^1(\Hilb^m(\cA_n))$ are
  $$\frac{1}{(m-1)!} \left( \fp_{-1}(1+ \omega_i) - \frac{1}{2} \fp_{-2} (1+\omega_i) + \cdots \right)^{m-1} \cdot 1 + \frac{1}{m!} \left( \fp_{-1}(1+ \omega_i) - \frac{1}{2} \fp_{-2} (1+\omega_i) + \cdots \right)^m \cdot 1,$$
  which is $D_0 + D_i$.
\end{proof}

Since the diffeomorphism $\phi$ naturally preserves tautological bundles, these equalities also hold in $\Hilb^m([\C^2/\Z_{n+1}])$, as long as one take the reduction to $T^\pm$. The decomposition (\ref{dec_of_1}) becomes
$$1= -\frac{1}{(n+1)^2s^2} \left( [p_1] + \cdots + [p_{n+1}] \right) = -\frac{1}{(n+1)s^2} [\pt].$$

For divisors and curve classes, as in \cite{MOk12}, there are identifications
$$A^1(\fM_\theta(\bv,\bw))\cong \C c\oplus \fh, \qquad A_1(\fM_\theta(\bv,\bw))\cong \C \delta\oplus \fh^*$$
in our special case. Moreover, under these identifications the root lattice of $\hat\fg$ corresponds to integral curves $H_2(\fM_\theta (\bv,\bw), \Z)$; and simple roots
$$\alpha_0, \alpha_1, \cdots, \alpha_n \in \hat\fh^*$$
correspond to the dual basis of $c_1(\cV_0), c_1(\cV_1) \cdots c_1(\cV_n)$.

By construction of quiver varieties, the divisor
$$\sum_{i=0}^n \theta_i \cdot c_1(\cV_i)$$
is ample, because it is the polarization line bundle coming from the GIT quotient procedure. Therefore the condition for a curve class to be effective is to be represented by a root $\hat\alpha \in \C\delta\oplus \fh^*$, satisfying
$$\theta\cdot \hat\alpha >0.$$
Hence the effective cone $\mathrm{Eff}$ is given by a Weyl chamber in the weight space, with polarization $\theta$. For our cyclic quiver varieties it is described as follows:
\begin{enumerate}[$\bullet$]
  \setlength{\parskip}{1ex}

  \item effective curves in $\Hilb^m(\cA_n)$ correspond to roots $k\delta + \alpha$, for all $k\in \Z$ and \emph{positive} roots $\alpha$ of $\fg$;

  \item effective curves in $\Hilb^m([\C^2/\Z_{n+1}])$ correspond to the \emph{positive} roots of $\hat\fg$, i.e. roots $k\delta +\alpha$, either $k>0$ and $\alpha$ a root of $\fg$, or $k\in \Z$ and $\alpha$ a positive root of $\fg$.
\end{enumerate}

From now on we will not distinguish between curve classes and roots of $\hat\fg$.

\subsection{Stable basis}

Another basis that would be pretty useful for us is the stable basis of Maulik--Okounkov \cite{MOk12}. It can be defined in the broader generality of symplectic resolutions. We will only treat it in our case of cyclic quiver varieties. First let's introduce the notion of Steinberg correspondence.

Let $X$ and $Y$ be (possibly non-compact) holomorphic symplectic varieties with symplectic forms $\omega_X$ and $\omega_Y$. Let $\omega_X-\omega_Y$ be the natural symplectic structure on the product $X\times Y$. Let $L\subset X\times Y$ be a cycle.

\begin{defn}
  $L$ is called a \emph{Steinberg correspondence} if
  \begin{enumerate}[1)]
  \setlength{\parskip}{1ex}
  \item each of its component is a Lagrangian subvariety in $X\times Y$;

  \item there exist proper maps $X\rar V$, $Y\rar V$ to an affine variety $V$, such that $L$ is supported on $X\times_V Y$.
  \end{enumerate}
  In cases where everything admits a group action, we require $L$ and the corresponding maps to be equivariant.
\end{defn}

Now let $X= \fM_\theta (\bv,\bw)$ be the cyclic quiver variety, and $X^{T^\pm}$ be the fixed point set. One has to choose a chamber $\fC$ in the Lie algebra of $T^\pm$, together with a polarization on $X$. For $Z\in X^{T^\pm}$, let $\text{Slope}_\fC (Z)$ be the closure of all points eventually attracted to $Z$, under the $T^\pm$ flow determined by $\fC$, and let $N_\pm$ be the subbundle of the normal bundle $N_{Z/X}$ with positive/negative weights. For precise definitions and more details of all these notions we refer to \cite{MOk12}.

\begin{thm}[Theorem 3.3.4 of \cite{MOk12}]
  There exist unique $\Q[s_1,s_2]$-linear maps
  $$\xymatrix{
  \Stab_\fC: A^*_T \left( X^{T^\pm} \right) \ar[r] & A^*_T (X),
  }$$
  such that for any $Z\in X^{T^\pm}$, and any $\gamma\in A_T^*(Z)$,
  \begin{enumerate}[(i)]
  \setlength{\parskip}{1ex}
  \item $\Stab_\fC (\gamma)$ is supported on $\operatorname{Slope}_\fC (Z)$;

  \item $\left. \Stab_\fC (\gamma) \right|_Z = \pm e(N_-) \cdot \gamma$, with signs determined by the chosen polarization;

  \item $\left. \Stab_\fC (\gamma) \right|_{Z'}$ is divisible by $(s_1+s_2)$ for all $Z'\neq Z$ that are eventually attracted to $Z$.
  \end{enumerate}
\end{thm}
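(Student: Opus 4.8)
The plan is to follow the Maulik--Okounkov construction of stable envelopes for symplectic resolutions, specialized to the cyclic quiver variety $X=\fM_\theta(\bv,\bw)$. Since $T^\pm$ is one-dimensional, the chamber $\fC$ amounts to a choice of generic cocharacter $\sigma\colon\C^*\to T^\pm$, which I fix; here the role of the ``$\hbar$'' parameter of \cite{MOk12} is played by $\hbar:=s_1+s_2$, the $T$-weight of the symplectic form, and $T^\pm=\ker(\hbar)$ is exactly the subtorus preserving it. For each component $Z\subset X^{T^\pm}$ I would introduce the attracting set $\operatorname{Attr}(Z):=\{x\in X\mid \lim_{t\to 0}\sigma(t)\cdot x\in Z\}$, with its affine-bundle projection $p_Z\colon\operatorname{Attr}(Z)\to Z$ of rank $\operatorname{rk}N_-(Z)$. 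The first structural point to record is that, because $T^\pm$ preserves $\omega$, contraction with $\omega$ is a $T^\pm$-equivariant isomorphism $N_+(Z)\cong N_-(Z)^\vee$, so $\operatorname{rk}N_+(Z)=\operatorname{rk}N_-(Z)$ and $\dim\operatorname{Attr}(Z)=\tfrac12(\dim X+\dim Z)$. Using the proper map $X\to V$ to the affine ($\theta=0$) quiver variety --- the Steinberg/properness input --- one checks that the relation ``$Z'\preceq Z$ iff $\overline{\operatorname{Attr}(Z)}\cap Z'\neq\emptyset$'', transitively closed, is a genuine partial order (a flow line leaving $Z$ moves strictly in $V$, so there are no cycles), and that $\operatorname{Slope}_\fC(Z)$ is the closure of the full attracting set $\operatorname{Attr}^f(Z):=\bigcup_{Z'\preceq Z}\operatorname{Attr}(Z')$.

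For uniqueness I would run the standard triangularity argument over this poset, whose engine is the divisibility lemma: the restriction to any fixed component $Z'$ of a class supported on $\operatorname{Attr}^f(Z)$ lies in the ideal generated by $e(N_-(Z'))$. Given two maps $\Stab,\Stab'$ satisfying (i)--(iii), their difference $\Delta(\gamma)$ is supported on $\operatorname{Slope}_\fC(Z)$, vanishes on $Z$, and is divisible by $\hbar$ on each lower component. Since $e(N_-(Z'))$ is a product of the nonzero $T^\pm$-weights of $N_-(Z')$ --- none of which is proportional to $s_1+s_2$, as $\hbar$ is a distinct prime in $\Q[s_1,s_2]$ --- it is coprime to $\hbar$ and a nonzerodivisor in $A^*_T(Z')$; so writing $\Delta(\gamma)|_{Z'}=e(N_-(Z'))\cdot\delta$ with $Z'$ minimal in the support, divisibility by $\hbar$ (or vanishing, if $Z'=Z$) forces $\delta$, hence $\Delta(\gamma)|_{Z'}$, to be zero, so $Z'$ was not in the support after all, a contradiction. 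Thus $\Delta(\gamma)$ restricts to $0$ on all fixed components, hence vanishes after localization, hence vanishes in $A^*_T(X)$ because this module is torsion-free over $\Q[s_1,s_2]$ (equivariant formality of these quiver varieties).

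For existence I would build $\Stab_\fC(\gamma)$ by induction over the poset, from minimal components up, keeping everything integral so that no localization is needed. Given $\gamma\in A^*_T(Z)$, the closure $\overline{\operatorname{Attr}(Z)}\subset Z\times X$ is a Steinberg correspondence, and the first approximation $\widetilde\gamma:=\pm(\pi_X)_*\bigl([\overline{\operatorname{Attr}(Z)}]\cdot\pi_Z^*\gamma\bigr)$ is supported on $\operatorname{Slope}_\fC(Z)$ with $\widetilde\gamma|_Z=\pm e(N_-(Z))\cdot\gamma$ (sign fixed by the polarization, from the excess self-intersection of the attracting cell along $Z$), so it meets (i) and (ii) but possibly not (iii). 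I then correct downward: for $Z'\prec Z$ maximal among components where $\widetilde\gamma|_{Z'}$ is not divisible by $\hbar$, the divisibility lemma gives a factorization $\widetilde\gamma|_{Z'}=e(N_-(Z'))\cdot\delta_{Z'}$ with $\delta_{Z'}\in A^*_T(Z')$, and replacing $\widetilde\gamma$ by $\widetilde\gamma-\Stab_\fC(\delta_{Z'})$ --- using the stable envelope already constructed on the strictly smaller component --- removes the obstruction at $Z'$ without changing $\widetilde\gamma|_Z$, without enlarging the support beyond $\operatorname{Slope}_\fC(Z)$, and without creating new obstructions strictly above $Z'$. Finiteness of the poset makes this terminate, and $\Q[s_1,s_2]$-linearity is manifest from the construction.

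The step I expect to be the main obstacle is the divisibility lemma itself, used in both halves: that the restriction to a lower fixed component $Z'$ of a class supported on $\operatorname{Attr}^f(Z)$ is divisible by $e(N_-(Z'))$. Proving this requires a local model near $Z'$ for how $\overline{\operatorname{Attr}(Z)}$ and the lower attracting cells meet $Z'$ --- roughly, that along $Z'$ the relevant cycle is swept out in the negative normal directions, so a Gysin/excess computation produces the factor $e(N_-(Z'))$ --- and it is precisely here that the holomorphic symplectic structure (giving $\operatorname{rk}N_+=\operatorname{rk}N_-$ and the isotropy of attracting cells) together with properness over the affine quotient $V$ enter essentially. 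Since in our situation $X=\Hilb^m([\C^2/\Z_{n+1}])$ is a completely explicit Nakajima variety whose $T^\pm$-fixed points are indexed by $(n+1)$-tuples of partitions (equivalently $\Z_{n+1}$-colored partitions via the $(n+1)$-quotient), an alternative is to define $\Stab_\fC$ by hand on this fixed-point basis using Nakajima operators and verify (i)--(iii) directly; but the argument above is the one that generalizes and that I would carry out.
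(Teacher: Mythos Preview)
The paper does not prove this theorem: it is stated purely as a citation of Theorem~3.3.4 of \cite{MOk12}, with no argument given. So there is no ``paper's own proof'' to compare against; your sketch is essentially a reconstruction of the original Maulik--Okounkov argument, which is exactly what the citation points to. In that sense your approach is the right one and matches the intended reference.

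That said, there is a genuine omission in your uniqueness argument that you should fix. You write that $\Delta(\gamma)|_{Z'}=e(N_-(Z'))\cdot\delta$ together with divisibility by $\hbar=s_1+s_2$ ``forces $\delta$, hence $\Delta(\gamma)|_{Z'}$, to be zero.'' Coprimality of $e(N_-(Z'))$ and $\hbar$ only tells you that $\delta$ is divisible by $\hbar$, not that $\delta=0$. What actually closes the argument is a degree count: in this setting $X^{T^\pm}$ consists of isolated points, $\Stab_\fC(1_Z)$ lives in $A^m_T(X)$ (as the paper notes just after the theorem), so its restriction to a fixed point $Z'$ is a homogeneous polynomial of degree $m$ in $s_1,s_2$, and $e(N_-(Z'))$ also has degree $m=\operatorname{rk}N_-(Z')$. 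Hence $\delta$ has degree $0$, and a degree-$0$ element divisible by $\hbar$ must vanish. This degree bound is precisely what plays the role of condition~(iii) in the original formulation of \cite{MOk12} (where it is phrased as $\deg_A<\tfrac12\operatorname{codim}Z'$); the paper's ``divisible by $(s_1+s_2)$'' is the equivalent reformulation in the isolated-fixed-point, middle-degree situation, but your argument needs to invoke the degree explicitly to make the implication go through. The same degree tracking is what guarantees your inductive correction in the existence step terminates without introducing higher-order obstructions.
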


In particular, the $\Stab_\fC$ map is an isomorphism of $T$-equivariant cohomologies if one passes to $\Q(s_1,s_2)$; accordingly, $\{\Stab_\fC(Z)\}$ form a $\Q(s_1,s_2)$-basis for $A^*_T(X)\otimes \Q(s_1,s_2)$, called the \emph{stable basis}.

By construction, the maps $\Stab_\fC$ are defined via Steinberg correspondences, and therefore map the middle degree to middle degree in cohomology. As a consequence, in our case of $X = \Hilb^m(\cA_n)$ or $\Hilb^m([\C^2/\Z_{n+1}])$, if $p$ is a $T^{\pm}$-fixed point, $\Stab_\fC(p)$ will live in $A^m_T(X)$.

Among plenty of pleasant properties of the stable basis, we only mention the following which will be crucial to us. Let $\Stab^\dagger_\fC$ be the adjoint to $\Stab_\fC$ with respect to the (equivariant) Poincar\'e pairing. We view them as correspondences living in $X^{T^\pm}\times X$ and $X\times X^{T^\pm}$ respectively. Let $L\subset X\times X$ be a $T$-invariant Steinberg correspondence. By definition the convolution
$$\Stab^\dagger_{-\fC} \circ L \circ \Stab_\fC,$$
is given by the push-forward of some cycle $C\subset X^{T^\pm}\times X\times X\times X^{T^\pm}$ to $X^{T^\pm}\times X^{T^\pm}$.

\begin{lem} \label{properness}
  $C$ is proper over $X^{T^\pm}\times X^{T^\pm}$.
\end{lem}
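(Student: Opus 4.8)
The plan is to reduce Lemma~\ref{properness} to a formal property of Steinberg correspondences, namely that such correspondences compose "properly". Unwinding the convolution $\Stab^\dagger_{-\fC}\circ L\circ \Stab_\fC$, the cycle $C\subset X^{T^\pm}\times X\times X\times X^{T^\pm}$ has support contained in
$$\bigl\{(z_1,x_1,x_2,z_2)\ :\ (z_1,x_1)\in\Supp\Stab_\fC,\ (x_1,x_2)\in\Supp L,\ (x_2,z_2)\in\Supp\Stab^\dagger_{-\fC}\bigr\}.$$
By the definition of Steinberg correspondence, each of these three supports factors through proper maps to an affine variety. In our situation one may take the affine target in all three cases to be the affinization $\fM_0(\bv,\bw)$ of $X=\fM_\theta(\bv,\bw)$, the proper maps being the canonical projective contraction $\pi\colon\fM_\theta(\bv,\bw)\to\fM_0(\bv,\bw)$ and its restriction to the closed subvariety $X^{T^\pm}\hookrightarrow X$ (a closed immersion composed with a proper morphism, hence proper). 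Thus I would first observe that $\Supp C$ is a closed subset of
$$Z:=X^{T^\pm}\times_{\fM_0}X\times_{\fM_0}X\times_{\fM_0}X^{T^\pm},$$
which is closed in $X^{T^\pm}\times X\times X\times X^{T^\pm}$ because $\fM_0$, being affine, is separated. It therefore suffices to show $Z$ is proper over $X^{T^\pm}\times X^{T^\pm}$.

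The properness of $Z\to X^{T^\pm}\times X^{T^\pm}$ then follows by chaining base changes of proper morphisms, the key elementary fact being: for a fibre product $A\times_V B$ over a separated base $V$, the projection $A\times_V B\to A$ is the base change of $B\to V$, hence proper whenever $B\to V$ is. Grouping the two middle factors, write $W:=X\times_{\fM_0}X$; this is proper over $\fM_0$ (base change of $\pi$ to $X$, then composed with $\pi$). One then identifies $Z=\bigl(X^{T^\pm}\times_{\fM_0}X^{T^\pm}\bigr)\times_{\fM_0}W$, so $Z\to X^{T^\pm}\times_{\fM_0}X^{T^\pm}$ is the base change of the proper morphism $W\to\fM_0$ and is proper. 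Finally $X^{T^\pm}\times_{\fM_0}X^{T^\pm}\hookrightarrow X^{T^\pm}\times X^{T^\pm}$ is a closed immersion (again because $\fM_0$ is separated), hence proper. Composing these two proper morphisms gives $Z\to X^{T^\pm}\times X^{T^\pm}$ proper, and restricting to the closed subset $\Supp C$ finishes the argument.

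The step needing the most care — and the only place the hypotheses genuinely enter — is the first reduction. For $L$ we are only given that it is \emph{some} $T$-invariant Steinberg correspondence, so a priori it is proper over an unspecified affine variety $V_2$, and likewise $\Stab_\fC$, $\Stab^\dagger_{-\fC}$ come with affine targets $V_1,V_3$; in that generality $\Supp C$ is no longer a fibre product over a single scheme, so one cannot quote a single base-change step. I would handle this by running the base-change argument one correspondence at a time: $\Supp\Stab^\dagger_{-\fC}$ is proper over $X^{T^\pm}$ (base change of $X\to V_3$), this properness is propagated through one copy of $X$ by $\Supp L$ (which is proper over each of its two factors), and then through the last $X$ by $\Supp\Stab_\fC$, using at every stage only that $X$, $X^{T^\pm}$ and the $V_i$ are separated so that all the fibre products are honest closed subschemes. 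Beyond the bookkeeping of which projection is a base change of which proper morphism, I do not expect any real obstacle; in particular nothing special about the form of $L$ is used, only that it is a $T$-invariant Steinberg correspondence.
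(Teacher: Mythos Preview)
Your argument is correct and is essentially an explicit unpacking of what the paper defers to by citing the proof of Theorem~4.6.1 in \cite{MOk12}: each Steinberg correspondence is, by definition, supported on a fibre product over an affine (hence separated) base, so each is proper over its outer factor, and these propernesses chain together through the convolution exactly as you describe. The paper gives no independent argument beyond the citation, so your write-up is strictly more detailed but follows the same route.
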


\begin{proof}
  This follows from the proof of Theorem 4.6.1 in \cite{MOk12}.
\end{proof}

\section{Relative orbifold Donaldson--Thomas theory}

\subsection{Absolute and relative DT theories in general}

In this section we start to consider the DT theory. In general, let $\cX$ be a 3-dimensional smooth quasiprojective DM stack. Let $K(\cX)$ be the (compactly supported) K-group and $F_\bullet K(\cX)$ be the topological filtration. Fix some $P\in F_1 K(\cX)$. DT theory counts curves on $\cX$ in class $P$. From now on we will use $\cM$ to denote Hilbert schemes of curves, in either absolute or relative case.

More precisely, let $\cM:=\Hilb^P(\cX)$ be the Hilbert scheme parameterizing 1-dimensional closed substacks in $\cX$ with proper supports and with the fixed K-class $P$. By \cite{Zh}, if $\cX$ is proper there is virtual fundamental class $[\cM]^\vir\in A_*(\cM)$. In the nonproper case, a perfect obstruction theory is still available. We will treat this case in the next subsection.

To define the descendent DT invariants, one can introduce a modified Chern character. Connected components of the inertia stack $I\cX$ are gerbes over their coarse moduli spaces. Given a vector bundle $V$ on $I\cX$, it splits into a direct sum of eigenbundles $\oplus_\zeta V^{(\zeta)}$ under the gerbe actions, where $V^{(\zeta)}$ has eigenvalue $\zeta$.

There is a coefficient-twisting morphism $\rho: K(I\cX)\rar K(I\cX)_\C$, defined as
$$\rho(V):= \sum_\zeta \zeta V^{(\zeta)} \in K(I\cX)_\C.$$
The modified Chern character $\ch:K(\cX)_\Q\rar A^*(I\cX)_\C$ and orbifold Chern character $\ch^\orb:K(\cX)_\Q\rar A_\orb^*(I\cX)_\C$ is defined as
$$\ch(V):= ch(\rho(\pi^*V)), \qquad \ch^\orb_k \bigg|_{\cX_i} =\ch_{k-\age_i} \bigg|_{\cX_i}, $$
where $\pi: I\cX\rar \cX$ is the usual projection and $ch$ is the usual Chern character.

We have the diagram,
$$\xymatrix{
\cM\times I\cX \ar[r]\ar[d] & I\cX \ar[d] \\
\cM\times \cX \ar[r]^-q \ar[d]_p & \cX \\
\cM
}$$
Given $\gamma\in A^l_\orb(\cX)$, define the operator
$$\xymatrix{
\ch_{k+2}^\orb(\gamma): A_*(\Hilb^P(\cX)) \ar[r] & A_{*-k+1-l}(\Hilb^P(\cX))
}$$
as
$$\ch_{k+2}^\orb(\gamma)(\xi):= p_* \left( \ch_{k+2}^\orb(\bI) \cdot I^* q^* \gamma \cap p^* \xi \right),$$
where $I: I\cX\rar I\cX$ is the canonical involution map, and $\bI$ is the universal ideal sheaf.

Given cohomology classes $\gamma_i\in A^*_\orb(\cX)$, the absolute DT invariant is defined as
$$\langle \sigma_{k_1}(\gamma_1)\cdots \sigma_{k_l}(\gamma_l) \rangle_P := \deg \left( \prod_{i=1}^l \ch^\orb_{k_i +2} (\gamma_i)\cdot [\Hilb^P(\cX)]^\vir \right),$$
where the degree map takes the degree 0 part of the homology class and pushes it forward to a point. In cases where $\cX$ is not proper but admits a torus action with proper fixed loci, the pushforward $p_*$ and the degree map can be defined via equivariant localization.

Now let's consider the relative case. Let $\cD\subset \cX$ be a (possibly disconnected) effective smooth divisor and $P\in F_1 K(\cX)$. Let
$\Delta:= \Pj_\cD (\cO_\cD \oplus N_{\cD/\cX})$ be the $\Pj^1$-bundle over $\cD$. The following modified target is called an \emph{expanded pair}.
$$\cX[k]= \cX\cup_\cD \Delta \cup_{\cD} \cdots \cup_{\cD} \Delta, \qquad k\geq 0.$$
Here each copy of $\Delta$ are glued to other components along the 0- and $\infty$-sections. We call $\cX$ the \emph{rigid} component, and $\Delta$'s the \emph{bubble} components.

The moduli space in the relative DT theory is $\cM:= \Hilb^P(\cX,\cD)$, which parameterizes 1-dimensional closed substacks $\cZ$ of K-class $P$, with proper supports, on the expanded pairs $\cX[k]$, for all possible $k$. Moreover, the substacks $\cZ$ here should satisfy the admissibility and stability conditions. Roughly speaking, admissibility means that $\cZ$ should be (in some sense) transversal to singular divisors and the distinguished divisor in $\cX[k]$, and stability means there are only finitely many automorphisms. For more details, see \cite{Zh}.

The moduli $\Hilb^P(\cX,\cD)$ is a separated DM stack of finite type, and is proper if $\cX$ itself is proper. There is also a virtual fundamental class $[\Hilb^P(\cX,\cD)]^\vir$, which comes from a perfect \emph{relative} obstruction theory over $\fA$, the classifying stack of expanded pairs with weight $P$. Let $\fX$ be the universal target associated to $(\cX, \cD)$ over $\fA$. The diagram in this case is
\begin{equation} \label{def_DT}
\xymatrix{
\cM\times_\fA (I_\fA \fX) \ar[r]\ar[d] & I_\fA \fX \ar[r]\ar[d] & I\cX \ar[d] \\
\cM\times_\fA \fX \ar[r]^-q\ar[d]_p & \fX \ar[r]\ar[d] & \cX \\
\cM \ar[r] & \fA, &
},
\end{equation}
and the obstruction theory is
$$\xymatrix{
  E^\bullet:=Rp_*(R\cH om(\bI,\bI)_0\otimes q^*\omega_{\fX/\fA})[2] \ar[r] & L^\bullet_{\cM/\fA}.
  }$$

There is an evaluation map
$$\xymatrix{
\ev: \Hilb^P(\cX,\cD) \ar[r] & \Hilb^{\cD\cdot P}(\cD),
}$$
defined by the intersection of $\cZ$ with the distinguished divisor $\cD[k]$, where $\cD\cdot P$ is the K-theoretic Gysin pull back of $P$ to $\cD$. Given $C\in A^*(\Hilb^{\cD\cdot P}(\cD))$, the relative DT invariant is defined as
$$\langle \sigma_{k_1}(\gamma_1)\cdots \sigma_{k_l}(\gamma_l) | C \rangle_P := \int_{[\Hilb^P(\cX,\cD)]^\vir} \prod_{i=1}^l \ch^\orb_{k_i +2} (\gamma_i)\cdot \ev^*C.$$
If $\cD$ is disconnected, one can put as many relative insertions as the connected components of $\cD$.

A class $P\in K(Y)$ is called \emph{multi-regular}, if it can be represented by some coherent sheaf, such that the associated representation of the stabilizer group at the generic point of each component of its support is a multiple of the regular representation. For such $P$ we have a formula for the virtual dimension in the compact case.

\begin{prop} \label{vd_general}
  Suppose $\cX$ is proper and $P$ is multi-regular. The virtual dimension of $[\Hilb^P(\cX,\cD)]^\vir$ is given by
  $$\operatorname{vdim}= -\int_\cX ch_2(I)\cdot c_1(\cX),$$
  where $[I]$ is any point in the moduli space.
\end{prop}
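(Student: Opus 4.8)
The plan is to compute the virtual dimension directly from the perfect obstruction theory
$$E^\bullet = Rp_*\bigl(R\cH om(\bI,\bI)_0 \otimes q^*\omega_{\fX/\fA}\bigr)[2] \rightarrow L^\bullet_{\cM/\fA},$$
using that $\operatorname{vdim} = \operatorname{rk} E^\bullet + \dim \fA$. Since the classifying stack $\fA$ of expanded pairs is étale over a point (it is $0$-dimensional as a stack — the expansions contribute nothing to the dimension, only to the combinatorics), the contribution reduces to $\operatorname{rk} E^\bullet$ computed at a point $[I] \in \cM$, i.e. at an ideal-sheaf-type complex $\bI$ on the rigid target $\cX$ (or on some expansion $\cX[k]$, but the rank is deformation-invariant so we may use $\cX$ itself). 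Concretely, $\operatorname{rk} E^\bullet = -\chi\bigl(R\cH om(I,I)_0 \otimes \omega_\cX\bigr)$, where $I$ is the ideal sheaf of the substack and the subscript $0$ denotes the trace-free part.

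First I would apply Serre duality on the $3$-fold $\cX$ — more precisely, equivariant/orbifold Serre duality, since $\cX$ is a smooth DM stack with trivial generic stabilizer on the relevant locus (the class $P$ is multi-regular, so this is where multi-regularity enters): this gives
$$\chi\bigl(R\cH om(I,I)_0 \otimes \omega_\cX\bigr) = \chi\bigl(R\cH om(I,I)_0\bigr)^\vee,$$
so up to sign we need $\chi\bigl(R\cH om(I,I)_0\bigr) = \chi(R\cH om(I,I)) - \chi(\cO_\cX)$. Next I would expand $\chi(R\cH om(I,I))$ using the short exact sequence $0 \to I \to \cO_\cX \to \cO_\cZ \to 0$, writing $[I] = [\cO_\cX] - [\cO_\cZ]$ in $K$-theory, so that
$$\chi(R\cH om(I,I)) = \chi(\cO_\cX,\cO_\cX) - \chi(\cO_\cX,\cO_\cZ) - \chi(\cO_\cZ,\cO_\cX) + \chi(\cO_\cZ,\cO_\cZ).$$
The term $\chi(\cO_\cZ,\cO_\cZ)$ vanishes: this is the standard computation that the ``self-Euler characteristic'' of a class supported in dimension $\leq 1$ on a Calabi–Yau-like pairing is zero, but here $\cX$ is not Calabi–Yau, so I would instead group $\chi(R\cH om(I,I)_0 \otimes \omega_\cX)$ directly and track the $c_1(\cX)$ terms via Grothendieck–Riemann–Roch. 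The cleanest route: apply GRR (in the orbifold/Toen form) to $p: \cM \times \fX \to \cM$ to express $\operatorname{ch}(E^\bullet)$, take the degree-zero part for the rank, and observe that after the trace-free reduction and the twist by $\omega_{\fX/\fA}$, all contributions except $-\int_\cX ch_2(I)\cdot c_1(\cX)$ either cancel in pairs (by the $I \leftrightarrow I^\vee \otimes \omega$ symmetry of Serre duality) or vanish because $P = [I]$ is a $K$-class of a substack of dimension $\leq 1$, so $ch_0(I) = ch_1(I) = 0$ and $ch_i(I) = 0$ for $i > 3$; the only surviving cross term is the one pairing $ch_2(I)$ against $c_1(\omega_\cX^\vee) = c_1(\cX)$.

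The main obstacle I anticipate is bookkeeping the orbifold/equivariant corrections: Serre duality on a DM stack involves the inertia stack and age shifts (this is why the problem is phrased with $\ch^\orb$ and $I\cX$ throughout), so one must check that the naive $3$-fold GRR computation survives verbatim — i.e. that the twisted/orbifold Chern character contributions to the rank cancel. The key point making this work is multi-regularity of $P$: it forces the relevant sheaves to look like honest sheaves on the coarse space along the generic locus, so the degree-zero (rank) part of the orbifold GRR output agrees with the scheme-theoretic formula, and the stacky corrections only affect higher-degree components (which govern the virtual class, not its dimension). I would also need to confirm $\dim \fA = 0$ and that properness of $\cX$ is used only to make the integrals $\int_\cX$ well-defined without localization — both of which are either standard or already set up in the cited reference \cite{Zh}. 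Once these points are in place, the formula $\operatorname{vdim} = -\int_\cX ch_2(I)\cdot c_1(\cX)$ drops out.
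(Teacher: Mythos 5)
Your treatment of the rigid case ($k=0$) is essentially the paper's argument: compute $\chi(\cO,\cO)-\chi(I,I)$ by Toen's orbifold Riemann--Roch on $I\cX$, use multi-regularity of $P$ to show the twisted-sector contributions of $\ch_i(I)$ for $i\leq 2$ vanish (the twisting $\rho$ applied to a multiple of $\rho_\reg$ evaluates its character at a nontrivial group element, which is zero), and use the symmetry of $R\cH om(I,I)_0$ under Serre duality so that only the cross term $\int_\cX ch_2(I)\cdot c_1(\cX)$ survives. Two caveats: $ch_0(I)=1$, not $0$ --- you are conflating the ideal sheaf $I$ with the class $P=[\cO_\cZ]$; what vanishes is $ch_0, ch_1$ of $\cO_\cZ$ --- and your claim that the remaining terms ``cancel in pairs by the $I\leftrightarrow I^\vee\otimes\omega$ symmetry'' needs to be made precise exactly at degree $3$: in the paper the term $\ch_3(I)-\overline{\ch_3(I)}$ is purely imaginary and is discarded because $\chi$ is an integer, which is a different (and necessary) argument.

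The genuine gap is the expanded-target case. Points of $\Hilb^P(\cX,\cD)$ live on expansions $\cX[k]$, and your only handling of this is the sentence that ``the rank is deformation-invariant so we may use $\cX$ itself.'' Local constancy of $\operatorname{rk}E^\bullet$ does not give this: you would need every point supported on an expansion to be connected inside the moduli space to a point on the rigid target, which is neither established nor obvious. The paper instead computes directly for general $k$: by admissibility, $\chi(I,I)$ and $\chi(\cO,\cO)$ decompose as a sum over the rigid component and the bubbles $\Delta_i$ minus the contributions of the singular divisors $\cD_i$; the bubble contribution is evaluated via $c_1(\Delta_i)=p^*c_1(\cD)+\cD_{i-1}+\cD_i$ (Euler sequence of the $\Pj^1$-bundle), giving $\cZ\cdot\cD_{i-1}+\cZ\cdot\cD_i$, while each singular divisor contributes $2\,\cZ\cdot\cD_i$ through $\dim\Hilb^{\cZ\cdot\cD_i}(\cD_i)$; these combine and cancel using $\cZ\cdot\cD_i=P\cdot\cD$, which is also precisely where the independence of the chosen point $[I]$ in the stated formula comes from. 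Without some version of this computation (or an actual connectedness argument), your proof only establishes the formula at points supported on the rigid target.
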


\begin{proof}
  Consider a representative $I$ on $\cX[k]$. Since $\dim \fA=0$, we have
  $$\mbox{vdim}= \chi(\cO,\cO)- \chi(I,I),$$
  where $\chi(E,F):= \sum (-1)^i \dim \Ext^i(E,F)$.

  First let's look at the case $k=0$. We calculate $\chi$ by the orbifold Riemann--Roch, as stated in \cite{To},
  $$\chi(I,I)= \int_{I\cX} \frac{\ch(I^\vee\otimes I)\cdot Td(I\cX)}{ch(\rho(\lambda_{-1} N^\vee))},$$
  where $N$ is the normal bundle of the local regular embedding $\pi: I\cX\rar \cX$, and the dual and tensor operations are K-theoretic. We note that
  $$\ch(I^\vee)= \overline{\ch^\vee(I)},$$
  where the bar means the complex conjugate with respect to the natural real structure $K(I\cX)\otimes_\Z \R\subset K(I\cX)\otimes_\Z \C$.

  Since $P$ is multi-regular, by d\'evissage of K-theory we always have
  $$P=[\cO_\cZ]= \sum_{\cZ_i} \text{mult}(P,\cZ_i)\cdot [\rho_{\reg,\cZ_i}]\otimes [\cO_{\cZ_i}] \mod F_0 K(\cX),$$
  where $\cZ_i$ ranges over all irreducible components of $\cZ$, $\rho_{\reg,\cZ_i}$ is the regular representation of the stabilizer group at the generic point of $\cZ_i$, and $\text{mult}(P,\cZ_i)$ is some integer. One can see that for $0\leq i\leq 2$, $\ch_i(I)$ and $\overline{\ch^\vee_i(I)}$ only depend on the $F_1K(\cX)/F_0K(\cX)$ part of $P$. Thus we conclude that
  $$\ch_0(I)=1, \qquad \ch_1(I)=0, \qquad \ch_2(I)|_{\cX_i}=0$$
  for connected components $\cX_i\subset I\cX$ except the trivial one. In fact, the twisting morphism $\rho$ acting on a multiple of $\rho_\reg$ always gives the character of the generator of the gerbe action on $\cX_i$, and thus vanishes for nontrivial $\cX_i$. Moreover,
  $$\ch_3(I)|_{\cX_j}=0$$
  for those connected components with $\dim \cX_j<3$, by dimension reasons.

  We also have $\ch_2(I)$ is real, since $\rho_\reg$ is self-dual.

  Since $N$ is trivial for 3-dimensional components, we have
  \begin{eqnarray*}
    \chi(I,I) &=& \int_{I\cX} \left( 1+ \ch_2(I)- \overline{\ch_3(I)} \right)\cdot \left( 1+ \ch_2(I)+ \ch_3(I) \right) \cdot \frac{Td(I\cX)}{ch(\rho(\lambda_{-1} N^\vee))} \\
    &=& \int_{I\cX} \frac{Td(I\cX)}{ch(\rho(\lambda_{-1} N^\vee))} + 2\int_\cX ch_2(I)\cdot Td(\cX) + \sum_{\dim \cX_j=3} \int_{\cX_j} \left( \ch_3(I)- \overline{\ch_3(I)} \right).
  \end{eqnarray*}
  The first term is exactly $\chi(\cO,\cO)$, by Riemann--Roch, and the last term vanishes since it is imaginary but $\chi$ must be an integer. We are left with
  $$\chi(\cO,\cO)-\chi(I,I)= -2\int_\cX ch_2(I)\cdot Td(\cX)= -\int_\cX ch_2(I)\cdot c_1(\cX).$$
  Now let's consider the case for general $k$. By admissibility we have
  $$\chi(I,I)= \chi(I|_\cX,I|_\cX)+ \sum_{i=1}^k \chi(I|_{\Delta_i}, I|_{\Delta_i}) - \sum_{i=0}^{k-1} \chi(I|_{\cD_i}, I|_{\cD_i}),$$
  and a similar formula for $\chi(\cO,\cO)$, where $\Delta_i, \cD_i$ are the bubble components and singular divisors of $\cX[k]$, indexed in order.

  By the results above,
  $$\chi(\cO_{\Delta_i},\cO_{\Delta_i})-\chi(I|_{\Delta_i}, I|_{\Delta_i})= -\int_{\Delta_i} ch_2(I)\cdot c_1(\Delta_i).$$
  Let $p: \Delta\rar \cD$ be the projection of $\Pj^1$-bundle. By Euler's sequence,
  $$c_1(\Delta_i)= p^*c_1(\cD)+ \cD_{i-1}+ \cD_i,$$
  and thus
  \begin{eqnarray*}
    -\int_{\Delta_i} ch_2(I)\cdot c_1(\Delta_i) &=& \int_{\Delta_i} \cZ\cdot \left( p^*c_1(\cD)+ \cD_{i-1}+ \cD_i \right) \\
    &=& \int_\cD p_*[\cZ]\cdot c_1(\cD) + \cZ\cdot \cD_{i-1}+ \cZ\cdot \cD_i \\
    &=& \cZ\cdot \cD_{i-1}+ \cZ\cdot \cD_i.
  \end{eqnarray*}
  The first term vanishes because $p_*[\cZ]$ must be 0-dimensional in $\cD$ by admissibility.

  On the other hand, one has
  $$\chi(\cO_{\cD_i},\cO_{\cD_i})-\chi(I|_{\cD_i}, I|_{\cD_i}) = \dim \Hilb^{\cZ\cdot \cD_i} (\cD_i)= 2\cZ\cdot \cD_i.$$
  Combine everything and notice the fact $\cZ\cdot \cD_i = P\cdot \cD$ for any $i$. We get the formula for the virtual dimension.
\end{proof}

There is a degeneration formula for DT theory. Let $\cX$ be the generic fiber of a simple degeneration in the sense of \cite{Zh}, which degenerates to the central fiber $\cX_0= \cX_-\cup_\cD \cX_+$.

\begin{prop}
  Let $\{C_k\}$ be a basis of the cohomology $A^*(\Hilb^{P_0}(\cD))$ for $P_0\in K_0(\cD)$ and $g_{kl}= \int C_k\cup C_l$. Then
  $$\left\langle \prod_{i=1}^r \sigma_{k_i}(\gamma_i) \right\rangle_{\cX,P} =
  \sum_{\substack{P_- + P_+ - P_0 = P, \\
                S\subset \{1,\cdots,r\}, k,l}}
  \left\langle \prod_{i\in S} \sigma_{k_i}(\gamma_i) \middle| C_k \right\rangle_{(\cX_-,\cD),P_-} g^{kl} \left\langle \prod_{i\not\in S} \sigma_{k_i}(\gamma_i) \middle| C_l \right\rangle_{(\cX_+,\cD),P_+}. $$
\end{prop}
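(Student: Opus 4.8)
The plan is to run the standard deformation-to-the-normal-cone argument for Donaldson--Thomas theory, due to J.~Li and B.~Wu in the scheme case and carried out for Deligne--Mumford stacks in \cite{Zh}, while tracking the descendant insertions as in \cite{MOb09-DT}. First I would replace the one-parameter degeneration of $\cX$ to $\cX_0=\cX_-\cup_\cD\cX_+$ by the associated stack of expanded degenerations $\fA_{\mathrm{deg}}$, carrying a universal expanded total space $\fX\to\fA_{\mathrm{deg}}$ whose generic fibers are copies of $\cX$ and whose special fibers are the expansions $\cX_-[a]\cup_\cD\cX_+[b]$ obtained by inserting chains of $\Pj^1$-bundles over $\cD$. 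Over $\fA_{\mathrm{deg}}$ one forms the Hilbert stack $\cM^{\mathrm{deg}}:=\Hilb^P(\fX/\fA_{\mathrm{deg}})$ of admissible, stable, $P$-marked $1$-dimensional substacks; exactly as in the absolute and relative cases it is a separated DM stack of finite type, and $R\cH om(\bI,\bI)_0\otimes\omega_{\fX/\fA_{\mathrm{deg}}}$ furnishes a perfect obstruction theory relative to $\fA_{\mathrm{deg}}$, hence a virtual class. The fiber of $\cM^{\mathrm{deg}}$ over the generic point of the base is $\Hilb^P(\cX)$, and the descendant operators $\ch^\orb_{k_i+2}(\gamma_i)$ together with the relative evaluation maps extend over the whole family.

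Next I would invoke deformation invariance of the virtual class: since $\dim\fA_{\mathrm{deg}}$ does not affect virtual dimensions and the relative obstruction theory is compatible with base change to the two points of the parameter $\A^1$, the specialization of $[\Hilb^P(\cX)]^\vir$ is the virtual class of the central fiber $\cM_0:=\cM^{\mathrm{deg}}|_0$. Thus $\langle\prod_i\sigma_{k_i}(\gamma_i)\rangle_{\cX,P}$ can be computed by integrating $\prod_i\ch^\orb_{k_i+2}(\gamma_i|_{\cX_0})$ over $[\cM_0]^\vir$ (properness of $\cX$, or of the relevant torus-fixed loci, is what makes these integrals defined). Then I would decompose $\cM_0$: an admissible substack of an expanded central fiber $\cX_-[a]\cup_\cD\cX_+[b]$ is the same datum as a pair consisting of a $\cD$-relatively admissible substack of $\cX_-[a]$ and one of $\cX_+[b]$ with equal restrictions to $\cD$, and stability of the glued object matches stability of the pair; grouping by the K-classes $P_\pm$ of the two halves, constrained by $P_-+P_+-P_0=P$ with $P_0=P\cdot\cD$, gives
$$\cM_0 \;=\; \coprod_{P_-+P_+-P_0=P}\;\Hilb^{P_-}(\cX_-,\cD)\times_{\Hilb^{P_0}(\cD)}\Hilb^{P_+}(\cX_+,\cD),$$
the fiber product over the two relative evaluation maps.

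The technical heart, and the step I expect to be the main obstacle, is the comparison of virtual classes on each such component. Using the Mayer--Vietoris distinguished triangle for $R\cH om(\bI,\bI)_0$ on the nodal target $\cX_-[a]\cup_\cD\cX_+[b]$, the relative-to-$\fA_{\mathrm{deg}}$ obstruction theory of $\cM_0$ should decompose into the two $\cD$-relative obstruction theories of $\Hilb^{P_\pm}(\cX_\pm,\cD)$ glued over the deformation theory of the common restriction to $\Hilb^{P_0}(\cD)$. Since $\cD$ is a smooth (orbifold) surface, $\Hilb^{P_0}(\cD)$ is smooth, so a refined Gysin pullback along its diagonal $\Delta$ would identify
$$[\cM_0]^\vir\big|_{P_-,P_+}\;=\;\Delta^!\Big([\Hilb^{P_-}(\cX_-,\cD)]^\vir\times[\Hilb^{P_+}(\cX_+,\cD)]^\vir\Big).$$
Making this precise requires checking that the obstruction cone of the glued problem is genuinely the fiber product of the two relative cones over the smooth divisor Hilbert scheme --- i.e.\ that the Li--Wu gluing is compatible with the obstruction theories used here --- and that the DM/gerbe bookkeeping inside $R\cH om(\bI,\bI)_0$ and $\ch^\orb$ introduces no new subtleties; both should go through as in \cite{Zh}, but this is where I would have to be careful.

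Finally I would unwind the insertions. Writing the diagonal of the smooth stack $\Hilb^{P_0}(\cD)$ as $\sum_{k,l}g^{kl}\,C_k\otimes C_l$ in the chosen basis $\{C_k\}$ turns $\Delta^!$ into the product of the two relative evaluation factors $\ev^*C_k$, $\ev^*C_l$ together with the matrix $g^{kl}$. Over $\cM_0$ the universal ideal sheaf is glued from the two relative universal sheaves along $\cZ\cap\cD$, so for each $i$ the descendant $\ch^\orb_{k_i+2}(\gamma_i)$ --- with $\gamma_i$ read as its restriction to $\cX_\pm$ --- is pulled back from one half or the other up to a correction supported on the $0$-dimensional locus $\cZ\cap\cD$, which is handled exactly as in \cite{MOb09-DT} and does not alter the formula. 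Summing over the choices $S\subset\{1,\dots,r\}$ of which half each insertion belongs to then yields the stated identity.
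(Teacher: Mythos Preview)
The paper does not prove this proposition; it states the degeneration formula as a known result, citing \cite{Zh} for the orbifold setting (and implicitly Li--Wu for schemes). Your sketch is the correct standard argument and is essentially what one finds in those references: build the stack of expanded degenerations, use deformation invariance of the virtual class, decompose the central fiber as a fiber product over $\Hilb^{P_0}(\cD)$, compare obstruction theories via the diagonal pullback, and split the descendants. So there is nothing to compare approach-wise --- you have supplied the proof the paper chose to quote rather than reproduce.

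One small caution: your remark that the descendant corrections supported on $\cZ\cap\cD$ are ``handled exactly as in \cite{MOb09-DT}'' is a bit glib. The clean way to handle insertions in the degeneration formula is to assume from the outset that each $\gamma_i$ is pulled back from either $\cX_-$ or $\cX_+$ (i.e.\ has support disjoint from $\cD$ after a suitable choice of representative), which is how the paper uses the formula in practice; then no correction term arises and the split over $S\subset\{1,\dots,r\}$ is immediate. If you want the formula for arbitrary $\gamma_i$ you do have to be more careful about the diagonal contribution, but that generality is not needed here.
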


\subsection{DT theory of $[\C^2/\Z_{n+1}]\times \Pj^1$}

In this paper the main ambient space considered is $\cX= [\C^2/\Z_{n+1}]\times \Pj^1$. This is a noncompact target, but admits a $T$-action with compact fixed locus. Let $\cY= B\Z_{n+1}\times \Pj^1\subset \cX$ be the substack. The compactly supported K-theory $K(\cX)$ is generated by classes of the form $[\rho_i\otimes \cO_\cY]$ and $[\rho_i\otimes \cO_p]$, with $0\leq i\leq n$, where $\rho_i$ are irreducible representations of $\Z_{n+1}$ and $p\in \cY$ is a point.

We fix a K-class
$$P= \sum_{j=0}^n m_j [\rho_j \otimes \cO_\cY] + \sum_{j=0}^n \varepsilon_j [\rho_j\otimes \cO_p],$$
where $m_j, \varepsilon_j\in \Z$ and $m_j\geq 0$. Denote by $\bm$, $\varepsilon$ respectively the tuples $(m_0,\cdots, m_n)$ and $(\varepsilon_0, \cdots, \varepsilon_n)$.
The relative divisor we would consider here is a disjoint union of vertical fibers in $\cX$. In other words, let $p_1, \cdots, p_r$ be points in $\Pj^1$ and let $N_i:=[\C^2/\Z_{n+1}]\times \{p_i\}$ be the corresponding fiber. A typical divisor is of the form $\coprod_i N_i$.

A perfect obstruction theory can be constructed for this noncompact target. Let $P=(\bm, \varepsilon)$ and $\cM:= \Hilb^{P}(\cX)$. Let $\bI$ be the universal ideal sheaf on $\cM\times \cX$, and $p$, $q$ be the projections. As in the proper case \cite{Zh}, we consider the map
  $$\xymatrix{
  E^\bullet:=Rp_*(R\cH om(\bI,\bI)_0\otimes q^*\omega_\cX)[2] \ar[r] & L^\bullet_{\cM},
  }$$
given by a projection of the traceless Atiyah class.

The argument of Proposition 10 in \cite{MPT} works here. Let $[\Pj^2/\Z_{n+1}]$ be the stacky quotient of the group action
$$\zeta\cdot [x:y:z]= \left[ \zeta x: \zeta^{-1} y: z \right].$$
This gives a compactification $\bar\cX:= [\Pj^2/\Z_{n+1}] \times \Pj^1$ of $\cX$.

The moduli $\cM$ can be viewed as parameterizing closed substacks on $\bar\cX$, with proper supports and contained in $\cX\subset \bar\cX$. We obtained the same moduli space and same universal family $\cZ$, but with a different universal target $\cM\times \bar\cX$. We claim that there is a duality between the two complexes
  $$Rp_*(R\cH om(\bI,\bI)_0\otimes q^*\omega_\cX)[2], \qquad Rp_*(R\cH om(\bI,\bI)_0)[1].$$
The key for this argument is that $R\cH om(\bI,\bI)_0$ is supported on the open locus $\cM\times \cX$, and the two complexes are precisely the restriction to the open locus of the complexes
  $$Rp_*(R\cH om(\bar\bI,\bar\bI)_0\otimes q^*\omega_{\bar\cX})[2], \qquad Rp_*(R\cH om(\bar\bI,\bar\bI)_0)[1],$$
where objects with bars are counterparts on $\bar\cX$ of the un-barred objects. The complexes on $\cM\times \bar\cX$ are naturally dual to each other via the Serre duality and therefore the claim is true. As a result, the tangent and obstruction space at a point $[I]\in \Hilb^P(\cX)$ can be computed by $\Ext^1(I,I)_0$ and $\Ext^2(I,I)_0$ respectively.

The target being noncompact but with a $T$-action, we can compute the virtual dimension by $T$-equivariant methods.

\begin{prop}
  For $P= (\bm, \varepsilon)$, the virtual dimension of $[\Hilb^P(\cX,\coprod N_i)]^\vir$ is $2m_0$.
\end{prop}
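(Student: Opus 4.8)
The plan is to reduce the computation to the already-established formula of Proposition~\ref{vd_general}. Although $\cX=[\C^2/\Z_{n+1}]\times\Pj^1$ is noncompact, the compactification $\bar\cX=[\Pj^2/\Z_{n+1}]\times\Pj^1$ constructed above is proper, and the divisor $\coprod N_i$ together with the relative obstruction theory extends to $\bar\cX$; the universal ideal sheaves agree on the open locus where $R\cH om(\bI,\bI)_0$ is supported. So I would first argue that $\mathrm{vdim}[\Hilb^P(\cX,\coprod N_i)]^\vir$ equals the quantity computed for the proper relative pair $(\bar\cX,\coprod\bar N_i)$ by Proposition~\ref{vd_general}, namely $-\int_{\bar\cX}ch_2(\bar I)\cdot c_1(\bar\cX)$, since the difference $\chi(\cO,\cO)-\chi(I,I)$ localizes on the support of $R\cH om(\bI,\bI)_0$, which sits inside $\cX$. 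Here $\bar I$ is the ideal sheaf of a substack supported in $\cX$ with the same $ch_2$.

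Next I would unwind the right-hand side. The curve class part of $P$ contributing to $ch_2$ is $m[\rho_\reg\otimes\cO_\cY]$ together with the point contributions $\sum_j\varepsilon_j[\rho_j\otimes\cO_p]$; as in the proof of Proposition~\ref{vd_general}, only the $F_1K/F_0K$ part matters and the twisting morphism $\rho$ kills the nontrivial inertia components, so $ch_2(\bar I)$ reduces to a class on the untwisted component $\bar\cX$ itself. Concretely $ch_2(\bar I)$ is (minus) the class of the curve $\cZ$, which pushes to $m$ times the fiber class $[\C^2/\Z_{n+1}]\times\{pt\}$ of the $\Pj^1$-direction (the $\varepsilon_j$'s give $0$-dimensional contributions which pair to zero against a divisor). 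I would then compute $c_1(\bar\cX)=c_1([\Pj^2/\Z_{n+1}])\boxplus c_1(\Pj^1)$ and pair: the fiber of $\Pj^1$ meets $c_1(\Pj^1)=2[pt]$ trivially in the $\Pj^1$-factor but picks up $c_1([\Pj^2/\Z_{n+1}])$ evaluated on $[\C^2/\Z_{n+1}]$, contributing $\int_{[\C^2/\Z_{n+1}]}\cdots$ — and here the orbifold/stacky normalization is what produces the factor yielding $2m_0$ rather than the naive answer. I would phrase this last step via $T$-equivariant localization directly on $\cX$ to avoid worrying about intersection theory on the singular coarse space: write $ch_2(I)\cdot c_1(\cX)$ as a sum over the $T$-fixed loci (which are proper) of the relevant equivariant classes.

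Alternatively, and perhaps more cleanly, I would compute $\mathrm{vdim}=\chi(\cO,\cO)-\chi(I,I)=\dim\Ext^1(I,I)_0-\dim\Ext^2(I,I)_0$ equivariantly at a single well-chosen $T$-fixed point $[I]$ of $\Hilb^P(\cX,\coprod N_i)$, using the duality $\Ext^i(I,I)_0\cong\Ext^{3-i}(I,I)_0^\vee\otimes(\text{canonical})$ established above; since everything in sight is a monomial-type substack supported near the orbifold locus, $\chi(I,I)$ is a finite combinatorial sum over boxes, and the regular-representation structure in the $\Z_{n+1}$-direction forces the twisted contributions to cancel, leaving exactly the contributions of the $m_0$-part of the untwisted sector, hence $2m_0$. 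The main obstacle I anticipate is bookkeeping the orbifold Riemann–Roch / localization factors correctly: one must be careful that the noncompactness in the $\C^2/\Z_{n+1}$ directions is handled by the $T$-action with proper fixed loci (so that $p_*$ and the integral are defined), that the relative insertions along $\coprod N_i$ contribute nothing extra to the virtual dimension (as in the $k=0$ versus general $k$ analysis of Proposition~\ref{vd_general}, where the bubble and divisor contributions cancel), and that the $\age$-shift in $\ch^\orb$ does not sneak into the $ch_2$ computation — it does not, because $ch_2$ lives on the untwisted sector. Once those normalizations are pinned down, the arithmetic collapsing $-\int_\cX ch_2(I)\cdot c_1(\cX)$ to $2m_0$ is short.
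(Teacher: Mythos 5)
There is a genuine gap, and it is structural: both of your routes secretly assume the multi-regular condition, while the proposition is stated for an arbitrary tuple $\bm=(m_0,\dots,m_n)$ and the answer $2m_0$ singles out the multiplicity of the \emph{trivial} representation only. Proposition \ref{vd_general} is proved under the hypothesis that $P$ is multi-regular, and that hypothesis is exactly what kills the twisted components of $I\cX$ in $\ch_2(I)$ there (the twisting morphism $\rho$ applied to a multiple of $\rho_\reg$ produces the character of $\rho_\reg$ at a nontrivial group element, which vanishes). For general $\bm$ the character $\sum_j m_j\zeta^{ij}$ does not vanish, the twisted sectors do contribute, and the untwisted-sector formula you want to use, $-\int_\cX ch_2(I)\cdot c_1(\cX)$, evaluates to $\tfrac{2}{n+1}\sum_j m_j$ (the curve sits over the stacky point with degree $\tfrac{1}{n+1}$ on the coarse space, $c_1$ of the surface factor pairs to zero with this horizontal curve, and $c_1(\Pj^1)$ contributes the $2$). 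This equals $2m_0$ only when $m_0=\cdots=m_n$ --- which is precisely the content of the Remark following the proposition in the paper. Your second route has the same problem: the asserted cancellation ``the regular-representation structure forces the twisted contributions to cancel, leaving the $m_0$-part'' has no mechanism once $P$ is not a multiple of $\rho_\reg$. As a secondary point, your identification of the curve class is off: $\cZ$ is supported on the horizontal curve $\cY=B\Z_{n+1}\times\Pj^1$, not on a vertical fiber $[\C^2/\Z_{n+1}]\times\{pt\}$, so the nonzero pairing is against $\pi_2^*c_1(\Pj^1)$, not against $c_1$ of the (compactified) surface factor.

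The paper's proof does not pass through Proposition \ref{vd_general} at all. It computes $\operatorname{vdim}=\chi(\cO,\cO-I^\vee\otimes I)$ directly on the noncompact $\cX$, which is well defined because $[\cO]-[I]^\vee\otimes[I]$ is compactly supported. Writing $[\cO_\cY]=(1-\rho_1)(1-\rho_1^{-1})$ and $[\cO_p]=(1-\rho_1)(1-\rho_1^{-1})(1-[\cO(-F)])$ gives $P^\vee=\sum_j m_j[\rho_{-j}\otimes\cO_\cY]-\sum_j\varepsilon_j[\rho_{-j}\otimes\cO_p]$, and $P^\vee\otimes P=0$ for dimension reasons, so
$$\operatorname{vdim}=\chi(\cY,P+P^\vee)=\chi\Bigl(B\Z_{n+1}\times\Pj^1,\ \sum_j m_j(\rho_j+\rho_{-j})\otimes\cO\Bigr)+\chi\Bigl(B\Z_{n+1}\times\Pj^1,\ \sum_j\varepsilon_j(\rho_j-\rho_{-j})\otimes\cO_p\Bigr).$$
Taking $\Z_{n+1}$-invariants, $\rho_j+\rho_{-j}$ contributes only for $j=0$, giving $2m_0\,\chi(\Pj^1,\cO_{\Pj^1})=2m_0$, while the point terms cancel. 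This invariant-part calculation is the mechanism that isolates $m_0$; to repair your argument you would either have to restrict to the multi-regular case or redo the Riemann--Roch computation keeping the twisted sectors, which amounts to reproducing exactly this computation.
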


\begin{proof}
  The virtual dimension is still equal to $\chi(\cO, \cO-I^\vee\otimes I)$, which makes sense since $[\cO]-[I]^\vee\otimes [I]$ is still compactly supported. We can explicitly write down the classes
  $$[\cO_\cY]= (1-\rho_1)(1-\rho_1^{-1}),$$
  $$[\cO_p]= (1-\rho_1)(1-\rho_1^{-1}) [\cO_F]= (1-\rho_1)(1-\rho_1^{-1}) \left( 1- [\cO(-F)] \right),$$
  where $\rho_1$ is the irreducible representation with weight 1, and $F$ is a general vertical fiber. One can see that
  $$[\cO_\cY]^\vee= [\cO_\cY], \qquad [\cO_p]^\vee= -[\cO_p],$$
  $$P^\vee= \sum_{j=0}^n m_j [\rho_{-j}\otimes \cO_\cY] - \sum_{j=0}^n \varepsilon_j [\rho_{-j}\otimes \cO_p],$$
  where $\rho_{-j}$ is the 1-dimensional $\Z_{n+1}$-representation with eigenvalue of the generator $\zeta^{-j}$. Thus the virtual dimension is
  \begin{eqnarray*}
    \chi(\cX, P+P^\vee - P^\vee\otimes P) &=& \chi(\cY, P+P^\vee) \\
    &=& \chi \left( B\Z_{n+1}\times \Pj^1, \sum_{j=0}^n m_j (\rho_j+\rho_{-j}) \otimes \cO_{B\Z_{n+1}\times \Pj^1} \right) \\
    && +  \chi \left( B\Z_{n+1}\times \Pj^1, \sum_{j=0}^n \varepsilon_j  (\rho_j- \rho_{-j})\otimes \cO_p  \right) \\
    &=& 2m_0,
  \end{eqnarray*}
  where the derived tensor product is always taken in $\cX$ and $P^\vee\otimes P$ vanishes by dimension reasons.
\end{proof}

\begin{rem}
  In the multi-regular case $m_0=\cdots = m_n=m$, The virtual dimension can also be deduced from the general result Proposition \ref{vd_general}. By definition one can view the obstruction theory as coming from that on a compactification $\bar\cX$, where the proposition applies.
\end{rem}

There is an evaluation map
$$\xymatrix{
\ev_i: \Hilb^P(\cX,\coprod N_i) \ar[r] & \Hilb^\bm([\C^2/\Z_{n+1}]),
}$$
for each $N_i$. Given $\gamma_i\in A^*_\orb(\cX)$, $\xi_i\in A^*(\Hilb^\bm([\C^2/\Z_{n+1}]))$, we can define the relative DT invariants
$$\left\langle \sigma_{k_1}(\gamma_1)\cdots \sigma_{k_l}(\gamma_l) \middle| \xi_1,\cdots,\xi_r \right\rangle_{\bm,\varepsilon} := \int_{[\Hilb^P(\cX,\cD)^T]^\vir} \frac{\prod_{i=1}^l \ch^\orb_{k_i +2} (\gamma_i)\cdot \prod_{j=1}^r\ev_j^*\xi_j}{e_T(N^\vir)},$$
where $N^\vir$ is the virtual normal bundle of the $T$-fixed loci $\Hilb^P(\cX,\cD)^T$. Consider the generating function
$$\left\langle \sigma_{k_1}(\gamma_1)\cdots \sigma_{k_l}(\gamma_l) \middle| \xi_1,\cdots,\xi_r \right\rangle_\bm := \sum_{\varepsilon} q^{\varepsilon} \left\langle \sigma_{k_1}(\gamma_1)\cdots \sigma_{k_l}(\gamma_l) \middle| \xi_1,\cdots,\xi_r \right\rangle_{\bm,\varepsilon},$$
where $q^{\varepsilon}=\prod_{j=0}^n q_j^{\varepsilon_j}$ is a multi-index variable and we usually omit the subscript $\bm$ if it is clear from the context.

We also define the \emph{reduced} DT generating function by normalizing with the degree zero contribution,
$$\left\langle \sigma_{k_1}(\gamma_1)\cdots \sigma_{k_l}(\gamma_l) \middle| \xi_1,\cdots,\xi_r \right\rangle'_\bm:= \frac{
\left\langle \sigma_{k_1}(\gamma_1)\cdots \sigma_{k_l}(\gamma_l) \middle| \xi_1,\cdots,\xi_r \right\rangle_\bm}{\left\langle \ |\emptyset,\cdots,\emptyset \right\rangle_0},$$
where $\emptyset$ here means there is no insertion.

For simplicity, we also write
$$\langle \xi_1,\cdots,\xi_r \rangle_\bm, \qquad \langle \xi_1,\cdots,\xi_r \rangle'_\bm,$$
for the invariants with no descendants.

The invariants live in $\Q(s_1,s_2)(( q_0, \cdots, q_n ))$. A refinement of the range of allowed $q$ parameters will be given in Corollary \ref{range-q}.

For $\cX= [\C^2/\Z_{n+1}]\times \Pj^1$ the DT degeneration formula appears as a composition of operators. One can degenerate $\Pj^1$ into $\Pj^1\cup\Pj^1$, and $\cX$ degenerates into $\cX_1\cup \cX_2$. Suppose that after degeneration the relative divisors $N_1,\cdots, N_{r'}$ lie in $\cX_1$, and the others lie in $\cX_2$. We can write the degeneration formula in terms of generating functions.

\begin{prop}[Degeneration formula]
Let $\{C_a\}$ be a basis for $A^*(\Hilb^\bm([\C^2/\Z_{n+1}]))$ and $\int C_a\cup C_b = g_{ab}$.
  \begin{eqnarray*}
  \left\langle \prod_{i=1}^l \sigma_{k_i}(\gamma_i) \middle| \xi_1,\cdots,\xi_r \right\rangle_\bm &=&
  \sum_{S\subset \{1,\cdots,l\}, a,b}
        \left\langle \prod_{i\in S} \sigma_{k_i}(\gamma_i) \middle| \xi_1,\cdots,\xi_{r'}, C_a \right\rangle_\bm g^{ab} \\
        && \cdot \left\langle \prod_{i\not\in S} \sigma_{k_i}(\gamma_i) \middle| \xi_{r'+1},\cdots,\xi_{r}, C_b \right\rangle_\bm
  \end{eqnarray*}
\end{prop}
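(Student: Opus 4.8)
The plan is to derive this from the general DT degeneration formula already recorded above (with \cite{Zh} providing the orbifold foundations), applied to one specific simple degeneration of $\cX$, and then to repackage the resulting finite sum as a product of generating functions. Concretely, I would deform $\cX=[\C^2/\Z_{n+1}]\times\Pj^1$ to the normal cone of a vertical fiber $N=[\C^2/\Z_{n+1}]\times\{p\}$; since the normal bundle $N_{N/\cX}$ is trivial, the bubble component is again $[\C^2/\Z_{n+1}]\times\Pj^1$, so the generic fiber is $\cX$ and the central fiber is $\cX_1\cup_N\cX_2$ with $\cX_1\cong\cX_2\cong[\C^2/\Z_{n+1}]\times\Pj^1$ and distinguished divisor $N\cong[\C^2/\Z_{n+1}]$ --- equivalently, one degenerates the $\Pj^1$ factor to $\Pj^1\cup_p\Pj^1$. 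This is a simple degeneration in the sense of \cite{Zh}, $N$ is a smooth divisor, and the whole construction is $T$-equivariant; one arranges the specialization so that the relative fibers $N_1,\dots,N_{r'}$ (and any descendant insertions one wishes to send to the first piece) lie over $\cX_1$ and $N_{r'+1},\dots,N_r$ over $\cX_2$.

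Next I would apply the degeneration formula. Its compact form is the Proposition above; the extension allowing several relative divisors at once --- the new one created at the node, plus the $N_i$ already present --- is routine and is contained in \cite{Zh}. To handle the noncompactness of $\cX$ I would argue exactly as in the construction of the obstruction theory in Section 3.2: pass to the compactification $\bar\cX=[\Pj^2/\Z_{n+1}]\times\Pj^1$, on which the degeneration formula of \cite{Zh} holds verbatim, and note that every substack of class $P$ is supported in $\cX$ and away from the divisor at infinity, so the formula restricts to $\cX$. On the equivariant side, the $T$-fixed loci of all the moduli spaces involved are compact (a $T$-invariant curve of compact support in $[\C^2/\Z_{n+1}]\times\Pj^1$ must be supported on $B\Z_{n+1}\times\Pj^1$ together with embedded points), and the virtual normal bundles split compatibly across $\cX_1$, $\cX_2$ and $N$ because the relative obstruction theories glue along $N$; this yields, for each fixed $P=(\bm,\varepsilon)$, the numerical degeneration formula with $\{C_a\}$ inserted along the node and the inverse pairing $g^{ab}$ of $g_{ab}=\int C_a\cup C_b$.

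It remains to do the bookkeeping and sum over $\varepsilon$. Any admissible curve meets each fiber $N_j$ and the node $N$ in the $0$-dimensional class $N_j\cdot P=\bm$, so the $\cY$-fiber multiplicity on each component of the central fiber is forced to be $\bm$; hence the only classes occurring are $P_\pm=(\bm,\varepsilon_\pm)$ with distinguished divisor class $P_0=\bm\in K_0(N)=K_0([\C^2/\Z_{n+1}])$, so that $\Hilb^{P_0}(N)=\Hilb^\bm([\C^2/\Z_{n+1}])$, the relative evaluation maps coincide with the $\ev_i$ of Section 3.2, and $\{C_a\}$, $g^{ab}$ are exactly the basis and dual pairing on $A^*(\Hilb^\bm([\C^2/\Z_{n+1}]))$ named in the statement. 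The remaining freedom is the splitting $\varepsilon=\varepsilon_-+\varepsilon_+$ of the floating-point multiplicities, so multiplying by $q^\varepsilon=q^{\varepsilon_-}q^{\varepsilon_+}$ and summing over all $\varepsilon$ turns the single sum over $(P_-,P_+)$ into the product of the two generating functions $\langle\ \cdot\ \rangle_\bm$, the sum over $S$ and over $a,b$ surviving unchanged --- which is exactly the claimed identity.

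I expect the second step to be where the real content lies: showing that the degeneration formula of \cite{Zh} is compatible with $T$-equivariant localization for the noncompact target $\cX$, i.e.\ that the localized virtual contributions multiply across the node. This parallels the treatment in Maulik--Oblomkov \cite{MOb09-DT}, but still requires checking compactness of the fixed loci and the compatibility of the obstruction theories under specialization; once that is in hand, the degeneration of the target in the first step and the generating-function bookkeeping in the third are formal.
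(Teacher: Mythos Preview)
Your proposal is correct and follows exactly the route the paper takes: the paper states this proposition without proof, as a direct specialization of the general degeneration formula (the preceding proposition, imported from \cite{Zh}) to the degeneration of $\Pj^1$ into $\Pj^1\cup\Pj^1$, with the relative divisors distributed between the two pieces and the sum over $\varepsilon$-splittings absorbed into the product of generating functions. Your additional care with the noncompactness and equivariant compatibility is appropriate but not elaborated in the paper itself.
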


Application of the degeneration formula to degree zero partition functions shows
$$\langle \ | \emptyset,\cdots,\emptyset \rangle_0 = \langle \ | \emptyset \rangle_0^{-(k-2)},$$
where the left hand side is relative to $k$ divisors. Using this one can see that the degeneration formula also holds for reduced invariants.

\subsection{Reduced obstruction theory}

As is similar to the case of $\C^2\times \Pj^1$ and K3 fibration, we will see that the ordinary DT invariants are trivial and a reduced obstruction theory has to be constructed to define nontrivial invariants. The existence of such a reduced theory relies heavily on the holomorphic symplectic structure on the fibers.

We only consider 1-point and 2-point invariants in this subsection, i.e. the relative divisor is either $\cD= N_0\sqcup N_\infty$, with $0, \infty\in \Pj^1$, or just $\cD= N_\infty$. Throughout this subsection we assume that $\varepsilon\neq 0$. Again we will apply a similar argument as in \cite{MPT}.

As mentioned before, the obstruction sheaf of the relative DT theory is $\cE xt^2_p(\bI,\bI)_0$, where $\cE xt_p^i$ stands for the $i$-th cohomology of $Rp_*R\cH om$. The obstruction theory $Rp_* P\cH om(\bI, \bI)_0$ is perfect, and hence can be represented by a 2-term complex $[E^0 \to E^1]$, whose cohomology at $E^1$ is $\cE xt_p^2 (\bI, \bI)_0$. Apply the left exact functor $\cH om (-, \cO_\cM)$ and by Serre duality we have
\begin{equation} \label{dual}
\cE xt^1_p (\bI,\bI\otimes q^*\omega_{\fX/\fA})_0 \cong \cH om(\cE xt^2_p(\bI,\bI)_0, \cO_\cM).
\end{equation}

Let $\fP\rar \fA$ be the universal family of expanded pairs with respect to the smooth pair $(\Pj^1, \{0\}\cup \{\infty\})$ (resp. $(\Pj^1, \{\infty\})$), and note that $\fX = [\C^2/\Z_{n+1}] \times \fP$. Let $\pi_1: \fX\rar [\C^2/\Z_{n+1}]$, $\pi_2: \fX\rar \fP$ be the projections. We have
$$\Omega_{\fX/\fA} \cong \pi_1^* \Omega_{[\C^2/\Z_{n+1}]} \oplus \pi_2^* \Omega_{\fP/\fA}.$$

Consider the image $\At_{\fP/\fA}$ of relative Atiyah class under the map
$$\xymatrix{
\Ext^1 (\bI, \bI\otimes q^*L^\bullet_{\fX/\fA}) \ar[r] & \Ext^1 (\bI, \bI\otimes q^*L^\bullet_{\fX/\fA})_0 &&
}$$
$$\xymatrix{
 && \ar[r] & H^0 (\cE xt^1_p (\bI,\bI\otimes q^*\Omega_{\fX/\fA})_0) \ar[r] & H^0 (\cE xt^1_p (\bI,\bI\otimes q^*\pi_2^*\Omega_{\fP/\fA})_0),
}$$
which gives a map
$$\xymatrix{
\cO_\cM \ar[rr]^-{\At_{\fP/\fA}} && \cE xt^1_p (\bI,\bI\otimes q^*\pi_2^*\Omega_{\fP/\fA})_0.
}$$

Composing with the map $\Omega_{\fP/\fA}\rar \omega_{\fP/\fA}$ and cupping with symplectic form $\sigma$ pulled back from $[\C^2/\Z_{n+1}]$ yields
\begin{equation} \label{vector_omega}
\xymatrix{
\cO_\cM \ar[r] & \cE xt^1_p (\bI,\bI\otimes q^*\pi_2^*\omega_{\fP/\fA})_0 \ar[r]^-{\cup \sigma}_\sim & \cE xt^1_p (\bI,\bI\otimes q^*\omega_{\fX/\fA})_0.
}\end{equation}
Hence by the duality (\ref{dual}), we obtain a map
\begin{equation} \label{surj}
\xymatrix{
\cE xt^2_p (\bI,\bI)_0 \ar[r] & \cO_\cM.
}\end{equation}

\begin{lem}
  The map (\ref{surj}) is surjective.
\end{lem}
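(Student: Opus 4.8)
The plan is to prove surjectivity of the map~\eqref{surj} by reducing to a pointwise statement via Nakayama's lemma and then checking nonvanishing on a well-chosen fiber. Since $\cE xt^2_p(\bI,\bI)_0$ and $\cO_\cM$ are coherent on the separated finite-type DM stack $\cM$, it suffices to show the induced map on fibers $\cE xt^2_p(\bI,\bI)_0\otimes k(x)\to k(x)$ is surjective — equivalently nonzero — for every closed point $x=[\cZ]\in\cM$, where $\cZ$ sits on some expanded target $\cX[k]$. By base change for $Rp_*$ (the higher $\cE xt^j_p$ vanish, so cohomology and base change applies in the top degree), this fiber is $\Ext^2(I,I)_0$ for the corresponding ideal sheaf $I$ on $\cX[k]$, and the map in question is the composite of: the relative Atiyah class landing in $\Ext^1(I,I\otimes\pi_2^*\Omega_{\fP/\fA}|_x)_0$, projection $\Omega_{\fP/\fA}\to\omega_{\fP/\fA}$, the isomorphism $\cup\,\sigma$ with the symplectic form pulled back from $[\C^2/\Z_{n+1}]$, and finally the Serre-type duality~\eqref{dual} identifying $\Ext^1(I,I\otimes\omega_{\fX/\fA})_0$ with the dual of $\Ext^2(I,I)_0$.

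First I would unwind what the composite pairing computes. Dualizing, surjectivity amounts to: the element of $\Ext^1(I,I\otimes\omega_{\fX/\fA})_0$ obtained from $\At_{\fP/\fA}(x)$ by cupping with $\sigma$ is nonzero, i.e. it pairs nontrivially against $\Ext^2(I,I)_0$ under Serre duality on $\cX[k]$. Because $\omega_{\fX/\fA}\cong\pi_1^*\omega_{[\C^2/\Z_{n+1}]}\otimes\pi_2^*\omega_{\fP/\fA}$ and the fiber direction carries the holomorphic symplectic form $\sigma\in H^0(\pi_1^*\omega_{[\C^2/\Z_{n+1}]})$ (trivializing $\omega_{[\C^2/\Z_{n+1}]}$), cupping with $\sigma$ is genuinely an isomorphism, as asserted in~\eqref{vector_omega}; so the content is entirely in showing that the image of the relative Atiyah class in $H^0(\cE xt^1_p(\bI,\bI\otimes q^*\pi_2^*\Omega_{\fP/\fA})_0)$ is nonzero pointwise, and stays nonzero after projecting $\Omega_{\fP/\fA}\to\omega_{\fP/\fA}$.

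The key step is therefore the following nonvanishing. The relative Atiyah class $\At_{\fX/\fA}$, projected onto the $\pi_2^*\Omega_{\fP/\fA}$-factor of $\Omega_{\fX/\fA}$ and traced out, recovers (up to sign) the Kodaira–Spencer class of the family of expanded pairs deforming in the $\Pj^1$-direction; concretely, on the rigid component it is the image of a generator of $H^1(\Pj^1,T_{\Pj^1})$ twisted appropriately, and on the bubble components it records the smoothing parameters of the nodes. Following the argument of Proposition~11 (and its proof) in~\cite{MPT}: one exhibits a tangent vector to $\fA$ at the image of $x$ — the honest deformation of $\Pj^1$ or of the chain of bubbles — along which the family $\cX[k]\to\fA$ is nontrivial, and then the relative Atiyah class evaluated against it is precisely the obstruction-theoretic incarnation of that nontriviality, hence nonzero. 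Pairing this against the trace map $\Ext^2(I,I)_0\to H^2(\cX[k],\cO)\to\C$ after the $\cup\,\sigma$ identification and Serre duality gives a nonzero number; since $\omega_{[\C^2/\Z_{n+1}]}$ is trivial, $\chi(\cO_{\cX[k]})$ contributions behave as in the surface-times-curve case of~\cite{MPT}, and the relevant $H^2$ is one-dimensional, so the pairing being nonzero is exactly surjectivity. Passing back from closed points to the sheaf level by Nakayama (the target $\cO_\cM$ is a line bundle, so cokernel vanishing is pointwise) completes the proof.

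The main obstacle I anticipate is twofold. First, one must check that projecting $\Omega_{\fP/\fA}\to\omega_{\fP/\fA}$ does not kill the Atiyah class: on the rigid component $\Omega_{\fP/\fA}=\omega_{\fP/\fA}=\omega_{\Pj^1}$ so there is nothing to do, but on a bubble chain $\Omega_{\fP/\fA}$ has a torsion-free-but-not-locally-free issue at the nodes relative to $\fA$, and one needs the compatibility of the log/relative cotangent complexes of expanded degenerations (as set up in~\cite{Zh}) to see that the dualizing sheaf receives the class faithfully — this is where the transversality ("admissibility") of $\cZ$ to the singular divisors is used. Second, one must carry out the Serre-duality bookkeeping on the possibly-nonreduced, reducible targets $\cX[k]$ with care, matching conventions with the duality~\eqref{dual} that was derived from the compactification $\bar\cX$; the cleanest route is to note that $R\cH om(\bI,\bI)_0$ is supported on the open part $\cM\times\cX$ where everything is smooth and then invoke~\eqref{dual} verbatim, so that the pointwise computation only ever sees the ordinary Serre duality on $\cX[k]$, exactly as in the $\C^2\times\Pj^1$ case treated in~\cite{MPT}.
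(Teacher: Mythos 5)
Your reduction to a pointwise statement (Nakayama plus base change, then Serre duality to turn surjectivity into nonvanishing of the image of the relative Atiyah class in $\Ext^1(I,I\otimes\pi_2^*\omega_{\Pj^1[k]})_0$) matches the paper's setup, but the key nonvanishing step is wrong. You propose to pair the Atiyah class against a tangent vector to $\fA$, i.e.\ against a Kodaira--Spencer direction in which the expanded target deforms (smoothing of nodes), and claim this pairing is nonzero ``because the family is nontrivial.'' This is exactly backwards: cupping the Atiyah class with a deformation of the target computes the obstruction to extending $I$ along that deformation, and since a universal ideal sheaf exists on $\fX\rar\fA$, that composition \emph{vanishes} --- indeed this vanishing is precisely what the paper proves in the next proposition in order to descend the cosection from $\Ob_{\cM/\fA}$ to $\Ob_\cM$. (Your auxiliary claim that on the rigid component one sees ``a generator of $H^1(\Pj^1,T_{\Pj^1})$'' is also vacuous, since $H^1(\Pj^1,T_{\Pj^1})=0$.) The correct pairing, used in the paper, is against a global \emph{vector field} $V\in H^0(T^{\log}_{\Pj^1[k]})\cong H^0(\omega_{\Pj^1[k]}^\vee)$, chosen to vanish at the nodes, at the relative divisor, and (on the rigid component) at one more point away from the support of $\varepsilon$; cupping with $V$ produces the class in $\Ext^1(I,I)_0$ of the deformation of $I$ obtained by flowing along $V$, and this is nontrivial because the substack genuinely varies in the $\Pj^1$-direction.

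This also exposes the second gap: your argument never uses the hypothesis $\varepsilon\neq 0$, which is essential and cannot be avoided. If $\varepsilon=0$, then by Lemma \ref{Hilb_m_0} every $\cZ$ is a product $Z_0\times\Pj^1$, translation along $V$ fixes $I$, the deformation class is zero, and the map (\ref{surj}) is \emph{not} surjective at such points (consistently, the degree-zero invariants are not divisible by $s_1+s_2$). So any correct proof must invoke $\varepsilon\neq 0$ together with admissibility to guarantee the nontriviality of the translation deformation, and must additionally check (as the paper does via the choice $V\in T^{\log}\cong\omega^\vee$ with opposite residues at the nodes) that the class survives the projection $\Omega_{\Pj^1[k]}\rar\omega_{\Pj^1[k]}$, rather than relying on a tangent direction of $\fA$.
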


\begin{proof}
By the vanishing of higher cohomology sheaves, it suffices to show the surjectivity over a closed point $[I]$. We are reduced to the injectivity of the map
\begin{equation} \label{2_maps}
\xymatrix{
\C \ar[r]^-{\At_{\Pj^1[k]}} & \Ext^1(I,I\otimes \pi_2^*\Omega_{\Pj^1[k]})_0 \ar[r] & \Ext^1(I,I\otimes \pi_2^*\omega_{\Pj^1[k]})_0 \ar[r]^-{\cup \sigma}_\sim & \Ext^1(I, I\otimes \omega_{\cX[k]})_0,
}\end{equation}
for a certain $k$. Note that by construction of the perfect obstruction theory, these $\Ext$-groups indeed compute the fibers of $\cE xt$-sheaves at closed points.

The first arrow is given by the pull-back of the Atiyah class on $\Pj^1[k]$. To prove that it is injective, we will construct some vector field $V$ on $\Pj^1[k]$, such that its paring with the Atiyah class is nonzero.

For 2-point invariants, the relative divisor is $N_0\sqcup N_\infty$. We pick $V$ such that it vanishes on the nodal points and the $0, \infty$ of the two boundary components of $\Pj^1[k]$. For 1-point invariant, the relative divisor is $N_\infty$. We pick $V$ the same way on the bubble components, but on the rigid $\Pj^1$ let $V$ vanish at $\infty$ and another arbitrary point which avoids the support of $\varepsilon$.

Cup $V$ with the Atiyah class. The image is a class in $\Ext^1(I,I)_0$, which stands for the deformation of $I$ in the direction along $V$. Since $\varepsilon\neq 0$ and $I$ is admissible, it is a nontrivial deformation and we conclude that the first arrow of (\ref{2_maps}) is injective.

Moreover, note that we can actually require $V$ to have opposite residues at the two components at each node; in other words, require that $V\in T_{\Pj^1[k]}^{\log}\cong \omega_{\Pj^1[k]}^\vee$. For this choice of $V$ the argument above shows that the image of $\C$ in $\Ext^1(I,I\otimes \pi_2^*\omega_{\Pj^1[k]})_0$, whose paring with $V$ is nontrivial. Hence the injectivity of (\ref{2_maps}) is proved.
\end{proof}

If we are working with the absolute DT invariants, by Theorem 1.1 of \cite{KL}, the surjectivity of (\ref{surj}) is sufficient to conclude the vanishing of the invariants and the existence of a reduced obstruction theory, since the obstruction theory is already an absolute one. However, in order to define a reduced class for the relative DT theory, we have to pass from the relative obstruction theory over $\fA$ to an absolute one. The standard way is to consider the following exact triangle
$$\xymatrix{
L_\fA^\bullet \ar[r] & L_\cM^\bullet \ar[r] & L_{\cM/\fA}^\bullet \ar[r] & L_\fA^\bullet [1].
}$$
Take its composition with the original relative obstruction theory $E^\bullet\rar L^\bullet_{\cM/\fA}\rar L^\bullet_\fA [1]$, and let $\cE^\bullet:= \text{Cone}(E^\bullet\rar L^\bullet_\fA [1])[-1]$. There is a diagram of exact triangles
$$\xymatrix{
\cE^\bullet \ar[r]\ar[d] & E^\bullet \ar[r]\ar[d] & L_\fA^\bullet [1] \ar[d] \\
L_\cM^\bullet \ar[r] & L_{\cM/\fA}^\bullet \ar[r] & L_\fA^\bullet [1].
}$$
Here $\cE^\bullet$ is also 2-term and $\cE^\bullet\rar L^\bullet_\cM$ forms an absolute perfect obstruction theory, with obstruction sheaf $\Ob_\cM:= h^1((\cE^\bullet)^\vee)$.

\begin{prop}
  The surjection $\Ob_{\cM/\fA}\twoheadrightarrow \cO_\cM$ descends to the surjection $\Ob_\cM \twoheadrightarrow \cO_\cM$.
\end{prop}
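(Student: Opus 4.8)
The plan is to reduce the statement to the vanishing of a single Serre-duality pairing, which then follows from a cohomological degree count exploiting the product structure $\cX[k]=[\C^2/\Z_{n+1}]\times\Pj^1[k]$. \emph{Reduction:} dualizing the exact triangle $\cE^\bullet\to E^\bullet\to L^\bullet_\fA[1]$ and taking cohomology sheaves over $\cM$ produces an exact sequence
$$\mathcal{G}\ \xrightarrow{\ \kappa\ }\ \Ob_{\cM/\fA}\ \longrightarrow\ \Ob_\cM\ \longrightarrow\ h^2\big((L^\bullet_\fA[1])^\vee|_\cM\big),\qquad \mathcal{G}:=h^1\big((L^\bullet_\fA[1])^\vee|_\cM\big).$$
Since $\fA$ is a smooth Artin stack, $L^\bullet_\fA$ is concentrated in degrees $[0,1]$ --- its $h^1$ being dual to the infinitesimal automorphisms of the expanded pairs, i.e.\ to the Lie algebras of the tori $(\C^*)^k$ rescaling the $k$ smoothing parameters --- so $(L^\bullet_\fA[1])^\vee|_\cM$ lies in degrees $[0,1]$, the rightmost term vanishes, and $\Ob_\cM=\coker(\kappa)$ with $\kappa$ the connecting map, i.e.\ the obstruction map recording the non-smoothness of $\cM\to\fA$. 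Hence the surjection $m\colon\Ob_{\cM/\fA}\twoheadrightarrow\cO_\cM$ of (\ref{surj}) descends to $\Ob_\cM\twoheadrightarrow\cO_\cM$ if and only if $m\circ\kappa=0$ (surjectivity of the descended map being automatic, as $m$ then factors through $\Ob_\cM$).

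\emph{The two maps:} $m$ is, by construction, the transpose --- under the relative Serre-duality pairing $\cE xt^2_p(\bI,\bI)_0\times\cE xt^1_p(\bI,\bI\otimes q^*\omega_{\fX/\fA})_0\to\cO_\cM$ of (\ref{dual}) --- of the section $\mu(1)=\At_{\fP/\fA}(\bI)\cup\sigma$. On the other hand, the Atiyah-class calculus of deformation theory expresses $\kappa$ as cup product of the $\fX/\fA$-relative Atiyah class of $\bI$ against the Kodaira--Spencer class of the corresponding deformation of the expanded pair; since $\fX=[\C^2/\Z_{n+1}]\times\fP$, such a deformation moves only the $\Pj^1$-expansion $\fP$, so its Kodaira--Spencer class lies in the summand $\pi_2^*\Omega_{\fP/\fA}$ of $\Omega_{\fX/\fA}$ and $\kappa(g)=\At_{\fP/\fA}(\bI)\cup\operatorname{KS}_\fP(g)$. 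Therefore, fibrewise over $[I]\in\cM$ with $I$ supported on $\cX[k]$, $m(\kappa(g))$ equals the Serre-duality trace of the composition $\big(\At_{\fP/\fA}(I)\cup\operatorname{KS}_\fP(g)\big)\circ\big(\At_{\fP/\fA}(I)\cup\sigma\big)\in\operatorname{Ext}^3_{\cX[k]}(I,I\otimes\omega_{\cX[k]})$.

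\emph{Degree count and main obstacle:} I would show this trace vanishes by K\"unneth on $\cX[k]=[\C^2/\Z_{n+1}]\times\Pj^1[k]$, using $\omega_{\cX[k]}=\pi_1^*\omega_{[\C^2/\Z_{n+1}]}\otimes\pi_2^*\omega_{\Pj^1[k]}$. In the K\"unneth decomposition, $\At_{\fP/\fA}(\bI)$ (the $\pi_2^*\Omega_{\fP/\fA}$-component of the Atiyah class) and $\operatorname{KS}_\fP(g)$ (pulled back along $\pi_2$ and supported at the smoothed node) each carry cohomological degree $0$ along the surface factor and $1$ along the $\Pj^1[k]$ factor, while $\sigma$ --- a holomorphic $2$-form used only to trivialize $\omega_{[\C^2/\Z_{n+1}]}$ --- carries degree $0$ along both. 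Composing and applying the trace, the resulting class has cohomological degree $1+1+1=3$ along $\Pj^1[k]$; but $\dim\Pj^1[k]=1$, so $H^{\ge 2}(\Pj^1[k],-)=0$ by Grothendieck vanishing and the class is zero. Hence $m\circ\kappa=0$ and the surjection descends. The main obstacle is making this count rigorous: one must verify that the truncated Atiyah class of $\bI$ respects the splitting $\Omega_{\cX[k]}=\pi_1^*\Omega_{[\C^2/\Z_{n+1}]}\oplus\pi_2^*\Omega_{\Pj^1[k]}$, so that $\At_{\fP/\fA}(\bI)$ genuinely sits in the claimed K\"unneth stratum even though $I$ need not be a box product, and that $\kappa$ really is $\At_{\fP/\fA}(\bI)\cup\operatorname{KS}_\fP$ --- both routine but technical, in the spirit of the reduced-theory arguments of \cite{MPT}; the relative Serre trace also has to be handled with some care because $\cX[k]$ is non-compact. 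An equivalent, arguably cleaner, packaging is to show directly that the obstruction to factoring the morphism $\mu\colon\cO_\cM[1]\to E^\bullet$ through $\cE^\bullet$ --- a class in $\Hom_{D(\cM)}\big(\cO_\cM[1],L^\bullet_\fA[1]|_\cM\big)=H^0\big(\cM,L^\bullet_\fA|_\cM\big)$ --- vanishes, which reduces to the same degree count.
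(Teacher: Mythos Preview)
Your reduction is correct and matches the paper exactly: both arrive at the need to check that the composition
\[
T_\fA \longrightarrow h^1\big((L^\bullet_{\cM/\fA})^\vee\big) \longrightarrow \cE xt_p^2(\bI,\bI)_0 \longrightarrow \cO_\cM
\]
vanishes, and both identify $\kappa$ with $\At_{\fP/\fA}(\bI)\cup\operatorname{KS}_\fP$. From here, however, the paper takes a much simpler and more geometric route.

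Rather than analyse the composite $m\circ\kappa$, the paper proves the \emph{stronger} statement that $\kappa$ itself vanishes. At a closed point $[I]$ the map becomes
\[
\Ext^1_{\cX[k]}\big(\pi_2^*\Omega_{\Pj^1[k]},\,\cO\big)\ \xrightarrow{\ \cup\,\At_{\fP/\fA}(I)\ }\ \Ext^2(I,I)_0,
\]
whose source is the space of first-order smoothings of the nodes of $\cX[k]$ and whose image is precisely the obstruction to extending $I$ as a sheaf along such a smoothing. Since the universal ideal sheaf exists over the universal family $\fX\to\fA$, an admissible $I$ always extends along node-smoothing directions, so this obstruction class is identically zero --- hence $\kappa=0$ and a fortiori $m\circ\kappa=0$. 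This is the argument of \cite{MPT}, Propositions~13--14, and the paper essentially just invokes it.

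Your K\"unneth degree count is a genuinely different attack, but it targets only the weaker statement $m\circ\kappa=0$ and runs into the obstacle you yourself identify: $I$ is not a box product, so $\Ext^*(I,I)$ carries no K\"unneth bigrading, and ``cohomological degree along $\Pj^1[k]$'' has no intrinsic meaning for its elements. One might try to make sense of this via a Leray filtration for $\pi_2$, but the required compatibility of $\At_{\fP/\fA}(I)$ with such a filtration is not automatic and would need a careful argument. Given that the paper's approach kills $\kappa$ outright with a one-line deformation-theoretic interpretation, the degree count --- while plausible in spirit --- is both harder to make rigorous and unnecessary.
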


\begin{proof}
  The proof is essentially the same as Proposition 14 of \cite{MPT}. One has to check the vanishing of the composition
  \begin{equation} \label{anni}
  \xymatrix{
  T_\fA \ar[r] & h^1 ((L_{\cM /\fA}^\bullet)^\vee) \ar[r] & \cE xt_p^2 (\bI,\bI)_0 \ar[r] & \cO_\cM.
  }
  \end{equation}
  The same argument as Proposition 13 of \cite{MPT} works here. (\ref{anni}) can be replaced by
  $$\xymatrix{
  T_\fA \ar[r] & R^1 p_* ((\pi_2^*L_{\fP /\fA}^\bullet)^\vee) \ar[rr]^-{\At_{\fP /\fA}} && \cE xt_p^2 (\bI,\bI)_0 \ar[r] & \cO_\cM,
  }$$
  where the first map is the Kodaira--Spencer map. By the vanishing of higher $Rp_*$ and $\cE xt$-sheaves, over a closed point the second map is
  $$\xymatrix{
  \Ext^1_{\cX[k]}(\pi_2^* \Omega_{\Pj^1[k]}, \cO) \ar[rr]^-{\At_{\fP/\fA}} & & \Ext^2(I,I)_0
  }.$$
  The group $\Ext^1_{\cX[k]}(\pi_2^* \Omega_{\Pj^1[k]}, \cO)$ is the deformation space of $\cX[k]$ coming from smoothing the nodal divisors, and cupping with the Atiyah class gives the obstruction of extending the sheaf $I$ along the direction of smoothing nodes. However, by the existence of a universal ideal sheaf on the universal family $\fX\rar \fA$, we know that along a direction of smoothing the nodes, $I$ can always be extended. Thus the map above vanishes, and the proposition is proved.
\end{proof}

The entire argument works $T$-equivariantly. Since as a $T$-equivariant sheaf $\omega_{\cX[k]}$ has an extra factor $(t_1 t_2)^{-1}$, equivariantly the surjection (\ref{surj}) is
$$\xymatrix{
\Ob_{\cM/\fA}:= \mathcal{E}xt^2_p(\bI,\bI)_0 \ar@{->>}[r] & \cO_{\cM}\otimes (t_1t_2),
}$$
and the surjection from the absolute obstruction sheaf is,
$$\xymatrix{
\Ob_{\Hilb^P(\cX, \cD)} \ar@{->>}[r] & \cO_{\Hilb^P(\cX, \cD)} \otimes (t_1 t_2).
}$$
In the language of Kiem--Li \cite{KL}, this is a \emph{cosection} which is nowhere degenerate. We conclude from \cite{KL} that the intrinsic normal cone, in the sense of \cite{Be}, lies in the kernel $\cE_1\subset \text{Ob}_{\Hilb^P(\cX,\cD)}$ of this cosection as a \emph{cycle} $C$. As usual, one defines a \emph{reduced} virtual cycle by restricting $C$ to the zero-section of $\cE_1$,
$$\left[ \Hilb^P(\cX,\cD) \right]^\red:= 0^!_{\cE_1} [C] \in A_*(\Hilb^P(\cX, \cD)).$$
The reduced class is of virtual dimension $2m_0+1$, and is related to the usual virtual cycle by
$$\left[ \Hilb^P(\cX, \cD) \right]^\vir = (s_1+s_2)\cdot \left[ \Hilb^P(\cX, \cD) \right]^\red.$$
This construction leads to the following result.

\begin{prop}
  For $\varepsilon\neq 0$, the DT invariants
  $$\left\langle \prod_i \sigma_{k_i}(\gamma_i) \right\rangle_{\bm, \varepsilon}, \qquad \left\langle \prod_i \sigma_{k_i}(\gamma_i) \middle| A \right\rangle_{\bm, \varepsilon}, \qquad \left\langle A \middle| \prod_i \sigma_{k_i}(\gamma_i) \middle| B \right\rangle_{\bm, \varepsilon}$$
  are divisible by $s_1+s_2$.
\end{prop}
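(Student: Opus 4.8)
The plan is to obtain the divisibility as a formal consequence of the reduced obstruction theory just constructed, with no further geometric input. All three invariants are, by definition, integrals of one and the same insertion class $\Psi$ -- the product $\prod_i \ch^\orb_{k_i+2}(\gamma_i)$ of descendants, multiplied where relevant by the pulled-back relative insertions $\ev^*A$, $\ev^*B$ -- against the virtual cycle $[\Hilb^P(\cX,\cD)]^\vir$, and this integral is defined through $T$-equivariant localization because $\cX$ is noncompact. So it suffices to compare, on each $T$-fixed component $F$, the localization contribution of $[\Hilb^P(\cX,\cD)]^\vir$ with that of the reduced cycle $[\Hilb^P(\cX,\cD)]^\red$.

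First I would record the localization input. The reduced two-term complex $\cE^\bullet\to L^\bullet_\cM$ is a $T$-equivariant perfect obstruction theory, and the $T$-fixed loci of $\cM:=\Hilb^P(\cX,\cD)$ are proper (that is exactly what makes the equivariant theory well defined), so the virtual localization formula applies to both the reduced and the unreduced obstruction theory on $\cM$. On a fixed component $F$ the $T$-fixed parts of the two obstruction theories agree: the only difference between $\Ob_\cM$ and the reduced obstruction sheaf $\cE_1=\ker\big(\Ob_\cM\twoheadrightarrow\cO_\cM\otimes(t_1t_2)\big)$ is the line $\cO_\cM\otimes(t_1t_2)$, which is purely moving since its weight $s_1+s_2$ is nonzero. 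Hence $F$ carries a single fixed virtual class $[F]^\vir$ for both theories, while the virtual normal bundles obey $N^{\vir,\red}_F = N^\vir_F + \cO_F\otimes(t_1t_2)$ in $K$-theory, so that $e_T(N^{\vir,\red}_F)=(s_1+s_2)\,e_T(N^\vir_F)$. This is the local incarnation of the identity $[\cM]^\vir=(s_1+s_2)[\cM]^\red$. Substituting into the localization formula,
\[
\Big\langle \textstyle\prod_i\sigma_{k_i}(\gamma_i)\,\Big|\,\cdots\Big\rangle_{\bm,\varepsilon}
= \sum_F \int_{[F]^\vir}\frac{\Psi|_F}{e_T(N^\vir_F)}
= (s_1+s_2)\sum_F \int_{[F]^\vir}\frac{\Psi|_F}{e_T(N^{\vir,\red}_F)},
\]
and the sum on the right is precisely the corresponding invariant computed against $[\cM]^\red$; the identical computation handles the absolute, $1$-point and $2$-point cases.

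The last point, which I expect to be the only real obstacle, is to check that this reduced invariant is regular along $s_1+s_2=0$, so that the extracted factor $s_1+s_2$ is genuine rather than cancelled by a pole of the localization residues. Here I would use that the reduced perfect obstruction theory is built with no equivariant denominators -- the parameter $s_1+s_2$ enters only as the $T$-weight of the cosection target $\cO_\cM\otimes(t_1t_2)$, while $\cE^\bullet$, the cosection, and its kernel $\cE_1$ are honest complexes and sheaves. Consequently the reduced theory restricts, over the locus $s_1+s_2=0$, to a bona fide $T^\pm$-equivariant perfect obstruction theory on $\cM$ with $T^\pm=\{(t,t^{-1})\}$; combined with properness of the fixed loci and compatibility of $T$- and $T^\pm$-equivariant localization, this forces the reduced invariant to have no pole at $s_1+s_2=0$. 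Divisibility of the original invariants by $s_1+s_2$ then follows from the displayed identity.
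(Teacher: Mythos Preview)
Your approach contains a genuine gap in the final step, and it also diverges from the paper's argument in two places.

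\textbf{The gap.} The crux of your proof is the claim that the reduced invariant has no pole at $s_1+s_2=0$, and your justification is that ``the reduced theory restricts to a $T^\pm$-equivariant perfect obstruction theory'' together with ``properness of the fixed loci and compatibility of $T$- and $T^\pm$-equivariant localization''. But compatibility of the two localizations is not automatic: the $T$-equivariant invariant specializes to the $T^\pm$-equivariant one only if the latter is actually defined, i.e.\ only if the $T^\pm$-fixed locus of $\cM$ is proper. You have only assumed properness of the $T$-fixed locus, and the $T^\pm$-fixed locus can in principle be strictly larger. Without that extra properness statement your argument is circular: you are invoking the well-definedness of the $T^\pm$-theory to conclude regularity, but well-definedness is equivalent to what you want to prove. (In fact the $T^\pm$-fixed locus \emph{is} proper here --- any $T^\pm$-invariant curve with proper support must be supported on $\{0\}\times\Pj^1[k]$, hence lies in the Hilbert scheme of a proper thickening of that set --- but this needs to be said, and it is not the route the paper takes.)

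\textbf{Comparison with the paper.} The paper controls the denominators differently: it invokes the Hilbert--Chow morphism argument of \cite{OP-DT}, Lemma~5, to show directly that the (unreduced) invariants lie in $\Q[s_1,s_2]_{(s_1 s_2)}$, so that $s_1+s_2$ never appears in a denominator and the factor extracted from the reduced theory is genuine. This bypasses any discussion of $T^\pm$-fixed loci. Second, the paper only constructs the reduced obstruction theory for the $1$- and $2$-point relative moduli (the cosection argument in the preceding subsection is explicitly restricted to $\cD=N_\infty$ or $\cD=N_0\sqcup N_\infty$); the absolute invariant is then handled by the degeneration formula, which expresses it in terms of $1$-point invariants already known to be divisible. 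Your uniform treatment of the absolute case via a reduced class is reasonable --- the cosection construction is in fact easier when there are no bubbles --- but it is not what the paper provides, so you would need to supply that construction separately.
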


\begin{proof}
  The latter two cases follow from the reduced obstruction theory. By definition, it suffices to prove that there are no $(s_1+s_2)$-factors appearing in the denominator $e_T(N^\vir)$ as we apply the $T$-localization. One can apply the same argument as in Lemma 5 of \cite{OP-DT}, push everything forward by a Hilbert-Chow morphism and conclude that the $T$-equivariant DT invariants always take values in $\Q[s_1,s_2]_{(s_1 s_2)}$.

  For the first case, consider the degeneration of $\Pj^1$ into $\Pj^1 \cup \Pj^1$ and their product with $[\C^2/\Z_{n+1}]$. The proposition follows from the degeneration formula and the results for 1-point functions.
\end{proof}

\begin{rem}
  The proposition actually holds for DT invariants with arbitrarily many insertions. One can adopt the trick in Section 4.6 and 4.7 of \cite{OP-DT}, the key for which is that 1-point invariants for $\varepsilon=0$
  $$\left\langle \prod \sigma_{k_i} (\gamma_i) \middle| A \right\rangle_{\bm,0}$$
  form an invertible matrix. Thus by the degeneration formula, DT invariants modulo $(s_1+s_2)$ with $r$ insertions can be recovered by those with $r-1$ insertions.
\end{rem}

\section{Rubber geometry}

In this section we introduce the rubber geometry for $\cX=[\C^2/\Z_{n+1}]\times \Pj^1$, which is parallel to the theory of Okounkkov--Pandharipande \cite{OP-DT} and Maulik--Oblomkov \cite{MOb09-DT}. Most constructions there are still valid in the orbifold case without modification. We fix $P$ as before and always assume that $\varepsilon\neq 0$.

Let $\Hilb^{\bm,\varepsilon}(\cX, N_0\sqcup N_\infty)^\circ\subset \Hilb^{\bm,\varepsilon}(\cX, N_0\sqcup N_\infty)$ be the open locus parameterizing substacks $\cZ$ whose restrictions to the rigid component have only finitely many automorphisms under the $\C^*$-action. The \emph{rubber} moduli space is defined as
$$\Hilb^{\bm,\varepsilon}(\cX, N_0\sqcup N_\infty)^\sim:= [\Hilb^{\bm,\varepsilon}(\cX, N_0\sqcup N_\infty)^\circ / \C^*].$$
In other words, objects in $\Hilb^{\bm,\varepsilon}(\cX, N_0\sqcup N_\infty)^\sim$ are 1-dimensional closed substacks on the bubble components of expanded pairs $\cX[k]$. In the rubber target space, there is no longer a particular rigid component. The rubber moduli space appears naturally as the boundary of the relative moduli space. Note that $\varepsilon\neq 0$ is a necessary condition for it to be nonempty.

We adopt the notation $\cM^\sim:= \Hilb^{\bm,\varepsilon}(\cX, N_0\sqcup N_\infty)^\sim$. Analogous to previous theories, $\cM^\sim$ has a perfect obstruction theory, of virtual dimension $2m_0-1$, since the Artin stack serving as a base for $\cM^\sim$ is now of dimension $-1$. A reduced obstruction theory also exists on $\cM^\sim$, related in the same way to the ordinary obstruction theory. Rubber invariants can again be defined by $T$-localization,
$$\langle A,B \rangle^\sim_{\bm,\epsilon}:= \int_{[\cM^{\sim,T}]^\vir} \frac{\ev_0^*A\cdot \ev_\infty^*B}{e(N^\vir)},$$
and we define the generating function
$$ \langle A,B \rangle^\sim_{\bm}:= 1+ \sum_{\varepsilon\neq 0} q^\varepsilon \langle A,B \rangle^\sim_{\bm,\epsilon}\in \Q(s_1,s_2) (( q_0,\cdots,q_n )).$$

Let $\pi: \fX_\cM^\sim \rar \cM^\sim$ be the universal target over the rubber moduli space. A point in $\fX_\cM^\sim$ can be viewed as a 1-dimensional substack on the rubber together with a point in the associated target, away from the singular and distinguished divisors. Hence the extra point ``rigidifies" the component it lies in, thus producing an object in the non-rubber moduli $\cM$. This process defines a rigidification map $\phi: \fX_\cM^\sim \rar \cM$.

A more precise way to think about this is as follows. Recall that $\fX\rar \fA$ is the universal target over the classifying stack of all expanded pairs. Let $\fX^\circ\subset \fX$ be the complement of all universal singular and distinguished divisors. Consider the following substack
$$\fX_\cM^\circ:= \cM^\circ\times_\fA \fX^\circ \subset \cM\times_\fA \fX \rar \cM,$$
where $\cM^\circ=\Hilb^{\bm,\varepsilon}(\cX, N_0\sqcup N_\infty)^\circ$.

An object of $\fX^\circ_\cM$ is a pair $(\cZ,p)$ where $\cZ\subset \cX[k]$ is a curve with finite automorphisms on the rigid component, and $p\in \cX[k]$ is a point, which avoids all $\cD_i$, $0\leq i\leq k$. We have the diagram
$$\xymatrix{
\fX_\cM^\sim \ar[d] & \fX_\cM^\circ \ar@{^(->}[r]\ar[d]\ar[l] & \cM\times_\fA \fX \ar[r]\ar[d] & \fX \ar[r]\ar[d] & \cX \\
\cM^\sim & \cM^\circ \ar@{^(->}[r]\ar[l] & \cM \ar[r] & \fA. &
}$$
By definition $\cM^\sim= [\cM^\circ/\C^*]$, and the rubber universal family $\fX_\cM^\sim$ is obtained as the quotient
$$\fX_\cM^\sim = [\fX_\cM^\circ/\C^*] = [\cM^\circ \times_\fA \fX^\circ / \C^*],$$
where the $\C^*$-action on the second factor stands for the action on the rigid component.

The rigidification map $\phi: \fX_\cM^\sim \rar \cM^\circ \hookrightarrow \cM$ is then defined as follows. Given a point $(\cZ, p)$ in $\cM^\circ\times_\fA \fX^\circ$, consider the component $C_p\cong [\C^2/\Z_{n+1}]\times \Pj^1$ of $\cX[k]$ where $p$ lies in, and twist $\cZ$ by the $\C^*$-action on $C_p$ that moves $p$ into $[\C^2/\Z_{n+1}]\times \{1\}\subset C_p$. The 1-dimensional substack obtained this way is defined to be the image of $(\cZ, p)$ under $\phi$.

The definition works globally. This map is well-defined since the $\C^*$-action is free and transitive on each component. $\fX_\cM^\sim$ is isomorphic to the substack $\fX_1^\circ$ of $\fX_\cM^\circ$ with the special point lying over $1\in \Pj^1$,
$$\fX_\cM^\sim \cong \cM^\circ\times_\fA \fX^\circ \times_\cX ([\C^2/\Z_{n+1}]\times \{1\}) =: \fX^\circ_1 \subset \cM\times_\fA \fX.$$
We arrive at the following diagram
$$\xymatrix{
 & \fX_\cM^\sim \ar[dl]_\pi \ar[r]^-{\alpha}_-\sim & \fX^\circ_1 \ar[dr]^\phi & \\
\cM^\sim & & &  \cM.
}$$
The perfect obstruction theory on $\cM$ restricts to a $\C^*$-equivariant obstruction theory on the open locus $\cM^\circ$, and thus descends to $\cM^\sim$. Moreover, the universal ideal sheaf and obstruction theory also restricts to $\fX_1^\circ$, since it is a section of the trivial $\C^*$-bundle. As a reselt we have the following equality between the virtual cycles
$$\pi^*[\cM^\sim]^\vir = \alpha^*\phi^*[\cM]^\vir.$$
Let $\iota: [\C^2/\Z_{n+1}]\times \{1\}\hookrightarrow \cX$ be the embedding, and $F\in A^0_\orb([\C^2/\Z_{n+1}])$ be the fundamental class of the untwisted component. In the multi-regular case, the equality between virtual cycles leads to the following identity between descendent 2-point DT invariants and rubber invariants. Here the symbol $\langle A| \sigma_k (\gamma)| B\rangle$ just means $\langle \sigma_k (\gamma)| A, B \rangle$.

\begin{prop} \label{rigidification}
  Given $\bm=(m, \cdots, m)$, $\varepsilon\neq 0$, $A, B \in A^*(\Hilb^m([\C^2/\Z_{n+1}]))$, we have
  $$\langle A | \sigma_1(\iota_*F) | B \rangle_{m,\varepsilon} = -\frac{1}{n+1} \left( \sum_{j=0}^n \varepsilon_j \right) \cdot \langle A, B \rangle^\sim_{m,\varepsilon},$$
  $$\langle A | \sigma_0(\iota_*e_i) | B \rangle_{m,\varepsilon} = -\frac{(2-\zeta^i-\zeta^{-i})}{n+1} \left( \sum_{j=0}^n \varepsilon_j \zeta^{-ij} \right) \cdot \langle A, B \rangle^\sim_{m,\varepsilon}, \quad 1\leq i\leq n.$$
\end{prop}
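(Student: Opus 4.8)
The plan is to use the rigidification diagram $\cM^\sim \xleftarrow{\pi} \fX_\cM^\sim \xrightarrow[\sim]{\alpha} \fX_1^\circ \xrightarrow{\phi} \cM$ together with the identity $\pi^*[\cM^\sim]^\vir = \alpha^*\phi^*[\cM]^\vir$ to compare the descendent 2-point invariant on $\cM$ with the rubber 2-point invariant on $\cM^\sim$. The key observation is that a descendent insertion $\sigma_1(\iota_*F)$ (resp. $\sigma_0(\iota_* e_i)$) on $\cM$, once pulled back under $\phi\circ\alpha$, becomes (up to a universal scalar) the class of the rigidifying section $\fX_1^\circ \hookrightarrow \fX_\cM^\sim$ inside $\fX_\cM^\sim$; pushing forward along $\pi$ then kills the fiber direction and returns a multiple of the fundamental class of $\cM^\sim$, so that $\phi_*$ of the descendent reduces to the rubber cycle. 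First I would write $\ch^\orb_{k+2}(\iota_* \gamma)$ as an operator on $A_*(\cM)$ using the defining formula $p_*(\ch^\orb_{k+2}(\bI)\cdot I^*q^*\gamma \cap p^*(-))$, and restrict to the open locus $\cM^\circ$ where the $\C^*$-action is free; since $\iota_*\gamma$ is supported on $[\C^2/\Z_{n+1}]\times\{1\}$, the factor $q^*\gamma$ localizes the relevant contribution to the section $\fX_1^\circ$.

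The core computation is local on the rigidified component $C_p\cong [\C^2/\Z_{n+1}]\times\Pj^1$: one needs the restriction of $\ch_3(\bI)$ (for $\sigma_1$) and $\ch_2^\orb(\bI)$ (for $\sigma_0$, which by the age shift is $\ch_1$ on the twisted sector) to this component, paired against $[\cO_\cY\text{-type curve}]$ data. This is exactly the kind of vertex computation carried out in Okounkov–Pandharipande \cite{OP-DT} and Maulik–Oblomkov \cite{MOb09-DT} for the smooth case; here one has to track the $\Z_{n+1}$-representation. For $\sigma_1(\iota_*F)$: the class $\ch_3(\bI)$ restricted to a fiber computes (minus) the Euler characteristic / length data of $\cZ$ along that fiber, and since $\cZ$ carries K-class with $\varepsilon = (\varepsilon_0,\dots,\varepsilon_n)$ on the point part, the fiberwise integral of $\ch_3$ against $F$ (the untwisted fundamental class) picks out $-\frac{1}{n+1}\sum_j \varepsilon_j$, the coefficient of the regular representation; the $\frac{1}{n+1}$ is precisely the normalization $\fp_k(\pt)\mapsto -\Id(k)/(n+1)$ built into the Nakajima/McKay dictionary, consistent with $[\pt] = \tfrac{1}{n+1}\sum [p_i]$ from \eqref{dec_of_pt}. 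For $\sigma_0(\iota_* e_i)$: now the insertion lives on the $i$-th twisted sector $B\Z_{n+1}$, so the modified Chern character $\ch^\orb$ reads off the $\zeta^{-ij}$-eigenspace of the fiber of $\bI$ under the gerbe action; the multiplicities $\varepsilon_j$ of $\rho_j\otimes\cO_p$ contribute with weight $\zeta^{-ij}$, and the factor $2-\zeta^i-\zeta^{-i}$ arises from the K-theoretic normal-bundle/Koszul factor $(1-\rho_1)(1-\rho_1^{-1})$ written down in the virtual-dimension computation of Section 3.2 (evaluated on the $i$-th character), exactly paralleling how $\omega_i$ meets $E_i$ with the Cartan-matrix coefficient.

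With the local contribution identified, I would assemble the global statement: $\phi_*\alpha_*\bigl(\ch^\orb_{k+2}(\iota_*\gamma)\text{-factor}\cap \pi^*[\cM^\sim]^\vir\bigr)$ equals the stated scalar times $[\cM]^\vir$ on the image, so integrating $\ev_0^* A\cdot \ev_\infty^* B$ and using that $\ev_0,\ev_\infty$ factor through $\phi$ (the relative conditions at $0,\infty$ are unaffected by rigidifying an interior component) converts the descendent 2-point DT invariant into the scalar times $\langle A,B\rangle^\sim_{m,\varepsilon}$. One must also check the reduced-versus-ordinary obstruction theories match up under rigidification — but this is already recorded in the excerpt (the reduced classes on $\cM$ and $\cM^\sim$ are related by the same $(s_1+s_2)$ factor, and the equality $\pi^*[\cM^\sim]^\vir = \alpha^*\phi^*[\cM]^\vir$ descends to the reduced cycles since the cosection is pulled back compatibly). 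The main obstacle I anticipate is the bookkeeping of the orbifold/equivariant Chern character restricted to the rigidified fiber — precisely pinning down the constants $-\tfrac{1}{n+1}$ and $-\tfrac{(2-\zeta^i-\zeta^{-i})}{n+1}\zeta^{-ij}$ requires careful handling of the twisting morphism $\rho$, the age shift, and the Koszul resolution of $\cO_p$ as a $\Z_{n+1}$-sheaf; everything else is a direct transcription of the rubber calculus of \cite{OP-DT, MOb09-DT} to the stacky setting.
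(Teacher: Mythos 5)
Your proposal follows essentially the same route as the paper: the rigidification diagram with $\pi^*[\cM^\sim]^\vir=\alpha^*\phi^*[\cM]^\vir$, localization of the insertion supported on $[\C^2/\Z_{n+1}]\times\{1\}$ to the section $\fX_1^\circ$, and a fiberwise evaluation of $\ch_3(I)$ (resp.\ $\ch_1$ on the age-one twisted sector, via the twisting morphism $\rho$ and the Koszul factor $(1-\rho_1)(1-\rho_1^{-1})$) producing exactly the stated constants. The only small slip is attributing the $\tfrac{1}{n+1}$ to the Nakajima normalization of $[\pt]$; in the paper it comes from integrating the class of the gerby point, whose automorphism group $\Z_{n+1}$ contributes the factor $\tfrac{1}{n+1}$, but this does not affect the correctness of the argument.
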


\begin{proof}
  By definition, as in the diagram (\ref{def_DT}), $\langle A | \sigma_1(\iota_*F) | B \rangle_{m,\varepsilon}$ is defined by capping the class
  $$\ch_3^\orb(\iota_*F) [\cM]^\vir = p_* \left( \ch^\orb_3(\bI) \cdot I^*q^*\iota_*F \cap p^*[\cM]^\vir \right)$$
  with $\ev_0^*A\cdot \ev_\infty^*B$. Consider following class obtained from intersections on $\fX^\circ_1$,
  \begin{equation} \label{phi-class}
  \phi_* \left( \ch^\orb_3(\bI_1) \cdot I^*q_1^*F \cap \phi^*[\cM]^\vir \right),
  \end{equation}
  where $\iota_1: \fX^\circ_1 \to \cM \times_\fA \fX$ is the embedding, $\bI_1:= \iota_1^* \bI$ is the universal ideal sheaf on $\fX^\circ_1$, and $q_1: \fX^\circ_1 \to [\C^2/\Z_{n+1}]$ is the projection.

  Since $\phi = p \circ \iota_1$, we have
  \begin{eqnarray*}
    \phi_* \left( \ch^\orb_3(\bI_1) \cdot I^*q_1^*F \cap \phi^*[\cM]^\vir \right) &=& p_* \iota_{1*} \left( \iota_1^* \ch^\orb_3 (\bI) \cdot I^*q_1^*F \cap \iota_1^* p^* [\cM]^\vir \right) \\
    &=& p_* \left(  \ch^\orb_3 (\bI) \cdot I^*\iota_* F \cap p^* [\cM]^\vir \right).
  \end{eqnarray*}
  Therefore it suffices to compute (\ref{phi-class}).

  Now we apply the isomorphism $\alpha: \fX_\cM^\sim \xrar{\sim} \fX_1^\circ$, under which $q_1^* F$ becomes $q^*[F\times \Pj^1]$. Capping with $\ev_0^*A\cdot \ev_\infty^*B$, we get
  \begin{eqnarray*}
    \langle A | \sigma_1(\iota_*F) | B \rangle_{m,\varepsilon} &=& \deg \left( \phi_* \left( \ch^\orb_3(\bI_1) \cdot I^*q_1^* F \cap \phi^*[\cM]^\vir \right) \cdot \ev_0^*A\cdot \ev_\infty^*B \right)\\
    &=& \deg \left( \alpha_*\phi_* \left( \ch^\orb_3(\bI_1) \cdot \alpha^*I^*q^*_1 F \cap \alpha^*\phi^*[\cM]^\vir \right) \cdot \ev_0^*A\cdot \ev_\infty^*B \right) \\
    &=& \deg \left( \pi_* \left( \ch^\orb_3(\bI) \cdot I^*q^*[F\times \Pj^1] \cap \pi^*[\cM^\sim]^\vir \right) \cdot \ev_0^*A\cdot \ev_\infty^*B \right) \\
    &=& \deg \pi_*\left( \ch^\orb_3(\bI) \cdot I^*q^*[F\times \Pj^1] \right) \cdot \int_{[\cM^\sim]^\vir} \ev_0^*A\cdot \ev_\infty^*B \\
    &=& \left( \int_{I\cX} \ch^\orb_3(I) \cdot I^*q^*[F\times \Pj^1] \right) \cdot \langle A, B \rangle^\sim_{m,\varepsilon},
  \end{eqnarray*}
  where by abuse of notation we've also used $\bI$ to denote the universal ideal sheaf on $\fX_\cM^\sim$, and in the last equality we apply a fiberwise calculation. The same argument leads to similar formulas for other divisor insertions
  $$\langle A | \sigma_0(\iota_*e_i) | B \rangle_{m,\varepsilon} = \left( \int_{I\cX} \ch^\orb_2(I) \cdot I^*q^*[e_i\times \Pj^1] \right) \cdot \langle A, B \rangle^\sim_{m,\varepsilon}.$$
  It remains to evaluate these integrals.
  \begin{eqnarray*}
    \int_{I\cX} \ch^\orb_3(I) \cdot I^*q^*[F\times \Pj^1] &=& \int_\cX \ch_3(I) \\
    &=& -\int_\cX \sum_{j=0}^n \varepsilon_j [p] \\
    &=& -\frac{\sum_{j=0}^n \varepsilon_j}{n+1}.
  \end{eqnarray*}
  The $n+1$ appears since $p$ has automorphism group $\Z_{n+1}$. For the other integrals, let $\cX_i\cong B\Z_{n+1}\times \Pj^1\subset I\cX$ be the $i$-th component. Each of them has age 1 except $i=0$.
  $$\int_{I\cX} \ch^\orb_2(I) \cdot I^*q^*[e_i\times \Pj^1] = \int_{\cX_{-i}} \ch_2^\orb(I)\cdot q^*[e_{-i}\times \Pj^1]
    = \int_{\cX_{-i}} \ch_1(I), $$
  where we view $i\in \Z_{n+1}$.

  To compute the restriction of $[I]$ on $\cX_{-i}$ we apply the K-theoretic excess intersection formula. For $[V]\in K(\cY)$, the formula says
  $$i^*i_*[V]= \lambda_{-1}N^\vee\cdot [V],$$
  where $N_{\cY/\cX}$ is the normal bundle. In our case we have $N= \rho_1+ \rho_{-1}$ and $\lambda_{-1}N^\vee= (1-\rho_1)(1-\rho_{-1}) \in K(\cY)$. Then in $K(\cX_{-i})$,
  $$[I]|_{\cX_{-i}}= \left( 1-m\rho_\reg - \sum_{j=0}^n \varepsilon_j [\rho_j\otimes \cO_p] \right) \cdot (1-\rho_{-1})(1-\rho_1),$$
  and after the coefficient-twisting morphism $\rho$,
  $$\rho\left([I]|_{\cX_{-i}} \right) = \left( 1- \sum_{j=0}^n \varepsilon_j \zeta^{-ij} [\rho_j\otimes \cO_p] \right) \cdot (1-\zeta^i \rho_{-1})(1-\zeta^{-i} \rho_1).$$
  The term involved with $\rho_\reg$ vanishes because the character of $\rho_\reg$ on a nontrivial group element always vanishes. Compute $ch_1$ and we get
  $$\int_{I\cX} \ch^\orb_2(I) \cdot I^*q^*[e_i\times \Pj^1] = -\frac{(2-\zeta^i-\zeta^{-i}) \sum_{j=0}^n \varepsilon_j \zeta^{-ij}}{n+1}.$$
  The proposition is proved.
\end{proof}

\section{Degree zero invariants}

In this section we compute the degree zero \emph{equivariant} DT invariants. Under the specialization to the CY condition, these invariants reduce to the computation of the orbifold topological DT vertex, which is already studied by Young--Bryan \cite{YB} and more generally by Bryan--Cadman--Young \cite{Br} via certain vertex operators. We will generalize their results to the equivariant vertex. In this section everything is considered as $(\C^*)^3$-equivariant, for the 3-dimensional torus.

\subsection{Toric compactification}

For the purpose of this section, we introduce another toric compactification. As a toric orbifold surface, $[\C^2/\Z_{n+1}]$ is defined by the simplicial fan in $\R^2$ generated by $(1,0)$ and $(1,n+1)$. Now we add other two rays
$(0,1)$ and $(-1,-1)$ to obtain a complete fan. Denote the compactified surface by $\bar S$, as shown in the following figure.

\begin{figure}[h]
\begin{center}
\psfrag{a1}{$(1,n+1)$}
\psfrag{a2}{$(1,0)$}
\psfrag{a3}{$(0,1)$}
\psfrag{a4}{$(-1,-1)$}
\psfrag{b1}{$(1,n+1)$}
\psfrag{b2}{$(1,0)$}
\psfrag{b3}{$(0,1)$}
\psfrag{b4}{$(-1,-1)$}
\includegraphics[scale=0.5]{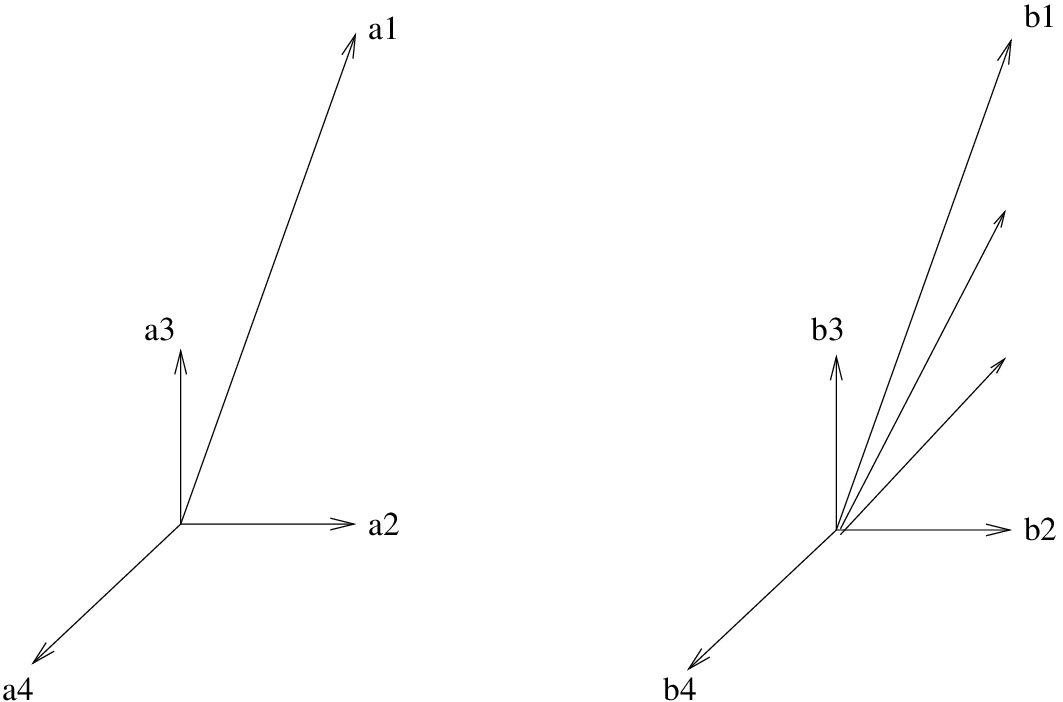}
\end{center}
\caption{toric compactifications of $[\C^2/\Z_{n+1}]$ and $\mathcal{A}_n$}
\end{figure}

A feature of this compactification which differs from $[\Pj^2/\Z_{n+1}]$ which we used before is that it does not introduce any new orbifold points: the only orbifold point in $\bar S$ is still the origin in $[\C^2/\Z_{n+1}]$. The same construction also gives a smooth toric compactification of the $\cA_n$ surface, which we denote by $\bar S'$.

Let $\partial \bar S$ and $\partial \bar S'$ be the boundaries of the two compactifications. A key observation here is that $\partial \bar S$ and $\partial \bar S'$ have exactly the same toric skeleton: same torus-fixed points, 1-skeletons and toric weights.

\subsection{Equivariant DT vertex for transversal $A_n$-singularities}

Let's compute the equivariant DT vertex for $[\C^3/\Z_{n+1}]\cong [\C^2/\Z_{n+1}]\times \C$.

Equip the lattice $\Z_{\geq 0}^3$ with the multi-regular coloring, i.e. each point $(i,j,k)$ is colored by the residue class of the content $(i-j) \mod (n+1)$. A 3d colored partition $\pi$ is viewed as a collection of boxes piled in the corner of $\Z_{\geq 0}^3$. The $\Z_{n+1}$-colored multi-regular equivariant vertex is defined as
$$Z_{\Z_{n+1}} (s,q) := \sum_\pi w(\pi) q_0^{|\pi|_0}\cdots q_n^{|\pi|_n} \quad\in\quad \Q(s_1,s_2,s_3) \llb q_0,\cdots,q_n \rrb,$$
where $|\pi|_i$ is the number of boxes in $\pi$ of color $i$, $\pi$ runs over all 3d colored partitions, and $w(\pi)$ is a $\Z_{n+1}$-version of the equivariant vertex measure in the sense of \cite{MNOP1, MNOP2}. By definition,
$$w(\pi) = \frac{1}{e\left( T_\pi^\vir \right)},$$
where $T_\pi^\vir$ is the virtual tangent space at the $(\C^*)^3$-fixed point $I_\pi$. Each $w(\pi)$ is a rational function of degree 0 in $s_1,s_2,s_3$. For an explicit formula of $T_\pi^\vir$ and $w(\pi)$, we refer to the Appendix of \cite{YB}.

\begin{lem}
  In terms of $s_3$, $\log Z_{\Z_{n+1}}(s,q)$ has only one pole of order 1.
\end{lem}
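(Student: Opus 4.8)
The plan is to analyze the $s_3$-dependence of each vertex weight $w(\pi)$ directly from the combinatorial formula for the virtual tangent space $T^\vir_\pi$, and then argue that the logarithm of the generating function has at worst a simple pole. Since $[\C^3/\Z_{n+1}] \cong [\C^2/\Z_{n+1}]\times \C$ with $s_3$ the weight of the extra $\C$-factor, I would first recall from the Appendix of \cite{YB} the explicit form of $T^\vir_\pi$ as a Laurent polynomial in the characters $t_1,t_2,t_3$, organized so that the dependence on $t_3$ enters through the "transversal" structure: the $\Z_{n+1}$-colored vertex for a product geometry decomposes, slice by slice along the third axis, into contributions that look like a $\Z_{n+1}$-colored \emph{two}-dimensional count, glued together with $t_3$-shifts. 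The first step is therefore to make this slicing precise and extract, for each $\pi$, the order of vanishing and the order of the pole of $w(\pi)$ as a function of $s_3$, uniformly in $\pi$.

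The key structural input I would isolate is that $w(\pi)$, as a rational function of $s_3$, has poles only along $s_3 = 0$ (this is the statement that the $\C$-direction is "rigid", i.e. the only $T^3$-fixed points are the colored partitions themselves and there is no excess fixed locus associated to the third torus factor), and moreover that the \emph{order} of the pole at $s_3=0$ is bounded independently of $\pi$ — in fact, one expects the leading behavior as $s_3 \to 0$ to be governed by the dimension of the transversal slice, producing a factor like $s_3^{-(\text{something linear in the slice data})}$ whose logarithm contributes a term proportional to $(\log s_3)\cdot(\text{linear functional of }\pi)$ plus a regular remainder. Summing over $\pi$ against $q_0^{|\pi|_0}\cdots q_n^{|\pi|_n}$, the coefficient of $\log s_3$ becomes a single well-defined power series in $q_0,\dots,q_n$, and all other poles cancel in the logarithm. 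Concretely I would (i) write $Z_{\Z_{n+1}}(s,q)$ as an exponential of a connected count using the standard vertex/edge factorization, (ii) show the connected contributions are each regular in $s_3$ away from $s_3=0$, and (iii) control the $s_3 = 0$ behavior by the transversal-dimension argument to see that $\log Z$ has a pole of order exactly $1$ there and nowhere else.

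The main obstacle, I expect, is step (iii): making rigorous the claim that after taking the logarithm, the potentially high-order poles at $s_3 = 0$ coming from individual $w(\pi)$ collapse to a \emph{single} simple pole. This is the same phenomenon as in the ordinary (uncolored) DT vertex computations of \cite{MNOP1, MNOP2} and in \cite{YB, Br}, where one uses the fact that $\log Z$ of a transversal $A_n$-singularity geometry is, up to normalization, a sum of shifted MacMahon-type functions $M(x,Q)$ whose $s_3$-dependence is through an overall exponent that is linear in $1/s_3$; the cancellation of higher-order poles is built into the product structure of $M$. I would therefore try to reduce the lemma to precisely this known product form — essentially anticipating the formula of the next theorem in the paper — by comparing with the edge contributions of the toric compactification $\bar S \times \Pj^1$ introduced in Section 5.1, where the transversal $A_n$ edges carry exactly the $M(q_{[a,b]},Q)^{\pm(s_1+s_2)/s_3}$ factors; the exponent being a \emph{first-order} function of $1/s_3$ is what yields the order-one pole. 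If a fully self-contained combinatorial argument is preferred, the fallback is to bound the pole order of each $w(\pi)$ at $s_3=0$ by a direct count of negative-weight summands in $T^\vir_\pi$ proportional to the third coordinate, show this count grows at most linearly, and then invoke the standard fact that $\log$ of such a generating series has pole order equal to the linear rate — here $1$.
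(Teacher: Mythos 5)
There is a genuine gap, and it sits exactly at the point your proposal leans on hardest. Your ``key structural input'' --- that $w(\pi)$, as a rational function of $s_3$, has poles only along $s_3=0$, with order bounded uniformly in $\pi$ --- is false. The weight $w(\pi)=1/e(T^\vir_\pi)$ is a product of reciprocals of linear forms $as_1+bs_2+cs_3$ with mixed integer coefficients (this is visible already from the standard character formula for $T^\vir_\pi$, and persists after taking the $\Z_{n+1}$-invariant part), so individual terms have poles along many lines $s_3=-(as_1+bs_2)/c$ besides $s_3=0$, and the order of the pole at $s_3=0$ grows with $\pi$ rather than being uniformly bounded. The entire content of the lemma is that these spurious poles, and the higher-order poles at $s_3=0$, cancel after summing over all $\pi$ and taking the logarithm; this cannot be seen term by term, and the closing appeal to a ``standard fact'' that the log of such a series has pole order equal to a linear growth rate is not a theorem --- it is precisely what needs proof. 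Your fallback is also circular within the paper's logic: the product of MacMahon factors with exponents linear in $1/s_3$ is the statement of the theorem that this lemma (together with the comparison to $\cA_n\times\C$ and the Calabi--Yau specialization $s_1+s_2+s_3=0$) is used to establish, and the boundary of the compactification $\bar S\times\C$ of Section 5.1 is only ever \emph{identified} with the corresponding $\cA_n$ boundary skeleton, not evaluated in closed form in advance, so the $M(q_{[a,b]},Q)^{\pm(s_1+s_2)/s_3}$ edge factors are not available as input here.

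The paper's own proof is a citation: it invokes the technique of Sections 4.2--4.3 of \cite{MNOP2}, which is geometric rather than combinatorial. In that approach one writes the $1$-point degree-$0$ relative invariant of $[\C^2/\Z_{n+1}]\times\Pj^1$ by relative virtual localization as the vertex $Z_{\Z_{n+1}}(s,q)$ times a rubber factor $\langle \tfrac{1}{s_3-\psi_0}\,|\,\emptyset\rangle^\sim$ (this is exactly how it reappears in Section 5.3 of the paper); properness of pushforwards to symmetric products controls the poles of the relative side so that only $s_1,s_2$ denominators occur, while all $s_3$-dependence enters through the rubber series in $1/s_3$, and the rubber calculus of \cite{MNOP2} then yields that $\log Z_{\Z_{n+1}}$ has a single simple pole in $s_3$. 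If you want a correct write-up, you should follow that route (or genuinely reprove the cancellation), not the term-by-term analysis of $w(\pi)$.
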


\begin{proof}
  One can use exactly the same technique as in Section 4.2 and 4.3 of \cite{MNOP2}.
\end{proof}

The subsequent argument in \cite{MNOP2} to determine the equivariant DT vertex of $\C^3$ does not work any more, since it requires the invariants to be symmetric in $s_1$, $s_2$ and $s_3$, which is not the case for $[\C^2/\Z_{n+1}]\times \C$. We look for another argument by comparison with $\cA_n\times \C$.

Denote by $Z(\cX)$ the equivariant degree 0 DT generating function of a 3-orbifold $\cX$, e.g. $Z([\C^2/\Z_{n+1}]\times \C) = Z_{\Z_{n+1}}(s,q)$. By localization we have
$$Z(\bar S\times \C) = Z([\C^2/\Z_{n+1}]\times \C) \cdot Z(\partial \bar S \times \C),$$
where by $\partial \bar S$ we denote the contribution from the fixed points on the boundary.

On the other hand, let $\bar S'$ be the smooth toric compactification of $\cA_n$ given as above. Similarly we have
$$Z(\bar S'\times \C) = Z(\cA_n\times \C) \cdot Z(\partial \bar S'\times \C),$$
as formal power series in $\Q(s_1,s_2,s_3)\llb Q \rrb$. A key observation is that $\bar S$ and $\bar S'$ has the same boundary skeleton, and therefore
$$Z(\partial \bar S \times \C) = Z(\partial \bar S' \times \C),$$
under the identification $Q = q_0 q_1 \cdots q_n$.

\begin{lem}
  $\log Z(\bar S\times \C)$ and $\log Z(\bar S'\times \C)$ only have poles in $s_3$ and have no poles in $s_1$, $s_2$.
\end{lem}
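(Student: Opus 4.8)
The plan is to reduce the statement about $\log Z(\bar S\times \C)$ (and identically $\log Z(\bar S'\times \C)$) to the pole structure of the local vertex contributions already established in the previous lemma, and to a clean vanishing argument for the edge contributions. First I would write, by $T^3$-localization, the degree zero partition function $Z(\bar S\times \C)$ as a product over the $T^3$-fixed points and edges of $\bar S\times \C$: since $\bar S$ is a toric (orbi)surface and we cross with $\C$, the only compact torus-invariant curves in $\bar S\times \C$ are the toric curves inside $\bar S\times\{0\}$, so the fixed-point contributions are local vertices $Z_{\Z_{n+1}}(s,q)$ at the orbifold corner and ordinary MNOP vertices $V(s_1',s_2',s_3)$ at the smooth corners, glued along edge factors $E(s)$ indexed by the $1$-dimensional cones of $\bar S$. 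Taking $\log$ turns the product into a sum, so it suffices to control the $s_3$-poles and the $s_1,s_2$-poles of each summand separately.

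For the smooth corners and the edges this is exactly the content of Section 4.2--4.3 of \cite{MNOP2}: $\log$ of a smooth equivariant vertex has a single simple pole in each of its three equivariant variables and is otherwise polynomial, and after gluing the $s_1,s_2$-poles coming from adjacent vertices cancel against the edge factors, leaving only $s_3$-poles; moreover, because every edge of $\bar S$ lies in $\bar S\times\{0\}$ and hence carries $\C$-normal weight $s_3$, each edge factor contributes only a monomial pole in $s_3$. For the orbifold corner I would invoke the previous lemma directly: $\log Z_{\Z_{n+1}}(s,q)$ has only one pole in $s_3$, of order $1$, and one checks (again by the $\C^3$-type degree argument, now in the colored setting, cf.\ the appendix of \cite{YB}) that its potential poles in $s_1,s_2$ along $s_1=0$, $s_2=0$, or $s_1+s_2=0$ either are absent or cancel after gluing with the two adjacent smooth vertices along the edges emanating from the orbifold corner. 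Combining these local statements over all vertices and edges gives that $\log Z(\bar S\times\C)$ has no $s_1$- or $s_2$-poles and only monomial poles in $s_3$; the identical argument with $\bar S'$ in place of $\bar S$ handles $\log Z(\bar S'\times\C)$, and in fact since $\partial\bar S$ and $\partial\bar S'$ have the same toric skeleton the edge and boundary-vertex contributions literally agree.

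The main obstacle I expect is the cancellation of the $s_1,s_2$-poles at and around the orbifold vertex: unlike in \cite{MNOP2}, the colored vertex $Z_{\Z_{n+1}}(s,q)$ is not symmetric in $s_1,s_2,s_3$, so one cannot simply quote symmetry to move a known $s_3$-pole into the $s_1$ or $s_2$ slot and back. Instead one must argue intrinsically that $\log Z_{\Z_{n+1}}$ has no poles in $s_1$ and $s_2$ by itself — this should follow from the dimensional/degree analysis of the virtual tangent weights $T_\pi^\vir$ (the $s_1=0$ and $s_2=0$ behaviour is governed by the fixed loci of the corresponding one-parameter subgroups, which are again toric and of the expected codimension) — and then separately that the $(s_1+s_2)$-type poles, if present in individual $w(\pi)$, wash out in the sum, exactly as the non-monomial $s_i$-poles wash out in the smooth case. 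Once this local input is in hand, the gluing bookkeeping is routine.
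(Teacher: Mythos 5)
Your plan has a genuine gap, and it sits exactly where you flagged the ``main obstacle.'' The key local claim you propose to prove --- that $\log Z_{\Z_{n+1}}(s,q)$ has no poles in $s_1$, $s_2$ by itself --- is false: the closed formula eventually obtained for the orbifold vertex contains the exponent $-\frac{(s_1+s_2)(s_1+s_2+s_3)}{(n+1)s_1s_2}$, so its logarithm has simple poles along $s_1=0$ and $s_2=0$ (just as $\log Z(\C^3)$ does for the smooth MNOP vertex). The absence of $s_1,s_2$-poles is a \emph{global} statement about the compact geometry $\bar S\times\C$: the poles of the individual vertex and edge terms only cancel after summing over the whole toric skeleton. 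Proving that cancellation directly by vertex/edge bookkeeping is not ``routine''; worse, in the logical structure of this section it is close to circular, since the lemma is used precisely to pin down the unknown orbifold vertex (via the ratio with $Z(\cA_n\times\C)$ and the CY specialization), so you cannot feed detailed knowledge of that vertex's $s_1,s_2$-behaviour into its own proof. Even in the smooth case, the analogous statement in \cite{MNOP2} (their Lemma 3) is not proved by cancelling vertex poles against edge factors.

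The paper's proof avoids all of this with a one-line geometric argument: because $\bar S$ is proper, the projection $\bar S\times\C\to\C$ is proper, so one can push the degree-zero virtual class forward (via the Hilbert--Chow type map) to $\operatorname{Sym}(\C)$ and then localize only with respect to the torus acting on the $\C$-factor. Localization on $\C$ produces only powers of $s_3$ in denominators, while the coefficients are proper equivariant pushforwards and hence polynomial in $s_1,s_2$; this gives at once that $\log Z(\bar S\times\C)$ (and identically $\log Z(\bar S'\times\C)$) has only monomial $s_3$-poles and no $s_1,s_2$-poles. If you want to salvage your approach, you would have to replace the false local claim by an honest global cancellation argument, which essentially amounts to redoing this properness/pushforward argument in vertex language; the direct pushforward is both shorter and not circular.
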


\begin{proof}
  The proof is the same as Lemma 3 in \cite{MNOP2}. Consider the proper projection $\bar S\times \C \rar \C$. One can push the virtual class forward onto $\mathrm{Sym} (\C)$, and then compute by localization on $\C$, which only introduces poles in $s_3$.
\end{proof}

Combine the previous two lemmas, the identification of boundary contributions and the fact that there is a reduced obstruction theory. We conclude that
$$\frac{Z([\C^2/\Z_{n+1}]\times \C)}{Z(\cA_n\times \C)} = F(q)^{-\frac{s_1+s_2}{s_3}},$$
where $F(q)\in \Q\llb q_0,\cdots q_n\rrb$. The denominator can computed by localization and the explicit formula for $Z(\C^3)$ in \cite{MNOP2},
$$Z(\cA_n\times \C) = M(1,-Q)^{-\frac{(n+1)(s_1+s_2)}{s_3} - \frac{(s_1+s_2)(s_1+s_2+s_3)}{(n+1)s_1s_2}}, \qquad Q = q_0q_1\cdots q_n,$$
where $M(x,Q) = \prod_{k\geq 1} \left( 1-xQ^k \right)^{-k}$ is the McMahon function.

$F(q)$ can be determined via specialization to the CY condition $s_1+s_2+s_3=0$, in which case $Z([\C^2/\Z_{n+1}]\times \C)$ reduces to the topological DT vertex in \cite{YB, Br}. By Theorem 1.6 in \cite{YB} we have
$$Z([\C^2/\Z_{n+1}]\times \C) \bigg|_{s_1+s_2+s_3=0} = M(1,-Q)^{n+1} \prod_{1\leq a\leq b\leq n} \left( M(q_{[a,b]}, Q) M(q_{[a,b]}^{-1}, Q) \right),$$
where $q_{[a,b]} = q_a\cdots q_b$, $Q=q_0 q_1\cdots q_n$. We arrive at the following conclusion.

\begin{thm}
  The $(\C^*)^3$-equivariant DT vertex for $[\C^2/\Z_{n+1}]\times \C$ is
  $$Z_{\Z_{n+1}} (s,q) = M(1,-Q)^{-\frac{(n+1)(s_1+s_2)}{s_3} - \frac{(s_1+s_2)(s_1+s_2+s_3)}{(n+1)s_1s_2}} \prod_{1\leq a\leq b\leq n} \left( M(q_{[a,b]}, Q) M(q_{[a,b]}^{-1}, Q) \right)^{-\frac{s_1+s_2}{s_3}},$$
  where $q_{[a,b]} = q_a\cdots q_b$, $Q=q_0 q_1\cdots q_n$, $M(x,Q) = \prod_{k\geq 1} \left( 1-xQ^k \right)^{-k}$.
\end{thm}

\subsection{Degree zero invariants of $[\C^2/\Z_{n+1}]\times \Pj^1$}

By relative virtual localization, the $(\C^*)^3$-equivariant 1-point degree 0 DT invariants of $[\C^2/\Z_{n+1}]\times \Pj^1$ is
$$\langle \ | \emptyset \rangle_0 =  Z_{\Z_{n+1}} (s,q) \cdot \left\langle \frac{1}{s_3-\psi_0} \middle| \emptyset \right\rangle_0^\sim,$$
where the second term is a 1-point descendent invariant on the rubber space.

\begin{thm} \label{deg-0-rel}
  The $T$-equivariant $k$-point relative degree 0 DT invariant of $[\C^2/\Z_{n+1}]\times \Pj^1$ is
  $$\langle \ | \emptyset, \cdots, \emptyset \rangle_0 = M(1,-Q)^{ (k-2)\cdot \frac{(s_1+s_2)^2}{(n+1)s_1s_2}}.$$
\end{thm}

\begin{proof}
  Denote by $W_\infty$ the rubber invariant $\left\langle \frac{1}{s_3-\psi_0} \middle| \emptyset \right\rangle_0^\sim$. By the same topological recursion argument as in Lemma 4 of \cite{MNOP2}, we have $\log W_\infty = \frac{1}{s_3} F_\infty$, where $F_\infty \in \Q(s_1, s_2) \llbracket q_0,\cdots, q_n \rrbracket$. Also by the same argument as in Lemma 3 of \cite{MNOP2}, we have $\log \langle \ | \emptyset \rangle_0$ has no poles in $s_3$ and only monomial poles in $s_1$, $s_2$. We conclude that $\log W_\infty$ consists of the terms in $-\log Z_{\Z_{n+1}} (s,q)$ with single pole $s_3$, and $\log \langle \ |\emptyset \rangle_0$ consists of the remaining terms. Hence $(\C^*)^3$-equivariantly,
  $$\log \langle \ |\emptyset \rangle_0 = -\frac{(s_1+s_2)(s_1+s_2+s_3)}{(n+1) s_1 s_2} \log M(1,-Q).$$
  The $T$-equivariant version is obtained by setting $s_3=0$, and the $k$-point invariant is obtained via the degeneration formula.
\end{proof}

Same arguments as in \cite{OP-DT} yield the following degree 0 descendent invariants. Consider the equivariant vertex with descendents
$$Z_{\Z_{n+1}}^{\sigma_1(s_3)} (s,q) := \sum_\pi w^{\sigma_1(s_3)} (\pi) q_0^{|\pi|_0}\cdots q_n^{|\pi|_n} \quad\in\quad \Q(s_1,s_2,s_3) \llb q_0,\cdots,q_n \rrb,$$
where
$$w^{\sigma_1(s_3)} (\pi) := \frac{ch_3(I_\pi)}{e\left( T_\pi^\vir \right)},$$
where $T_\pi^\vir$ is the virtual tangent space at the $(\C^*)^3$-fixed point $I_\pi$.

\begin{cor} \label{vacuum-expectation}
  \begin{enumerate}[1)]
    \setlength{\parskip}{1ex}

    \item The equivariant vertex $Z_{\Z_{n+1}}^{\sigma_1(s_3)}(s,q)$ with a descendent insertion $\sigma_1(s_3)$ is
        $$Z_{\Z_{n+1}}^{\sigma_1(s_3)}(s,q) = -s_3 Q\frac{\partial}{\partial Q} Z_{\Z_{n+1}}(s,q).$$

    \item Let $F$ be a vertical fiber of $[\C^2/\Z_{n+1}]\times \Pj^1$. Then the 2-point degree 0 DT invariant with descendent insertion $\sigma_1(F)$ is
        $$\langle \emptyset | \sigma_1(F) | \emptyset \rangle_0 = (s_1+s_2) Q\frac{\partial}{\partial Q} \log \left( M(1,-Q)^{n+1} \prod_{1\leq a\leq b\leq n} \left( M(q_{[a,b]}, Q) M(q_{[a,b]}^{-1}, Q) \right) \right).$$
  \end{enumerate}
\end{cor}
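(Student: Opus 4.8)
The plan is to follow Okounkov--Pandharipande \cite{OP-DT}, adapting their degree-$0$ computations for $\C^2\times\Pj^1$ to the orbifold setting. Both parts reduce to evaluating the relevant descendent at the torus-fixed ideals appearing in the equivariant vertex and then feeding the result into the relative virtual localization already set up above for the degree-$0$ theory.

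For part (1), the $T^3$-fixed points of the vertex are the multi-regular monomial ideals $I_\pi$ in $[\C^2/\Z_{n+1}]\times\C$, and I would evaluate the class $\ch_3^\orb(I_\pi)\cdot(s_3\cdot 1)$ over the inertia stack by the orbifold Riemann--Roch / $K$-theoretic excess-intersection technique used in the proof of Proposition \ref{rigidification}. The contributions split by sector; the twisted sectors are copies of $B\Z_{n+1}\times\C$, hence one-dimensional, so the degree-$3$ component of the modified orbifold Chern character vanishes there and only the untwisted sector survives. On the untwisted sector the contribution at $I_\pi$ is $-s_3$ times the number of boxes of $\pi$, counted as recorded by the distinguished grading variable $Q=q_0\cdots q_n$ (the remaining toric parameters being absorbed into the $q_{[a,b]}$). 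Since multiplying the contribution of each $\pi$ by this box count is precisely the action of the Euler operator $Q\,\partial/\partial Q$, summing over $\pi$ yields $Z^{\sigma_1(s_3)}_{\Z_{n+1}}(s,q)=-s_3\,Q\,\partial_Q\,Z_{\Z_{n+1}}(s,q)$; this is the orbifold analogue of the standard DT divisor relation.

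For part (2), I would apply relative virtual localization to $\langle\emptyset|\sigma_1(F)|\emptyset\rangle_0$ on $[\C^2/\Z_{n+1}]\times\Pj^1$ exactly as in the degree-$0$ discussion above, representing the vertical-fiber class $[F]$ by a fiber over a $\C^*$-fixed point of $\Pj^1$; the descendent then localizes onto the vertex over that point, where it becomes the insertion $\sigma_1(s_3)$ of part (1), while the rubber factors are unchanged. Using part (1) together with the known degree-$0$ one- and two-point values and the standard cancellations between vertex and rubber contributions as in \cite{MNOP2,OP-DT}, the localization sum reduces to $-s_3\,Q\,\partial_Q\log Z_{\Z_{n+1}}(s,q)$. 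By the factorization $Z_{\Z_{n+1}}=Z(\cA_n\times\C)\cdot F(q)^{-(s_1+s_2)/s_3}$ and the explicit vertex formula proved above, this expression is regular at $s_3=0$ --- the simple $s_3$-pole of $\log Z_{\Z_{n+1}}$ being killed by the explicit factor $s_3$ --- and passing to the $(\C^*)^2$-theory by setting $s_3=0$ turns the coefficient into $s_1+s_2$, giving $(s_1+s_2)\,Q\,\partial_Q\log\bigl(M(1,-Q)^{n+1}\prod_{1\le a\le b\le n}M(q_{[a,b]},Q)M(q_{[a,b]}^{-1},Q)\bigr)$; this is the asserted formula, the argument of the logarithm being the Calabi--Yau specialization $Z_{\Z_{n+1}}|_{s_1+s_2+s_3=0}$ computed via Theorem 1.6 of \cite{YB}. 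Degree-$0$ descendent invariants relative to more fibers then follow from the degeneration formula.

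The step I expect to be the main obstacle is the relative-localization bookkeeping in part (2): verifying that the fiber-class descendent matches the vertex insertion $\sigma_1(s_3)$ under all gluing and normalization conventions, and --- more delicately --- controlling the interaction of the limit $s_3\to0$ with the order-one $s_3$-pole of $\log Z_{\Z_{n+1}}$ and with the $\psi$-class poles of the rubber integrals, so that the reduced-obstruction-theory factor $s_1+s_2$ emerges with the correct coefficient. The secondary delicate point is the precise evaluation of the orbifold Chern character in part (1), in particular pinning down which combinatorial box count the descendent extracts, hence that the resulting operator is exactly $Q\partial_Q$.
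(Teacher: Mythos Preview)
Your approach is the same as the paper's: the paper's entire proof is the single sentence ``Same arguments as in \cite{OP-DT} yield the following degree 0 descendent invariants,'' and you have correctly identified and outlined those arguments, including the vertex/rubber factorization and the passage to $s_3=0$.

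One small correction to your reasoning in part~(1): the twisted sectors drop out not by a bare dimension count---equivariantly, $\ch_3^\orb=\ch_{2}$ on an age-$1$ sector of dimension~$1$ need not vanish---but because the insertion $s_3$, read as $s_3\cdot 1\in A^*_{T,\orb}(\cX)$, is supported on the untwisted component, so $I^*q^*(s_3\cdot 1)$ already restricts the integrand there; compare the analogous step in the proof of Proposition~\ref{rigidification}. The two delicate points you flag---the $s_3\to 0$ bookkeeping against the simple pole of $\log Z_{\Z_{n+1}}$, and pinning down that the orbifold $\ch_3$ together with the $1/(n+1)$ automorphism factor yields exactly the box count corresponding to $Q\partial_Q$---are precisely where the orbifold modifications enter, and are worth writing out explicitly.
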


\section{Cap and tube invariants}

In this section we compute the cap and tube DT invariants, i.e. relative DT invariants with 1 or 2 relative insertions. Before getting into the calculation, we need some results from the compactification. From now on we always assume the multi-regular condition $\bm=(m,\cdots, m)$.

\subsection{Factorization on the compactification}

Again we consider the compactification introduced in Section 4. Recall that we have $S = [\C^2/\Z_{n+1}] \subset \bar S$, and $\bar\cX:= \bar S \times \Pj^1$ is a compactification of $\cX$. The torus $T$ acts fiberwisely. The $T$-fixed locus of $\bar\cX$ is a disjoint union of $B\Z_{n+1}\times \Pj^1$, which lives in the original $\cX$, and 3 copies of $\Pj^1$, which we denote by $\Pj^1(i)$, $i=1,2,3$. Each $\Pj^1(i)$ lives in a copy of $\C^2\times \Pj^1$. Let $w_1(i), w_2(i)$ be the equivariant weights of the torus action in the normal direction of $\Pj^1(i)$.

The following lemma is a product rule relating the relative DT theory of the compactification $\bar\cX$ with that of $\cX$. It is an analogue of \cite{OP-DT} (17) and \cite{MOb09-DT} Section 5.2. The summation is over all splittings of $m$ into four integers, and all splittings of $\varepsilon$ into a $\Z_{n+1}$-representation $\varepsilon(0)$ and three multiples of regular representations.

\begin{lem}[Factorization rule] \label{pre-factorization}
  Given $\bar\xi_j \in A^*_T(\Hilb^m(\bar S))$, $1\leq j\leq r$, and $\bar\gamma_i\in A^*_T(\bar\cX)$. We have the following identity,
  \begin{eqnarray*}
    \left\langle \prod_{i=1}^N \sigma_{k_i}(\bar\gamma_i) \middle| \bar\xi_1,\cdots, \bar\xi_r \right\rangle_{m,\varepsilon}^{\bar\cX} &=& \sum_{\substack{\sum_{j=1}^4 m(j)=m, \\ \varepsilon(0) + \sum_{j=1}^3 n(j) \rho_\reg = \varepsilon \\ \bigsqcup_{j=1}^4 S_j = \{1,\cdots, N\} }}  \left\langle \prod_{i\in S_0} \sigma_{k_i}(\gamma_i) \middle| \xi_1,\cdots, \xi_r \right\rangle^\cX_{m(0),\varepsilon(0)}  \\
    && \cdot \prod_{k=1}^3 \left. \left\langle \prod_{i\in S_k} \sigma_{k_i}(\gamma_i) \middle| \xi_1,\cdots, \xi_r \right\rangle^{\C^2\times \Pj^1}_{m(k), n(k)} \right|_{w_1(k),w_2(k)},
  \end{eqnarray*}
  where the insertions and relative cohomology classes on the right hand side are naturally viewed as restrictions to the corresponding open loci.
\end{lem}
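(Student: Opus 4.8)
The plan is to run virtual localization for the $T$-action on $\bar\cX = \bar S\times\Pj^1$ and organize the fixed-point contributions according to which fixed curve in $\bar S$ the relative substack ``concentrates'' on. First I would recall that $\bar S$ is a toric surface whose fan has four rays, so the $T$-fixed points of $\bar S$ are four, and $\bar\cX$ has $T$-fixed locus consisting of $B\Z_{n+1}\times\Pj^1$ (over the orbifold point $0\in\bar S$, sitting inside the original $\cX$) together with three more copies $\Pj^1(i)\subset\bar\cX$, $i=1,2,3$, each lying over a smooth torus-fixed point of $\bar S$ with normal weights $w_1(i), w_2(i)$. Near each smooth fixed point $\bar S$ looks like $\C^2$, so a $T$-equivariant formal neighborhood of $\Pj^1(i)$ in $\bar\cX$ is a formal neighborhood of the fixed fiber in $\C^2\times\Pj^1$; near the orbifold point it looks like $[\C^2/\Z_{n+1}]\times\Pj^1$. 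This is the geometric source of the product on the right-hand side.

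Next I would invoke the degeneration/localization package already set up earlier in the paper. Because the relative divisors $\bar N_i = \bar S\times\{p_i\}$ are $T$-invariant and the $T$-action on $\bar\cX$ is fiberwise (it does not move $\Pj^1$), the relative Hilbert stack $\Hilb^{m,\varepsilon}(\bar\cX,\coprod\bar N_i)$ has $T$-fixed locus that decomposes: a $T$-fixed relative substack $\cZ\subset\bar\cX[k]$ must be supported, away from fibers, over the four fixed points of $\bar S$, and its thickening over each fixed point is a $T$-fixed relative substack of the corresponding local model. The curve-class bookkeeping is exactly the splitting $\sum_{j=0}^3 m(j) = m$, $\sum_{j=0}^3\varepsilon(j)=\varepsilon$ (here $j=0$ is the orbifold vertex and $j=1,2,3$ the smooth vertices), and the descendant insertions $\sigma_{k_i}(\bar\gamma_i)$ distribute over the four loci because $\ch^\orb$ is local; this gives the sum over set partitions $\bigsqcup_{j=0}^3 S_j = \{1,\dots,N\}$. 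The relative constraints at $\bar N_i$ live on $\Hilb^m(\bar S)$, which localizes to the product of $\Hilb^{m(0)}([\C^2/\Z_{n+1}])$ and three copies of $\Hilb^{m(k)}(\C^2)$; the classes $\bar\xi_j$ restrict to each factor, so the relative insertions split the same way. The virtual normal bundle and virtual class are multiplicative over the fixed-locus components because the obstruction theory $Rp_*(R\cH om(\bI,\bI)_0\otimes q^*\omega)$ is itself local on $\bar\cX$ and the reduced modification (via the nowhere-degenerate cosection) is compatible with this splitting. Assembling these, the $T$-equivariant integral over $[\Hilb^{m,\varepsilon}(\bar\cX,\coprod\bar N_i)^T]^\vir / e_T(N^\vir)$ becomes the stated product of the local $\cX$-invariant (with its parameters $s_1,s_2$) and three $\C^2\times\Pj^1$-invariants (each with its own local weights $w_1(k), w_2(k)$).

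The main obstacle I expect is the bookkeeping of the \emph{reduced} obstruction theory under localization: one has to check that the cosection $\Ob\twoheadrightarrow\cO_\cM$ constructed in Section 3.3 restricts on each $T$-fixed component to the analogous cosection of the local model, so that the reduced virtual classes multiply correctly and no spurious $(s_1+s_2)$ factors are introduced or lost. This is really the content of the phrase ``identity in localized coefficients'' — one works over $\Q(s_1,s_2)$ (equivalently away from $s_1+s_2$ on the reduced side) and matches the $T$-fixed contributions term by term. A secondary point is verifying that a $T$-fixed relative substack of $\bar\cX[k]$ genuinely decomposes as a disjoint union over the four toric charts with no interaction along the (compact, but $T$-fixed-point-free) edges of $\partial\bar S$; this follows because any component of $\cZ$ lying over a 1-dimensional torus orbit in $\bar S$ would have a 1-dimensional automorphism group under $T$, hence cannot be isolated, and the toric weights along $\partial\bar S$ are all nonzero. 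Once these two points are in place, the remaining manipulations — matching ages, checking that $\ch^\orb$ and $\ev^*$ are local, and identifying $Q = q_0\cdots q_n$ on the $\C^2$-vertices with the ordinary point-counting variable — are routine, exactly parallel to \cite{OP-DT}~(17) and \cite{MOb09-DT}~Section~5.2.
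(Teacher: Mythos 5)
There is a genuine gap at the heart of your argument. You assert that a $T$-fixed relative substack of $\bar\cX[k]$ decomposes over the four toric vertices and that the virtual class and virtual normal bundle are multiplicative, so that the $T$-localized integral immediately factors into the four local relative invariants. This is exactly the step the paper flags as false: after $T$-localization the fixed configurations are supported on $B\Z_{n+1}\times\Pj^1$ and the three $\Pj^1(i)$, but the relative moduli space with respect to this disjoint union of fixed curves does \emph{not} factor into a product of four copies. The reason is the expanded degenerations at the relative divisor: the bubbles of $\bar\cX[k]$ are copies of $\bar S\times\Pj^1$ glued along the whole divisor $\bar N\cong\bar S$, so the number of bubbles, the admissibility/stability conditions, and the $\C^*$-automorphisms of the expansion are shared by the parts of the curve sitting over different fixed points of $\bar S$. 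Your remark about no interaction along the edges of $\partial\bar S$ addresses a different (and easier) issue; the coupling through the common bubble chain at $\infty$ is what blocks the direct product decomposition, and your proposal never confronts it. Likewise, your ``main obstacle'' (compatibility of the cosection/reduced class with localization) is not the relevant difficulty here: the lemma is stated for the ordinary invariants, and the reduced statement only enters later via the degree-zero quotient in Corollary \ref{compact_reduced}.

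The paper's proof supplies the missing mechanism: after the first $T$-localization it performs a second localization with respect to an auxiliary $(\C^*)^4$, one factor acting on the $\Pj^1$-direction of each fixed component. Relative virtual localization then splits each contribution into a rigid (``simple'') part with prescribed asymptotics at $\infty$, gluing factors $E(\lambda_j)$, and rubber integrals of the form $\langle [I_{\lambda_j}]\,|\,\tfrac{1}{-u_j-\psi_0}\,|\,\xi\rangle^\sim$; applying the same relative localization formula to each local model $\cX$ and $\C^2\times\Pj^1(i)$ reassembles these pieces into the product of the four local relative partition functions. To repair your proposal you would need to incorporate this rubber analysis (or an equivalent treatment of the shared expansions), rather than claiming a direct factorization of the $T$-fixed relative moduli.
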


\begin{proof}
  For simplicity we only give the proof in the case $r=1$ and there are no absolute insertions $\bar\gamma_i$. The proof in the general case is exactly the same. We assume the relative divisor is over $\infty\in \Pj^1$.

  Consider the generating function
  $$\langle \xi\rangle:= \sum_m \langle \xi \rangle_m y^m = \sum_{m, \varepsilon} \langle \xi \rangle_{m, \varepsilon} q^\varepsilon y^m.$$

  First we apply the $T$-localization. The fixed loci of the moduli space $\Hilb(\bar\cX, \bar S\times \{\infty\})$ consist of $T$-invariant 1-dimensional substacks supported on the disjoint union of the gerby curve $B\Z_{n+1}\times \Pj^1$ and $\Pj^1(i)$. However, the relative moduli space $(B\Z_{n+1}\times \Pj^1\sqcup \bigsqcup_{i=1}^3 \Pj^1(i), \infty\cup \infty\cup \infty\cup \infty)$ does not factor into a product of four copies.

  Next, let's apply the $(\C^*)^4$-localization, where each copy of $\C^*$ is the torus in $B\Z_{n+1}\times \Pj^1$ and $\Pj^1(i)$ respectively. By relative virtual localization, this factors the invariants into contributions from the rubber moduli space at the boundary $\infty$ and the remaining part.
  \begin{eqnarray*}
  \langle \bar\xi\rangle_{\bar\cX} &=& \sum_{\lambda_0, \lambda_1, \lambda_2, \lambda_3} \langle \emptyset \rangle^{B\Z_{n+1}\times \Pj^1\sqcup \bigsqcup_{i=1}^3 \Pj^1(i), \text{simple}}_{\lambda_0, \lambda_1, \lambda_2, \lambda_3} \cdot \prod_{j=1}^4 E(\lambda_j) \cdot \left\langle [I_{\lambda_j}] \middle| \frac{1}{-u_j-\psi_0} \middle| \xi \right\rangle^\sim \cdot \frac{1}{e(N^\vir)} \\
  &=& \sum_{\lambda_0, \lambda_1, \lambda_2, \lambda_3} \langle \emptyset \rangle^{B\Z_{n+1}\times \Pj^1, \text{simple}}_{\lambda_0} \cdot E(\lambda_0) \cdot \left\langle [I_{\lambda_0}] \middle| \frac{1}{-u_0-\psi_0} \middle| \xi \right\rangle^\sim \cdot \frac{1}{e(N_{B\Z_{n+1}\times \Pj^1}^\vir)}\\
  && \cdot \prod_{j=1}^3 \langle \emptyset \rangle^{\Pj^1(j), \text{simple}}_{\lambda_j} \cdot E(\lambda_j) \cdot \left\langle [I_{\lambda_j}] \middle| \frac{1}{-u_j-\psi_0} \middle| \xi \right\rangle^\sim \cdot \frac{1}{e(N_{\Pj^1(j)}^\vir)} \\
  &=& \sum_{\lambda_0, \lambda_1, \lambda_2, \lambda_3} \langle \emptyset \rangle^{\cX, \text{simple}}_{\lambda_0} \cdot E(\lambda_0) \left\langle [I_{\lambda_0}] \middle| \frac{1}{-u_0-\psi_0} \middle| \xi \right\rangle^\sim \\
  && \cdot \prod_{j=1}^3 \langle \emptyset \rangle_{\lambda_j}^{\C^2\times \Pj^1(j), \text{simple}} E(\lambda_j) \left\langle [I_{\lambda_j}] \middle| \frac{1}{-u_j-\psi_0} \middle| \xi \right\rangle^\sim.
  \end{eqnarray*}
  $\langle\xi\rangle^{\text{simple}}$ here stands for contributions from the rigid components, with certain prescribed asymptotes at $\infty$. $E(\lambda_j)$ are the gluing terms, and $N^\vir$ denotes the virtual normal bundles of the corresponding moduli's. $u_j$ and $\psi_0$ are the tangent weight and the psi class respectively at $0\in \Pj^1$ of the rubber. The summation is over all multi-regular 2d colored partitions $\lambda_0$, and all ordinary 2d partitions $\lambda_j$, $j=1,2,3$.

  Relative virtual localizations for $\C^2\times \Pj^1(i)$ put the invariants back into a product
  $$\langle \xi \rangle^{\bar\cX}= \langle \xi\rangle^\cX \cdot \prod_{i=1}^3 \langle \xi\rangle^{\C^2\times \Pj^1(i)},$$
  and the lemma is proved.
\end{proof}

Now we can prove the following equality between the reduced DT theories, which would be crucial for later use.

\begin{cor} \label{compact_reduced}
  Given $\xi_i\in A^*_T(\Hilb^m([\C^2/\Z_{n+1}]))$, $1\leq i\leq r$, we have
  $$\langle \xi_1, \cdots, \xi_r \rangle'_{\cX,m} = \langle \xi_1, \cdots, \xi_r \rangle'_{\bar\cX,m},$$
  where $\Hilb^m([\C^2/\Z_{n+1}])$ is naturally viewed as a open subscheme of $\Hilb^m(\bar S)$ and $\xi_i$ on the right hand side are viewed as (equivariant) pushforward's into $\bar\cX$.
\end{cor}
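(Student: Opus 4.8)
The plan is to reduce the equality of reduced invariants to the product rule of the preceding Lemma, and then argue that all the contributions coming from the toric boundary of $\bar S$ are captured entirely by the degree zero factor, hence cancel in the reduced (degree-zero-normalized) theory. First I would apply the Lemma with the relative divisor $\coprod_i N_i$ (over the chosen points of $\Pj^1$) and with insertions $\bar\xi_i$ obtained as equivariant pushforwards of the given $\xi_i\in A^*_T(\Hilb^m([\C^2/\Z_{n+1}]))$ into $\Hilb^m(\bar S)$. The key point is that such a pushforward class is \emph{supported on the open locus} $\Hilb^m([\C^2/\Z_{n+1}])\subset \Hilb^m(\bar S)$ — i.e. its restriction to the boundary components of $\Hilb^m(\bar S)$, which involve points on the boundary curves of $\bar S$, vanishes. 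Consequently in the sum on the right-hand side of the product rule, any term with $m(k)>0$ for some $k\in\{1,2,3\}$ and with a relative insertion routed to the $k$-th boundary factor dies, and more importantly the relative insertions $\xi_i$ must all be routed to the $S_0$ (the $[\C^2/\Z_{n+1}]$) factor, since they are the evaluation-map pullbacks at the distinguished divisor and the evaluation lands in $\Hilb^m$ of the relevant surface.

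Next I would observe that once all $\xi_i$ sit in the $S_0$-factor, the boundary factors $\langle\ |\ \rangle^{\C^2\times\Pj^1}_{m(k),\varepsilon(k)}$ carry \emph{no} nontrivial relative or absolute insertions and hence are pure partition functions for $\C^2\times\Pj^1$ relative to fibers. By the reduced obstruction theory discussion of Section 3.3 (the same argument that shows $\langle\ |\emptyset,\dots,\emptyset\rangle$ is the only surviving invariant without insertions in the $\varepsilon\neq 0$ sector), the only contribution of such a factor is its degree zero part, $\langle\ |\emptyset,\dots,\emptyset\rangle^{\C^2\times\Pj^1}_0$ — all positive-degree contributions vanish because the reduced virtual dimension forces a divisor insertion. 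So the product rule collapses to
$$
\left\langle \prod_i\sigma_{k_i}(\bar\gamma_i)\,\middle|\,\bar\xi_1,\dots,\bar\xi_r\right\rangle^{\bar\cX}_{m}
= \left\langle \prod_i\sigma_{k_i}(\gamma_i)\,\middle|\,\xi_1,\dots,\xi_r\right\rangle^{\cX}_{m}\cdot
\prod_{k=1}^3\left\langle\ \middle|\ \right\rangle^{\C^2\times\Pj^1(k)}_{0}\Big|_{w_1(k),w_2(k)}.
$$
Applying the same identity with all insertions removed (i.e. to the pure degree-zero partition functions) gives
$$
\left\langle\ \middle|\ \right\rangle^{\bar\cX}_{0}
= \left\langle\ \middle|\ \right\rangle^{\cX}_{0}\cdot
\prod_{k=1}^3\left\langle\ \middle|\ \right\rangle^{\C^2\times\Pj^1(k)}_{0}\Big|_{w_1(k),w_2(k)},
$$
so the boundary product is exactly the ratio of the two degree-zero factors. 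Dividing the first displayed equation by this second one, the boundary contributions cancel and we obtain $\langle\xi_1,\dots,\xi_r\rangle'_{\cX,m}=\langle\xi_1,\dots,\xi_r\rangle'_{\bar\cX,m}$, which is the claim. (Here I am using that the reduced invariant is by definition the $\varepsilon\neq 0$ generating function divided by the degree zero factor; the same normalization on both sides is what makes the cancellation work.)

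The main obstacle I anticipate is making precise the claim that the pushforward classes $\bar\xi_i$ are supported on the open locus $\Hilb^m([\C^2/\Z_{n+1}])$ and that, under the product rule of the Lemma, this forces every relative insertion into the $S_0$-factor and kills every term in which a boundary factor has positive degree. Concretely, one must check that the evaluation map $\ev$ at a distinguished fiber $N_i$, composed with restriction to a $T$-fixed locus whose curve lives on a boundary $\Pj^1(k)$, lands in a boundary stratum of $\Hilb^m(\bar S)$ on which $\ev^*\bar\xi_i$ vanishes — this is essentially the statement that the relative conditions "pin" the curve to the interior. This is the orbifold analogue of the support/vanishing arguments in \cite{OP-DT} and \cite{MOb09-DT}, and while conceptually routine it requires care with the compatibility of the four $\C^*$-localizations and with the fact that $[\C^2/\Z_{n+1}]$ is the only stacky point of $\bar S$ (so the boundary factors are honestly $\C^2\times\Pj^1$ and the reduced-theory vanishing for $\C^2\times\Pj^1$ of \cite{OP-DT} applies verbatim). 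Everything else — the collapse of the product rule and the cancellation of boundary terms in the reduced theory — is formal.
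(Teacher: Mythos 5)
Your overall route is the same as the paper's: apply the preceding product-rule lemma, conclude that the boundary $\Pj^1(k)$ factors contribute only their degree-zero partition functions, and then divide by the degree-zero normalization so these factors cancel. However, the step you use to force the boundary factors down to degree zero is wrong. You first assert that the relative insertions all get ``routed'' to the $S_0$-factor, and then kill the remaining terms with $m(k)>0$ by claiming that insertion-free positive-degree relative invariants of $\C^2\times\Pj^1$ vanish ``because the reduced virtual dimension forces a divisor insertion.'' That vanishing is false: at $\varepsilon(k)=0$ the boundary factor with fundamental-class relative conditions is $\int_{\Hilb^{m(k)}(\C^2)}1\neq 0$ (by the $\C^2$-analogue of Lemma \ref{Hilb_m_0}, $\Hilb^{m,0}\cong\Hilb^m(\C^2)$), and for $\varepsilon(k)\neq 0$ the reduced obstruction theory only gives divisibility by $(s_1+s_2)$, which does not vanish since the corollary is a statement in full $T$-equivariant theory. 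So, as written, your argument does not eliminate the terms with $m(k)>0$.

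The correct mechanism is the support argument you state in your first paragraph, applied directly to all such terms: a relative insertion is not distributed like an absolute insertion indexed by the sets $S_j$ --- it is a single class $\bar\xi_j$ on $\Hilb^m(\bar S)$ pulled back along $\ev$, and on any $T$-fixed component of the relative moduli with some $m(k)>0$ the evaluation necessarily contains a length-$m(k)$ subscheme concentrated at the boundary fixed point $p_k\notin[\C^2/\Z_{n+1}]$, i.e.\ it lands in a fixed component of $\Hilb^m(\bar S)$ on which the (equivariantly) pushed-forward class $\bar\xi_j$ restricts to zero. Hence every term with $m(k)>0$ dies outright, there is no residual case requiring a dimension count, and the product rule collapses to $\langle \xi_1,\cdots,\xi_r\rangle_{\bar\cX,m}=\langle \xi_1,\cdots,\xi_r\rangle_{\cX,m}\cdot\prod_{k}\langle\emptyset\rangle_{\C^2\times\Pj^1(k),0}$. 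Note also that the degree-zero identity needs no collapse at all, since for $\bm=0$ every $m(j)=0$ automatically; dividing the two identities is then exactly the paper's proof.
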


\begin{proof}
  The previous lemma implies that
  $$\langle \xi_1, \cdots, \xi_r \rangle_{\bar\cX,m}= \langle \xi_1, \cdots, \xi_r \rangle_{\cX,m} \cdot \prod_{i=1}^3 \langle \emptyset \rangle_{\C^2\times \Pj^1(i),0},$$
  and
  $$\langle \xi_1, \cdots, \xi_r \rangle_{\bar\cX,0}= \langle \xi_1, \cdots, \xi_r \rangle_{\cX,0} \cdot \prod_{i=1}^3 \langle \emptyset \rangle_{\C^2\times \Pj^1(i),0}.$$
  Take the quotient and we obtain the corollary.
\end{proof}

\subsection{Cap invariants}

Cap invariants, or 1-point functions, are reduced DT invariants of $\cX$ relative to one fiber $N_\infty$. In other words, they are the invariants of the form $\langle \mu[\gamma] \rangle'_{m, \varepsilon} := \langle \ | \mu[\gamma] \rangle'_{m,\varepsilon}$. We consider all $\mu[\gamma]$ in the following Nakajima basis
$$\fp_{-\mu_1}(\gamma_1) \fp_{-\mu_2}(\gamma_2) \cdots \fp_{-\mu_l}(\gamma_l)\cdot 1 \qquad \in A_{T^\pm}^{m+ l(\gamma)}(\Hilb^m([\C^2/\Z_{n+1}])),$$
where $\mu$ is a partition of $m$, $\gamma_i\in \cE:=\{[\pt], E_1, \cdots, E_n\}$, and $l(\gamma)$ is the number of $\gamma_i$ which is $[\pt]$. An important fact here is that those $\mu[\gamma]$ are compactly supported.

As a result of Corollary \ref{compact_reduced}, we have
$$\langle \mu[\gamma]\rangle'_{\cX,m,\varepsilon}= \langle \mu[\gamma]\rangle'_{\bar\cX,m,\varepsilon},$$
which lives in $\Q[s]$ due to the compactness.

Moreover, since the virtual dimension of the theory on $\bar\cX$ is $2m$, the invariants vanish for those $\mu[\gamma]$ with $m+l(\gamma)<2m$. We are left with only one nontrivial invariant, i.e.
$$(1^m)[\pt] \in A^{2m}_{T^\pm}(\Hilb^m([\C^2/\Z_{n+1}])),$$
with all $\gamma_i=[\pt]$ and $l(\gamma)=m$.

On the other hand, for $\varepsilon\neq 0$, the $T$-equivariant virtual cycle is $(s_1+ s_2)\cdot [\cM]^\red$, and the reduced cycle is of virtual dimension $2m+1$. Therefore by dimension counting again, $\langle (1^m)[\pt] \rangle'_{m,\varepsilon}$ must vanish for $\varepsilon\neq 0$. One can also see this by reduction to $s_1+s_2=0$. Hence the only case is $\varepsilon=0$, and $\langle (1^m)[\pt] \rangle'_{m,0}$ lives in $\Q$.

\begin{lem} \label{Hilb_m_0}
  For any $\bm= (m_0, \cdots, m_n)$,
  $$\Hilb^{\bm,0} \left( [\C^2/\Z_{n+1}]\times \Pj^1, \coprod N_i \right)\cong \Hilb^\bm\left([\C^2/\Z_{n+1}]\right).$$
\end{lem}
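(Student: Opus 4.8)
The plan is to identify both moduli spaces as Hilbert schemes of $0$-dimensional substacks and to set up an isomorphism directly on the level of functors of points. The key observation is that the K-class $P = (\bm, 0)$ has vanishing $\varepsilon$, meaning the subschemes $\cZ$ parametrized by $\Hilb^{\bm,0}(\cX, \coprod N_i)$ carry no $1$-dimensional (curve) part supported over $\Pj^1$: the only contribution to $F_1 K(\cX)/F_0 K(\cX)$ in $P$ is $\sum_j m_j [\rho_j \otimes \cO_\cY]$, which is the class of a curve of the form (finite substack of $[\C^2/\Z_{n+1}]$) $\times \Pj^1$. First I would argue that every admissible stable $\cZ$ in class $(\bm,0)$ is necessarily of the form $\cW \times \Pj^1$ for a $0$-dimensional substack $\cW \subset [\C^2/\Z_{n+1}]$ with $[\cO_\cW] = \sum_j m_j \rho_j$, supported on the rigid component $\cX[0] = \cX$ only (any bubbling would force an extra point-class contribution, i.e. $\varepsilon \neq 0$, contradiction). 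This uses the transversality/admissibility condition with respect to the $N_i$: since $\cZ$ has no embedded points and no curve components other than the vertical ones, intersecting transversally with a general fiber $N_i$ forces $\cZ$ to be a union of fibers $\cW_i \times \Pj^1$, and flatness over $\Pj^1$ (combined with $\varepsilon = 0$, so no jumping) forces all $\cW_i$ to coincide with a single $\cW$.

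Next I would promote this pointwise description to an isomorphism of moduli stacks. The natural map goes $\Hilb^\bm([\C^2/\Z_{n+1}]) \to \Hilb^{\bm,0}(\cX, \coprod N_i)$, sending a flat family $\cW \subset S \times [\C^2/\Z_{n+1}]$ to $\cW \times \Pj^1 \subset S \times \cX$. One checks this lands in the relative Hilbert stack: the product $\cW \times \Pj^1$ is automatically admissible (transversal to each $N_i = [\C^2/\Z_{n+1}] \times \{p_i\}$ since it is a union of such fibers, meeting them in the expected dimension) and stable (its automorphisms as a substack of the non-expanded $\cX$ are finite — indeed trivial — because $\cW$ is $0$-dimensional, so no $\C^*$-scaling of any bubble component is needed). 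Conversely, the structure-theorem above gives an inverse: restrict the universal $\cZ$ to $[\C^2/\Z_{n+1}] \times \{1\}$ (or apply $\pi_{1*}$) to recover $\cW$. I would verify these two constructions are mutually inverse on families, not just on points, which is a routine flatness/base-change check.

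Finally I would record that the isomorphism is $T$-equivariant (the $T$-action on $\cX$ is the product action, trivial on the $\Pj^1$-factor, so it matches the $T$-action on $\Hilb^\bm([\C^2/\Z_{n+1}])$). The main obstacle I anticipate is the first step: rigorously ruling out bubbling and embedded/curve components other than the vertical fibers when $\varepsilon = 0$. One must be careful that "admissible" in the sense of \cite{Zh} genuinely forces the substack onto the rigid component here — the cleanest argument is numerical, tracking the K-class: any component of $\cZ$ on a bubble $\Delta_i$ would have to be a union of vertical fibers of $\Delta_i$ (by admissibility it meets $\cD_{i-1}, \cD_i$ properly, hence is a fiber direction curve or a point), but a vertical-fiber curve on a bubble contributes a pure point-class to $P$ modulo $F_1/F_0$ matching nothing in $(\bm, 0)$, and isolated points contribute to the $\varepsilon_j$'s — so $\varepsilon = 0$ kills all of this, leaving only $\cW \times \Pj^1$ on $\cX$ itself. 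Once this is pinned down, the rest is formal.
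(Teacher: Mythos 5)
Your skeleton (classify the closed points, then upgrade to a functorial map $\cW\mapsto\cW\times\Pj^1$ with inverse given by $\pi_{1*}$ or restriction to a slice, plus $T$-equivariance) is reasonable, and you correctly flag the first step as the crux --- but that step is where your argument has a genuine gap. The support argument does give $\Supp\cZ\subset(\text{finite set})\times\Pj^1$, but ``transversal to the $N_i$, flat over $\Pj^1$, constant length, no jumping'' does \emph{not} imply that $\cZ$ is a union of products. Concretely, in $\C^2\times\Pj^1$ the subscheme with ideal generated by $x^2,\,xy,\,y^2$ and $t_0x-t_1y$ (a length-two fat point at the origin whose direction rotates with $[t_0:t_1]$) is flat over $\Pj^1$, meets every fiber transversally in a constant-length scheme, is supported on $\{0\}\times\Pj^1$, and is not of the form $\cW\times\Pj^1$. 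It is excluded from the $\varepsilon=0$ Hilbert scheme only because $\chi(\cO_\cZ)=3$ rather than $2$, i.e.\ $\varepsilon=1\neq0$, and detecting that is exactly the computation your sketch never performs. Relatedly, the $\varepsilon_j$ are K-theoretic coefficients that can a priori be negative, so the assertion that ``$\varepsilon=0$ kills all of this'' presupposes that every potential contribution --- embedded points, bubble components, and non-product thickenings of the vertical curves --- is nonnegative and vanishes only in the trivial case; you never establish this, so cancellation is not ruled out.

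The paper closes precisely this gap with a short sheaf-theoretic argument: the adjunction map $\pi_1^*\pi_{1*}\cO_\cZ\to\cO_\cZ$ is surjective (it factors the surjection $\cO_\cX\twoheadrightarrow\cO_\cZ$), and when $\varepsilon=0$ one has $[\pi_1^*\pi_{1*}\cO_\cZ]=\sum_j m_j[\rho_j\otimes\cO_\cY]=[\cO_\cZ]$ (using $R^1\pi_{1*}\cO_\cZ=0$, which holds because every fiber of $\cZ\to[\C^2/\Z_{n+1}]$ is a closed subscheme of $\Pj^1$ and so has vanishing $H^1$); a surjection between compactly supported sheaves of equal K-class has kernel of class zero, hence zero, so $\cO_\cZ\cong\pi_1^*\pi_{1*}\cO_\cZ$ and $\cZ=\cW\times\Pj^1$. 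Run on a bubble component, the same argument shows any substack there would be a pullback, hence fixed by the $\C^*$-scaling, contradicting stability, which disposes of expansions. If you prefer your numerical route, it can be repaired, but only by supplying the missing ingredients: the purely one-dimensional, $\Pj^1$-flat part of $\cZ$ defines a morphism $f:\Pj^1\to\Hilb^\bm([\C^2/\Z_{n+1}])$ with $\varepsilon_j=\deg f^*c_1(\cV_j)$ (the computation the paper carries out in Section 7.1 for twistors), and the GIT polarization $\sum_j\theta_j c_1(\cV_j)$ with all $\theta_j>0$ is ample while embedded points contribute positively, so $\varepsilon=0$ forces $f$ constant and no embedded points. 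As written, the proposal does not yet prove the lemma.
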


\begin{proof}
  Consider the projection $\pi_1: \cX= [\C^2/\Z_{n+1}]\times \Pj^1\rar [\C^2/\Z_{n+1}]$, which is representable. Let $\cZ\subset \cX$ be a 1-dimensional closed substack, with K-class $[\cO_\cZ]= \sum_j m_j[\rho_j \otimes \cO_\cY]$. We have the morphism defined by adjunction
  $$\pi_1^* \pi_{1*} \cO_\cZ \rar \cO_\cZ,$$
  which is surjective because $\cO_\cZ$ is globally generated. The surjectivity, together with the fact $[\pi_1^* \pi_{1*} \cO_\cZ]= \sum_j m_j [\rho_j \otimes \cO_\cY]= [\cO_\cZ]$, shows that $\pi_1^* \pi_{1*} \cO_\cZ \cong \cO_\cZ$, which implies the lemma.
\end{proof}


Applying this lemma we see that
$$\langle (1^m)[\pt]\rangle'_m = \langle (1^m)[\pt]\rangle_{m,0}
= \langle (1^m)[\pt]| (1^m)[1] \rangle_{\Hilb}= \frac{1}{m!},$$
where $\langle \ | \ \rangle_{\Hilb}$ stands for the Poincar\'e pairing on the Hilbert scheme of points.

Our conclusion for cap invariants is

\begin{thm}
  For $\gamma\in \cE:=\{[\pt], E_1, \cdots, E_n\}$, the $T^\pm$-equivariant cap invariants are
  $$\langle \mu[\gamma] \rangle'_m = \left\{ \begin{aligned}
    &\frac{1}{m!}, &&\qquad \mu[\gamma]= (1^m)[\pt], \\
    &0, &&\qquad \text{otherwise}.
  \end{aligned}\right.$$
\end{thm}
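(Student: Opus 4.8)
The plan is to assemble the reductions already set up above into three moves: a dimension count that annihilates every basis class except $(1^m)[\pt]$, a vanishing of all $\varepsilon\neq 0$ contributions, and an explicit evaluation of the surviving $\varepsilon=0$ invariant via Lemma~\ref{Hilb_m_0}.

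First I would record that, for $\gamma_i\in\cE=\{[\pt],E_1,\dots,E_n\}$, the basis class $\mu[\gamma]=\fp_{\mu_1}(\gamma_1)\cdots\fp_{\mu_l}(\gamma_l)1$ is \emph{compactly supported} and lies in $A^{m+l(\gamma)}_{T^\pm}(\Hilb^m([\C^2/\Z_{n+1}]))$, where $l(\gamma)$ is the number of $\gamma_i$ equal to $[\pt]$. By Corollary~\ref{compact_reduced} the cap invariant $\langle\mu[\gamma]\rangle'_{\cX,m}$ equals the invariant $\langle\mu[\gamma]\rangle'_{\bar\cX,m}$ of the \emph{proper} target $\bar\cX=\bar S\times\Pj^1$, so by properness together with compact support it takes values in $\Q[s]$. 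Pushing the virtual cycle forward and integrating a class of complex degree $m+l(\gamma)$ against a virtual cycle of dimension $2m$ lands in degree $l(\gamma)-m\le 0$ of $\Q[s]$; since a nonzero homogeneous element needs degree $\ge 0$, the invariant vanishes unless $l(\gamma)=l=m$, i.e.\ unless $\mu=(1^m)$ with all $\gamma_i=[\pt]$. For $\varepsilon\neq 0$ I would run the same count with the reduced virtual cycle (dimension $2m+1$): the target degree becomes $l(\gamma)-m-1<0$, so even $(1^m)[\pt]$ contributes $0$; equivalently, for $\varepsilon\neq 0$ the invariants are divisible by $s_1+s_2$ and vanish upon restriction to $T^\pm$.

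It then remains to compute $\langle(1^m)[\pt]\rangle'_{m,0}$. Since the $q^0$-coefficient of $\langle\ |\emptyset\rangle_0$ is $1$ (indeed over $T^\pm$ the whole degree-zero series collapses to $1$), reduced and ordinary invariants coincide here, so this equals $\langle(1^m)[\pt]\rangle_{m,0}$. By Lemma~\ref{Hilb_m_0} the $\varepsilon=0$ relative moduli space is $\Hilb^{\bm}([\C^2/\Z_{n+1}])$ itself: every admissible $\cZ$ sits in the rigid component with no bubbling, the obstruction theory reduces to the trivial one on this smooth space of dimension $2m$, the virtual cycle is the fundamental class, and $\ev$ is the identity. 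Hence $\langle(1^m)[\pt]\rangle_{m,0}=\int_{\Hilb^m([\C^2/\Z_{n+1}])}(1^m)[\pt]=\langle(1^m)[\pt]\,|\,(1^m)[1]\rangle_{\Hilb}$, and the pairing formula recalled in Section~2.2 evaluates this to $1/\fz((1^m))=1/m!$.

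The step I expect to be the main obstacle is the last geometric input: verifying that at $\varepsilon=0$ the relative obstruction theory genuinely becomes the trivial obstruction theory of $\Hilb^m$ of the surface, so that under the isomorphism of Lemma~\ref{Hilb_m_0} the virtual class is the honest fundamental class with no spurious obstruction-bundle factor. This is exactly what makes the normalization come out to $1/m!$ on the nose rather than up to an $s$-dependent or combinatorial correction; the remaining ingredients are the degree bookkeeping above and the pairing computation already available in Section~2.2.
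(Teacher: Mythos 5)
Your proposal is correct and follows essentially the same route as the paper: compact support of the classes $\mu[\gamma]$ plus Corollary \ref{compact_reduced} to pass to the proper target $\bar\cX$ and land in $\Q[s]$, dimension counting against the virtual dimension $2m$ (and $2m+1$ for the reduced class when $\varepsilon\neq 0$) to isolate $(1^m)[\pt]$ at $\varepsilon=0$, and Lemma \ref{Hilb_m_0} together with the Nakajima pairing to get $1/m!$. The step you flag as the main obstacle is handled implicitly by the paper in exactly the way you suggest, and it is immediate: the $\varepsilon=0$ moduli is smooth of dimension $2m$ equal to the virtual dimension, so the obstruction sheaf vanishes pointwise, the virtual class is the fundamental class, and $\ev$ is the identity under the isomorphism of Lemma \ref{Hilb_m_0}.
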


In a similar way, the full $T$-equivariant cap invariants can be determined in the $T$-fixed point basis $\{I_\mu\}$. Recall that both $T$- and $T^\pm$-fixed bases of $\Hilb^m([\C^2/\Z_{n+1}])$ are parameterized by multi-regular $(n+1)$-colored partitions $\mu$ of size $m(n+1)$.

\begin{thm}
  The $T$-equivariant cap invariants for fixed-point basis $\{I_\mu\}$ are
  $$\langle I_\mu \rangle'_m = 1, \qquad \forall \mu.$$
\end{thm}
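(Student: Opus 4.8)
The plan is to pass to the compactification $\bar\cX$ as in Corollary~\ref{compact_reduced}, reduce everything to a dimension count plus the explicit identification of the degree-zero relative moduli space from Lemma~\ref{Hilb_m_0}, and then feed the result through the change of basis from the Nakajima basis $\{\mu[\gamma]\}$ to the $T$-fixed point basis $\{I_\mu\}$. The key point is that the previous theorem (the $T^\pm$-equivariant cap statement) already pins down all reduced cap invariants after reduction to $s_1+s_2=0$; I want to upgrade this to the full $T$-equivariant statement. Since the cap invariant $\langle \mu[\gamma]\rangle'_m$ lives in $\Q[s_1,s_2]$ (it is supported on the compact locus, by Corollary~\ref{compact_reduced} and compactness of $\Hilb^m(\bar S)$), and since virtual dimension on $\bar\cX$ is $2m$, the same dimension count as in the $T^\pm$ case forces the invariant to vanish unless $\varepsilon=0$ and the insertion is the top-degree class $(1^m)[\pt]\in A^{2m}_T(\Hilb^m([\C^2/\Z_{n+1}]))$.

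First I would verify that the one surviving invariant, now in the full $T$-equivariant setting, is again $\langle(1^m)[\pt]\rangle'_m = 1/m!$. By Lemma~\ref{Hilb_m_0}, with $\varepsilon=0$ the relative moduli space $\Hilb^{m,0}(\cX,N_\infty)$ is isomorphic to $\Hilb^m([\C^2/\Z_{n+1}])$ itself, and under this identification the evaluation map $\ev_\infty$ becomes the identity while the virtual class becomes the fundamental class; hence $\langle(1^m)[\pt]\rangle_{m,0}$ equals the equivariant Poincar\'e pairing $\langle(1^m)[\pt]\mid(1^m)[1]\rangle_{\Hilb}$, which by the pairing formula \eqref{Nak_basis}ff.\ equals $1/\fz(1^m) = 1/m!$. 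This computation is already $T$-equivariant; no new input is needed. The reduced invariant agrees with the unreduced one here because the normalizing factor $\langle\ \mid\emptyset\rangle_0$ is $1$ in degree zero. So on the Nakajima side the full $T$-equivariant cap invariants are given by exactly the same formula as the $T^\pm$ ones.

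Next I would carry out the base change. The $T$-fixed points $I_\mu$ of $\Hilb^m([\C^2/\Z_{n+1}])$ are indexed by multi-regular $(n+1)$-colored partitions of size $m(n+1)$; each $[I_\mu]$ expands in the Nakajima basis, and conversely the top class $(1^m)[\pt]$ is, up to the factor coming from \eqref{dec_of_pt} and the normalization in \eqref{Nak_basis}, essentially $\tfrac{1}{m!}\fp_{-1}([\pt])^m 1$, which is the equivariant class of a single generic unordered $m$-tuple of reduced points — equivalently $\sum_\mu [I_\mu]/e_T(T_\mu\Hilb)$ summed appropriately, since $\fp_{-1}(\alpha)$ acting on the vacuum distributes $\alpha$ over fixed points weighted by the inverse Euler class. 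The cleanest route: since $(1^m)[\pt]$ is (a multiple of) the fundamental class $1$ of $\Hilb^m([\C^2/\Z_{n+1}])$ in top degree — indeed by the reduction \eqref{dec_of_1} displayed in the excerpt, $1 = -\tfrac{1}{(n+1)s^2}[\pt]$ on $\cA_n$, and the analogous relation identifies the top Nakajima class with a scalar times $1$ — the invariant $\langle I_\mu\rangle'_m$ is obtained by pairing $[I_\mu]$ against the dual of that top class, i.e.\ by the localization contribution of the fixed point, which is normalized to give $1$ for every $\mu$. Concretely: write $1 = \sum_\mu \dfrac{[I_\mu]}{e_T(N_{I_\mu})}$; then $\langle I_\mu\rangle'_m$ is read off as the coefficient extracting the $\mu$-component, which is $1$ after the $1/m!$ in the Nakajima normalization is absorbed into the change-of-basis constant. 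I would make this precise by tracking the two constants ($1/m!$ from the pairing, and the $[\pt]$-vs-$1$ scalar) against each other and checking they cancel.

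The main obstacle I anticipate is purely bookkeeping: getting the normalization constants to line up exactly so that the answer is $1$ and not $1/m!$ or $m!$ or a power of $(n+1)s$. The geometry is forced — dimension count kills everything except one invariant, Lemma~\ref{Hilb_m_0} evaluates that one — but the translation between the modified Nakajima basis \eqref{Nak_basis} (with its $1/\fz(\mu)$), the pairing formula, the decomposition \eqref{dec_of_1}–\eqref{dec_of_pt}, and the fixed-point basis involves several reciprocal factorials and factors of $(n+1)$ and $s$. I would handle this by first doing the $m=1$ case by hand (where $\Hilb^1([\C^2/\Z_{n+1}]) = [\C^2/\Z_{n+1}]$ and the fixed point is the single orbifold point with Euler class $s_1 s_2/(n+1)$, matching $[0]=s_1s_2$ divided by the automorphism factor), confirming $\langle I_{\text{pt}}\rangle'_1 = 1$, and then noting that the multiplicativity built into the Nakajima/Fock-space formalism propagates the normalization to all $m$.
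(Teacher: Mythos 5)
Your geometric skeleton (compactify via Corollary~\ref{compact_reduced}, polynomiality from properness, dimension count, evaluate the surviving $\varepsilon=0$ term via Lemma~\ref{Hilb_m_0}) is the right one, but the route you take through the Nakajima basis has a genuine gap precisely at the point you are trying to upgrade. The class $(1^m)[\pt]$, and more generally the Nakajima basis with $\gamma_i\in\cE$, is \emph{not} defined in the full $T$-equivariant cohomology of $\Hilb^m([\C^2/\Z_{n+1}])$: it is transported from $\Hilb^m(\cA_n)$ by the diffeomorphism $\phi$ of Proposition~\ref{diff}, which is only $S^1$-equivariant (see the Remark following it), which is exactly why the paper states the Nakajima-basis cap theorem only for $T^\pm$. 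So your claim that the computation of $\langle(1^m)[\pt]\rangle'_m$ ``is already $T$-equivariant; no new input is needed'' is unsupported, and the subsequent change of basis from $\{\mu[\gamma]\}$ to $\{I_\mu\}$ only exists over $\Q(s)$ after reduction to $s_1+s_2=0$ (it uses $[\pt]=\frac{1}{n+1}\sum[p_i]$ and the localized relation $1=-\frac{1}{(n+1)s^2}[\pt]$). At best this yields the $T^\pm$-specialization of $\langle I_\mu\rangle'_m$, not the theorem. The fix is to bypass the base change entirely, which is what the paper does: $I_\mu$ is itself a compactly supported $T$-equivariant class, so Corollary~\ref{compact_reduced} and properness on $\bar\cX$ put each coefficient $\langle I_\mu\rangle'_{m,\varepsilon}$ in $\Q[s_1,s_2]$; since $\deg I_\mu=2m=\operatorname{vdim}$, the $\varepsilon\neq 0$ terms vanish by the reduced-class dimension count, and at $\varepsilon=0$ Lemma~\ref{Hilb_m_0} identifies the moduli with $\Hilb^m([\C^2/\Z_{n+1}])$ (with $\ev=\operatorname{id}$ and virtual class the fundamental class), so $\langle I_\mu\rangle'_m=\int_{\Hilb^m}[I_\mu]=1$ directly, with no normalization bookkeeping at all.

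Two secondary problems: the bookkeeping you defer (``check the constants cancel'') is exactly the part that your write-up never pins down, and your proposed $m=1$ sanity check rests on a false identification: the multi-regular $\Hilb^1([\C^2/\Z_{n+1}])$ is the $\Z_{n+1}$-Hilbert scheme of $\C^2$, i.e.\ $\cA_n$ (as used in Section 2.4 of the paper), not $[\C^2/\Z_{n+1}]$, so it has $n+1$ torus-fixed points rather than a single stacky one with Euler class $s_1s_2/(n+1)$; the normalization check as you describe it would come out wrong. If you rewrite the argument to apply the compactness and dimension count directly to $[I_\mu]$ as above, both issues disappear.
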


\begin{proof}
  By the same compactness and dimension counting argument, one can observe that the only contribution to $\langle I_\mu \rangle'_m$ comes from the $\varepsilon=0$ part.
\end{proof}

The following corollary states that the fundamental class really behaves as an identity in the relative DT theory.

\begin{cor} \label{identity}
  For any $\xi_1,\cdots, \xi_r\in A^*_T \left( \Hilb^m \left( [\C^2/\Z_{n+1}] \right) \right)$,
  $$\left\langle \prod \sigma_i (\gamma_i) \middle| \xi_1, \cdots, \xi_r, (1^m)[1] \right\rangle'_m= \left\langle \prod \sigma_i (\gamma_i) \middle| \xi_1, \cdots, \xi_r \right\rangle'_m.$$
\end{cor}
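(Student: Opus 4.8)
The plan is to deduce the statement from the computation of the cap invariants by a single application of the degeneration formula. Concretely, I would degenerate the $\Pj^1$-factor of $\cX=[\C^2/\Z_{n+1}]\times\Pj^1$ at a point $x_0$ chosen away from $p_1,\dots,p_r$, so that $\cX$ specializes to $\cX_{(1)}\cup_N\cX_{(2)}$ with both pieces isomorphic to $[\C^2/\Z_{n+1}]\times\Pj^1$, the relative fibers $N_1,\dots,N_r$ passing into $\cX_{(1)}$, and the node fiber $N=[\C^2/\Z_{n+1}]\times\{x_0\}$ a relative divisor on each piece. The component $\cX_{(2)}$ then carries a \emph{cap}, i.e.\ a relative invariant with one relative insertion, for which Section~6.2 gives a complete answer; the idea is that running this answer through the degeneration formula reinserts the fundamental class into the last relative slot on $\cX_{(1)}\cong\cX$.

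In carrying this out I would first record two elementary facts about $\Hilb^m([\C^2/\Z_{n+1}])$: the modified Nakajima element $(1^m)[1]=\tfrac1{m!}\fp_{-1}(1)^m 1$ is the fundamental class $1\in A^0_T$, and $\langle 1,I_\mu\rangle_{\Hilb}=\int_{\Hilb^m([\C^2/\Z_{n+1}])}I_\mu=1$ for every $T$-fixed point $\mu$, by equivariant localization. Together with the cap invariants $\langle\,|\,I_\mu\rangle'_m=1$ this shows the cap functional $\xi\mapsto\langle\,|\,\xi\rangle'_m$ equals $\xi\mapsto\langle 1,\xi\rangle_{\Hilb}$, so that for any basis $\{C_a\}$ of $A^*_T(\Hilb^m([\C^2/\Z_{n+1}]))$ with $g_{ab}=\int C_a\cup C_b$ and inverse $(g^{ab})$ one has $\sum_{a,b}g^{ab}\langle\,|\,C_b\rangle'_m\,C_a=1$. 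The reduced degeneration formula gives
$$\left\langle\textstyle\prod_i\sigma_{k_i}(\gamma_i)\,\middle|\,\xi_1,\dots,\xi_r\right\rangle'_m=\sum_{a,b}\left\langle\textstyle\prod_i\sigma_{k_i}(\gamma_i)\,\middle|\,\xi_1,\dots,\xi_r,C_a\right\rangle'_{\cX_{(1)},m}g^{ab}\big\langle\,\big|\,C_b\big\rangle'_{\cX_{(2)},m},$$
with all descendants placed on $\cX_{(1)}$; summing over $b$ first, using the identity above and linearity in the last relative argument, collapses the right-hand side to $\langle\prod_i\sigma_{k_i}(\gamma_i)\,|\,\xi_1,\dots,\xi_r,1\rangle'_{\cX_{(1)},m}$, and since $\cX_{(1)}=\cX$ and $1=(1^m)[1]$ this is precisely the left-hand side of the corollary.

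It is worth stressing that the degeneration is applied to the side with $r$ relative fibers, so the cap on $\cX_{(2)}$ is a genuine input and the argument closes up on $\cX_{(1)}\cong\cX$ rather than being circular. The only point that needs care — and the main technical obstacle — is the bookkeeping of descendants: I must check that every cap contribution carrying a descendant insertion vanishes, so that all descendants can legitimately be pushed onto $\cX_{(1)}$. For $\gamma_i$ supported over a point of $\Pj^1$ this is automatic once $x_0$ avoids that point, and for the remaining pulled-back insertions it should follow from the same compactness-and-dimension argument used for the cap theorem together with Corollary~\ref{compact_reduced}. For the descendant-free invariants $\langle\xi_1,\dots,\xi_r\rangle'_m$, which is what the main theorem uses, this issue does not arise and the argument above is complete.
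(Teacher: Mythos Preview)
Your approach is essentially identical to the paper's: degenerate the $r$-pointed theory into two pieces with all $r$ relative fibers on one side, so the other side is a cap, and use the cap computation to identify the node gluing with an insertion of the fundamental class $(1^m)[1]$. The paper's proof is two sentences and does not spell out the identity $\sum_{a,b} g^{ab}\langle\,|\,C_b\rangle'_m\,C_a = 1$ or the descendant bookkeeping; your write-up is more careful on both points, and your observation that fiber-supported descendants (the only kind actually needed later, namely $\sigma_1(\iota_*F)$ and $\sigma_0(\iota_*e_i)$) land on $\cX_{(1)}$ by choosing $x_0$ appropriately is exactly what closes the argument in the cases the paper uses.
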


\begin{proof}
  Consider the invariants on the right hand side. Degenerate $[\C^2/\Z_{n+1}]\times \Pj^1$ into $[\C^2/\Z_{n+1}]\times \Pj^1\cup [\C^2/\Z_{n+1}]\times \Pj^1$, where all descendents and relative insertions $\xi_i$'s remain on the first copy of $[\C^2/\Z_{n+1}]\times \Pj^1$. By the degeneration formula,
  $$\left\langle \prod \sigma_i (\gamma_i) \middle| \xi_1, \cdots, \xi_r \right\rangle'_m = \sum_{a,b} \left\langle \prod \sigma_i (\gamma_i) \middle| \xi_1, \cdots, \xi_r, C_a \right\rangle'_m g^{ab} \langle C_b | \ \rangle,$$
  where $\{C_a\}$ is the Nakajima basis. The results on 1-point functions imply that the only nontrivial possibility for $C_b$ is $(1^m)[\pt]$. The corollary is proved.
\end{proof}

\subsection{Tube invariants}

By tube invariants, we mean 2-point functions $\langle C_a, C_b\rangle'_m$, where $\{C_a\}$ is a basis for $A^*(\Hilb^m([\C^2/\Z_{n+1}])$. Here we need a result on the allowed $q$-parameters.

We claim that all $\langle C_a, C_b\rangle'_m$ take values in $\Q(s_1,s_2)\llb q^{\mathrm{Eff}} \rrb$, where $\mathrm{Eff}$ is a certain cone in the lattice spanned by $\varepsilon_0, \cdots, \varepsilon_n$. This claim will be proved in Corollary \ref{range-q}, and the precise meaning of the cone $\mathrm{Eff}$ will be clear there.

Consider the degeneration formula.
$$ \langle C_a, C_b \rangle'_m= \sum_{k,l} \langle C_a,C_k\rangle'_m \cdot g^{kl} \cdot \langle C_l, C_b\rangle'_m.$$
Define matrix $M$ as $M_a^b:= \sum_k \langle C_a, C_k \rangle'_m \cdot g^{kb}$. Under our assumption, the matrix has entries in $\Q(s_1,s_2)\llb q^\mathrm{Eff} \rrb$. Then the degeneration formula is exactly
$$M= M\cdot M.$$
On the other hand, by Lemma \ref{Hilb_m_0} we have
$$M|_{q=0}= \Id,$$
where taking $q=0$ means evaluating at the vertex of $\mathrm{Eff}$. Hence $M$ must be $\Id$ itself.

\begin{thm} \label{tube}
  $$\langle A,B \rangle'_m= \langle A| B\rangle_{\Hilb^m}.$$
\end{thm}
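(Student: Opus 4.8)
The plan is to reduce to a basis and then run a short formal argument built from the degeneration formula. Since $\langle -,-\rangle'_m$ is bilinear in the two relative insertions, it suffices to fix a homogeneous basis $\{C_a\}$ of $A^*_T(\Hilb^m([\C^2/\Z_{n+1}]))$, with Gram matrix $g_{ab}=\langle C_a| C_b\rangle_{\Hilb}=\int C_a\cup C_b$ and inverse $g^{ab}$, and to prove $\langle C_a,C_b\rangle'_m=g_{ab}$ for all $a,b$.

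First I would degenerate the base $\Pj^1$ of $\cX=[\C^2/\Z_{n+1}]\times\Pj^1$ into a chain of two copies of $\Pj^1$, putting $C_a$ on the relative fiber of one component and $C_b$ on the other. In the multi-regular case $\bm=(m,\dots,m)$ the degeneration formula keeps $\bm$ fixed on both sides and only splits $\varepsilon$, so it yields
\[
\langle C_a,C_b\rangle'_m=\sum_{k,l}\langle C_a,C_k\rangle'_m\, g^{kl}\,\langle C_l,C_b\rangle'_m .
\]
Writing $M$ for the matrix with entries $M_a^{\,b}:=\sum_k\langle C_a,C_k\rangle'_m\, g^{kb}$, this identity is exactly the idempotency $M=M\cdot M$, an identity of matrices over the coefficient ring of the $q=(q_0,\dots,q_n)$ variables.

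Next I would pin down the constant term. By Lemma~\ref{Hilb_m_0} the $\varepsilon=0$ locus of the relative moduli space is $\Hilb^m([\C^2/\Z_{n+1}])$ itself, on which both evaluation maps are the identity and the virtual class is the fundamental class; hence the $q$-independent part of $\langle C_a,C_b\rangle'_m$ is $\int C_a\cup C_b=g_{ab}$, i.e.\ $M|_{q=0}=\Id$. To finish I would invoke the effectivity statement (to be established as Corollary~\ref{range-q}), which places the entries of $M$ in the power series ring $\Q(s_1,s_2)\llb q^{\mathrm{Eff}} \rrb$ in which $q=0$ is the vertex of the cone $\mathrm{Eff}$: writing $M=\Id+N$ with $N|_{q=0}=0$, idempotency gives $N=-N^2$, so by iteration $N$ is divisible by arbitrarily high powers of the $q$-variables, and since the ring is adically separated we get $N=0$, hence $M=\Id$. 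Unwinding the definition of $M$ gives $\langle C_a,C_b\rangle'_m=g_{ab}$, and therefore $\langle A,B\rangle'_m=\langle A| B\rangle_{\Hilb}$ by bilinearity.

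I expect the formal idempotency step to be routine; the real content is securing its two inputs. The first is the effectivity/convergence result Corollary~\ref{range-q}, without which ``$q=0$'' and the divisibility argument have no meaning — this is the analogue of the corresponding estimate in \cite{OP-DT}. The second is the identification of the $\varepsilon=0$ contribution with the Poincar\'e pairing, which requires not just Lemma~\ref{Hilb_m_0} but also that the ordinary (non-reduced) obstruction theory restrict to the standard one on the smooth component $\Hilb^m([\C^2/\Z_{n+1}])\hookrightarrow\Hilb^{(m,\dots,m),0}(\cX,N_0\sqcup N_\infty)$; this too parallels the $\C^2\times\Pj^1$ case. The degeneration formula in the orbifold relative setting is already available, so no further geometric input is needed.
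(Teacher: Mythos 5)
Your proposal is correct and follows essentially the same route as the paper: degenerate $\Pj^1$ to get the idempotency $M=M\cdot M$, identify $M|_{q=0}=\Id$ via Lemma~\ref{Hilb_m_0}, and use the effectivity statement of Corollary~\ref{range-q} to conclude $M=\Id$. Your extra remarks (the adic-separatedness step forcing $N=0$, and the identification of the $\varepsilon=0$ virtual class with the fundamental class so that the constant term is the Poincar\'e pairing) merely make explicit details the paper leaves implicit.
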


\begin{rem} \label{general_tube}
  One may observe that the computation does not involve an explicit choice of the basis $\{C_a\}$, and thus can be largely generalized. For any orbifold surface $\mathcal{S}$, we can always draw the same conclusion for the relative DT invariants of $(\mathcal{S}\times \Pj^1, \mathcal{S}_0 \sqcup \mathcal{S}_\infty)$.
\end{rem}

\section{Three-point functions}

This section is devoted to the computation of relative DT invariants with 3 relative insertions, which is the most nontrivial part of the calculation. One has to reduce the main part of the theory to quantum multiplication by divisors on $\Hilb^m \left( [\C^2/\Z_{n+1}] \right)$. As an intermediate process we have to pass to the rubber invariants. In this section $\lambda, \mu, \cdots$ will denote colored partitions, where a colored partitions is nothing but an ordinary partition with each box $(i,j)$ colored by its diagonal position $j-i \in \Z_{n+1}$.

\subsection{Localization of rubber invariants}

$T$-fixed points of $\Hilb^m([\C^2/\Z_{n+1}])$ are parameterized by $(n+1)$-colored multi-regular 2d partitions of size $m(n+1)$. They form the fixed-point basis $\{[I_\lambda]\}$ of the localized equivariant cohomology $A_T(\Hilb^m([\C^2/\Z_{n+1}])) \otimes \Q(s_1,s_2)$. Let's consider the rubber invariants under this basis
$$\langle I_\mu, I_\nu \rangle_{m, \varepsilon}^\sim, \qquad \varepsilon\neq 0.$$

Let $I$ be the representative of a $T$-fixed point in the rubber moduli space $\Hilb^{m, \varepsilon} (\cX, N_0\sqcup N_\infty)^\sim$. The target space in which $I$ lives is a chain of $[\C^2/\Z_{n+1}]\times \Pj^1$. Following \cite{OP-DT}, on each component of this chain, two possible types of $I$ can occur. Let $\cZ$ denote the associated 1-dimensional closed substack.

\begin{enumerate}[1)]
\setlength{\parskip}{1ex}

\item If there are embedded points in $\cZ$, $I$ is called a \emph{skewer}. Since the embedded points are not fixed by the $\C^*$-action, nor is the skewer. Thus it is also $T$-fixed as a point in the rigid moduli space $\Hilb([\C^2/\Z_{n+1}]\times \Pj^1)$.

    \noindent
    For skewers, one must have $I|_0= I|_\infty$ as fixed points of $\Hilb([\C^2/\Z_{n+1}])$. \\

\item If there are no embedded points in $\cZ$, $I$ is called a \emph{twistor}. In this case $\cZ$ is flat over the chain of $\Pj^1$, which by definition defines a map $f$ from the chain of $\Pj^1$ to $\Hilb([\C^2/\Z_{n+1}])$. In other words, we obtain an element
    $$[f] \in \bar\cM_{0,2}(\Hilb([\C^2/\Z_{n+1}]), \beta)^T,$$
    where the right hand side is Kontsevich's moduli space of stable maps.

    \noindent
    Let's determine the curve class $\beta$. Recall that a basis of $A_1(\Hilb([\C^2/\Z_{n+1}])$ is given by the dual basis of the divisors
    $$c_1(\cV_i)\in A^1(\Hilb([\C^2/\Z_{n+1}])),$$
    where $\cV_i$ are the tautological bundles of the quiver variety, and is identified with the simple roots of $\hat\fg=\widehat\fgl (n+1)$,
    $$\alpha_0,\alpha_1, \cdots,\alpha_n\in \hat\fh^*.$$
    Hence it suffices to compute $c_1(\cV_i)\cdot \beta$, which is the same as $\deg f^*\cV_i$.

    \noindent
    Let's view $\cZ$ as a $\Z_{n+1}$-equivariant subscheme on $\C^2\times \Pj^1$. Let $\tilde\pi_2: \C^2\times \Pj^1 \rar \Pj^1$ be the projection. By definition we have
    $$\bigoplus_{i=0}^n f^*\cV_i = \tilde\pi_{2*} \cO_\cZ$$
    as $\Z_{n+1}$-equivariant vector bundles.

    \noindent
    We compute the pushforward to $K_{\Z_{n+1}}(\pt)$.
    \begin{eqnarray*}
      \chi \left( \Pj^1, \bigoplus_{i=0}^n f^*\cV_i \right) &=& \chi \left( \C^2\times \Pj^1, \cO_\cZ \right) \\
      &=& m \rho_\reg \cdot \chi(\Pj^1) + \sum_{j=0}^n \varepsilon_j \rho_j \cdot \chi(\pt) \\
      &=& \sum_{j=0}^n (m+\varepsilon_j) \rho_j
    \end{eqnarray*}
    Hence by Riemann--Roch we have
    $$\deg f^*(\cV_i)= \chi(\Pj^1, f^*\cV_i) - \text{rk} f^*(\cV_i) = \varepsilon_i.$$
    The conclusion is
    $$c_1(\cV_i)\cdot \beta = \varepsilon_i, \qquad [f] \in \bar\cM_{0,2}(\Hilb([\C^2/\Z_{n+1}]), \varepsilon)^T.$$
\end{enumerate}

The characterization of $\beta$ shows that one can identify $\varepsilon$ with roots of $\hat\fg$ in the standard way. In other words, we identify
$$(1, 0, \cdots, 0), \quad (0, 1, 0, \cdots, 0), \quad \cdots, \quad (0, \cdots, 0, 1)$$
with $\alpha_0, \alpha_1, \cdots, \alpha_n$, or equivalently, $\varepsilon \longleftrightarrow \varepsilon\cdot \hat\alpha$.

The argument above also works for a general $\bm$. As a corollary we have a restriction on which $\varepsilon$'s are allowed to appear.

\begin{cor} \label{range-q}
  The DT invariants $\left\langle \sigma_{k_1}(\gamma_1)\cdots \sigma_{k_l}(\gamma_l) \middle| \xi_1,\cdots,\xi_r \right\rangle_\bm$ live in $\Q(s_1,s_2)\llb q^\mathrm{Eff} \rrb$, where $\mathrm{Eff}$ is the effective cone of $\Hilb^\bm([\C^2/\Z_{n+1}])$. Combined with the description of the effective cone, they take values in $\Q(s_1,s_2)\llb q_0, \cdots, q_n \rrb$.
\end{cor}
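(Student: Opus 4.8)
The plan is to reduce everything, via $T$-localization, to the purely numerical question of which classes $(\bm,\varepsilon)$ can be realized by a $T$-fixed admissible substack, and then to extract the constraint on $\varepsilon$ from exactly the fixed-point geometry used in the rubber discussion above. First I would note that each invariant in the statement is by definition an integral over $[\Hilb^{(\bm,\varepsilon)}(\cX,\coprod N_i)^T]^{\vir}$ divided by the equivariant Euler class of the virtual normal bundle, so it vanishes whenever the $T$-fixed locus of $\Hilb^{(\bm,\varepsilon)}(\cX,\coprod N_i)$ is empty, and this vanishing is independent of the descendant and relative insertions chosen. Hence it is enough to prove: if there is a $T$-fixed admissible closed substack $\cZ\subset\cX[k]=[\C^2/\Z_{n+1}]\times\Pj^1[k]$ with $[\cO_\cZ]$ of class $(\bm,\varepsilon)$ for some $k$, then $\varepsilon$ lies in the effective cone $\mathrm{Eff}$ of $\Hilb^\bm([\C^2/\Z_{n+1}])$.

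Next I would analyze such a $\cZ$ as in the rubber case above. Since $T$ acts trivially on every $\Pj^1$-factor, $\cZ$ is supported on $B\Z_{n+1}\times\Pj^1[k]$, and its $K$-class decomposes along the chain of $\Pj^1$'s, with the usual inclusion–exclusion corrections at the nodes, into two kinds of contributions (following Okounkov–Pandharipande and Maulik–Oblomkov): \emph{twistor} components, which are flat over their $\Pj^1$ and therefore define $T$-fixed stable maps $f_i$ to $\Hilb^\bm([\C^2/\Z_{n+1}])$, and \emph{skewer} components, on which the one-dimensional part is pulled back from a $T$-fixed point of $\Hilb^\bm([\C^2/\Z_{n+1}])$ together with embedded points. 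The Riemann–Roch computation carried out above, which only used $\tilde\pi_{2*}\cO_\cZ\cong\bigoplus_i f^*\cV_i$ and is valid for arbitrary $\bm$, shows that the curve class $\beta_i$ of a twistor satisfies $c_1(\cV_l)\cdot\beta_i=\varepsilon^{(i)}_l$, where $\varepsilon^{(i)}$ is the part of $\varepsilon$ carried by that component; being represented by an honest rational curve, $\beta_i$ is effective. On a skewer component the corresponding part of $\varepsilon$ is a non-negative combination $\sum_l e^{(i)}_l\alpha_l$ of the simple roots $\alpha_0,\dots,\alpha_n$, each of which is itself a positive root of $\hat\fg$ and hence an effective class. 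Summing over the components and using that $\mathrm{Eff}$ is closed under addition, we conclude $\varepsilon\in\mathrm{Eff}$, so all the invariants live in $\Q(s_1,s_2)\llb q^{\mathrm{Eff}}\rrb$.

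For the sharper statement I would invoke the explicit description of the effective cone recorded earlier: every effective curve class is a positive root $k\delta+\alpha$ of $\hat\fg$ with either $k>0$ and $\alpha$ a root of $\fgl(n+1)$ (possibly zero), or $k=0$ and $\alpha$ a positive root. Writing $\delta=\alpha_0+\cdots+\alpha_n$ and $\alpha=\pm(\alpha_i+\cdots+\alpha_{j-1})$, the coefficient of each simple root in $k\delta+\alpha$ is at least $k-1\ge 0$ in the first case and at least $0$ in the second. Since $q_l$ records the $\alpha_l$-coefficient, so that $q^\delta=q_0q_1\cdots q_n=Q$, every $\varepsilon$ that occurs has all $\varepsilon_l\ge 0$, and therefore the invariants take values in $\Q(s_1,s_2)\llb q_0,\dots,q_n\rrb$.

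The main obstacle is step two: the genuine content is the classification of $T$-fixed admissible substacks of $\cX[k]$ into twistor and skewer pieces, together with the bookkeeping of how the $K$-class adds up along the chain — in particular making sure the node corrections never produce a negative simple-root coefficient. Everything else is formal (the twistor computation is already done above, and the passage from nonvanishing of an invariant to nonemptiness of the fixed locus is immediate), so the work is precisely in pinning down the skewer and node contributions, which is where one relies on the Okounkov–Pandharipande / Maulik–Oblomkov analysis of the relevant fixed loci.
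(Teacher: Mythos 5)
Your proposal is correct and follows essentially the same route as the paper, which likewise deduces the corollary from the $T$-localization analysis of fixed configurations on expanded targets (twistor components giving stable maps with $c_1(\cV_i)\cdot\beta=\varepsilon_i$, hence effective classes, and skewer/embedded-point contributions giving non-negative multiples of the simple roots). The node-correction bookkeeping you flag as the remaining work is in fact harmless: pushing forward along the contraction $\cX[k]\rar\cX$, each node subtracts exactly $\sum_j m_j[\rho_j\otimes\cO_p]$, which cancels the $\bm$-part contributed by the corresponding bubble, so the total $\varepsilon$ is simply the sum of the componentwise non-negative $\varepsilon^{(i)}$'s.
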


The following result relates rubber invariants (in the $\mu\neq \nu$ case) to quantum multiplication by divisors in $\Hilb^m([\C^2/\Z_{n+1}])$.

\begin{prop} \label{rubber_QH}
  For $\varepsilon\neq 0$, $\mu\neq \nu$,
  $$\langle I_\mu, I_\nu \rangle_{m, \varepsilon}^\sim = \langle I_\mu, I_\nu \rangle_{\Hilb^m, \varepsilon} \mod (s_1+s_2)^2,$$
  where the right hand side is the $T$-equivariant Gromov--Witten invariant in $\Hilb([\C^2/\Z_{n+1}])$ with $g=0$, $n=2$, and $\beta= \varepsilon$ in the sense of the above argument.
\end{prop}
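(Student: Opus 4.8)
The strategy is to evaluate the rubber invariant by $T$-equivariant virtual localization on $\cM^\sim=\Hilb^{m,\varepsilon}(\cX,N_0\sqcup N_\infty)^\sim$ and to match its leading term in $s_1+s_2$ with the $T$-equivariant virtual localization of the genus-$0$ two-pointed Gromov--Witten invariant of $\Hilb^m([\C^2/\Z_{n+1}])$ in class $\varepsilon$. By the reduced obstruction theory of Section~3.3 the rubber invariant is divisible by $s_1+s_2$; for $\mu\neq\nu$ and $\varepsilon\neq 0$ the Gromov--Witten invariant is also divisible by $s_1+s_2$ (by the divisor equation it is a matrix coefficient of the purely quantum part of a divisor operator, which carries a factor $s_1+s_2$). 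Hence the asserted congruence modulo $(s_1+s_2)^2$ is an equality of the coefficients of $(s_1+s_2)^1$ on the two sides.

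First I would enumerate the $T$-fixed loci of $\cM^\sim$ via the skewer/twistor dichotomy recalled above. A fixed locus is a chain of rubber components, each either a skewer --- a rigid component with embedded points, forcing the two adjacent $T$-fixed points $I_\lambda$ of $\Hilb^m([\C^2/\Z_{n+1}])$ to coincide --- or a twistor, a $\C^*$-flat component inducing a $T$-fixed element of $\bar\cM_{0,2}(\Hilb^m([\C^2/\Z_{n+1}]),\beta_i)$, subject to the compatibility $\mu=\lambda_0,\lambda_1,\dots,\lambda_k=\nu$ of the node labels and to the twistor (and embedded-point) classes summing to $\varepsilon$. Relative (rubber) virtual localization then writes $\langle I_\mu,I_\nu\rangle^\sim_{m,\varepsilon}$ as a sum over such chains of products of: the rigid relative DT contribution of each skewer (a cap/tube-type invariant of $[\C^2/\Z_{n+1}]\times\Pj^1$), the contribution of each twistor to $\bar\cM_{0,2}(\Hilb^m([\C^2/\Z_{n+1}]),\beta_i)^T$, the $\psi$-class edge factors at the rubber nodes, and the gluing factors over $\Hilb^m([\C^2/\Z_{n+1}])$. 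Since a skewer preserves the label and $\mu\neq\nu$, every contributing chain has at least one twistor; since $\varepsilon\neq 0$, $\cM^\sim$ is nonempty.

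The core of the argument is the divisibility bookkeeping, carried out as in \cite{OP-DT} and \cite{MOb09-DT}: each skewer contributes a factor divisible by $s_1+s_2$ by the $1$- and $2$-point divisibility results of Section~3.3, and a dimension count at each additional rubber node produces a further factor of $s_1+s_2$. Therefore every fixed locus other than a single rigid twistor contributes at order $(s_1+s_2)^2$ and may be dropped. On the surviving loci --- single twistors, i.e.\ points of $\bar\cM_{0,2}(\Hilb^m([\C^2/\Z_{n+1}]),\varepsilon)^T$ carrying $I_\mu$ over $0\in\Pj^1$ and $I_\nu$ over $\infty$ --- the rubber-specific edge and obstruction factors agree, modulo $(s_1+s_2)^2$, with the edge factors in the $T$-virtual localization of $\bar\cM_{0,2}(\Hilb^m([\C^2/\Z_{n+1}]),\varepsilon)$ against $\ev_0^*[I_\mu]\cdot\ev_\infty^*[I_\nu]$; here one matches the two fixed-point sets via the $S^1$-equivariant diffeomorphism of Proposition~\ref{diff} and Nagao's combinatorial description of the $\widehat{\fgl}(n+1)$-action. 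Summing over all such loci and applying virtual localization in reverse on $\bar\cM_{0,2}(\Hilb^m([\C^2/\Z_{n+1}]),\varepsilon)$ yields $\langle I_\mu,I_\nu\rangle_{\Hilb^m,\varepsilon}$ modulo $(s_1+s_2)^2$.

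The main obstacle is the second half of the bookkeeping: proving cleanly that every additional rubber node and every skewer costs a further power of $s_1+s_2$, that the $\psi$-edge factors from relative virtual localization contribute nothing below that order, and that the reduced obstruction theory on $\cM^\sim$ restricts compatibly along each fixed locus so that the single factor $s_1+s_2$ of the reduced class is correctly distributed over the chain. One must also check that the age shifts and gerbe structures on the twisted sectors of $I[\C^2/\Z_{n+1}]$ are inert for this leading-order analysis; they are, since the relevant fixed geometry and tangent weights are supported on the untwisted sector, and this is essentially the only point where the argument departs from \cite{OP-DT,MOb09-DT}.
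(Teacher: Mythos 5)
Your plan follows essentially the same route as the paper: $T$-localization on the rubber moduli, the skewer/twistor dichotomy, killing every graph with more than one twistor edge (or any skewer) modulo $(s_1+s_2)^2$ via the reduced obstruction theory, and matching the surviving single-twistor contributions with the localization of $\bar\cM_{0,2}(\Hilb^m([\C^2/\Z_{n+1}]),\varepsilon)$. The only cosmetic difference is that the paper handles your ``main obstacle'' not by comparing edge factors fixed locus by fixed locus, but by embedding the open locus $U_{m,\varepsilon}\subset \bar\cM_{0,2}$ of chain-domain stable maps into $\cM^\sim$ and checking (as in Lemma 25 of \cite{OP-DT}) that the obstruction theories, universal families and evaluation maps coincide there, which yields the matching in one stroke.
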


\begin{proof}
  Let's apply $T$-localization to the rubber invariant $\langle I_\mu, I_\nu \rangle_{m, \varepsilon}^\sim$. Recall that a fixed point $I$ lives in a chain of rational components, and determines a graph $\Gamma_{\mu,\nu}$ in the sense of Section 8.3.5 in \cite{OP-DT}. In these graphs, vertices stand for maximal subchains of skewer components and ``breaking" vertices between twistor components; and edges stand for ``unbroken" chains of twistor components.

  By the reduced obstruction theory, if there are at least 2 edges in the chain, there would be an $(s_1+s_2)^2$ factor which does not contribute to the invariant. Therefore the only contribution comes from the graph with a single twistor edge.

  Consider the open locus $U_{m,\varepsilon}\subset \bar\cM_{0,2}(\Hilb([\C^2/\Z_{n+1}]), \varepsilon)$, consisting of those stable maps whose domains are a chain of rational curves. Then $U_{m,\varepsilon}$ can be embedded as an open locus in the rubber moduli space $\Hilb^{m,\varepsilon}(\cX, N_0\sqcup N_\infty)^\sim$, which under $T$-localization, matches exactly with the graphs with one single twistor edge. One can check, just as in Lemma 25 of \cite{OP-DT}, the perfect obstruction theory for the GW and DT theory coincide on the open locus. Moreover, the universal family and evaluation maps on both sides also coincide on $U_{m, \varepsilon}$. Thus the lemma is proved.
\end{proof}

\subsection{Correspondence between $T$-fixed points and non-punctual invariants}

Now let's deal with the case $\mu=\nu$. Before that we need a closer look at the $T$-fixed points of $\Hilb^m(\cA_n)$ and $\Hilb^m([\C^2/\Z_{n+1}])$.

Let $p_1, \cdots, p_{n+1}$ be the $T$-fixed points in $\cA_n$. $T$-fixed points in $\Hilb^m(\cA_n)$ are parameterized by $(n+1)$-tuples of partitions $$\vec{\lambda}:= \left( \lambda^{(1)}, \cdots, \lambda^{(n+1)} \right),$$
with total size $|\vec{\lambda}|:= \sum \left| \lambda^{(i)} \right| = m$, where each partition $\lambda^{(i)}$ stands for a $T$-fixed point in $\Hilb^{|\lambda^{(i)}|}(\C^2)$, concentrated at $p_i$.

It is well-known \cite{Nak96} that the cohomology of $\Hilb(\C^2)$ can be identified (as a vector space) with the ring of symmetric functions. Let $w^\pm$ be the tangent weights of the coordinate axis in $\C^2$, and $J_\lambda$ be a fixed point corresponding to a partition $\lambda$. Then the normalized fixed-point class $(w^+)^{-|\lambda|} [J_\lambda]$ is identified the integral Jack polynomial $J^{w^-/w^+}_\lambda(z)$. If we specialize to the condition $s_1+s_2=0$, these Jack polynomials specialize to Schur polynomials, and $[J_\lambda]$ can be identified with
$$(-w^+)^{|\lambda|} \frac{|\lambda|}{\dim \lambda} s_\lambda(z).$$
In particular, the Nakajima creators $\fp_{-k}([0])$ are identified with multiplications by Newton's power functions
$$w^+\cdot p_k(z).$$
Therefore, a $T$-fixed point in $\Hilb^m(\cA_n)$ can be identified with an $(n+1)$-tuple of symmetric functions, each corresponding to a fixed point in $\Hilb(\C^2_{p_i})$, where $\C^2_{p_i}$ stands for an analytic or formal neighborhood around $p_i$.

On the other hand, $T$-fixed points of $\Hilb^m([\C^2/\Z_{n+1}])$ are parameterized by multi-regular $(n+1)$-colored partitions of size $m(n+1)$. Moreover, on both $\Hilb^m(\cA_n)$ and $\Hilb^m([\C^2/\Z_{n+1}])$, $T$-fixed points coincide with $T^\pm$-fixed points, basically because there are no tangent weights divisible by $(s_1+s_2)$.

There is a perfect correspondence between the sets
$$\{ (n+1) \text{-tuples of partitions } \{\vec{\lambda}\} \text{ with total size } m \}$$
and
$$\{ \text{multi-regular } \Z_{n+1} \text{-colored partitions of size } m(n+1) \}.$$
This operation is called the $n$-\emph{quotient}. Some good introductions can be found in \cite{Br, Nag, Ross-DT}. Both types of partitions can be identified with states in certain colored-version of infinite wedge Fock space, where Nakajima operators can be realized as annihilation and creation operators.

\begin{prop}[Theorem 4.5 and 5.5 of \cite{Nag}]
  The correspondence between $T^\pm$-fixed points of $\Hilb^m(\cA_n)$ and $\Hilb^m([\C^2/\Z_{n+1}])$ induced by the $S^1$-diffeomorphism $\phi$ coincides with the $n$-quotient correspondence between partitions. Moreover, this correspondence respects the actions by Nakajima operators.
\end{prop}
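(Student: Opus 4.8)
The statement is Nagao's theorem (Theorems 4.5 and 5.5 of \cite{Nag}), together with the observation recorded in the remark above that his $\widehat\fsl(n+1)$-result extends to $\widehat\fgl(n+1)$. The plan is to organize the argument around three points: (a) describe the two fixed-point sets combinatorially; (b) pin down the bijection induced by $\phi$; (c) promote that bijection to an intertwiner of the Nakajima actions. Throughout, all equivariant statements are to be read $T^\pm$-equivariantly, equivalently modulo $(s_1+s_2)$, since $\phi$ is only an $S^1$-equivariant diffeomorphism.

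First I would recall the combinatorial descriptions. Since $\Hilb^m([\C^2/\Z_{n+1}])$ is the $\Z_{n+1}$-equivariant Hilbert scheme of $\C^2$, its $T$-fixed points are the $\Z_{n+1}$-invariant monomial ideals, i.e. multi-regular $\Z_{n+1}$-colored partitions of size $m(n+1)$; and $\Hilb^m(\cA_n)$ has $T$-fixed points supported near the $n+1$ torus-fixed points $p_1,\cdots,p_{n+1}$ of $\cA_n$, hence parameterized by $(n+1)$-tuples of ordinary partitions of total size $m$. On both spaces the $T$-fixed locus coincides with the $T^\pm$-fixed locus, because no tangent weight is divisible by $s_1+s_2$; so the diffeomorphism $\phi$ of Proposition \ref{diff} restricts to a bijection of these two finite sets, and this is the bijection to be identified with the $n$-quotient.

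Next I would pin down that bijection. Both spaces are Nakajima quiver varieties for the same quiver, dimension vector and framing, differing only in the stability parameter, and $\phi$ is the hyper-K\"ahler rotation relating the two chambers; in particular (as already used in Section 2.4) $\phi$ matches the tautological bundles $\cV_i$ on the two sides. Comparing $T^\pm$-fixed points then reduces to comparing the numerical data $c_1(\cV_i)$ restricted to each fixed point, for $0\le i\le n$, together with the Heisenberg weight, and one checks that on the $\cA_n$ side these numbers are read off from the $(n+1)$-tuple of partitions in exactly the way that, on the $[\C^2/\Z_{n+1}]$ side, they are read off from the corresponding $n$-quotient. Equivalently, and this is the route of \cite{Nag}, one realizes both sets of fixed points as states in a Fock space — the $(n+1)$-colored infinite wedge, resp. the tensor product of $n+1$ ordinary infinite wedges — and observes that the fixed-point correspondence produced by wall-crossing is precisely the classical combinatorial $n$-quotient map between these two models.

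Finally I would check compatibility with the Nakajima operators. These are defined (\cite{Nak94, Nak98}) by Hecke correspondences, which are themselves Lagrangian cycles inside products of quiver varieties for an enlarged dimension vector; the same hyper-K\"ahler rotation that produces $\phi$ relates these correspondence varieties for the two stability chambers, so $\phi$ intertwines the induced operators on $T^\pm$-equivariant cohomology, while on the combinatorial side the $n$-quotient is by construction the isomorphism intertwining the Heisenberg actions on the two Fock models. Since both transports agree on fixed-point classes by the previous step, they agree as maps, which gives the claim — using Nagao's identification of the quiver-variety $\widehat\fsl(n+1)$-action with the geometric action of \cite{MOb09-QH}, and the remark above to include the extra Heisenberg direction $\Id(k)$, after matching the sign normalizations to those of \cite{MOb09-QH}. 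The main obstacle is conceptual rather than computational: because $\phi$ is merely a diffeomorphism and only $S^1$-equivariant, none of the algebraic structure (tautological bundles, Hecke correspondences) is transported as algebraic data, so every comparison must be carried out topologically and in $(s_1+s_2)$-reduced cohomology, and one must verify that the hyper-K\"ahler rotation genuinely matches the correspondence varieties, not merely the ambient spaces; this verification, and the bookkeeping identifying the resulting map with the $n$-quotient, is exactly the content of \cite{Nag}, which we invoke.
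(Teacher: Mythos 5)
Your proposal is correct and matches the paper's treatment: the paper states this proposition as a quoted result of Nagao (Theorems 4.5 and 5.5 of \cite{Nag}), extended from $\widehat\fsl(n+1)$ to $\widehat\fgl(n+1)$ as in the earlier remark, and offers no independent proof. Your sketch of the hyper-K\"ahler rotation, the fixed-point combinatorics, and the Fock-space/$n$-quotient identification is a reasonable outline of Nagao's argument, but since you ultimately invoke \cite{Nag} for the substance, the route is essentially the same as the paper's.
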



One feature of this correspondence is crucial to us: one can explicitly describe how $\widehat{\fgl}(n+1)$ acts on partitions. In particular, we will use the following fact, where partitions are viewed as piles of boxes on the plane. For a reference, see Section 3.2 of \cite{Ross-DT}.

\begin{cor}
  Under the correspondence in the previous proposition, the operation of adding a single box at the position $(i,j)$ to the $l$-th partition $\lambda^{(l)}$ on the $\cA_n$ side corresponds to adding to the colored partition a length-$n$ border strip starting at a box of color $l$ on the orbifold side, where the unique color $0$ box of the length-$n$ border strip lies in the $(n+1)(j-i)$ diagonal.
\end{cor}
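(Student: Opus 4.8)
The plan is to reduce the corollary to the classical combinatorics of the $(n+1)$-quotient in the abacus (Maya diagram) model and then to track the residues of the cells that get added. By the previous Proposition the $S^1$-diffeomorphism $\phi$ identifies $T^\pm$-fixed points on the two sides via the $(n+1)$-quotient and intertwines the Nakajima operators, so it suffices to prove a purely combinatorial statement about that bijection. First I would encode a partition $\mu=(\mu_1\ge\mu_2\ge\cdots)$ by its bead set $\{\mu_j-j:j\ge 1\}\subset\Z$ with the charge-zero convention; then adding a single cell at the end of row $j$ is exactly the elementary move carrying the bead at position $\mu_j-j$ to $\mu_j-j+1$, allowed precisely when that slot is empty, in which case the new cell sits on diagonal $\mu_j-j+1$. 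Next I would recall that the $(n+1)$-quotient is realized on this model by sorting the beads into the $n+1$ runners indexed by residues modulo $n+1$: the beads on runner $i$, re-indexed, are the beads of the component $\lambda^{(i)}$, and the multi-regularity hypothesis is exactly the condition (empty $(n+1)$-core, with the charge offsets of the runners fixed appropriately) making this the splitting used in the Proposition.

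Under this dictionary, adding one cell to $\lambda^{(i)}$ is an elementary forward move on runner $i$, which in the merged abacus of $\mu$ is the jump of a bead from some position $p$ lying on runner $i$ to the position $p+(n+1)$, necessarily empty beforehand. It is a standard fact about cores and quotients (see e.g.\ \cite{Br,Nag,Ross-DT}) that such a jump changes $\mu$ by the addition of a connected skew shape with no $2\times 2$ block, i.e.\ a border strip, consisting of $n+1$ cells; since a border strip with $n+1$ cells is $n$ edge-steps long, this is the ``length-$n$ border strip'' of the statement. Traversing the strip from one extreme cell to the other the diagonal $b-a$ changes by $1$ at each step, so the strip occupies the $n+1$ consecutive diagonals running from $p+1$ to $p+(n+1)$; in particular it contains exactly one cell of each color in $\Z_{n+1}$ --- which reconfirms that the new colored partition is again multi-regular --- and the distinguished end of the strip, the one that counts as its ``start,'' lies on the diagonal in the color class indexed by $i$ (equal to $i$ up to whatever residue shift is built into the runner labelling of the Proposition).

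The only genuine work is the bookkeeping of indices: one must check that the labelling of runners by $i\in\{0,\dots,n\}$ agrees with the labelling of the components $\lambda^{(1)},\dots,\lambda^{(n+1)}$ by the $T$-fixed points of $\cA_n$ --- equivalently, with the vertices of the cyclic quiver under the framing $\bw=(1,0,\dots,0)$ --- and that the charge offsets of the runners, together with the choice of which extreme cell is the ``start'' of a border strip, are the ones for which the residue comes out to be $i$ on the nose. All of this is forced by the compatibility of $\phi$ with the Nakajima operators recorded in the previous Proposition, so no further geometric input is required; I expect this reconciliation of conventions, rather than the combinatorial core, to be the point that needs the most care. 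Once it is pinned down, the corollary is immediate from the abacus description of the $(n+1)$-quotient and the elementary identification of a forward jump by $n+1$ with the addition of a border strip.
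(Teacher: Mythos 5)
Your argument is correct and is essentially the argument the paper relies on: the paper states this corollary without proof, as an immediate consequence of the $n$-quotient correspondence of the preceding proposition together with the standard abacus/Maya-diagram combinatorics (as in the cited references), which is exactly what you spell out via the bead-jump-by-$(n+1)$ description. Your reconciliation of the length convention (a strip of $n+1$ cells, i.e.\ ``length $n$'' in edge-steps, matching the paper's later usage of ``length-$(n+1)$ border strip'') and your flagging of the residue/labelling bookkeeping are the right points of care, and nothing further is needed.
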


Let $\Theta$ be the operator on the Fock space $\cF^T_{[\C^2/\Z_{n+1}]}$ defined as
$$\langle A| \Theta| B\rangle:= \langle A, B\rangle^\sim.$$
Consider the decomposition
$$\langle A, B\rangle^\sim = 1 + \langle A, B\rangle^\sim_{\mathbf{0}} + \langle A, B\rangle^\sim_+,$$
where the ``$\mathbf{0}$" part consists of terms $q^\varepsilon$ with
$$\varepsilon_0 = \varepsilon_1 = \cdots = \varepsilon_n,$$
and the ``+" part contains other terms. If we adopt the identification $\varepsilon \longleftrightarrow \varepsilon\cdot \hat\alpha= k\delta+ \alpha$, where $\alpha$ is a root of $\fgl(n+1)$, then the ``$\mathbf{0}$" part contains exactly those terms with $q^{k\delta}$, $k\leq 1$. Similarly we define operators $\Theta_\mathbf{0}$ and $\Theta_+$, and call them ``punctual" and ``non-punctual" part respectively. Note that $\Theta_+$ is only nontrivial for $n\geq 1$.

Since for $\varepsilon\neq 0$ the invariants are divisible by $(s_1+s_2)$, we set
$$\langle A| \Theta^\red_\mathbf{0}| B\rangle:= \left. \frac{\langle A, B\rangle^\sim_\mathbf{0}}{s_1+s_2} \right|_{s_1=s, s_2=-s}, \qquad \langle A| \Theta^\red_+| B\rangle:= \left. \frac{\langle A, B\rangle^\sim_+}{s_1+s_2} \right|_{s_1=s, s_2=-s}.$$
Then $\Theta^\red_\mathbf{0}$, $\Theta^\red_+$ are operators on $\cF^{T^\pm}_{[\C^2/\Z_{n+1}]}\cong \cF^{T^\pm}_{\cA_n}$. Here ``red" stands for invariants defined by capping with the reduced virtual cycle $[\cM^\sim]^\red$, which should not be confused with the reduced DT invariants. We write $\Theta^\red:= \Theta_+^\red + \Theta^\red_\mathbf{0}$.

\begin{lem}[Parallel to Proposition 3.2 of \cite{MOb09-QH}, Proposition 4.1 of \cite{MOb09-DT}] \label{factorization}
  Let $\mu, \nu, \lambda^i, \rho^i$ be partitions. Then
  \begin{eqnarray*}
  \left\langle \mu[1] \prod \lambda^i (\omega_i) \middle| \Theta_+^\red \middle| \nu[1] \prod \rho^i (\omega_i) \right\rangle &=& \left\langle \prod \lambda^i (\omega_i) \middle| \Theta_+^\red \middle| \prod \rho^i (\omega_i) \right\rangle \langle \mu[1] | \nu[1] \rangle,\\
  \left\langle \mu[1] \prod \lambda^i (\omega_i) \middle| \Theta_\mathbf{0}^\red \middle| \nu[1] \prod \rho^i (\omega_i) \right\rangle &=& \left\langle \prod \lambda^i (\omega_i) \middle| \Theta^\red_\mathbf{0} \middle| \prod \rho^i (\omega_i) \right\rangle \langle \mu[1] | \nu[1] \rangle \\
  && + \left\langle \prod \lambda^i (\omega_i) \middle| \prod \rho^i (\omega_i) \right\rangle \langle \mu[1] | \Theta^\red_\mathbf{0} | \nu[1] \rangle \\
  && - \left\langle \mu[1] \prod \lambda^i (\omega_i) \middle| \nu[1] \prod \rho^i (\omega_i) \right\rangle \langle \emptyset | \Theta^\red_\mathbf{0} | \emptyset \rangle.
  \end{eqnarray*}
\end{lem}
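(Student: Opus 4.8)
The plan is to combine a tensor decomposition of the Fock space with a $T$-localization analysis of the rubber moduli space, in the spirit of \cite{MOb09-QH, MOb09-DT}.

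First I would record the decomposition underlying the two formulas. Under the $\widehat\fgl(n+1)$-identification of Section 2, $\fp_{-k}(1)$ is the trace current $\Id(-k)$ while the $\fp_{-k}(\omega_i)$ lie in the $\widehat\fsl(n+1)$-part; since the identity matrix is central in $\fgl(n+1)$, the trace current commutes with all of $\widehat\fsl(n+1)$, equivalently $\langle 1,\omega_i\rangle=0$ (which may also be checked directly by localization on $\cA_n$), whereas $\langle 1,1\rangle=-\tfrac{1}{(n+1)s^2}\neq0$. Because $\Omega=\{1,\omega_1,\dots,\omega_n\}$ is a basis of $A^*_{T^\pm}(\cA_n)\otimes\Q(s)$, the Nakajima Heisenberg factors, and compatibly with the Poincar\'e pairing $\cF^{T^\pm}_{[\C^2/\Z_{n+1}]}\cong\cF_1\otimes\cF'$, where $\cF_1$ is the irreducible Fock module generated by the $\fp_{-k}(1)$ (spanned by the $\mu[1]$) and $\cF'$ is generated by the $\fp_{-k}(\omega_i)$ (spanned by the $\prod_i\lambda^i(\omega_i)$). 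Expanding in the monomial basis and using multilinearity, the two stated identities are equivalent to
$$\Theta_+^\red=\Id_{\cF_1}\otimes\big(\Theta_+^\red|_{\cF'}\big),\qquad
\Theta_\mathbf{0}^\red-c=\big(\Theta_\mathbf{0}^\red|_{\cF_1}-c\big)\otimes\Id_{\cF'}+\Id_{\cF_1}\otimes\big(\Theta_\mathbf{0}^\red|_{\cF'}-c\big),$$
with $c=\langle\emptyset|\Theta_\mathbf{0}^\red|\emptyset\rangle$; since $\cF_1$ is irreducible over the Heisenberg generated by the $\fp_{\pm k}(1)$, these are in turn equivalent to the geometric assertions (i) $[\fp_{\pm k}(1),\Theta_+^\red]=0$ for all $k$, and (ii) $[\fp_{\pm k}(1),[\fp_{\pm l}(\omega_i),\Theta_\mathbf{0}^\red]]=0$ for all $k,l,i$. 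To prove (i) and (ii) I would $T$-localize the rubber invariants defining $\Theta^\red$, exactly as in the proof of Proposition \ref{rubber_QH} and in \cite{OP-DT, MOb09-DT}: a $T$-fixed point of $\cM^\sim$ is encoded by a graph whose vertices are the maximal skewer subchains and breaking vertices and whose edges are the unbroken twistor subchains; the reduced obstruction theory contributes an $(s_1+s_2)$ for every twistor edge, so after dividing by $(s_1+s_2)$ only single-twistor-edge graphs survive in $\Theta^\red$. On such a graph the twistor edge contributes a genus-$0$, $2$-pointed stable map of class $\varepsilon$ to $\Hilb^m([\C^2/\Z_{n+1}])\cong\Hilb^m(\cA_n)$ (with $\varepsilon\leftrightarrow\varepsilon\cdot\hat\alpha$ as in Section 7.1), the skewer vertices recording $T$-fixed configurations, and — as in \cite[Lemma 25]{OP-DT} — the GW and DT obstruction theories, universal families and evaluation maps agree on the relevant open locus. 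One then reads off which boxes a class-$\varepsilon$ curve can rearrange from the $n$-quotient description of the $\widehat\fgl(n+1)$-action (\cite{Nag} and the Corollary following it), using that the weight-preserving operators $e_\alpha e_{-\alpha}$ lie in $\widehat\fsl(n+1)$ and hence commute with the trace current $\fp_{\pm k}(1)$. If $\varepsilon$ is ``$+$'', i.e. $\varepsilon\cdot\hat\alpha=k\delta+\alpha$ with $\alpha\neq0$, the contributing curves move boxes only in the $\widehat\fsl(n+1)$-directions, so the boxes produced by the $\fp_{-k}(1)$-insertions at $0$ and $\infty$ are spectators and may be stripped off both relative fibers, yielding (i). If $\varepsilon=k\delta$ is ``$\mathbf{0}$'', a connected contributing curve moves either only ``uniform'' ($\fp(1)$-)boxes — these contributions reassemble into $\Theta_\mathbf{0}^\red|_{\cF_1}$ — or only $\omega$-boxes, giving $\Theta_\mathbf{0}^\red|_{\cF'}$; the ``no moving boxes'' term $\langle\emptyset|\Theta_\mathbf{0}^\red|\emptyset\rangle$ is common to both restrictions, so summing and subtracting it once gives (ii).

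I expect the main obstacle to be making the decoupling of the $\fp_{\pm k}(1)$-tower rigorous: that these boxes are genuinely spectators for the $\widehat\fsl(n+1)$-curves and split off additively from the $k\delta$-curves. This requires careful control of the $T$-fixed loci of the rubber, of their virtual normal-bundle contributions under the reduced obstruction theory, and of the precise $n$-quotient combinatorics of the $e_\alpha e_{-\alpha}$-action on colored partitions; the rest is the transcription of \cite{MOb09-QH, MOb09-DT} into the orbifold rubber setting, given Proposition \ref{rigidification} and the identifications of Section 7.1.
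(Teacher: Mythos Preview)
Your algebraic reformulation is clean and correct: the two identities are indeed equivalent to $\Theta_+^\red=\Id_{\cF_1}\otimes(\Theta_+^\red|_{\cF'})$ and $\Theta_\mathbf{0}^\red-c=(\Theta_\mathbf{0}^\red|_{\cF_1}-c)\otimes\Id+\Id\otimes(\Theta_\mathbf{0}^\red|_{\cF'}-c)$, and the decomposition $\cF\cong\cF_1\otimes\cF'$ via $\langle 1,\omega_i\rangle=0$ is exactly the right setup. The gap is in the localization step you use to prove the commutator relations.

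Your claim that ``only single-twistor-edge graphs survive in $\Theta^\red$'' is precisely what Proposition~\ref{rubber_QH} establishes, but \emph{only for $\mu\neq\nu$}. When $\mu=\nu$ there are $T$-fixed configurations with \emph{zero} twistor edges --- a single skewer vertex (embedded points on a constant curve) with any $\varepsilon$ you like --- and these also contribute an $(s_1+s_2)$ factor and hence survive in $\Theta^\red$. This diagonal skewer contribution is not captured by the GW comparison or by the $e_\alpha e_{-\alpha}$ combinatorics; there is no a priori reason it should be constant along $\fp_{\pm k}(1)$-orbits, which is what $[\fp_{\pm k}(1),\Theta_+^\red]=0$ would require. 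In fact the logic of the paper runs the other way: Lemma~\ref{factorization} is the input that allows one to extend the off-diagonal GW comparison (Proposition~\ref{rubber_QH}) to the diagonal (Propositions~\ref{munu-induction}, \ref{rubber_QH_red}, \ref{7.10}). Invoking the GW identification to prove the factorization is therefore circular. The same issue is more acute in the punctual case: your dichotomy ``a connected $k\delta$-curve moves only uniform boxes or only $\omega$-boxes'' is not justified, since the imaginary-root operators $e_{k\delta}e_{-k\delta}$ mix the trace and traceless Cartan pieces.

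The paper's argument avoids this circularity entirely. It does not localize the rubber directly; instead it rigidifies via Proposition~\ref{rigidification} to a descendent invariant $\langle\,\cdot\,|\sigma_0(\iota_*e_i)|\,\cdot\,\rangle$ or $\langle\,\cdot\,|\sigma_1(\iota_*F)|\,\cdot\,\rangle$, passes to the compactification $\bar\cX=\bar S\times\Pj^1$, and uses dimension counting to kill the compactified invariant. The product rule of Section~6.1 then expresses $0$ as a sum over splittings among $\cX$ and the three boundary patches $\C^2\times\Pj^1(j)$. The geometric point is that the $\omega_i$ are compactly supported in $[\C^2/\Z_{n+1}]$ and hence restrict to zero on $\partial\bar S$, forcing all $\lambda^i(\omega_i)$, $\rho^i(\omega_i)$ to stay on the $\cX$ piece, while the parts of $\mu[1]$, $\nu[1]$ are free to escape to the boundary patches. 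An induction on $\min\{l(\mu),l(\nu)\}$ then closes the argument. If you want to salvage your approach, you would need an independent analysis of the pure-skewer diagonal contribution showing it is a scalar in $\mu$; but that is essentially what the compactification argument supplies.
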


\begin{proof}
  The proof of the $\Theta_+^\red$ part is exactly the same as Section 5.2 of \cite{MOb09-DT}. One works on the compactification $\bar\cX$, rigidifies by $\sigma_0(\iota_* e_i)$ and apply the product rule to both sides of the lemma. An induction argument on the total length would prove the lemma.

  It remains to prove the punctual case. Again by rigidification (Proposition \ref{rigidification}), $\Theta_\mathbf{0}$ is completely determined by the descendent invariants with insertion $\sigma_1(\iota_* F)$.

  We prove by induction on $l= \min \{ l(\mu), l(\nu) \}$. When $\mu=\nu=\emptyset$ the identity holds trivially. Now we assume either $\mu$ or $\nu$ is nonempty. Consider the compactification $\bar\cX = \bar S\times \Pj^1$. The invariants
  $$\left\langle \mu[1] \prod \lambda^i (\omega_i) \middle| \sigma_1(\iota_* F) \middle| \nu[1] \prod \rho^i (\omega_i) \right\rangle^{\bar\cX}$$
  vanish by compactness and dimension counting. Thus by the factorization rule (Lemma \ref{pre-factorization}) we have
  \begin{eqnarray*}
    0 &=& \left\langle \mu[1] \prod \lambda^i (\omega_i) \middle| \sigma_1(\iota_* F) \middle| \nu[1] \prod \rho^i (\omega_i) \right\rangle^\cX \\
    && + \sum_{j=1}^3 \left\langle \mu[1] \prod \lambda^i (\omega_i) \middle| \nu[1] \prod \rho^i (\omega_i) \right\rangle^\cX \langle \emptyset | \sigma_1(\iota_* F) | \emptyset \rangle^{\C^2\times \Pj^1(j)} \\
    && + \sum_{\mu^{(j)}, \nu^{(j)}} C_{\mu^{(j)}, \nu^{(j)}} \left\langle \mu^{(0)}[1] \prod \lambda^i (\omega_i) \middle| \sigma_1(\iota_* F) \middle| \nu^{(0)}[1] \prod \rho^i (\omega_i) \right\rangle^\cX \cdot \prod_{j=1}^3 \left\langle \mu^{(j)}[1] \middle| \nu^{(j)}[1] \right\rangle^{\C^2\times \Pj^1(j)} \\
    && + \sum_{\mu^{(j)}, \nu^{(j)}} C_{\mu^{(j)}, \nu^{(j)}} \left\langle \mu^{(0)}[1] \prod \lambda^i (\omega_i) \middle| \nu^{(0)}[1] \prod \rho^i (\omega_i) \right\rangle^\cX \cdot \sum_{j=1}^3 \left\langle \mu^{(j)}[1] \middle| \sigma_1(\iota_* F) \middle| \nu^{(j)}[1] \right\rangle^{\C^2\times \Pj^1(j)} \\
    && \cdot \prod_{j'\neq j} \left\langle \mu^{(j')}[1] \middle| \nu^{(j')}[1] \right\rangle^{\C^2\times \Pj^1(j')},
  \end{eqnarray*}
  where the summation is over all splittings of the partition $\mu$, $\nu$ of the form
  $$\mu = \mu^{(0)} \cup \bigcup_{j=1}^3 \mu^{(j)},$$
  and $C_{\mu^{(j)}, \nu^{(j)}}$ are factors coming from reordering the parts.

  When $l=0$ the last two summation terms vanish and the lemma is true. For $l>0$, the lemma follows from the induction hypothesis and the factorization rule (Lemma \ref{pre-factorization}) for $\left\langle \mu[1] \middle| \sigma_1(\iota_* F) \middle| \nu[1] \right\rangle^{\bar\cX}$.
\end{proof}

\begin{cor}
  Let $\mu, \nu$ be multi-regular $(n+1)$-colored partitions of size $m(n+1)$. Then
  $$\langle I_\mu | \Theta^\red_+ | \fp_{-k}(1) I_\nu \rangle_m, \qquad k>0$$
  is a $\Q(s)$-linear combination of rubber invariants for $m'<m$.
\end{cor}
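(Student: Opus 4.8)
The plan is to deduce the statement directly from the factorization in Lemma~\ref{factorization}, after rewriting everything on the $\cA_n$ side in the Nakajima basis built from $\Omega=\{1,\omega_1,\dots,\omega_n\}$. First I would use Proposition~\ref{diff} and the $n$-quotient identification of Nagao to regard $I_\mu$, $I_\nu$ and the operator $\Theta_+^\red$ as acting on the $T^\pm$-reduced Fock space $A^*_{T^\pm}(\Hilb^\bullet(\cA_n))\otimes\Q(s)$. There, as recorded in Section~2.2 (the discussion after \eqref{Nak_basis}), the classes $\tau[1]\prod_i\lambda^i(\omega_i)$, for partitions $\tau,\lambda^1,\dots,\lambda^n$, form a $\Q(s)$-basis, so I would expand
$$
I_\mu=\sum c^\mu_{\tau,\vec\lambda}\,\tau[1]\prod_i\lambda^i(\omega_i),\qquad
I_\nu=\sum c^\nu_{\sigma,\vec\rho}\,\sigma[1]\prod_i\rho^i(\omega_i),\qquad c^\mu,c^\nu\in\Q(s),
$$
subject to the Hilbert-degree constraints $|\tau|+\sum_i|\lambda^i|=m$ and $|\sigma|+\sum_i|\rho^i|=m-k$.

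Since $\fp_{-k}(1)$ has negative index, the Heisenberg relations make it commute with every $\fp_{-a}(1)$ and every $\fp_{-a}(\omega_i)$; hence $\fp_{-k}(1)\cdot\sigma[1]\prod_i\rho^i(\omega_i)$ is a nonzero rational multiple of $\sigma'[1]\prod_i\rho^i(\omega_i)$, where $\sigma'=\sigma\cup\{k\}$ is a \emph{nonempty} partition because it contains a part equal to $k>0$. By $\Q(s)$-bilinearity of $\langle\,\cdot\,|\Theta_+^\red|\,\cdot\,\rangle$, the bracket in the corollary becomes a $\Q(s)$-combination of brackets $\langle\,\tau[1]\prod_i\lambda^i(\omega_i)\,|\,\Theta_+^\red\,|\,\sigma'[1]\prod_i\rho^i(\omega_i)\,\rangle$, each of which, by Lemma~\ref{factorization}, equals $\langle\prod_i\lambda^i(\omega_i)\,|\,\Theta_+^\red\,|\,\prod_i\rho^i(\omega_i)\rangle\cdot\langle\tau[1]\,|\,\sigma'[1]\rangle$.

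The rest is degree counting. The pairing $\langle\tau[1]\,|\,\sigma'[1]\rangle$ is a nonzero multiple of $\delta_{\tau\sigma'}$, so a term survives only when $\tau=\sigma'$, and then $|\tau|=|\sigma'|\ge k>0$; consequently $\sum_i|\lambda^i|=m-|\tau|\le m-k<m$, and since $\Theta_+^\red$ preserves Hilbert degree $\sum_i|\rho^i|$ equals this same $m'<m$. Thus each surviving summand is a $\Q(s)$-multiple of $\langle\prod_i\lambda^i(\omega_i)\,|\,\Theta_+^\red\,|\,\prod_i\rho^i(\omega_i)\rangle$ with both insertions on $\Hilb^{m'}([\C^2/\Z_{n+1}])$, i.e.\ the $(s_1+s_2)$-reduced non-punctual part of a $2$-point rubber invariant in Hilbert degree $m'<m$. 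Summing the finitely many terms proves the corollary.

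I do not expect a genuine obstacle: the substantive content is entirely in Lemma~\ref{factorization}, and what remains is bookkeeping of three Hilbert-scheme gradings. The one point worth flagging is the role of the hypothesis $k>0$: it is exactly what forces $|\tau|\ge k$, hence the \emph{strict} drop $m'<m$; without the extra creator $\fp_{-k}(1)$ one could have $\tau=\sigma=\varnothing$ and the Hilbert degree would not decrease.
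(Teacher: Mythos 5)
Your proposal is correct and follows essentially the same route as the paper: the paper's (very terse) proof is exactly ``expand $I_\mu$, $I_\nu$ in the Nakajima basis with $\Omega$-labels, note the coefficients lie in $\Q(s)$ (no $(s_1+s_2)$ in denominators), and apply the factorization lemma,'' and your degree bookkeeping---that $\fp_{-k}(1)$ forces the $1$-labeled partition on the right to contain a part $k>0$, so the pairing $\langle\tau[1]|\sigma'[1]\rangle$ kills all terms except those where the level of the surviving $\Theta_+^\red$ matrix element drops to $m'\le m-k<m$---is precisely the observation the paper leaves implicit. You also correctly read the level of $I_\nu$ as $m-k$ (as in the paper's later use of the corollary), so no gap remains.
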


\begin{proof}
  Write $I_\mu$, $I_\nu$ in terms of Nakajima basis. One simply observes that the coefficients do not contain $(s_1+s_2)$-factors in the denominator.
\end{proof}

\begin{prop} \label{munu-induction}
  Let $\eta$ be a multi-regular $(n+1)$-colored partition of size $m(n+1)$, $m\geq 1$. Then the invariant $\langle I_\eta | \Theta^\red_+ | I_\eta \rangle_m$ is determined by invariants of the form
  $$\langle I_\mu | \Theta^\red_+ | I_\nu \rangle_m,$$
  where $\mu, \nu$ are colored partitions with $\mu\neq \nu$, and rubber invariants with $m'<m$.
\end{prop}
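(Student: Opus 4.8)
The plan is a double induction, the outer parameter being $m$ and the inner one a partial order on the $T$-fixed points of $\Hilb^m([\C^2/\Z_{n+1}])$. Via the $n$-quotient correspondence and the diffeomorphism $\phi$ we work on the $\cA_n$-side, identifying $\eta$ with an $(n+1)$-tuple $\vec\lambda=(\lambda^{(1)},\dots,\lambda^{(n+1)})$ of total size $m$ and writing $I_{\vec\lambda}$ for the fixed-point class. The key input is the Corollary preceding this Proposition, which (through Lemma \ref{factorization}) says that any matrix element of $\Theta_+^{\red}$ on $\Hilb^m$ having one of its two arguments in the image of $\fp_{-k}(1)$, $k>0$, is a $\Q(s)$-linear combination of rubber invariants of size strictly less than $m$.

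The basic relation is obtained as follows. Since $m\ge 1$, the tuple $\vec\lambda$ has a removable box; removing a suitable one (see below) gives a tuple $\vec\mu$ of size $m-1$. Writing $\fp_{-1}(1)=\sum_i \tfrac{1}{w_i^- w_i^+}\,\fp_{-1}([p_i])$ by (\ref{dec_of_1}) and using Nakajima's matrix coefficients on each $\Hilb(\C^2_{p_i})$, one gets $\fp_{-1}(1)\,I_{\vec\mu}=\sum_{\vec\lambda'} a_{\vec\mu\vec\lambda'}\,I_{\vec\lambda'}$ with all $a_{\vec\mu\vec\lambda'}\in\Q(s)^{\times}$, the sum over the tuples $\vec\lambda'$ obtained from $\vec\mu$ by adding one box (so $\vec\lambda$ is among them). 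Pairing under $\Theta_+^{\red}$ and expanding both sides in the fixed-point basis yields
\begin{equation*}
\big\langle \fp_{-1}(1)I_{\vec\mu}\,\big|\,\Theta_+^{\red}\,\big|\,\fp_{-1}(1)I_{\vec\mu}\big\rangle_m
=\sum_{\vec\lambda',\vec\lambda''} a_{\vec\mu\vec\lambda'}\,a_{\vec\mu\vec\lambda''}\,\big\langle I_{\vec\lambda'}\,\big|\,\Theta_+^{\red}\,\big|\,I_{\vec\lambda''}\big\rangle_m .
\end{equation*}
By the Corollary the left-hand side is a $\Q(s)$-combination of rubber invariants with $m'<m$; on the right-hand side the terms with $\vec\lambda'\ne\vec\lambda''$ are precisely the permitted off-diagonal invariants. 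Moving these across, we obtain for each $\vec\mu$ a linear relation expressing the weighted sum $\sum_{\vec\lambda'=\vec\mu+\square}a_{\vec\mu\vec\lambda'}^{\,2}\,\langle I_{\vec\lambda'}|\Theta_+^{\red}|I_{\vec\lambda'}\rangle_m$ in terms of data of the two allowed kinds; more generally, replacing the two factors $\fp_{-1}(1)I_{\vec\mu}$ by $\fp_{-1}(1)I_{\vec\mu}$ and $\fp_{-1}(1)I_{\vec\mu'}$ for $\vec\mu,\vec\mu'$ differing by a single box move gives a relation whose diagonal part is supported on the $\vec\lambda'$ lying over both $\vec\mu$ and $\vec\mu'$.

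To extract the individual diagonal invariant $\langle I_{\vec\lambda}|\Theta_+^{\red}|I_{\vec\lambda}\rangle_m$ from these relations, I would fix a partial order on size-$m$ tuples — refining the lexicographic order on the size profile $(|\lambda^{(1)}|,\dots,|\lambda^{(n+1)}|)$ by a dominance-type order on the modified component — and choose, for a given $\vec\lambda$, the removable box so that $\vec\lambda$ is the maximum of $\{\vec\mu+\square\}$; then the relation above expresses $\langle I_{\vec\lambda}|\Theta_+^{\red}|I_{\vec\lambda}\rangle_m$ in terms of strictly smaller diagonals together with admissible data, which closes the induction once the minimal case is handled (for which one brings in the operators $\fp_{-k}(1)$ and their products to get enough relations). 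The main obstacle is exactly this last, purely combinatorial point: verifying that such a box can always be chosen compatibly with the add-a-box incidence — equivalently, that the full system of relations furnished by the $\fp_{-k}(1)$'s has maximal rank. The remaining ingredients are formal: the Corollary, Lemma \ref{factorization}, and the fact that the change of basis between the fixed-point and Nakajima bases is defined over $\Q(s)$, so that all the $a_{\vec\mu\vec\lambda'}$ and the coefficients appearing in the reductions lie in $\Q(s)$.
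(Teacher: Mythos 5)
There is a genuine gap, and it sits exactly where you flag it. Your basic relation pairs $\fp_{-1}(1)I_{\vec\mu}$ with itself (or with $\fp_{-1}(1)I_{\vec\mu'}$), so after expanding both slots in the fixed-point basis the diagonal contribution is a \emph{sum} $\sum_{\vec\lambda'=\vec\mu+\square} a_{\vec\mu\vec\lambda'}^{2}\,\langle I_{\vec\lambda'}|\Theta_+^{\red}|I_{\vec\lambda'}\rangle_m$ over all add-a-box covers of $\vec\mu$. To isolate a single diagonal entry you are then forced into the inner induction on a partial order, the compatibility claim that for each $\vec\lambda$ a removable box can be chosen making $\vec\lambda$ maximal among $\{\vec\mu+\square\}$, a base case for the minimal fixed points, and ultimately a maximal-rank statement for the system of relations coming from the $\fp_{-k}(1)$'s. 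None of these is established in the proposal — you explicitly defer them — and they are not routine; as written the argument does not determine $\langle I_\eta|\Theta_+^{\red}|I_\eta\rangle_m$.

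The paper's proof shows this extra machinery is unnecessary: do not symmetrize. Keep $I_\eta$ itself in one slot and pair it against $\fp_{-1}(1)I_{\eta'}$, where $\eta'$ is obtained from $\eta$ by removing one length-$(n+1)$ border strip (equivalently, on the $\cA_n$ side, $\vec\mu$ is $\vec\lambda$ minus one box). By the Corollary, $\langle I_\eta|\Theta_+^{\red}|\fp_{-1}(1)I_{\eta'}\rangle$ is a $\Q(s)$-combination of rubber invariants with $m'<m$; on the other hand $\fp_{-1}(1)I_{\eta'}=\sum_{\eta''}C_{\eta''}I_{\eta''}$ with $\eta''$ ranging over $\eta'$ plus a border strip, so the expansion contains exactly one diagonal term $C_\eta\,\langle I_\eta|\Theta_+^{\red}|I_\eta\rangle_m$ with $C_\eta\neq 0$, all other terms being off-diagonal $\langle I_\eta|\Theta_+^{\red}|I_{\eta''}\rangle_m$, $\eta''\neq\eta$. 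One then solves this single linear relation, and no partial order, no rank condition, and no separate base case are needed (beyond the existence of a removable border strip for $m\geq 1$, $n\geq 1$). Your ingredients — the Corollary, Lemma \ref{factorization}, the $\Q(s)$-rationality and nonvanishing of the Pieri-type coefficients — are the right ones; the missing idea is simply the asymmetric choice of test class, which collapses the extraction problem.
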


\begin{proof}
  Recall that we have to assume $n\geq 1$. As long as $m\geq 1$, there always exists a multi-regular $(n+1)$-colored partition $\eta'$, of size $(m-1)(n+1)$, such that $\eta'$ is obtained by removing a length-$(n+1)$ border strip from $\eta$. By the previous lemma, we know that
  $$\langle I_\eta | \Theta^\red_+ | \fp_{-1}(1) I_{\eta'} \rangle$$
  is determined by rubber invariants with $m'<m$.

  Let's investigate the class $\fp_{-1}(1) I_{\eta'}$. One has
  $$\fp_{-1}(1)= -\frac{1}{(n+1)^2 s^2}\sum_{i=1}^{n+1} \fp_{-1}([p_i]),$$
  and each $\fp_{-1}([p_i])$ acts on an $(n+1)$-tuple of partitions by adding a box on the $i$-th partition. Under the correspondence of $(n+1)$-tuples of partitions and colored partitions, on the other side $\fp_{-1}(p_i)$ acts by adding a length-$(n+1)$ border strip at the color $i$.

  Therefore, we conclude that $\fp_{-1}(1) I_{\eta'}$ is a linear combination of $I_{\eta''}$ where $\eta''$ is obtained by adding a border strip to $\eta'$. Hence
  $$\langle I_\eta | \Theta^\red_+ | \fp_{-1}(1) I_{\eta'} \rangle
    = C_\eta\cdot \langle I_\eta | \Theta^\red_+ | I_\eta \rangle + \sum_{\eta'',\ \eta''\neq \eta} C_{\eta''}\cdot \langle I_\eta | \Theta^\red_+ | I_{\eta''} \rangle$$
  is a linear combination of rubber invariants with $m'<m$, where $C_\eta$, $C_{\eta''}$ are combinatorial coefficients. Note that $C_\eta\neq 0$. The proposition is proved.
\end{proof}

Combine this proposition with Proposition \ref{rubber_QH} we have the following result.

\begin{prop} \label{rubber_QH_red}
  For $\varepsilon\neq k\delta$, $k\in \Z$ and any multi-regular $(n+1)$-colored partitions $\mu, \nu$ (possibly the same),
  $$\langle I_\mu |\Theta^\red_+ | I_\nu \rangle_{m, \varepsilon} - \langle I_\mu | I_\nu \rangle \cdot \langle \emptyset | \Theta^\red_+ | \emptyset \rangle_{0,\varepsilon} = \langle I_\mu, I_\nu \rangle_{\Hilb^m, \varepsilon}^\red,$$
  where the right hand side is the 2-point $T^\pm$-equivariant genus-zero reduced Gromov--Witten invariant of $\Hilb([\C^2/\Z_{n+1}])$, with curve class $\beta= \varepsilon$.
\end{prop}

\begin{proof}
  We observe that both operators $\langle\ ,\ \rangle_{m, \varepsilon, +}^\sim - \langle \ | \ \rangle \cdot \langle \emptyset | \Theta^\sim_+ | \emptyset \rangle_{0,\varepsilon}$ and $\langle \ ,\ \rangle_{\Hilb^m,\varepsilon}$ satisfy the factorization result as in Lemma \ref{factorization}, and coincide on the vacuum vector in the case $m=0$. The lemmas above reduce the case $\mu=\nu$ to the case $\mu\neq \nu$, which has been proved in Proposition \ref{rubber_QH}.
\end{proof}

We also need the following compactness result, which requires a  compactly supported basis in some sense. We choose the stable basis.

\begin{lem} \label{Q_value}
  Let $\mu, \nu$ be multi-regular $(n+1)$-colored partitions. Then
  $$\left\langle \Stab_{-\fC}(I_\mu) \middle| \Theta^\red_+ \middle| \Stab_\fC (I_\nu) \right\rangle \in \Q.$$
\end{lem}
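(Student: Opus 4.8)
The plan is to combine the properness result of Lemma \ref{properness} with the Steinberg-correspondence structure of the rubber moduli space, following the strategy that appears in this circle of ideas (cf. the analogous statements in \cite{OP-DT} and \cite{MOb09-DT}). The non-punctual rubber operator $\Theta^\red_+$ is assembled from rubber invariants $\langle I_\mu, I_\nu\rangle^\sim_{m,\varepsilon}$ with $\varepsilon$ a non-imaginary positive root; by Proposition \ref{rubber_QH_red} (together with the factorization Lemma \ref{factorization} and the reduction of $\mu=\nu$ to $\mu\neq\nu$ carried out in Propositions \ref{munu-induction} and \ref{rubber_QH}), the operator $\Theta^\red_+$ agrees, up to the vacuum term $\langle\emptyset|\Theta^\red_+|\emptyset\rangle$, with quantum multiplication by divisors on $\Hilb^m([\C^2/\Z_{n+1}])$, i.e. with the convolution against the class of an explicit Steinberg correspondence $L\subset X\times X$ (the closure of the graph of the rational-curve incidence, which is Lagrangian and supported on $X\times_V X$ for $V$ the affine base $\C^2/\Z_{n+1}$ of the symplectic resolution). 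The key point is therefore to rewrite $\langle\Stab_{-\fC}(I_\mu)|\Theta^\red_+|\Stab_\fC(I_\nu)\rangle$ as a matrix coefficient of the convolution $\Stab^\dagger_{-\fC}\circ L\circ\Stab_\fC$.

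Concretely, I would proceed as follows. First, expand $\Theta^\red_+$ in the $\mu\neq\nu$ regime via Proposition \ref{rubber_QH_red}, so that its matrix in the fixed-point basis is, modulo the scalar vacuum contribution and lower-$m$ rubber terms, the matrix of $L\,\circ$ (quantum multiplication by a divisor) in the fixed-point basis. Second, transport this through the stable basis: by Lemma \ref{properness}, the convolution $\Stab^\dagger_{-\fC}\circ L\circ\Stab_\fC$ is the pushforward of a cycle $C\subset X^{T^\pm}\times X\times X\times X^{T^\pm}$ that is proper over $X^{T^\pm}\times X^{T^\pm}$; hence the resulting class in $A^*_T(X^{T^\pm}\times X^{T^\pm})$ has \emph{no denominators} — it is a genuine (non-localized) equivariant class, polynomial in $s_1,s_2$. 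Third, specialize to the anti-diagonal torus $T^\pm$ (set $s_1=s$, $s_2=-s$) and divide by $(s_1+s_2)$ as in the definition of $\Theta^\red_+$; because the reduced obstruction theory guarantees divisibility by $(s_1+s_2)$ before specialization, the quotient is still regular at $s_1+s_2=0$, so $\langle\Stab_{-\fC}(I_\mu)|\Theta^\red_+|\Stab_\fC(I_\nu)\rangle$ lies in $\Q[s]$. Finally, a degree count pins it down: $\Stab_\fC(I_\nu)$ and $\Stab_{-\fC}(I_\mu)$ both live in the middle cohomological degree $A^m_T(X)$ (stable classes come from Steinberg correspondences), and $\Theta^\red_+$ preserves the total degree since the reduced virtual dimension of $\cM^\sim$ is chosen so that the 2-point rubber invariant is degree-$0$ homogeneous in $s$; pairing two middle-degree classes through a degree-preserving operator produces a number of weight $0$, hence an element of $\Q$.

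The main obstacle I anticipate is the second step: making precise that the properness of $C$ from Lemma \ref{properness} really does eliminate all $(s_1+s_2)$ and $(s_1s_2)$ denominators in the stable-basis matrix coefficients of $\Theta^\red_+$, rather than just in the matrix coefficients of the naive convolution $\Stab^\dagger_{-\fC}\circ L\circ\Stab_\fC$. One has to check that the passage from $\Theta^\red_+$ to this convolution (via Proposition \ref{rubber_QH_red}) does not reintroduce denominators — the subtlety being the subtraction of $\langle I_\mu|I_\nu\rangle\cdot\langle\emptyset|\Theta^\red_+|\emptyset\rangle_{0,\varepsilon}$ and the lower-$m$ rubber terms hidden in the reduction of the $\mu=\nu$ case. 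The vacuum term $\langle\emptyset|\Theta^\red_+|\emptyset\rangle_{0,\varepsilon}$ is a degree-$0$ pairing that, when transported to the stable basis, is paired against $\Stab_{-\fC}(I_\mu)$ and $\Stab_\fC(I_\nu)$; one needs $\langle\Stab_{-\fC}(I_\mu)|\Stab_\fC(I_\nu)\rangle\in\Q[s]$ (which holds since the stable basis is triangular with polynomial entries and dual to itself under $\fC\mapsto -\fC$) and $\langle\emptyset|\Theta^\red_+|\emptyset\rangle_{0,\varepsilon}\in\Q$ (a genuine scalar, being a relative degree-$0$ rubber invariant, computable as in Section 5). Assembling these, every constituent of $\langle\Stab_{-\fC}(I_\mu)|\Theta^\red_+|\Stab_\fC(I_\nu)\rangle$ is polynomial in $s$ of weight $0$, i.e. in $\Q$, which is the claim.
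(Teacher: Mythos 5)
Your route is genuinely different from the paper's, and it is not circular (Proposition \ref{rubber_QH_red} is proved independently of Lemma \ref{Q_value}), but it is a detour through the Gromov--Witten side of the correspondence, whereas the paper never leaves the DT moduli space. The paper passes to the compactification $\bar\cX=[\Pj^2/\Z_{n+1}]\times\Pj^1$ and to the rigidified two-point descendent invariants $\langle\Stab_{-\fC}(I_\mu)|\sigma_0(\iota_*e_i)|\Stab_\fC(I_\nu)\rangle^\red_{\bar\cX}$; the key observation is that the pushforward $\ev_*\bigl(\sigma_0(\iota_*e_i)\cdot[\bar\cM]^\red\bigr)$ is \emph{itself} a Steinberg correspondence in $X\times X$ (it has virtual dimension $2m$ and its objects are supported on $\Pj^1[k]\times Z$ with $Z\subset\Hilb^m$ zero-dimensional), so Lemma \ref{properness} applies directly to this DT class: the stable-basis matrix elements lie in $\Q[s_1,s_2]$, a dimension count gives degree zero, and the factorization rule plus rigidification (Proposition \ref{rigidification}, using that every non-punctual $\varepsilon$ pairs nontrivially with some $D'_i$) translate the statement back to $\Theta^\red_+$. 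Your plan instead converts $\Theta^\red_+$ into reduced GW invariants of $\Hilb^m([\C^2/\Z_{n+1}])$ via Proposition \ref{rubber_QH_red}, and then needs Theorem \ref{thm_QH} together with the divisor axiom to write those two-point classes as rational combinations of matrix elements of $e_\alpha e_{-\alpha}$, to which Lemma \ref{properness} applies; combined with $\langle\Stab_{-\fC}(I_\mu)|\Stab_\fC(I_\nu)\rangle\in\Q$ and the rationality of the reduced vacuum from Section 5, this can be assembled into a proof, but it imports strictly more machinery than the paper's self-contained DT-side argument and misses its one essential new idea.

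Two steps of your write-up are not right as stated and would need repair. First, the concluding degree count: ``$\Theta^\red_+$ preserves the total degree since the reduced virtual dimension of $\cM^\sim$ is chosen so that the 2-point rubber invariant is degree-$0$ homogeneous in $s$'' is not an argument --- the rubber matrix elements are a priori rational functions of $s$ with no homogeneity, and degree reasoning is only available \emph{after} polynomiality has been established; the correct order (on either side of the correspondence) is: properness from Lemma \ref{properness} gives membership in $\Q[s_1,s_2]$, and only then does the equivariant degree count (two middle-degree stable classes, a divisor-degree operator, integration over the $2m$-dimensional $X$, one factor of $s_1+s_2$ removed) force degree zero, hence a constant. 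Second, describing the needed correspondence as ``the closure of the graph of the rational-curve incidence'' is too loose to run the properness argument; what you actually need is the precise content of Theorem \ref{thm_QH}, namely that the purely quantum part of multiplication by a divisor is $-\hbar\sum_{\theta\cdot\alpha>0}(\lambda,\alpha)\frac{q_\kappa^\alpha}{1-q_\kappa^\alpha}\,e_\alpha e_{-\alpha}$ plus a multiple of the identity, with each $e_\alpha e_{-\alpha}$ a composition of Steinberg correspondences, together with the divisor axiom to recover from it the two-point reduced classes appearing in Proposition \ref{rubber_QH_red}. With these repairs your proposal is a valid, if longer, alternative; the paper's proof achieves the same conclusion by exhibiting the DT correspondence itself as Steinberg.
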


\begin{proof}
  First let's pass to the compactified and rigidified theory. Consider
  $$\left\langle \Stab_{-\fC}(I_\mu) \middle| \sigma_0(\iota_* e_i) \middle| \Stab_\fC (I_\nu) \right\rangle_{\bar\cX}^\red,$$
  where $\bar\cX$ is $\bar S \times \Pj^1$ as before.

  The invariants are defined by the degree of
  $$\ev_0^*\Stab_{-\fC}(I_\mu) \cdot \ev_\infty^*\Stab_\fC (I_\nu) \cdot \sigma_0(\iota_*e_i)\cdot [\bar\cM]^\red,$$
  which is equivalent to the degree of
  $$\left[ \Stab_{-\fC}(I_\mu)\times \Stab_\fC (I_\nu) \right] \cdot \ev_*\left( \sigma_0(\iota_*e_i) \cdot [\bar\cM]^\red \right),$$
  where $\ev:= \ev_0\times \ev_\infty$.

  View $\ev_*\left( \sigma_0(\iota_*e_i) \cdot [\bar\cM]^\red \right)$ as a correspondence, then the invariant is given by the $(I_\mu,I_\nu)$ matrix element of the correspondence
  $$\Stab^\tau_{-\fC} \circ \ev_*\left( \sigma_0(\iota_*e_i) \cdot [\bar\cM]^\red \right) \circ \Stab_\fC.$$
  Now the key observation is that $\ev_*\left( \sigma_0(\iota_*e_i) \cdot [\bar\cM]^\red \right)$ is actually a Steinberg correspondence, since it is of virtual dimension $2m$ and the objects are always supported on $\Pj^1[k]\times Z$ for some 0-dimensional $Z\subset \Hilb^m([\C^2/\Z_{n+1}])$. By Lemma \ref{properness}, the only push-forward step involved in the definition of the convolution is proper. We conclude that our invariants live in $\Q[s_1,s_2]$. By a further dimension counting, we have
  $$\left\langle \Stab_{-\fC}(I_\mu) \middle| \sigma_0(\iota_* e_i) \middle| \Stab_\fC (I_\nu) \right\rangle_{\bar\cX}^\red \in \Q.$$
  Finally we apply the factorization rule and obtain
  $$\left\langle \Stab_{-\fC}(I_\mu) \middle| \sigma_0(\iota_* e_i) \middle| \Stab_\fC (I_\nu) \right\rangle_{\cX}^\red = \left\langle \Stab_{-\fC}(I_\mu) \middle| \sigma_0(\iota_* e_i) \middle| \Stab_\fC (I_\nu) \right\rangle_{\bar\cX}^\red \in \Q,$$
  which implies the lemma by rigidification.
\end{proof}

\subsection{Punctual invariants}

\begin{prop} \label{7.10}
  For $\varepsilon= k\delta$, $k\geq 1$ and any multi-regular $(n+1)$-colored partitions $\mu, \nu$ (possibly the same),
  $$\langle I_\mu |\Theta_\mathbf{0}^\red | I_\nu \rangle_{m, \varepsilon} - \langle I_\mu | I_\nu \rangle \cdot \langle \emptyset | \Theta_\mathbf{0}^\red | \emptyset \rangle_{0,\varepsilon} = \langle I_\mu, I_\nu \rangle_{\Hilb^m, \varepsilon}^\red,$$
  where the right hand side is the 2-point $T^\pm$-equivariant genus-zero reduced Gromov--Witten invariant of $\Hilb([\C^2/\Z_{n+1}])$, with curve class $\beta= \varepsilon$.
\end{prop}

\begin{proof}
  For $\mu\neq \nu$ or $m=0$ the conclusion follows from Proposition \ref{rubber_QH}. For $n=0$ it follows from Proposition 22 of \cite{OP-DT} in the case of $\C^2\times \Pj^1$. It remains to consider the case $\mu=\nu$, $m, n\geq 1$. We follow a similar argument to the non-punctual case, and use induction on $m$.

  Let $\eta$ be a multi-regular $(n+1)$-colored partition. There exists another colored partition $\eta'$, of size $(m-1)(n+1)$, such that $\eta'$ is obtained by removing a length-$(n+1)$ border strip from $\eta$. Consider the invariant
  $$\langle I_\eta | \Theta^\red_\mathbf{0} | \fp_{-1}(1) I_{\eta'} \rangle.$$
  Write $I_\eta, I_{\eta'}$ in terms of Nakajima basis:
  $$I_\eta = \sum C^{\mu, \lambda^i}_\eta \mu[1]\prod \lambda^i(\omega_i), \qquad I_{\eta'} = \sum C^{\nu, \rho^i}_{\eta'} \nu[1] \prod \rho^i (\omega^i).$$
  Note that $\mu, \nu$ here are ordinary partitions instead of colored ones. By Lemma \ref{factorization} we have
  \begin{eqnarray*}
    && \langle I_\eta | \Theta^\red_\mathbf{0} | \fp_{-1}(1) I_{\eta'} \rangle - \langle I_\eta | \fp_{-1}(1) I_{\eta'} \rangle \langle \emptyset | \Theta^\red_\mathbf{0} | \emptyset \rangle \\
    &=& \sum C^{\mu, \lambda^i}_\eta C^{\nu, \rho^i}_{\eta'} \left\langle \mu[1]\prod \lambda^i(\omega_i) \middle| \Theta_\bzero^\red \middle| ((1)\cup\nu)[1] \prod \rho^i (\omega^i) \right\rangle \\
    &=& \sum C^{\mu, \lambda^i}_\eta C^{\nu, \rho^i}_{\eta'} \langle \mu[1] | ((1)\cup \nu)[1] \rangle \left( \left\langle \prod \lambda^i(\omega_i) \middle| \Theta_\bzero^\red \middle| \prod \rho^i (\omega^i) \right\rangle - \left\langle \prod \lambda^i(\omega_i) \middle| \prod \rho^i (\omega^i) \right\rangle \langle \emptyset | \Theta_\bzero^\red | \emptyset \rangle \right) \\
    && + \sum C^{\mu, \lambda^i}_\eta C^{\nu, \rho^i}_{\eta'} \left\langle \prod \lambda^i(\omega_i) \middle| \prod \rho^i (\omega^i) \right\rangle \left( \langle \mu[1] | \Theta^\red_\mathbf{0} | ((1)\cup \nu)[1] \rangle - \langle \mu[1] | ((1)\cup \nu)[1] \rangle \langle \emptyset | \Theta_\bzero^\red | \emptyset \rangle \right)
  \end{eqnarray*}
  By induction hypothesis and the next lemma we have
  \begin{eqnarray*}
    && \langle I_\eta | \Theta^\red_\mathbf{0} | \fp_{-1}(1) I_{\eta'} \rangle - \langle I_\eta | \fp_{-1}(1) I_{\eta'} \rangle \langle \emptyset | \Theta^\red_\mathbf{0} | \emptyset \rangle \\
    &=& \sum C^{\mu, \lambda^i}_\eta C^{\nu, \rho^i}_{\eta'} \langle \mu[1] | ((1)\cup \nu)[1] \rangle \left\langle \prod \lambda^i(\omega_i) , \prod \rho^i (\omega^i) \right\rangle^\red_{\Hilb, \mathbf{0}}  \\
    && + \sum C^{\mu, \lambda^i}_\eta C^{\nu, \rho^i}_{\eta'} \left\langle \prod \lambda^i(\omega_i) \middle| \prod \rho^i (\omega^i) \right\rangle \langle \mu[1], ((1)\cup \nu)[1] \rangle^\red_{\Hilb, \mathbf{0}}.
  \end{eqnarray*}
  Here the boldface subscript $``\bzero"$ means the ``punctual" part in the invariants, i.e. the terms with $q^\varepsilon$ with $\varepsilon_0 = \cdots = \varepsilon_n$.

  Now one can check the operator $\langle \ , \ \rangle^\red_{\Hilb, \mathbf{0}}$ also satisfies an identity as in Lemma \ref{factorization}, by computing the descendent invariants on $\Hilb(\bar S)$. We have
  \begin{eqnarray*}
    && \langle I_\eta | \Theta^\red_\mathbf{0} | \fp_{-1}(1) I_{\eta'} \rangle - \langle I_\eta | \fp_{-1}(1) I_{\eta'} \rangle \langle \emptyset | \Theta^\red_\mathbf{0} | \emptyset \rangle \\
    &=& \sum C^{\mu, \lambda^i}_\eta C^{\nu, \rho^i}_{\eta'} \left\langle \mu[1]\prod \lambda^i(\omega_i) , ((1)\cup \nu)[1] \prod \rho^i (\omega^i) \right\rangle^\red_{\Hilb, \mathbf{0}}  \\
    &=& \langle I_\eta , \fp_{-1}(1) I_{\eta'} \rangle^\red_{\Hilb, \mathbf{0}}.
  \end{eqnarray*}
  Now as in Proposition \ref{munu-induction} one observes that $\fp_{-1}(1) I_{\eta'}$ is a $\Q(s)$-linear combination of various $I_{\eta''}$ with exactly one term containing $I_\eta$. The proposition is proved.
\end{proof}

\begin{lem}
  Let $\mu, \nu\neq \emptyset$ be ordinary partitions. Then
  $$\langle \mu[1] | \Theta^\red_\mathbf{0} | \nu[1] \rangle - \langle \mu[1] | \nu[1] \rangle \langle \emptyset | \Theta_\bzero^\red | \emptyset \rangle = \langle \mu[1], \nu[1] \rangle^\red_{\Hilb, \mathbf{0}}.$$
\end{lem}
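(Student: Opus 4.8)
The plan is to reduce the identity to the known case of $\C^2\times\Pj^1$, namely Proposition~22 of \cite{OP-DT}, by exploiting that $\mu[1]$, $\nu[1]$ and the divisor governing the punctual part all lie in the sub--Fock space $\cF'\subset\cF^{T^\pm}$ generated by the operators $\fp_{-k}(1)$, and that on $\cF'$ the ``$\mathbf{0}$''--part of the theory is insensitive to the orbifold (or the resolution) structure.

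First I would rigidify, exactly as in the proof of Lemma~\ref{factorization}: the operator $\Theta^\red_\mathbf{0}$ is determined by the descendent two--point functions $\langle\,A\mid\sigma_1(\iota_*F)\mid B\,\rangle$, and by Proposition~\ref{rigidification}, for $\varepsilon=k\delta$ one has $\langle\,A\mid\sigma_1(\iota_*F)\mid B\,\rangle_{m,k\delta}=-k\,\langle A,B\rangle^\sim_{m,k\delta}$, so it suffices to evaluate $\langle\,\mu[1]\mid\sigma_1(\iota_*F)\mid\nu[1]\,\rangle_{m,k\delta}$ on $\cX=[\C^2/\Z_{n+1}]\times\Pj^1$. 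On the Gromov--Witten side the right--hand side is already accessible: $\langle\,\cdot\,,\,\cdot\,\rangle^\red_{\Hilb,\mathbf{0}}$ is governed by quantum multiplication by the divisor $D_0=c_1(\cV_0)$, which itself lies in $\cF'$, and by the explicit formula of \cite{MOb09-QH} -- transported to $\Hilb^m([\C^2/\Z_{n+1}])$ through the diffeomorphism $\phi$ and Nagao's identification of the $\widehat{\fgl}(n+1)$--action recalled before Proposition~\ref{munu-induction} -- the $\delta$--part of this quantum multiplication restricts on $\cF'$ to the full quantum product on $\Hilb^m(\C^2)$ under $Q=q_0\cdots q_n$. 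So the content of the lemma is to match the DT descendent invariant above with this.

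For the DT side I would argue that $\langle\,\mu[1]\mid\sigma_1(\iota_*F)\mid\nu[1]\,\rangle_{[\C^2/\Z_{n+1}]\times\Pj^1,\,m,\,k\delta}$ equals the analogous descendent invariant of $\C^2\times\Pj^1$ in curve class $k$. All insertions are pulled back from the untwisted sector and $k\delta$ is $k$ times the class of the exceptional curve of the Hilbert--Chow morphism, so these invariants are local near the punctual locus and, unlike the non--punctual invariants, should be unaffected by the orbifold structure -- in the same spirit as the surface--independence of the tube invariants noted in Remark~\ref{general_tube}. Concretely I would pass to the toric compactification $\bar S\times\Pj^1$, apply the product rule of Section~6 to strip off the (toric, hence $\C^2\times\Pj^1$--type) boundary contributions, and use that $\partial\bar S$ has the same toric skeleton as the boundary of the smooth compactification $\bar S'\times\Pj^1$ of $\cA_n\times\Pj^1$; dimension and compactness then pin down the remaining bulk contribution in class $k\delta$, which a $T$--localization at the fixed points $p_1,\dots,p_{n+1}$ (using $1=\sum_i [p_i]/(w_i^-w_i^+)$) writes in terms of $\C^2\times\Pj^1$. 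Combining this reduction with Proposition~22 of \cite{OP-DT} and the Gromov--Witten statement of the previous paragraph gives the lemma. The main obstacle is precisely this reduction: one must verify that the multi--regular, balanced orbifold DT moduli in class $k\delta$ contributes nothing beyond $\C^2\times\Pj^1$ for $\fp(1)$--type insertions -- even though $\iota_*F$ a priori meets the orbifold point -- and carefully track the $(s_1+s_2)$--divisibility (we work after reduction to $T^\pm$, with the reduced virtual class) together with the normalizations relating $\fp_{-k}(1)$ on $\cA_n$, the symmetric--function model of $A^*(\Hilb^m(\C^2))$, and the divisor $D_0$ on the Gromov--Witten side.
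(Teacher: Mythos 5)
The decisive step of your plan is the one you yourself flag as ``the main obstacle,'' and it is a genuine gap rather than a technicality: the reduction of the class-$k\delta$ orbifold descendent invariants $\langle \mu[1]\,|\,\sigma_1(\iota_*F)\,|\,\nu[1]\rangle$ to $\C^2\times\Pj^1$ is neither proved by your sketch nor literally true as stated. Already for the vacuum, Corollary \ref{vacuum-expectation} gives that the pure-$Q$ part of $\langle \emptyset|\sigma_1(\iota_*F)|\emptyset\rangle_0$ is $(n+1)(s_1+s_2)\,Q\partial_Q\log M(1,-Q)$, i.e.\ $n+1$ times the $\C^2\times\Pj^1$ answer of \cite{OP-DT}; matching factors of $n+1$ enter on the Hilbert-scheme side through $\langle 1,1\rangle=\tfrac{1}{(n+1)s_1s_2}$ and through the punctual part of $M_{D_0}$, so ``insensitive to the orbifold structure'' fails without a bookkeeping you do not supply. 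Moreover the mechanism you propose cannot establish the reduction: $[\C^2/\Z_{n+1}]$ has a single stacky $T$-fixed point, so $T$-localization of the orbifold DT moduli produces the orbifold vertex (which genuinely differs from a product of $\C^3$ vertices, cf.\ the extra factors in $Z_{\Z_{n+1}}$), not $n+1$ copies of $\C^2\times\Pj^1$; the identity $1=\sum_i[p_i]/(w_i^-w_i^+)$ is a cohomological decomposition transported through the diffeomorphism $\phi$, not a geometric decomposition of the moduli space. Likewise, the ``same toric skeleton'' comparison of $\bar S$ and $\bar S'$ only identifies the boundary contributions; it says nothing about the bulk in class $k\delta$, which is exactly what you would need. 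The appeal to Remark \ref{general_tube} is also misplaced: the surface-independence of tube invariants comes from the idempotent-matrix argument, not from any locality of the punctual sector.

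What the paper does instead sidesteps the bulk entirely, and this is the idea missing from your proposal. Since $\mu,\nu\neq\emptyset$, the invariants $\langle \mu[1]\,|\,\sigma_1(\iota_*F)\,|\,\nu[1]\rangle^{\bar\cX}$ on the compactification $\bar S\times\Pj^1$ vanish by compactness and dimension counting; the product rule of Section 6 then turns this vanishing into a linear relation expressing the $\cX$-invariant through boundary $\C^2\times\Pj^1(j)$ contributions and terms of smaller $\min\{l(\mu),l(\nu)\}$. The identical relation holds for $\cA_n\times\Pj^1$ with its compactification $\bar S'\times\Pj^1$, because the boundary skeletons coincide, so induction on $\min\{l(\mu),l(\nu)\}$ reduces everything to the vacuum expectations, which agree by Corollary \ref{vacuum-expectation}. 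This yields $\langle\mu[1]|\Theta^\red_\mathbf{0}|\nu[1]\rangle^{\cX}=\langle\mu[1]|\Theta^\red_\mathbf{0}|\nu[1]\rangle^{\cA_n\times\Pj^1}$, and the lemma then follows from the known DT/Hilb correspondence for $\cA_n\times\Pj^1$ of \cite{MOb09-DT} together with the comparison of the punctual parts of the explicit quantum multiplications on $\Hilb(\cA_n)$ and $\Hilb([\C^2/\Z_{n+1}])$; Proposition 22 of \cite{OP-DT} is only needed for $n=0$. If you want to keep your route through $\Hilb(\C^2)$, you would have to prove the orbifold-to-$\C^2$ comparison with the correct $(n+1)$-normalizations on both sides, which is essentially as hard as the statement itself.
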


\begin{proof}
  Consider the compactification $\bar\cX = \bar S\times \Pj^1$. Since $\mu, \nu\neq \emptyset$, the invariants
  $$\langle \mu[1] | \sigma_1(\iota_* F) | \nu[1] \rangle^{\bar\cX}$$
  vanish by compactness and dimension counting. Thus by Lemma \ref{factorization} we have
  \begin{eqnarray*}
    0 &=& \langle \mu[1] | \sigma_1(\iota_* F) | \nu[1] \rangle^\cX + \sum_{j=1}^3 \langle \mu[1] | \nu[1] \rangle^\cX \langle \emptyset | \sigma_1(\iota_* F) | \emptyset \rangle^{\C^2\times \Pj^1(j)} \\
    && + \sum_{\mu^{(j)}, \nu^{(j)}} C_{\mu^{(j)}, \nu^{(j)}} \left\langle \mu^{(0)}[1] \middle| \sigma_1(\iota_* F) \middle| \nu^{(0)}[1]  \right\rangle^\cX \cdot \prod_{j=1}^3 \left\langle \mu^{(j)}[1] \middle| \nu^{(j)}[1] \right\rangle^{\C^2\times \Pj^1(j)} \\
    && + \sum_{\mu^{(j)}, \nu^{(j)}} C_{\mu^{(j)}, \nu^{(j)}} \left\langle \mu^{(0)}[1] \middle| \nu^{(0)}[1] \right\rangle^\cX \cdot \sum_{j=1}^3 \left\langle \mu^{(j)}[1] \middle| \sigma_1(\iota_* F) \middle| \nu^{(j)}[1] \right\rangle^{\C^2\times \Pj^1(j)} \\
    && \cdot \prod_{j'\neq j} \left\langle \mu^{(j')}[1] \middle| \nu^{(j')}[1] \right\rangle^{\C^2\times \Pj^1(j')},
  \end{eqnarray*}
  Let's compare this with its counterpart in the case of $\cA_n\times \Pj^1$. Consider the compactification $\bar\cX'$ of $\cA_n\times \Pj^1$. The same argument shows that $\langle \mu[1] |\sigma_1(\iota_* F)| \nu[1] \rangle^{\cA_n\times \Pj^1}$ also satisfies the equation above. Induction on $\min\{l(\mu), l(\nu)\}$ reduces the problem to a comparison of the vacuum expectations. But we have
  $$\langle \emptyset | \sigma_1(\iota_* F) | \emptyset \rangle^\cX_{0, \mathbf{0}} = \langle \emptyset | \sigma_1(\iota_* F) | \emptyset \rangle^{\cA_n\times \Pj^1}_{0, \mathbf{0}},$$
  by Corollary \ref{vacuum-expectation}. Hence we conclude
  $$\langle \mu[1] | \Theta^\red_\mathbf{0} | \nu[1] \rangle^\cX = \langle \mu[1] | \Theta^\red_\mathbf{0} | \nu[1] \rangle^{\cA_n\times \Pj^1}.$$
  The lemma follows from the correspondence \cite{MOb09-DT} between DT theory of $\cA_n\times \Pj^1$ and $\Hilb(\cA_n)$, and explicit formulas of quantum multiplications on $\Hilb(\cA_n)$ and $\Hilb([\C^2/\Z_{n+1}])$, which will be given in later sections.
\end{proof}

\subsection{3-point DT invariants}

We need some more results on the Hilbert scheme of points on $[\C^2/\Z_{n+1}]$. Let $\bJ$ be the universal ideal sheaf on $\Hilb^m([\C^2/\Z_{n+1}])\times [\C^2/\Z_{n+1}]$, and $p$ be the projection to the first factor. We also have the inertia stack $\Hilb^m([\C^2/\Z_{n+1}])\times I[\C^2/\Z_{n+1}]$ and the orbifold Chern character $\ch^\orb$. Denote by $\cH_i$ its $i$-th twisted component and also by $p$ the first projection. Let
$$D'_0:= -\frac{1}{n+1} \left( \sum_{j=0}^n c_1(\cV_j) \right),\qquad D'_i:= -\frac{2-\zeta^i-\zeta^{-i}}{n+1} \left( \sum_{j=0}^n \zeta^{-ij} c_1(\cV_j) \right), \quad 1\leq i\leq n.$$
We have the following result.

\begin{lem} \label{push_Hilb} In the full $T$-equivariant cohomology,
  $$p_* ch_3 (\bJ)= D'_0- \frac{m}{2} (s_1+s_2), \qquad p_* \ch^\orb_2(\bJ|_{\cH_{-i}})= D'_i, \quad 1\leq i\leq n.$$
\end{lem}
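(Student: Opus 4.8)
The plan is to compute the two pushforwards by reduction to the case $m=1$, using the general structure of tautological classes on Hilbert schemes. First I would observe that $p_* \ch_3(\bJ)$ and $p_* \ch^{\orb}_2(\bJ|_{\cH_{-i}})$ are divisor classes on $\Hilb^m([\C^2/\Z_{n+1}])$: by the Grothendieck--Riemann--Roch formula applied to the representable projection, together with the dimension bookkeeping (the orbifold age shift moves $\ch^{\orb}_2$ on a twisted component to $\ch_1$ on the corresponding gerbe, which after pushforward lands in $A^1$), both expressions lie in $A^1_T(\Hilb^m([\C^2/\Z_{n+1}]))$. So it suffices to identify them in the basis $\{D_0, D_1, \dots, D_n\}$ (equivalently $\{c_1(\cV_j)\}$), which by Section~2.4 is a $\Q[s_1,s_2]$-basis after reduction to $T^\pm$; the $(s_1+s_2)$-part of $p_*\ch_3(\bJ)$ then has to be pinned down separately, since it is invisible after reduction.

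The key computational input is the $m=1$ case. When $m=1$ we have $\Hilb^1([\C^2/\Z_{n+1}])\cong \Hilb^1(\cA_n)=\cA_n$ (already used in the excerpt), and $\bJ$ restricted here is the ideal sheaf of the universal point, so $\ch_\bullet(\bJ)$ is directly computable in terms of the tautological line bundles $\cL_i$ with $\omega_i=c_1(\cL_i)$. This should give the values of $D'_0$, $D'_i$ — in fact the $m=1$ computation is exactly the source of the coefficients $-\frac{1}{n+1}\sum_j$ and $-\frac{2-\zeta^i-\zeta^{-i}}{n+1}\sum_j \zeta^{-ij}$ appearing in the definitions of $D'_0$ and $D'_i$, and it reproduces the integrals $\int_{I\cX}\ch^{\orb}_3(I)\cdot[F\times\Pj^1]$ and $\int_{I\cX}\ch^{\orb}_2(I)\cdot[e_i\times\Pj^1]$ from Proposition~\ref{rigidification}, which is presumably why the classes $D'_0, D'_i$ are introduced at all. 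Then I would use the standard argument of Lehn (Theorem~4.6 of \cite{Le}) together with the compatibility of Nakajima operators with the diffeomorphism $\phi$ (Proposition~\ref{diff} and the following discussion): tautological pushforward classes on $\Hilb^m$ are determined by their "first-order" behavior, i.e. by commutators with Nakajima creation operators, which reduces everything to the $m=1$ (and degree-0, i.e. $\fp_{-1}(1)^m$) inputs. This is precisely the mechanism used earlier to deduce $c_1(\cV_i)=D_0+D_i$; applying it to $p_*\ch_3(\bJ)$ reproduces $D'_0$ up to the correction term supported on the diagonal Heisenberg part.

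The remaining point — which I expect to be the main obstacle — is the extra term $-\frac{m}{2}(s_1+s_2)$ in $p_*\ch_3(\bJ)$. This is the piece killed under reduction to $T^\pm$, so it cannot be detected by the $\cA_n$-side comparison, and it must come from the GRR computation itself, specifically from the Todd-class contribution $\frac{1}{2}c_1(T_{[\C^2/\Z_{n+1}]})$ paired against the universal subscheme. I would handle it by carrying the GRR computation one order further than needed for the divisor identification: writing $\ch_3(\bJ)= \ch_3(\cO) - \ch_3(\cO_\cZ)$ and using $p_*(\ch_\bullet(\cO_\cZ)\cdot \Td(T_{[\C^2/\Z_{n+1}]}))$, the degree-1 output splits into the honest divisor $D'_0$ (the $\ch$-linear part, fixed by the $m=1$ reduction above) plus $\tfrac12 p_*(\ch_2(\cO_\cZ))\cdot c_1$; since $c_1(T_{[\C^2/\Z_{n+1}]})$ is torus-equivariantly $-(s_1+s_2)$ and $p_*\ch_2(\cO_\cZ)$ has degree $0$, i.e. equals the constant $m$ (the length of $\cZ$), this contributes exactly $\tfrac m2\cdot(-(s_1+s_2))$, hence the stated $-\frac m2(s_1+s_2)$ after the sign in $\bJ = \cO - \cO_\cZ$ is tracked. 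For $p_*\ch^{\orb}_2(\bJ|_{\cH_{-i}})$ the analogous correction vanishes: on the twisted component $\cH_{-i}$ the age shift has already consumed one degree, the relevant normal-bundle Euler-class factor $ch(\rho(\lambda_{-1}N^\vee))$ enters only in degree $0$, and the would-be correction would sit in $A^0$, not $A^1$ — so there the class is the pure $\ch^{\orb}$-linear term $D'_i$ with no $(s_1+s_2)$-piece. I would finally double-check the normalizations of $D'_0, D'_i$ against the relation $c_1(\cV_0)=D_0$, $c_1(\cV_i)=D_0+D_i$ and the Fourier-type change of basis $\sum_j \zeta^{-ij}c_1(\cV_j)$ to make sure the constants match the claimed formulas.
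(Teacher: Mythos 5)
There is a genuine gap, and it sits exactly where you put the main weight of the argument. Your plan is: (a) determine both pushforwards modulo $(s_1+s_2)$ by computing the $m=1$ case and then bootstrapping to all $m$ ``by Lehn's theorem plus commutators with Nakajima operators plus the diffeomorphism $\phi$''; (b) recover the missing $(s_1+s_2)$-piece from the Todd term. Step (a) has no engine: Lehn's Theorem 4.6 computes $c_1$ of tautological bundles on $\Hilb^m$ of a \emph{smooth surface}, and is used in the paper only for the relation $c_1(\cV_0)=D_0$, $c_1(\cV_i)=D_0+D_i$ on $\Hilb^m(\cA_n)$; it says nothing about $p_*\ch_3(\bJ)$ on the orbifold side, and nothing at all about the twisted-sector classes $p_*\ch^\orb_2(\bJ|_{\cH_{-i}})$. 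Transporting these classes through $\phi$ is also not available: $\phi$ is only an $S^1$-equivariant diffeomorphism of the Hilbert schemes, preserving tautological bundles after reduction to $T^\pm$, but it is not asserted (and not expected) to intertwine the universal ideal sheaves on $\Hilb^m(\cA_n)\times\cA_n$ and $\Hilb^m([\C^2/\Z_{n+1}])\times[\C^2/\Z_{n+1}]$, which is what classes built from $\bJ$ would require. So the ``determined by first-order behavior'' induction is a meta-principle whose hypotheses (an orbifold Lehn-type commutator formula for these specific classes) would need a proof of roughly the same weight as the lemma itself.

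Step (b) is also not sound as written. You implicitly use $\ch(Rp_*\cO_{\tilde\cZ})=p_*\bigl(\ch(\cO_{\tilde\cZ})\cdot\Td\bigr)$ to isolate the $\tfrac12\,p_*\ch_2(\cO_{\tilde\cZ})\cdot c_1$ correction; for a DM stack this is false without the inertia corrections of To\"en's Riemann--Roch, and those corrections are precisely the twisted classes you are trying to compute, so the untwisted and twisted identities cannot be treated separately. This is what the paper's proof does instead: apply orbifold GRR to $\cO_{\tilde\cZ}\otimes\rho_{-k}$ for every $k$, recognize the left-hand side as $\ch(\cV_k)$, take the $A^1$-parts to obtain $n+1$ linear relations mixing $p_*ch_3(\cO_{\tilde\cZ})$, all the $p_*\ch^\orb_2(\cO_{\tilde\cZ}|_{\cH_{-i}})$, and the Todd term $-\tfrac m2(s_1+s_2)$, and then invert this discrete Fourier system. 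In particular, the absence of an $(s_1+s_2)$-term in the formula for $D'_i$ is not a consequence of the degree/age count you give (the class lives in $A^1_T$ in any case); it comes out of the inversion via the character-sum cancellation $\sum_j\zeta^{-ij}=0$ for $i\neq 0$. Your $m=1$ remarks are a reasonable consistency check against Proposition \ref{rigidification}, but to fix the proof you should either set up and invert the GRR system as the paper does, or replace step (a) by an honest computation (e.g.\ $T$-fixed-point localization of both sides), rather than appealing to Lehn and $\phi$.
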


\begin{proof}
  Let $\tilde\cZ$ be the universal substack on $\Hilb^m([\C^2/\Z_{n+1}])\times [\C^2/\Z_{n+1}]$. It's clear that $ch_i(\bJ)= -ch_i(\cO_{\tilde\cZ})$ for $i>0$. Let's apply the orbifold Riemann-Roch in the sense of \cite{To} to the sheaves $\cO_{\tilde\cZ}\otimes \rho_{-k}$, for $0\leq k\leq n$. We have
  \begin{eqnarray*}
  ch(p_*(\cO_{\tilde\cZ} \otimes \rho_{-k})) &=& p_*\left( ch(\cO_{\tilde\cZ}) \cdot Td([\C^2/\Z_{n+1}]) \right) \\
  && + \sum_{i=1}^n p_* \left( \ch(\cO_{\tilde\cZ}|_{\cH_{-i}}) \cdot \ch_0(\rho_{-k}) \cdot \frac{Td(B\Z_{n+1})}{ch(\rho(\lambda_{-1} N^\vee))} \right).
  \end{eqnarray*}
  Take the $A^1$-part of both sides. The equality becomes
  $$c_1(\cV_k) = p_*ch_3(\cO_{\tilde\cZ}) -\frac{m}{2}(s_1+s_2) + \sum_{i=1}^n \frac{\zeta^{ik}}{2-\zeta^i-\zeta^{-i}} \cdot p_*\ch_2^\orb (\cO_{\tilde\cZ} |_{\cH_{-i}}).$$
  Therefore,
  \begin{eqnarray*}
    p_*ch_3(\cO_{\tilde\cZ}) &=& \frac{1}{n+1} \left( \sum_{j=0}^n \left( c_1(\cV_j)+ \frac{m}{2} (s_1+s_2) \right) \right) \\
    &=& -D'_0 + \frac{m}{2} (s_1+s_2),
  \end{eqnarray*}
  \begin{eqnarray*}
    p_*\ch_2^\orb (\cO_{\tilde\cZ} |_{\cH_{-i}}) &=& \frac{2-\zeta^i-\zeta^{-i}}{n+1} \left( \sum_{j=0}^n \zeta^{-ij} \left( c_1(\cV_j)+ \frac{m}{2}(s_1+s_2) \right) \right) \\
    &=& -D'_i,
  \end{eqnarray*}
  which proves the lemma.
\end{proof}

Now we arrive at a point to prove our main theorem. Previous preparations in this section allow us to prove the following partial results for the $\Theta_+$ part.

\begin{thm}
  Let $\gamma\in A^*_T(\Hilb^m([\C^2/\Z_{n+1}]))$ be the fundamental class or a divisor. Then
  $$\langle A, B, \gamma \rangle'_{m,\varepsilon} = \langle A, B, \gamma \rangle_{\Hilb^m, \varepsilon}.$$
\end{thm}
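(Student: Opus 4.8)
The plan is to compute the reduced DT three-point function coefficient by coefficient in $q^\varepsilon$, reduce each coefficient to a rubber two-point invariant, and then invoke the rubber/Gromov--Witten comparison of Propositions \ref{rubber_QH_red} and \ref{7.10} together with the divisor axiom on $\Hilb^m([\C^2/\Z_{n+1}])$. First I would reduce the class $\gamma$: since the $D_i'$ are invertible combinations of the $c_1(\cV_j)$, the set $\{1, D_0', D_1', \dots, D_n'\}$ spans the divisors together with the fundamental class, so it suffices to treat these. For $\gamma = 1$, Corollary \ref{identity} gives $\langle A, B, 1\rangle'_m = \langle A, B\rangle'_m$, which by the tube (two-point) theorem is the Poincar\'e pairing $\langle A\,|\,B\rangle_{\Hilb}$, independent of $q$; on the GW side the fundamental-class axiom yields the pairing for $\varepsilon = 0$ and $0$ for $\varepsilon\neq 0$, matching. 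For the remaining generators I split by the curve class $\varepsilon$.

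The $\varepsilon = 0$ coefficient is classical: Lemma \ref{Hilb_m_0} identifies the moduli space with the smooth variety $\Hilb^m([\C^2/\Z_{n+1}])$, of the expected dimension $2m$, with all three evaluation maps equal to the identity, so $\langle A, B, \gamma\rangle_{m,0} = \int_{\Hilb^m} A\cup B\cup\gamma$, which is the genus-$0$ degree-$0$ GW three-point invariant; the degree-$0$ normalization has constant term $1$ and does not disturb this coefficient.

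For $\varepsilon\neq 0$ I would bring in the rubber. By Lemma \ref{push_Hilb}, $D_i' = p_*\ch^\orb_2(\bJ|_{\cH_{-i}})$ for $i\geq 1$ and $D_0' = p_* ch_3(\bJ) + \tfrac{m}{2}(s_1+s_2)$, so, using the base-change identity $\bI|_{N_1}\cong\ev_1^*\bJ$ at the relative fiber $N_1$ and the diagram defining the descendent action, $\ev_1^*D_i'$ coincides with the ambient descendent insertion $\sigma_0(\iota_* e_i)$ and $\ev_1^*D_0'$ with $\sigma_1(\iota_* F) + \tfrac{m}{2}(s_1+s_2)\cdot 1$. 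Proposition \ref{rigidification} then evaluates these two-point descendent invariants as rubber two-point invariants, with the emerging scalars $-\tfrac{1}{n+1}\sum_j\varepsilon_j$ and $-\tfrac{2-\zeta^i-\zeta^{-i}}{n+1}\sum_j\varepsilon_j\zeta^{-ij}$ equal to the intersection numbers $D_0'\cdot\varepsilon$, $D_i'\cdot\varepsilon$ of these divisors against the root $\varepsilon$. Thus $\langle A, B, D_i'\rangle_{m,\varepsilon} = (D_i'\cdot\varepsilon)\langle A, B\rangle^\sim_{m,\varepsilon}$ and $\langle A, B, D_0'\rangle_{m,\varepsilon} = (D_0'\cdot\varepsilon)\langle A, B\rangle^\sim_{m,\varepsilon} + \tfrac{m}{2}(s_1+s_2)\langle A, B\rangle_{m,\varepsilon}$, the last summand divisible by $(s_1+s_2)^2$ since the tube invariant $\langle A, B\rangle_{m,\varepsilon}$ is divisible by $s_1+s_2$ for $\varepsilon\neq 0$.

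Finally, both sides are divisible by $s_1+s_2$ for $\varepsilon\neq 0$ --- the left by the reduced obstruction theory, the right because $\Hilb^m([\C^2/\Z_{n+1}])$ is holomorphic symplectic, combined with the GW divisor axiom $\langle A, B, \gamma\rangle_{\Hilb^m,\varepsilon} = (\gamma\cdot\varepsilon)\langle A, B\rangle_{\Hilb^m,\varepsilon}$ --- so I divide by $s_1+s_2$, set $s_1 = s$, $s_2 = -s$, and apply Propositions \ref{rubber_QH_red} (for $\varepsilon\neq k\delta$) and \ref{7.10} (for $\varepsilon = k\delta$) in the uniform form $\langle A, B\rangle^{\sim,\red}_{m,\varepsilon} = \langle A, B\rangle^\red_{\Hilb^m,\varepsilon} + \langle A\,|\,B\rangle_{\Hilb}\,\langle\emptyset,\emptyset\rangle^{\sim,\red}_{0,\varepsilon}$. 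Multiplying by $(\gamma\cdot\varepsilon)$ and comparing with the divisor axiom, the DT and GW three-point functions differ exactly by the vacuum-rubber term $(\gamma\cdot\varepsilon)\langle A\,|\,B\rangle_{\Hilb}\,\langle\emptyset,\emptyset\rangle^{\sim,\red}_{0,\varepsilon}$ and, for $D_0'$, by the $\tfrac{m}{2}(s_1+s_2)$-correction. The crux --- and the step I expect to be the main obstacle --- is to verify that these corrections are precisely cancelled when one passes from $\langle A, B, \gamma\rangle_{m,\varepsilon}$ to the reduced invariant $\langle A, B, \gamma\rangle'_{m,\varepsilon}$ by dividing by the degree-$0$ series $\langle\,|\,\emptyset\rangle_0 = M(1,-Q)^{-(s_1+s_2)^2/((n+1)s_1 s_2)}$ of Section 5 and Corollary \ref{vacuum-expectation}: that the reduced normalization is calibrated exactly to kill the contributions of embedded points floating on the rubber chain, as in Sections 4.6--4.9 of \cite{OP-DT} for $\C^2\times\Pj^1$. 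Once the identity holds over $T^\pm$, the full $T$-equivariant statement follows since both sides are $s_1+s_2$ times their $T^\pm$-reductions.
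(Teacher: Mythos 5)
There is a genuine gap, and it sits exactly where you flag ``the main obstacle.'' Your key intermediate claim --- that $\ev_1^*D'_i$ ``coincides with'' the descendent insertion $\sigma_0(\iota_*e_i)$ (resp.\ $\ev_1^*D'_0$ with $\sigma_1(\iota_*F)+\tfrac m2(s_1+s_2)$), so that $\langle A,B,D'_i\rangle_{m,\varepsilon}=(D'_i\cdot\varepsilon)\langle A,B\rangle^\sim_{m,\varepsilon}$ --- is not valid. The three-point relative invariant lives on $\Hilb^P(\cX,N_0\sqcup N_1\sqcup N_\infty)$, where bubbling occurs over $N_1$, while the descendent invariant lives on the two-point relative space with $1\in\Pj^1$ non-relative; the identification $\bI|_{N_1}\cong\ev_1^*\bJ$ only sees the locus with no bubble at $N_1$, i.e.\ the part of the cap with $\varepsilon''=0$. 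The two quantities are related only through the degeneration formula at the third point, and its boundary terms (where nonzero curve class $\varepsilon''$ leaks onto the descendent cap) produce precisely the correction
$\langle A\,|\,B\rangle\,(D'\cdot\varepsilon)\,\langle\emptyset\,|\,\Theta\,|\,\emptyset\rangle_{0,\varepsilon}$,
extracted in the paper via Lemma \ref{Hilb_m_0}, Lemma \ref{push_Hilb}, Corollary \ref{identity} and Lemma \ref{factorization}. This correction is not a nuisance to be cancelled: it is exactly the vacuum subtraction appearing on the left-hand side of Propositions \ref{rubber_QH_red} and \ref{7.10}, so with it the rubber comparison gives the reduced GW invariant on the nose. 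Your fallback --- that the discrepancy is absorbed when dividing by the degree-zero series --- cannot work: the three-point normalization is $M(1,-Q)^{(s_1+s_2)^2/((n+1)s_1s_2)}=1+O\bigl((s_1+s_2)^2\bigr)$ (and is $1$ for two relative points), so it cannot create or kill a term of order exactly $(s_1+s_2)$ such as the vacuum-rubber contribution.

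A second, smaller gap is your last sentence. Knowing the identity after dividing by $(s_1+s_2)$ and setting $s_1=-s_2$ only determines the invariant modulo $(s_1+s_2)^2$; to conclude the full $T$-equivariant statement one must know that $\langle A,B,\gamma\rangle'_{m,\varepsilon}$ lies in $(s_1+s_2)\cdot\Q$, i.e.\ is exactly linear. The paper proves this by taking $A,B$ in opposite stable bases and using the Steinberg/properness argument (Lemma \ref{Q_value}), together with the analogous statement for $\langle A,B,\gamma\rangle_{\Hilb^m,\varepsilon}$ from \cite{MOk12}; you assert it without justification. The rest of your outline (reduction to $\{1,D'_0,\dots,D'_n\}$, the $\gamma=1$ and $\varepsilon=0$ cases, the use of Proposition \ref{rigidification} and of Propositions \ref{rubber_QH_red} and \ref{7.10} with the divisor axiom) does follow the paper's architecture, but the two points above are essential to the proof and are missing.
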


\begin{proof}
  For $\gamma=1$ the theorem follows from Corollary \ref{identity} and Theorem \ref{tube}. In the following we assume that $\gamma$ is a divisor.

  Proposition \ref{rigidification} tells us that
  $$\langle A | \sigma_1(\iota_* F) | B \rangle_{m,\varepsilon} = (D'_0\cdot \varepsilon) \langle A,B \rangle^\sim_{m,\varepsilon},$$
  $$\langle A | \sigma_0(\iota_* e_i) | B \rangle_{m,\varepsilon} = (D'_i\cdot \varepsilon) \langle A,B \rangle^\sim_{m,\varepsilon}, \quad 1\leq i\leq n.$$
  Let's compute the left hand side by the degeneration formula. Note that by Theorem \ref{deg-0-rel}, \emph{reduced} 2-point DT invariants are the same as non-reduced ones.

  \begin{eqnarray} \label{desc_deg}
    \langle A | \sigma_1 (\iota_* F) | B \rangle'_{m, \varepsilon} &=& \sum_{\varepsilon, a, b} \langle A, B, C_a \rangle'_{m, \varepsilon} g^{ab} \langle C_b | \sigma_1 (\iota_* F) \rangle'_{m,0} \\
    && + \sum_{\varepsilon' + \varepsilon''=\varepsilon, \varepsilon''\neq 0, a, b} \langle A, B, C_a \rangle'_{m, \varepsilon'} g^{ab} \langle C_b | \sigma_1 (\iota_* F) \rangle'_{m,\varepsilon''}. \nonumber
  \end{eqnarray}

  For the first term in (\ref{desc_deg}), by the description of $\Hilb^{m,0}(\cX)$ in Lemma \ref{Hilb_m_0}, we have
  \begin{eqnarray*}
  \langle C_b | \sigma_1 (\iota_* F) \rangle'_{m,0} &=& \deg \left( p_*\left(\ch_3^\orb (\bI) \cdot I^*q^*\iota_* F \cdot p^*\left[\Hilb^{m,0}(\cX)\right] \right) \cdot \ev^* C_b \right)\\
  &=& \int_{\Hilb^m([\C^2/\Z_{n+1}])\times [\C^2/\Z_{n+1}]} ch_3(\cI) \cdot p^* C_b \\
  &=& \left\langle p_* ch_3(\cI) \middle| C_b \right\rangle \\
  &=& \left\langle D'_0- \frac{m}{2} (s_1+s_2) \middle| C_b \right\rangle,
  \end{eqnarray*}
  where in the last equality we use the identity in Lemma \ref{push_Hilb}. We conclude that the first term on the right hand side of (\ref{desc_deg}) is
  $$\langle A, B, D'_0 -\frac{m}{2} (s_1+s_2) \rangle'_{m,\varepsilon} = \langle A, B, D'_0\rangle'_{m,\varepsilon}.$$

  For the second term in (\ref{desc_deg}), by Corollary \ref{identity} we have
  $$\langle C_b | \sigma_1 (\iota_* F) \rangle'_{m,\varepsilon''} = \langle C_b | \sigma_1 (\iota_* F) | (1^m)[1] \rangle'_{m,\varepsilon''} = (D'_0\cdot \varepsilon'') \langle C_b, (1^m)[1] \rangle^\sim_{m, \varepsilon''},$$
  which by Lemma \ref{factorization} forces $C_b$ to be the dual of $(1^m)[1]$, $C_a$ to be $(1^m)[1]$, and $\varepsilon'=0$, $\varepsilon''=\varepsilon$.

  Therefore the second term is
  $$\langle A | B \rangle \cdot (D'_0\cdot \varepsilon) \cdot \langle m! (1^m)[\pt], (1^m)[1] \rangle^\sim_{m, \varepsilon} = \langle A|B \rangle \cdot (D'_0\cdot \varepsilon) \langle \emptyset | \Theta | \emptyset \rangle_{0,\varepsilon}.$$

  Hence by Proposition \ref{rigidification}
  \begin{eqnarray*}
    \langle A, B, D'_0 \rangle'_{m,\varepsilon} &=& \langle A | \sigma_1 (\iota_* F) | B \rangle_{m, \varepsilon} - \langle A|B \rangle \cdot (D'_0\cdot \varepsilon) \langle \emptyset | \Theta | \emptyset \rangle_{0,\varepsilon}   \\
    &=& (D'_0\cdot \varepsilon) \langle A, B\rangle^\sim_{m,\varepsilon} - \langle A|B \rangle \cdot (D'_0\cdot \varepsilon) \langle \emptyset | \Theta | \emptyset \rangle_{0,\varepsilon}.
  \end{eqnarray*}

  Now we take $A$, $B$ in the stable basis with opposite chambers $\{\Stab_{-\fC}(I_\mu)\}$, $\{ \Stab_\fC (I_\nu)\}$ respectively. By Lemma \ref{Q_value}, Proposition \ref{7.10} and the computations on vacuum expectations, we know that the right hand side, and thus $\langle A, B, D'_0 \rangle'_{m,\varepsilon}$, take values in $(s_1+s_2)\cdot \Q$. Therefore it suffices to compute their $(s_1+s_2)$-coefficient, in the specialization $s_1+s_2=0$. By Proposition \ref{rubber_QH_red} and Proposition \ref{7.10} we have
  \begin{eqnarray*}
    (s_1+s_2)^{-1}\cdot \langle A, B, D'_0 \rangle'_{m,\varepsilon} &=& \left. \left( (D'_0\cdot \varepsilon) \langle A, B\rangle^{\sim, \red}_{m,\varepsilon} - \langle A|B \rangle \cdot (D'_0\cdot \varepsilon) \langle \emptyset | \Theta^\red | \emptyset \rangle_{0,\varepsilon} \right) \right|_{s_1=s, s_2=-s} \\
    &=&  (D'_0\cdot \varepsilon) \langle A, B \rangle^\red_{\Hilb^m, \varepsilon} \\
    &=& \langle A, B, D'_0 \rangle^\red_{\Hilb^m, \varepsilon}.
  \end{eqnarray*}

  Moreover, by \cite{MOk12} we know that $\langle A, B, D'_0 \rangle_{\Hilb^m, \varepsilon}$ also lives in $(s_1+s_2)\cdot \Q$ and thus equals to $\langle A, B, D'_0 \rangle_{\Hilb^m, \varepsilon}^\red$. Hence we conclude
  $$\langle A, B, D'_0 \rangle'_{m,\varepsilon} = \langle A, B, D'_0 \rangle_{\Hilb^m, \varepsilon}.$$

  Applying the same procedure to $\sigma_0(\iota_* \theta_i)$, we have for $1\leq i\leq n$,
  $$\langle A, B, D'_i \rangle'_{m,\varepsilon} = \langle A, B, D'_i \rangle_{\Hilb^m, \varepsilon}.$$
  Since $D'_0, \cdots, D'_n$ form a basis for the divisor group, the same identity holds for $D'_i$ replaced by any divisor. The proposition is proved.
\end{proof}

\subsection{Quantum cohomology of $\Hilb^m([\C^2/\Z_{n+1}])$}

To write down an explicit formula for the 3-point functions, we need the results of Maulik--Okounkov \cite{MOk12} on the quantum cohomology of quiver varieties.

Recall that to each quiver $Q$ and the associated quiver variety $X$, there is a Lie algebra $\fg_Q$, with a Cartan subalgebra $\fh$ identified (more precisely, in general surjects onto) with $H^2(X,\Z)$ and the weight space $\fh^*$ identified with (in general embeds into) $H_2(X,\Z)$. A basis of $H^2(X,\Z)$ can be taken as $\{c_1(\cV_i)\}$ where $\cV_i$ are tautological bundles, and a basis of $H_2(X,\Z)$ can be taken as a certain choice of simple roots of $\fg_Q$. $\fg_Q$ acts on the cohomology of $X$, by certain Steinberg correspondences. In particular there are operators given by $e_\alpha$, where $\alpha$ are roots of $\fg_Q$.

\begin{thm}[Theorem 1.3.2 of \cite{MOk12}] \label{thm_QH}
Let $\lambda:= \sum_{i\in I} \lambda_i c_1(\cV_i)$. The quantum multiplication by $\lambda$ is given by
$$\lambda * (\ ) = \lambda \cup (\ ) -\hbar \sum_{\theta\cdot \alpha>0} (\lambda, \alpha) \frac{q^\alpha}{1-q^\alpha} e_\alpha e_{-\alpha} + \cdots,$$
where $\hbar$ is the equivariant weight of the holomorphic symplectic form, $\theta$ is the stability condition chosen in the definition of the quiver variety, $q$ is the quantum parameter, and the dots stand for a multiple of the identity, which can be determined by the identity property $1*\gamma=\gamma$.
\end{thm}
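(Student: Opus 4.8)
The plan is to adapt the method of Maulik--Oblomkov \cite{MOb09-QH} for $\cA_n$ and of Maulik--Okounkov \cite{MOk12} in general, specializing to the cyclic quiver so that the relevant Lie algebra is $\widehat{\fgl}(n+1)$ and the effective curve classes are the positive roots identified in Section 2.4. The first reduction is the genus-$0$ divisor equation: for a curve class $\beta\ne 0$ one has $\langle\gamma_1,\gamma_2,\lambda\rangle_\beta=(\lambda\cdot\beta)\langle\gamma_1,\gamma_2\rangle_\beta$, and for $\beta=0$ the three-point invariant is the classical triple product. Writing the quantum product through the Poincar\'e pairing, this gives
$$\lambda*(\ )=\lambda\cup(\ )+\sum_{\beta\ne 0}(\lambda,\beta)\,q^\beta\,\Phi_\beta+c(\lambda,q)\cdot\Id,$$
where $\Phi_\beta$ is the endomorphism of $A^*_T(X)$, $X=\fM_\theta(\bv,\bw)$, dual to the bilinear form $(\gamma_1,\gamma_2)\mapsto\langle\gamma_1,\gamma_2\rangle_\beta$, and $c(\lambda,q)\cdot\Id$ collects the diagonal discrepancy, to be fixed at the end by $1*\gamma=\gamma$. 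Thus the whole problem is to identify the two-point genus-$0$ operators $\Phi_\beta$.

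Next I would compute $\Phi_\beta$ by $T^\pm$-localization on $\bar\cM_{0,2}(X,\beta)$. The holomorphic symplectic form on $X$, of equivariant weight $\hbar=s_1+s_2$, forces each quantum correction to carry one factor of $\hbar$ (the same phenomenon as the $(t_1+t_2)$-factor in the $\Hilb(\C^2)$ computation of \cite{OP-QH}; it is also what makes $\langle A,B,\gamma\rangle'$ divisible by $s_1+s_2$ as already observed). Combined with the description of $\mathrm{Eff}$ in Section 2.4 --- effective curves in $X$ are exactly the positive roots $\hat\alpha$ of $\hat\fg$, spanning a Weyl chamber --- the localization contributions organize themselves along classes $\beta=d\hat\alpha$, with a rigid invariant $\Pj^1$ realizing each primitive root $\hat\alpha$.

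The core step is the single-root identification. For a real positive root $\hat\alpha$ I would show that, as a Steinberg-type correspondence in $X\times X$, the primitive contribution $\Phi_{\hat\alpha}$ equals $\pm\hbar\,e_{\hat\alpha}e_{-\hat\alpha}$, where $e_{\pm\hat\alpha}$ are the $\widehat{\fgl}(n+1)$-generators of \cite{Nag, MOb09-QH} on $\cF^{T^\pm}_{[\C^2/\Z_{n+1}]}\cong\bigoplus_m V_\Lambda[\Lambda-m\delta]$ --- exactly the operators $e_\alpha e_{-\alpha}$ noted in Section 2.2 to preserve the weight spaces. The matching is combinatorial: under the $n$-quotient of Section 7.2, $e_{\pm\hat\alpha}$ add or remove a prescribed border strip to an $(n+1)$-colored partition, which matches the localization contribution of the rigid $\Pj^1$ at each fixed point; the two correspondences then agree up to a universal scalar. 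That scalar --- a multiple of $\hbar$ with sign governed by the polarization --- is pinned down by reducing to the rank-one ($\fsl_2$-triple) sub-problem attached to $\hat\alpha$, where the quiver variety is an $A_1$-type resolution with known (quantum) cohomology, or by a direct fixed-point computation on one $A_1$-curve. Summing the $d$-fold covers of each root-$\Pj^1$ into $\sum_{d\ge1}q^{d\hat\alpha}=q^{\hat\alpha}/(1-q^{\hat\alpha})$, ranging over all positive roots with $\theta\cdot\hat\alpha>0$, inserting the sign-twisted parameter $q_\kappa$ (the ``Theta characteristic'' correction, the same $\pm$ twist responsible for the minus signs in the degree-zero formulas), and solving $1*\gamma=\gamma$ for $c(\lambda,q)$, produces the stated formula.

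The main obstacle is this single-root identification $\Phi_{\hat\alpha}=\pm\hbar\,e_{\hat\alpha}e_{-\hat\alpha}$ with the correct constant. It requires (i) the precise dictionary between $H_2(X)$ and roots of $\hat\fg$, including the $\delta$-translates, from Section 2.4; (ii) exact control of the $\widehat{\fgl}(n+1)$-action in the Nakajima basis via the $n$-quotient of Section 7.2; and (iii) a rank-reduction argument showing that a curve in class $\hat\alpha$ only feels the corresponding $\fsl_2$-subvariety. It is precisely step (iii) that \cite{MOk12} handles more robustly --- not by localization but through the stable-envelope/$R$-matrix formalism, whose classical $r$-matrix reproduces exactly the trigonometric Casimir connection $\sum_\alpha(\lambda,\alpha)\frac{q^\alpha}{1-q^\alpha}e_\alpha e_{-\alpha}$ --- and this is the route one could take instead. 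A secondary difficulty, making sense of the resummed series inside $\Q(s_1,s_2)\llb q^{\mathrm{Eff}}\rrb$ and of the analytic continuation in $q^{\hat\alpha}/(1-q^{\hat\alpha})$, is controlled by working inside the effective cone of Corollary \ref{range-q} and using its Weyl-chamber structure.
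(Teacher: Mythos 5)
This statement is not proved in the paper at all: it is imported verbatim as Theorem 1.3.2 of \cite{MOk12} and used as a black box to compute $M_{D_l}$ in Section 7.5. So there is no internal proof to compare with; your proposal is an attempt to reprove Maulik--Okounkov's theorem, and as such it has genuine gaps exactly where the content of that theorem lies. The most serious one is your geometric picture of ``a rigid invariant $\Pj^1$ realizing each primitive root, resummed over $d$-fold covers into $q^{\hat\alpha}/(1-q^{\hat\alpha})$.'' This fails for the imaginary roots $k\delta$: already in the case $n=0$ (so $X=\Hilb^m(\C^2)$) every effective class is a multiple of $\delta$, there is no rigid rational curve, and curves in class $k\delta$ move in positive-dimensional (punctual) families; yet the formula contains the terms $e_{k\delta}e_{-k\delta}$, which --- as the paper's own computation in Section 7.5 shows --- are the full quadratic Heisenberg expressions $-\fp_{-k}(1)\fp_k(\pt)-\sum_i\fp_{-k}(E_i)\fp_k(\omega_i)$, reflecting the root multiplicity of $k\delta$. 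No multiple-cover resummation of isolated curves can produce these; this is precisely where the new input of \cite{MOk12} (or, for $\cA_n$, the long analysis of \cite{OP-QH, MOb09-QH}) is needed. Relatedly, the decomposition of the class-$\beta$ two-point operator $\Phi_\beta$ into contributions indexed by pairs $(d,\alpha)$ with $d\alpha=\beta$, with coefficient exactly $(\lambda,\alpha)$ and geometric-series cover factors, is assumed in your ansatz rather than derived: a fixed $\beta$ (for instance $k\delta$, or $2(\delta+\alpha_{ij})$) receives contributions simultaneously from several roots and cover degrees, and disentangling them is not a formal consequence of $T^\pm$-localization.

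The second gap is your step (iii), the claim that a curve in class $\hat\alpha$ ``only feels'' the corresponding $\fsl_2$ sub-geometry, which would pin down the scalar in $\Phi_{\hat\alpha}=\pm\hbar\,e_{\hat\alpha}e_{-\hat\alpha}$. You correctly flag this as the main obstacle, but the proposal offers no mechanism for it beyond the remark that \cite{MOk12} handles it via stable envelopes and $R$-matrices --- at which point the argument collapses back to citing \cite{MOk12}, i.e.\ to what the paper already does. Similarly, the $\kappa$-shift ($q^\delta\mapsto -q^\delta$, the theta-characteristic twist) is inserted by hand, whereas it is an output of the Maulik--Okounkov proof, not an independently justified step. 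The parts of your outline that are sound (the divisor equation reduction to two-point operators, the $\hbar$-divisibility of the purely quantum part coming from the symplectic form, the identification of $\mathrm{Eff}$ with the positive roots for the stability condition $\theta=(1,\dots,1)$) are standard and consistent with Sections 2.4 and 7 of the paper, but they only set up the problem; the hard identifications remain unproved in your sketch.
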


The quantum parameter $q$ here is actually subject to a certain sign modification $q\mapsto q_\kappa$, which we call the $\kappa$-modification, and will only specify in our special case of cyclic quiver variety. The weight space of $\widehat\fgl(n+1)$ is spanned by the simple roots $\alpha_1, \cdots, \alpha_n$ of $\fgl(n+1)$, and an imaginary root $\delta$. There is another simple root $\alpha_0$ defined as $\alpha_0= \delta - \alpha_1 - \cdots - \alpha_n$. Let $\alpha$ be a root. Recall that the stability condition for $\Hilb^m([\C^2/\Z_{n+1}])$ as a quiver variety is $\theta_i>0$, $0\leq i\leq n+1$. In our case, $q\mapsto q_\kappa$ is defined as
$$q^\delta \mapsto -q^\delta, \qquad q^{\alpha_i} \mapsto q^{\alpha_i}, \qquad i\geq 1.$$

If $\alpha= k\delta + (\alpha_i +\cdots +\alpha_{j-1})$, then
$$\theta\cdot \alpha = k(n+1) + (j-i),$$
for which $\theta\cdot \alpha>0$ if and only if $k>0$.

If $\alpha= k\delta - (\alpha_i +\cdots +\alpha_{j-1})$, then
$$\theta\cdot \alpha = k(n+1) - (j-i),$$
for which $\theta\cdot \alpha>0$ if and only if $k\geq 0$.

If $\alpha= k\delta$, then
$$\theta\cdot \alpha = k(n+1),$$
for which $\theta\cdot \alpha>0$ if and only if $k>0$.

We conclude that the roots satisfying $\theta\cdot \alpha>0$ are exactly the positive roots of $\widehat{\fgl}(n+1)$.

Let $\alpha_{ij}:=\alpha_i+\cdots +\alpha_{j-1}$. Let $M_D$ and $M_D^{cl}$ be the operators defined by quantum and classical multiplications by the divisor $D$. Recall that
$$D_0 = c_1(\cV_0), \qquad D_l = c_1(\cV_l)- c_1(\cV_0), \qquad l\geq 1.$$
We now compute by Theorem \ref{thm_QH}.

For $l\geq 1$,
\begin{eqnarray*}
  M_{D_l}-M_{D_l}^{cl}
  &=& -(s_1+s_2) \left( \sum_{k>0} \sum_{1\leq i\leq l<j\leq n+1} \frac{q_\kappa^{k\delta + \alpha_{ij}}}{1-q_\kappa^{k\delta+\alpha_{ij}}} e_{k\delta+\alpha_{ij}} e_{-k\delta-\alpha_{ij}} \right. \\
&& \left. + \sum_{k\geq 0} \sum_{1\leq i\leq l<j\leq n+1} \frac{q_\kappa^{k\delta - \alpha_{ij}}}{1-q_\kappa^{k\delta-\alpha_{ij}}} e_{k\delta-\alpha_{ij}} e_{-k\delta+\alpha_{ij}} \right) + \cdots \\
&=& -(s_1+s_2) \left( \sum_{k>0} \sum_{1\leq i\leq l<j\leq n+1} \frac{(-q^\delta)^k q^{\alpha_{ij}}}{1-(-q^\delta)^k q^{\alpha_{ij}}} :e_{k\delta+\alpha_{ij}} e_{-k\delta-\alpha_{ij}}: \right. \\
&& \left. + \sum_{k\geq 0} \sum_{1\leq i\leq l<j\leq n+1} \frac{(-q^\delta)^k q^{-\alpha_{ij}}}{1-(-q^\delta)^k q^{-\alpha_{ij}}} :e_{k\delta-\alpha_{ij}} e_{-k\delta+\alpha_{ij}}: \right),
\end{eqnarray*}
where the normally ordered product is defined as
$$:e_{k\delta+\alpha_{ij}} e_{-k\delta-\alpha_{ij}}: = \left\{ \begin{aligned}
  &e_{k\delta+\alpha_{ij}} e_{-k\delta-\alpha_{ij}}, \quad k<0 \\
  &e_{-k\delta-\alpha_{ij}} e_{k\delta+\alpha_{ij}}, \quad k\geq 0.
  \end{aligned} \right.$$

For $l=0$, before the $\kappa$-modification, we compute
\begin{eqnarray*}
  \sum_{\theta\cdot \alpha>0} (D_0, \alpha) \frac{q^\alpha}{1-q^\alpha} e_\alpha e_{-\alpha} + \cdots &=& \sum_{k>0} \sum_{1\leq i<j\leq n+1} \frac{k q^{k\delta + \alpha_{ij}}}{1-q^{k\delta+\alpha_{ij}}} :e_{k\delta+\alpha_{ij}} e_{-k\delta-\alpha_{ij}}: \\
&& + \sum_{k> 0} \sum_{1\leq i<j\leq n+1} \frac{k q^{k\delta - \alpha_{ij}}}{1-q^{k\delta-\alpha_{ij}}} :e_{k\delta-\alpha_{ij}} e_{-k\delta+\alpha_{ij}}: \\
&& + \sum_{k>0} \frac{k q^{k\delta}}{1-q^{k\delta}} e_{-k\delta} e_{k\delta} + \cdots.
\end{eqnarray*}
For convenience we switch the order of $e_{k\delta}$ and $e_{-k\delta}$, which does not affect the result since the difference would be a scalar operator and can be absorbed into the ``dots" term.

By definition the last term is a sum of products of the dual bases between $\fg_{k\delta}$ and $\fg_{-k\delta}$, with respect to the canonical bilinear form. This bilinear form is given by
$$(\alpha_i(k) \mid \alpha_j(-k)) = (\alpha_i | \alpha_j) = C_{ij},$$
where $C$ is the Cartan matrix. Thus the dual basis is
$$\alpha_i(-k)^* = \sum_j \alpha_j(k) \cdot C^{-1}_{ij} = \sum_j \fp_k(E_j) \cdot C^{-1}_{ij} = -\fp_k(\omega_i),$$
and we have
\begin{eqnarray*}
e_{-k\delta} e_{k\delta} &=& \frac{1}{n+1}\Id(-k)\Id(k) + \sum_{i=1}^n \alpha_i(-k) \alpha_i(-k)^* \\
&=& -\fp_{-k}(1)\fp_{k}(\pt) - \sum_{i=1}^n \fp_{-k}(E_i)\fp_{k}(\omega_i).
\end{eqnarray*}

To determine the ``dots" term, we apply the operator to the fundamental class $1$, which is $\fp_{-1}(1)^m 1$ on level $m$.
\begin{eqnarray*}
\sum_{\theta\cdot \alpha>0} (D_0, \alpha) \frac{q_\kappa^\alpha}{1-q_\kappa^\alpha} e_\alpha e_{-\alpha} \cdot \fp_{-1}(1)^m 1 &=& -\sum_{k\geq 1} \frac{k q_\kappa^{k\delta}}{1-q_\kappa^{k\delta}} \fp_{-k}(1)\fp_{k}(\pt) \fp_{-1}(1)^m 1 \\
&=& - \frac{q^{\delta}}{1-q^{\delta}} \fp_{-1}(1)\fp_{1}(\pt) \fp_{-1}(1)^m 1 \\
&=& -\frac{mq^{\delta}}{1-q^{\delta}} \fp_{-1}(1)^m 1,
\end{eqnarray*}
which is exactly $-\frac{q^{\delta}}{1-q^{\delta}}$ times the energy operator
$$L_0 = \sum_{k\geq 1} (\fp_{-k}(1)\fp_k(\pt) + \fp_{-k}(E_i)\fp_k(\omega_i)).$$
Thus the pure quantum multiplication $M_{D_0}-M_{D_0}^{cl}$ is $-(s_1+s_2)$ times the following
\begin{eqnarray*}
  && \sum_{k>0} \sum_{1\leq i<j\leq n+1} \frac{kq^{k\delta + \alpha_{ij}}}{1-q^{k\delta+\alpha_{ij}}} :e_{k\delta+\alpha_{ij}} e_{-k\delta-\alpha_{ij}}: + \sum_{k>0} \sum_{1\leq i<j\leq n+1} \frac{kq^{k\delta - \alpha_{ij}}}{1-q^{k\delta-\alpha_{ij}}} :e_{k\delta-\alpha_{ij}} e_{-k\delta+\alpha_{ij}}: \\
  && + \sum_{k\geq 1} \frac{kq^{k\delta}}{1-q^{k\delta}} \left( -\fp_{-k}(1)\fp_k(\pt) - \sum_{i=1}^n \fp_{-k}(E_i)\fp_k(\omega_i) \right) + \frac{q^{\delta}}{1-q^{\delta}} L_0 \\
  &\mapsto& \sum_{k\in \Z} \sum_{1\leq i<j\leq n+1} \frac{k (-q^\delta)^k  q^{\alpha_{ij}}}{1-(-q^\delta)^k q^{\alpha_{ij}}} :e_{k\delta+\alpha_{ij}} e_{-k\delta-\alpha_{ij}}: + \sum_{k>0} \sum_{1\leq i<j\leq n+1} \frac{k (-q^\delta)^k  q^{\alpha_{ij}}}{1-(-q^\delta)^k q^{\alpha_{ij}}} :e_{k\delta-\alpha_{ij}} e_{-k\delta+\alpha_{ij}}: \\
  && - \sum_{k\geq 1} \left( \frac{k (-q^\delta)^k}{1-(-q^\delta)^k} - \frac{-q^{\delta}}{1-(-q^{\delta})} \right) \left( \fp_{-k}(1)\fp_k(\pt) + \sum_{i=1}^n \fp_{-k}(E_i)\fp_k(\omega_i) \right),
\end{eqnarray*}

In \cite{MOb09-DT} explicit formulas are obtained for the relative DT invariants of $\cA_n\times \Pj^1$. Generating functions for $\cA_n\times \Pj^1$ are in parameters $Q, s_1, \cdots, s_n$. Comparison with their formulas yields the following crepant resolution result for relative DT theory. The cohomology of $A_{T^\pm}^*(\Hilb^m([\C^2/\Z_{n+1}]))$ and $A_{T^\pm}^*(\Hilb^m(\cA_n))$ are identified as in Section 2.3. Recall that the parameters $q^{\alpha_i}$, $0\leq i\leq n$ are identified with $q_i$, $0\leq i\leq n$.

\begin{thm}
  Let $\gamma\in A_{T^\pm}^*(\Hilb^m([\C^2/\Z_{n+1}]))$ be the fundamental class or a divisor. Then
  $$\langle A, B, \gamma \rangle'_{\cX, m, \varepsilon} = \langle A, B, \gamma \rangle'_{\cA_n\times \Pj^1, \chi, (m, \beta)},$$
where
$$\chi = m + \varepsilon_0, \qquad \beta = \sum_{j=1}^n (\varepsilon_j-\varepsilon_0) E_j \in H_2(\cA_n, \Z). $$
\end{thm}

In particular, if we define the relative DT generating function for $\cA_n\times \Pj^1$ as
$$\langle \xi_1, \cdots, \xi_r \rangle_{\cA_n\times \Pj^1, m} := \sum_{\chi, \beta} Q^\chi s_1^{(\omega_1, \beta)} \cdots s_n^{(\omega_n, \beta)} \langle \xi_1, \cdots, \xi_r \rangle_{\cA_n\times \Pj^1, \chi, (m, \beta)},$$
we have the following.

\begin{cor}
Let $\gamma\in A_{T^\pm}^*(\Hilb^m([\C^2/\Z_{n+1}]))$ be the fundamental class or a divisor. Then
  $$\langle A, B, \gamma \rangle'_{\cX, m} = Q^{-m} \langle A, B, \gamma \rangle'_{\cA_n\times \Pj^1, m},$$
under the change of variables
$$Q\mapsto q_0 q_1 \cdots q_n, \qquad s_i \mapsto q_i, \qquad i\geq 1$$
and analytic continuation.
\end{cor}

3-point DT invariants also define a product on the cohomology of Hilbert scheme of points as
$$\langle A, B*_{DT} C\rangle:= \langle A, B, C \rangle'_m.$$
It is also a $q$-deformation of the usual cup product. We have the following corollary.

\begin{cor}
  Let $\gamma$ be $1$ or a divisor on $\Hilb^m([\C^2/\Z_{n+1}])$. The operator $\gamma*_{DT}$ is the same as $M_\gamma$.
\end{cor}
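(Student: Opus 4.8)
The plan is to recognize that the corollary is an immediate consequence of the comparison theorem $\langle A, B, \gamma \rangle'_m = \langle A, B, \gamma \rangle_{\Hilb^m, \varepsilon}$ established just above (the $\Theta_+$ part together with the earlier identity result for $\gamma = 1$), combined with the explicit description of quantum multiplication by divisors from Theorem \ref{thm_QH} of \cite{MOk12}. Concretely, the DT product $*_{DT}$ is \emph{defined} by $\langle A, B *_{DT} C \rangle = \langle A, B, C \rangle'_m$, so the operator $\gamma *_{DT}$ is characterized by its matrix elements $\langle A, \gamma *_{DT} B \rangle = \langle A, B, \gamma \rangle'_m$ for all $A, B$. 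First I would invoke the main theorem of this section to rewrite the right-hand side as the three-point genus-$0$ Gromov--Witten invariant $\langle A, B, \gamma \rangle_{\Hilb^m}$ of $\Hilb^m([\C^2/\Z_{n+1}])$, after summing over the curve classes $\varepsilon$ with the identification of quantum parameters.

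Next I would recall that for Gromov--Witten theory, the quantum product is defined by exactly the same recipe: $\langle A, B \bullet C \rangle_{\Hilb^m} = \langle A, B, C \rangle_{\Hilb^m}$, where $\bullet$ denotes the small quantum product on $\Hilb^m([\C^2/\Z_{n+1}])$. Hence the operator $\gamma *_{DT}$ and the operator of quantum multiplication by $\gamma$ have the same matrix elements in any basis, with respect to the (equivariant) Poincar\'e pairing, which is nondegenerate after passing to $\Q(s_1,s_2)$. Therefore $\gamma *_{DT} = \gamma \bullet$ as operators on the cohomology. Finally, Theorem \ref{thm_QH} identifies the operator of quantum multiplication by a divisor $\gamma$ on the quiver variety $\Hilb^m([\C^2/\Z_{n+1}])$ precisely with $M_\gamma$ (this is the content of the explicit formula, together with the fact that $D_0, \dots, D_n$ span the divisor group, as noted after the theorem). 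Combining these three identifications gives $\gamma *_{DT} = M_\gamma$.

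I would then add the remark that the case $\gamma = 1$ is trivial on both sides: $1 *_{DT}$ is the identity by Corollary \ref{identity}, and $M_1$ is the identity by the identity property $1 * \gamma = \gamma$ built into Theorem \ref{thm_QH}; so it suffices to treat divisors, for which the argument above applies. One small point worth spelling out is that the comparison theorem is stated over $T$ or $T^\pm$, and one should check that the matrix-element identification is compatible with the reduction to $T^\pm$ used in \cite{MOk12} to state the quantum multiplication formula; but this is exactly the setting in which both sides were computed, so no new input is needed.

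The main obstacle here is essentially bookkeeping rather than substance: one must be careful that the quantum parameters, the sign/Theta-characteristic modification $q_\kappa$, and the choice of Nakajima basis versus stable basis all match on the DT and Hilb sides, so that ``the same matrix elements'' genuinely means ``the same operator.'' Since all of these identifications have already been set up in the preceding subsections (the parameter dictionary $q^{\alpha_i} \leftrightarrow q_i$, the reduction to $T^\pm$, and the explicit formulas for $M_{D_0}, \dots, M_{D_n}$), the corollary follows with no further geometric work — it is a restatement of the three-point comparison theorem in the language of operators.
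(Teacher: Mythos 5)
Your proposal is correct and is essentially the paper's own (implicit) argument: the corollary is stated as an immediate consequence of the three-point comparison theorem, the definition $\langle A, B*_{DT}C\rangle = \langle A,B,C\rangle'_m$, and nondegeneracy of the equivariant Poincar\'e pairing, with the $\gamma=1$ case handled by Corollary \ref{identity}. The only cosmetic point is that $M_\gamma$ is by definition the quantum multiplication operator, so invoking Theorem \ref{thm_QH} is needed only for the explicit formula, not for the identification itself.
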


Under the nondegeneracy conjecture as in \cite{MOb09-DT}, we can equate the full DT theory and quantum cohomology theory.

\begin{conj}
  The joint eigenspaces for the operators $M_{D_l}$, $0\leq l\leq n$ are 1-dimensional for all $m>0$.
\end{conj}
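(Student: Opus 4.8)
The plan is to show that the commuting operators $M_{D_0},\dots,M_{D_n}$ act on $A^*_T(\Hilb^m([\C^2/\Z_{n+1}]))\otimes\Q(s_1,s_2)$ with pairwise distinct joint eigenvalues, at least after passing to the generic point of $\Sp\Q(s_1,s_2)\llb q_0,\dots,q_n\rrb$. Since $1$-dimensionality of all joint eigenspaces is equivalent to the number of distinct joint eigenvalues being exactly $\dim_{\Q(s_1,s_2)}A^*_T(\Hilb^m)$, and this number is lower semicontinuous in $q$, it is enough to verify the statement at one convenient specialization. Note also that the classical part alone is not sufficient: equivariant cup product by divisors need not have simple joint spectrum on $\Hilb^m$, so the quantum corrections are essential. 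Finally, by the crepant resolution correspondence proved above, $M_\gamma$ for $[\C^2/\Z_{n+1}]$ and $M_\gamma$ for $\cA_n$ are the same operator on the identified cohomology $A^*_{T^\pm}(\Hilb^m([\C^2/\Z_{n+1}]))\cong A^*_{T^\pm}(\Hilb^m(\cA_n))$ after the change of variables $Q\mapsto q_0\cdots q_n$, $s_i\mapsto q_i$ and analytic continuation; hence the conjecture is equivalent to the nondegeneracy conjecture of Maulik--Oblomkov \cite{MOb09-DT} for $\cA_n\times\Pj^1$, and one may work on whichever side is more convenient.

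The concrete approach is to diagonalize the operators using the explicit formulas above for $M_{D_l}-M_{D_l}^{cl}$. Each correction is a $\Q(s_1,s_2)$-linear combination of the normally ordered bilinears $:e_{k\delta\pm\alpha_{ij}}e_{-k\delta\mp\alpha_{ij}}:$ (and, for $l=0$, additionally the Heisenberg bilinears $\fp_{-k}(1)\fp_k(\pt)+\sum_i\fp_{-k}(E_i)\fp_k(\omega_i)$ and the energy operator $L_0$), all of which preserve the weight spaces $V_\Lambda[\Lambda-m\delta]$. For $n=0$ this is exactly the operator of Okounkov--Pandharipande \cite{OP-QH}, whose spectrum on $\Hilb^m(\C^2)$ is simple, with joint eigenvalue on a partition $\lambda$ an explicit box-sum $\sum_{\square\in\lambda}h(q;c(\square))$ in the content $c(\square)$. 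The task is to run the analogous computation for the cyclic quiver variety: produce an eigenbasis, indexed by $(n+1)$-colored multi-regular partitions (equivalently by $(n+1)$-tuples of ordinary partitions of total size $m$ via the $n$-quotient), and compute the joint eigenvalue of $(M_{D_0},\dots,M_{D_n})$ as a tuple of explicit box-sums involving $q$ and the color of each box.

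Granting such a formula, the conjecture becomes the purely combinatorial statement that these eigenvalue tuples are pairwise distinct for generic $q$, and I expect this \emph{non-collision} step to be the main obstacle. For orientation, the full commutative algebra generated by all the higher quantum Hamiltonians of the Maulik--Okounkov integrable system (the Bethe subalgebra of the relevant Yangian) acts with simple joint spectrum for generic parameters, a general feature of such Bethe subalgebras; the content of the conjecture is the strictly stronger claim that the divisors $D_0,\dots,D_n$ \emph{alone} already separate all joint eigenvectors, so that one never needs the higher Hamiltonians. Concretely one would separate the eigenvectors in two stages: first use $M_{D_1},\dots,M_{D_n}$, whose quantum corrections only see the real roots $\alpha_{ij}$, to pin down everything not visible to $M_{D_0}$; then use the $\delta$-direction part of $M_{D_0}$ (the $L_0$-term together with the $e_{\pm k\delta}$-terms) to resolve the remaining degeneracies inside a fixed weight space. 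The delicate point is that the eigenvalue box-sums are sums of transcendental functions of $q$, so one must exclude accidental identities among them; this is the sort of thing one hopes to settle by a generating-function argument comparing $\sum_{\vec\lambda}q^{(\text{eigenvalue data of }\vec\lambda)}$ over all colored partitions, but making this work uniformly in $m$ and $n$ is where the real difficulty lies.
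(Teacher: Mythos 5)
The statement you are trying to prove is not proved in the paper at all: it is stated as a Conjecture (the orbifold analogue of the nondegeneracy conjecture of Maulik--Oblomkov \cite{MOb09-DT}), and the paper only draws consequences from it (the Corollary* that the DT ring is generated by divisors and agrees with small quantum cohomology). So there is no proof of record to compare against, and your proposal must stand on its own.

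It does not: what you have written is a strategy, not a proof, and you say so yourself. The reduction to the $\cA_n$ side via the crepant resolution correspondence is a legitimate observation, but it only trades this conjecture for the equally open Maulik--Oblomkov nondegeneracy conjecture; an equivalence between two open statements proves neither. The core of your plan --- produce an eigenbasis of $(M_{D_0},\dots,M_{D_n})$ indexed by multi-regular colored partitions, write the joint eigenvalues as explicit box-sums, and then show these tuples are pairwise distinct for generic $q$ --- leaves precisely the decisive ``non-collision'' step unexecuted. The appeal to simplicity of the Bethe subalgebra spectrum does not help, since (as you note) the conjecture is the strictly stronger claim that the $n+1$ divisor operators alone separate eigenvectors, and your two-stage separation argument (real-root corrections via $M_{D_1},\dots,M_{D_n}$, then the $\delta$-direction part of $M_{D_0}$) is only described, never carried out; excluding accidental identities among the resulting transcendental functions of $q$ is exactly the content of the conjecture. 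There are also smaller unjustified assertions (e.g.\ the lower-semicontinuity/specialization argument is delicate because the eigenvalues live in formal power series in $q_0,\dots,q_n$ with coefficients in $\Q(s_1,s_2)$, and the $n=0$ input from \cite{OP-QH} is invoked without verification), but the essential gap is that the claimed eigenvalue formula and its injectivity are nowhere established. As written, the proposal does not prove the statement, and indeed the statement remains open in the paper.
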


\begin{cor*}
  Under the conjecture, the DT ring defined as above is generated by the divisors $D_l$, $0\leq l\leq n$. Moreover, the DT ring structure is identical to that of the small quantum cohomology of $\Hilb^m([\C^2/\Z_{n+1}])$.
\end{cor*}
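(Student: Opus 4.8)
The plan is to promote the operator identity $\gamma *_{DT} = M_\gamma$ of the preceding Corollary --- valid for $\gamma$ a divisor or $\gamma = 1$, where $M_\gamma$ denotes quantum multiplication $\gamma * (\ )$ --- to a full coincidence of the quantum product $*$ with the DT product $*_{DT}$, with the conjecture entering only through the assertion that $*$ is generated by divisors. Throughout I use three facts: (1) $\gamma *_{DT} = M_\gamma$ for every divisor $\gamma$ and for $\gamma = 1$; (2) $*_{DT}$ is commutative, associative and unital with unit $1$ --- commutativity and the Frobenius identity $\langle A, B *_{DT} C \rangle = \langle A *_{DT} B, C \rangle$ are immediate from the symmetry of $\langle A, B, C \rangle'_m$ in its three arguments together with nondegeneracy of the Poincar\'e pairing, while associativity is the DT analogue of WDVV and follows from the degeneration formula applied to $\Pj^1$ carrying four relative fibers, exactly as associativity of $*$ is deduced in Gromov--Witten theory; (3) $*$ is commutative, associative and unital with unit $1$ \cite{MOk12}.

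The first step, which carries all the content, is to deduce from the nondegeneracy conjecture that $A^*(\Hilb^m([\C^2/\Z_{n+1}]))$ is generated as an algebra under $*$ by the divisors $D_0, \dots, D_n$. Extend scalars to a field over which the commuting operators $M_{D_0}, \dots, M_{D_n}$ are simultaneously diagonalizable, and decompose the cohomology into joint eigenspaces $E_\lambda$ indexed by the eigenvalue tuples $\lambda \in \C^{n+1}$ that occur. The conjecture says each $E_\lambda$ is one-dimensional, so these tuples are pairwise distinct and their number equals $N := \dim A^*(\Hilb^m([\C^2/\Z_{n+1}]))$. Let $R$ be the subalgebra generated under $*$ by the $D_i$; an element $r = p(D_0, \dots, D_n) \in R$ acts on $E_\lambda$ by the scalar $p(\lambda)$. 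Since the $N$ points $\lambda$ are distinct, multivariable Lagrange interpolation makes the evaluation map $R \to \C^N$, $r \mapsto (p(\lambda))_\lambda$, surjective, so $\dim R \ge N$ and $R$ is everything. (The map is also injective, for $r = M_r(1)$ and $M_r$ vanishes on all $E_\lambda$ only when $r = 0$.)

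The remaining steps are formal. Given a class $x$, Step 1 writes $x = \Phi(D_0, \dots, D_n)$ for some polynomial $\Phi$ evaluated with the product $*$; by associativity of $*$ and fact (1) the operator $x * (\ )$ equals $\Phi(M_{D_0}, \dots, M_{D_n})$, and in particular $x = \Phi(M_{D_0}, \dots, M_{D_n})(1)$. Evaluating the same polynomial $\Phi$ with $*_{DT}$ produces a class $\tilde x$; by associativity of $*_{DT}$ and fact (1), the operator $\tilde x *_{DT} (\ )$ is again $\Phi(M_{D_0}, \dots, M_{D_n})$, hence $\tilde x = \Phi(M_{D_0}, \dots, M_{D_n})(1) = x$. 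Therefore $x *_{DT} (\ ) = \Phi(M_{D_0}, \dots, M_{D_n}) = x * (\ )$; as $x$ was arbitrary, $*_{DT} = *$. Combined with Step 1 this shows that the DT ring is generated by $D_0, \dots, D_n$ and has the same structure as the small quantum cohomology of $\Hilb^m([\C^2/\Z_{n+1}])$, which is the assertion.

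The one genuine obstacle is Step 1 --- passing from the one-dimensionality of the joint eigenspaces of the $M_{D_l}$ to generation by divisors --- and this is precisely where the conjecture is indispensable; everything after it is formal, resting on the already-proved identity $\gamma *_{DT} = M_\gamma$ and on the associativity of $*_{DT}$, which I would obtain from the degeneration formula by the standard WDVV argument.
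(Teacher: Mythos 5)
Your overall strategy is exactly the intended one (the paper offers no written proof of this Corollary*, deferring to the standard reasoning of \cite{OP-DT} and \cite{MOb09-DT}): use the already-proved identity $\gamma *_{DT} = M_\gamma$ for $\gamma$ a divisor or $1$, get commutativity, unitality and associativity of $*_{DT}$ from the symmetry of the triple bracket and the degeneration formula with four relative fibers (valid for reduced invariants, as the paper notes), use the nondegeneracy conjecture to show the divisors generate, and then transfer everything formally. The formal transfer in your last step is correct: since $a\mapsto (a*\,)$ and $a\mapsto (a*_{DT}\,)$ are algebra homomorphisms into $\operatorname{End}$, any polynomial expression in $D_0,\dots,D_n$ gives the same operator $\Phi(M_{D_0},\dots,M_{D_n})$ under either product, and applying it to $1$ identifies the two evaluations of $\Phi$, whence $*_{DT}=*$ and generation of the DT ring by divisors.

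The one step that does not hold as written is the opening move of your Step 1: ``extend scalars to a field over which the commuting operators $M_{D_0},\dots,M_{D_n}$ are simultaneously diagonalizable.'' Field extension never creates diagonalizability; it only supplies eigenvalues, and a nontrivial Jordan block stays a Jordan block over any extension. Moreover the conjecture, read literally, asserts only that the \emph{joint eigenspaces} are one-dimensional, which is compatible with the operators failing to be semisimple (a single Jordan block has a one-dimensional eigenspace), and then your joint eigenspaces $E_\lambda$ need not span, the count of eigenvalue tuples need not be $N$, and the interpolation argument collapses. So you must either adopt the intended stronger reading of the conjecture (simple \emph{joint spectrum}, i.e.\ $N$ distinct joint eigenvalue tuples, which forces diagonalizability), or supply semisimplicity independently. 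The latter is available here: over $\Q(s_1,s_2)$ the classical equivariant ring of $\Hilb^m([\C^2/\Z_{n+1}])$ localizes, by the fixed-point localization theorem, to a product of fields indexed by the fixed points, and the small quantum product is a flat deformation of it over the complete local ring in the variables $q_0,\dots,q_n$; lifting the orthogonal idempotents from the $q=0$ fiber shows the localized quantum algebra is again a product of rank-one pieces, so all multiplication operators are simultaneously diagonalizable, and the conjecture then says precisely that the divisor operators separate the resulting idempotents. With that repair (and noting, as you do, that any operator commuting with the $M_{D_l}$ preserves each line $E_\lambda$, so Lagrange interpolation applies), your argument is complete and agrees with the paper's implicit one.
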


\section{Future works and relations to other theories}

There is a beautiful diagram of theories as mentioned in the introduction.
$$\xymatrix{
& \text{QH}(\Hilb(\cA_n)) \ar@{-}[dl] \ar@{-}[dr] \ar@{-}'[d][dd] & \\
\text{GW}(\cA_n\times \Pj^1) \ar@{-}[rr] \ar@{--}[dd] & & \text{DT}(\cA_n\times \Pj^1) \ar@{-}[dd] \\
& \text{QH}(\Hilb([\C^2/\Z_{n+1}])) \ar@{--}[dl] \ar@{-}[dr] & \\
\text{GW}([\C^2/\Z_{n+1}]\times \Pj^1) \ar@{--}[rr] & & \text{DT}([\C^2/\Z_{n+1}]\times \Pj^1),}$$
There are several directions we would like to explore further starting from our work on the DT theory of $[\C^2/\Z_{n+1}]\times \Pj^1$.

\begin{enumerate}[$\bullet$]
\setlength{\parskip}{1ex}

\item We expect that the relative Gromov--Witten theory of $[\C^2/\Z_{n+1}]\times \Pj^1$ also corresponds to the quantum cohomology of $\Hilb([\C^2/\Z_{n+1}])$, and thus would complete the above diagram. The result would provide a new example of orbifold relative GW/DT correspondence.

\item Raised by Ruan \cite{Ruan, BG}, the crepant resolution conjecture states that the orbifold GW theory of a Gorenstein orbifold should be equivalent to the GW theory of its crepant resolution (if exists), possibly up to change of variables and analytic continuation. The crepant resolution conjecture for DT theory is raised by Bryan--Graber \cite{Br}, with the Hard-Lefschetz condition.

    Our result, compared with the work of Maulik--Oblomkov \cite{MOb09-DT}, can be viewed as a crepant resolution correspondence for relative DT theory. It turns out that the correspondence is just given by wall crossing to a different chamber in the root space associated to the cyclic quiver variety. The invariants, or operators, are related to each other without change of variables but with analytic continuation.

\item Let $\mathcal{C}$ be a 1-dimensional DM stack, and $L_1$, $L_2$ be line bundles on $\mathcal{C}$. The total space of $L_1\oplus L_2$ can be viewed as an orbifold local curve. For those orbifold local curve with transversal $A_n$-singularities, i.e. locally looks like $[\C^2/\Z_{n+1}]$, as an analogy to \cite{BP, OP-DT}, we expect that there exists a TQFT formalism which determines the GW and DT theory of $L_1\oplus L_2$. This will be pursued in future works.
\end{enumerate}

\bibliographystyle{plain}
\bibliography{reference}

\begin{thebibliography}{10}

\bibitem{AGV}
D.~Abramovich, T.~Graber, and A.~Vistoli.
\newblock {Gromov--{W}itten theory of {D}eligne--{M}umford stacks}.
\newblock {\em American Journal of Mathematics}, 130(5):1337--1398, 2008.

\bibitem{Be}
K.~Behrend and B.~Fantechi.
\newblock {The intrinsic normal cone}.
\newblock {\em Inventiones mathematicae}, 128(1):45--88, 1997.

\bibitem{Br}
Jim Bryan, Charles Cadman, and Ben Young.
\newblock The orbifold topological vertex.
\newblock {\em Advances in Mathematics}, 229(1):531--595, 2012.

\bibitem{BG}
Jim Bryan and Tom Graber.
\newblock {The crepant resolution conjecture}.
\newblock In {\em Algebraic geometry---{S}eattle 2005. {P}art 1}, volume~80 of
  {\em Proc. Sympos. Pure Math.}, pages 23--42. Amer. Math. Soc., Providence,
  RI, 2009.

\bibitem{BP}
Jim Bryan and Rahul Pandharipande.
\newblock {The local Gromov--Witten theory of curves}.
\newblock {\em Journal of the American Mathematical Society}, 21(1):101--136,
  2006.

\bibitem{CR}
Weimin Chen and Yongbin Ruan.
\newblock A new cohomology theory of orbifold.
\newblock {\em Communications in Mathematical Physics}, 248(1):1--31, 2004.

\bibitem{KL}
Young-Hoon Kiem and Jun Li.
\newblock {Localizing virtual cycles by cosections}.
\newblock {\em Journal of the American Mathematical Society}, 26(4):1025--1050,
  October 2013.

\bibitem{Ku}
A.~Kuznetsov.
\newblock {Quiver varieties and Hilbert schemes}.
\newblock {\em Moscow Mathematical Journal}, 7(4):673--697, October-December
  2007.

\bibitem{Le}
Manfred Lehn.
\newblock {Chern classes of tautological sheaves on Hilbert schemes of points
  on surfaces}.
\newblock {\em Inventiones mathematicae}, 136(1):157--207, 1999.

\bibitem{MNOP1}
D.~Maulik, N.~Nekrasov, A.~Okounkov, and R.~Pandharipande.
\newblock {Gromov--Witten theory and Donaldson--Thomas theory, I}.
\newblock {\em Compositio Mathematica}, 142:1263--1285, 9 2006.

\bibitem{MNOP2}
D.~Maulik, N.~Nekrasov, A.~Okounkov, and R.~Pandharipande.
\newblock {Gromov--Witten theory and Donaldson--Thomas theory, II}.
\newblock {\em Compositio Mathematica}, 142:1286--1304, 9 2006.

\bibitem{MOOP}
D.~Maulik, A.~Oblomkov, A.~Okounkov, and R.~Pandharipande.
\newblock {Gromov--Witten/Donaldson--Thomas correspondence for toric 3-folds}.
\newblock {\em Inventiones mathematicae}, 186(2):435--479, 2011.

\bibitem{MOk12}
D.~{Maulik} and A.~{Okounkov}.
\newblock {Quantum Groups and Quantum Cohomology}.
\newblock {\em arXiv:1211.1287}, November 2012.

\bibitem{MPT}
D.~Maulik, R.~Pandharipande, and R.~P. Thomas.
\newblock {Curves on K3 surfaces and modular forms}.
\newblock {\em Journal of Topology}, 3(4):937--996, 2010.

\bibitem{Mau}
Davesh Maulik.
\newblock {Gromov--Witten theory of $\mathcal{A}_n$-resolutions}.
\newblock {\em Geometry \& Topology}, 13:1729--1773, 2009.

\bibitem{MOb09-DT}
Davesh Maulik and Alexei Oblomkov.
\newblock {Donaldson--Thomas theory of $\mathcal{A}_n\times \mathbb{P}^1$}.
\newblock {\em Compositio Mathematica}, 145:1249--1276, 9 2009.

\bibitem{MOb09-QH}
Davesh Maulik and Alexei Oblomkov.
\newblock {Quantum cohomology of the Hilbert scheme of points on
  $\mathcal{A}_n$-resolutions}.
\newblock {\em Journal of the American Mathematical Society}, 22(4):1055--1091,
  10 2009.

\bibitem{Nag}
Kentaro Nagao.
\newblock {Quiver varieties and Frenkel--Kac construction}.
\newblock {\em Journal of Algebra}, 321(12):3764--3789, 2009.

\bibitem{Nak96}
H.~{Nakajima}.
\newblock {Jack polynomials and Hilbert schemes of points on surfaces}.
\newblock In {\em eprint arXiv:alg-geom/9610021}, October 1996.

\bibitem{Nak94}
Hiraku Nakajima.
\newblock {Instantons on ALE spaces, quiver varieties, and Kac--Moody
  algebras}.
\newblock {\em Duke Math. J.}, 76(2):365--416, 11 1994.

\bibitem{Nak98}
Hiraku Nakajima.
\newblock {Quiver varieties and Kac--Moody algebras}.
\newblock {\em Duke Math. J.}, 91(3):515--560, 02 1998.

\bibitem{Ok-Lec}
A.~{Okounkov}.
\newblock {Lectures on K-theoretic computations in enumerative geometry}.
\newblock {\em arXiv:1512.07363}, December 2015.

\bibitem{OP-QH}
A.~Okounkov and R.~Pandharipande.
\newblock {Quantum cohomology of the Hilbert scheme of points in the plane}.
\newblock {\em Inventiones mathematicae}, 179(3):523--557, 2010.

\bibitem{OP-DT}
A.~Okounkov and R.~Pandharipande.
\newblock {The local Donaldson--Thomas theory of curves}.
\newblock {\em Geometry \& Topology}, 14:1503--1567, 2010.

\bibitem{Ross-DT}
D.~{Ross}.
\newblock {Donaldson-Thomas theory and resolutions of toric transverse
  A-singularities}.
\newblock {\em arXiv:1409.7011}, September 2014.

\bibitem{Ruan}
Yongbin Ruan.
\newblock {The cohomology ring of crepant resolutions of orbifolds}.
\newblock In {\em Gromov--Witten theory of spin curves and orbifolds}, volume
  403 of {\em Contemp. Math.}, pages 117--126. Amer. Math. Soc., Providence,
  RI, 2006.

\bibitem{To}
B.~To\"en.
\newblock {Th\'eor\`emes de Riemann--Roch pour les champs de Deligne--Mumford}.
\newblock {\em K-theory}, 18(1):33--76, 1999.

\bibitem{YB}
Benjamin Young and Jim Bryan.
\newblock {Generating functions for colored 3D Young diagrams and the
  Donaldson--Thomas invariants of orbifolds}.
\newblock {\em Duke Math. J.}, 152(1):115--153, 03 2010.

\bibitem{Zh}
Z.~{Zhou}.
\newblock {Relative orbifold Donaldson--Thomas theory and the degeneration
  formula}.
\newblock {\em arXiv:1504.02303}, April 2015.

\end{thebibliography}

\end{document}